\numberwithin{figure}{section}
\theoremstyle{plain}
\newtheorem{thm}{Theorem}[section]
\newtheorem{theoreme}{Theorem}[section]
\newtheorem{prop}[thm]{Proposition}
\newtheorem{cor}[thm]{Corollary}
\newtheorem{lemma}[thm]{Lemma}
\numberwithin{equation}{section}
\theoremstyle{remark}
\newtheorem{rmq}[thm]{Remark}
\newtheorem{rem}[thm]{Remark}
\theoremstyle{definition}
\newtheorem{dfn}[thm]{Definition}
\newtheorem{definition}[thm]{Definition}
\newcommand{\eps}{\varepsilon}
\newcommand{\ceps}{\epsilon} \newcommand{\cveps}{\varepsilon}
\newcommand{\hatun}[1]{\overset{\lower.9em\hbox{${\scriptscriptstyle 1 \wedge}$}}{#1}}
\newcommand{\hatdeux}[1]{\overset{\lower.9em\hbox{${\scriptscriptstyle \!\!2 \wedge}$}}{#1}}
\newcommand\R{{\mathbb R}} \newcommand\N{{\mathbb N}}
\newcommand\Z{{\mathbb Z}}
\def\ra{\rightarrow}
 \def\cdotv{\raise 2pt\hbox{,}}
\newcommand\anabla{\mathord{\centernot \nabla}}
\def\@tvsp{\mathchoice{{}\mkern-4.5mu}{{}\mkern-4.5mu}{{}\mkern-2.5mu}{}}
\def\ltrivert{\left|\@tvsp\left|\@tvsp\left|}
\def\rtrivert{\right|\@tvsp\right|\@tvsp\right|}
\def\ra{\rightarrow}
 \def\cdotv{\raise 2pt\hbox{,}}
\newcommand{\Grond}{\mathcal{G}}
\newcommand{\Prond}{\mathcal{P}}
\newcommand{\Srond}{\mathcal{S}}
\newcommand{\Trond}{\mathcal{T}}
\newcommand{\Arond}{\mathcal{A}}
\newcommand{\Nrond}{\mathcal{N}}
\newcommand{\Ab}{\mathbb{A}}
\newcommand{\Af}{\mathfrak{A}}
\newcommand{\Bf}{\mathfrak{B}}
\newcommand{\tf}{\mathfrak{S}}
\newcommand{\xx}{\mathbbmss{x}}
\newcommand{\yy}{\mathbbmss{y}}
\newcommand{\tprime}{\varrho}
\newcommand{\Tprime}{\Upsilon}
\newcommand{\Trondprime}{\Trond}
\newcommand{\Lp}{\ell}
\newcommand{\Lo}{\mathfrak{L}}
\newcommand{\tiTheta}{\upsilon}
\newcommand{\canonchi}{\boldsymbol{\chi}}
\newcommand{\cutoffchi}{\chi}
\newcommand{\WF}{W\!F}
\newcommand{\qtau}{\tau_{q}}
\def\mathcolor#1#{\@mathcolor{#1}}
\def\@mathcolor#1#2#3{%
  \protect\leavevmode
  \begingroup
    \color#1{#2}#3%
  \endgroup
}
\begin{document}

\title[Dispersion for the wave equation inside strictly convex domains]{Dispersion for the wave equation inside strictly
  convex domains II: the general case}

\author{Oana Ivanovici${}^{1}$}
\address{${}^{1}$Sorbonne Université, CNRS, LJLL, F-75005 Paris, France} 

\author{Richard Lascar${}^{2}$}

\author{Gilles Lebeau${}^{3}$}
  \address{${}^{2,3}$Universit\'e Côte d'Azur, CNRS, LJAD, France}

  \author{Fabrice Planchon${}^{4,*}$}
  \address{${}^{4}$Sorbonne Université, CNRS, IMJ-PRG F-75005 Paris, France}
  \email{${}^{*}$corresponding author: fabrice.planchon@sorbonne-universite.fr}
  \email{oana.ivanovici@math.cnrs.fr, gilles.lebeau@univ-cotedazur.fr, richard.lascar@univ-cotedazur.fr}

      \thanks{{\it Key words}  Dispersive estimates, wave equation, Dirichlet boundary condition.\\
   O.Ivanovici and F. Planchon were supported by ERC grant ANADEL 757 996.}
\date{}

\begin{abstract}
We consider the wave equation on a manifold $(\Omega,g)$ of dimension $d\geq 2$ with smooth
strictly convex boundary $\partial\Omega\neq\emptyset$, with Dirichlet boundary conditions. We construct a sharp local in time parametrix and then proceed to obtain dispersion estimates: our fixed time decay rate for the Green function exhibits  a $t^{1/4}$
loss with respect to the boundary less case. We precisely describe where and when these losses occur and relate them to swallowtail type singularities in the wave front set, proving that our decay is optimal. Moreover, we derive better than expected Strichartz estimates, balancing lossy long time estimates at a given incidence with short time ones with no loss: for $d=3$, it heuristically means that, on average the decay loss is only $t^{1/6}$.
\end{abstract}

\maketitle

\section{Introduction}\label{intro}
Let us consider the wave
equation on a smooth $d-$dimensional manifold $(\Omega,g)$, with $d\geq 2$, a strictly convex boundary $\partial
\Omega$, and $\Delta_g$ its Laplace-Beltrami operator:
\begin{equation} \label{WE} 
\left\{ \begin{array}{l}
   (\partial^2_t- \Delta_g) u=0,  \;\; \text{in } \Omega
\\ u|_{t=0}=u_0, \; \partial_t u|_{t=0}=u_1,\;
 u|_{\partial\Omega}=0, \end{array} \right.
 \end{equation}
On any smooth
Riemannian manifold without boundary, one may construct an approximate solution, i.e. a parametrix, to any order by microlocal methods. In a suitable patch around $x_{0}\in \Omega$ (within the radius of injectivity at $x_{0}$), such an approximate solution is a Fourier integral operator whose phase is a solution to the eikonal equation. That phase is non degenerate in a suitable way and one recovers pointwise decay estimates for the kernel of such parametrix similar to that for the flat case: let us denote by $e^{\pm it\sqrt{-\Delta_{g}}}$ the
half-wave propagators on $\Omega$ with $\partial \Omega=\varnothing$, and $\varkappa\in C_{0}^\infty
(]0,\infty[)$.  Then, possibly only for (small) finite $|t|$, we have the so-called dispersion estimate,
\begin{equation}\label{disprd}
\|\varkappa(-h^{2}\Delta_{g})e^{\pm it\sqrt{-\Delta_{g}}}\|_{L^1\rightarrow L^{\infty}}\leq C(d)h^{-d}\min\{1,(h/|t|)^{\frac{d-1}{2}}\}.
\end{equation}
Such fixed time decay estimates have been the key tool to obtain other families of estimates, from Strichartz to spectral projector estimates, all of which are of space-time type in (mixed) Lebesgue spaces, for data in Sobolev spaces. These in turn are invaluable tools for studying a large range of problems, from nonlinear waves to localization of eigenfunctions.  

In the presence of a boundary, much less is known on the decay of the wave equation. In fact, before our recent work \cite{Annals} on the wave equation on a model strictly convex domain, there were no known results on fixed time dispersion, even with lesser bounds than \eqref{disprd}. Boundaries induce reflections, and the geometry of broken light rays can be quite complicated. These already cause difficulties in proving propagation of singularities results, and obtaining such results led to major developments \cite{AndMel77, Ivrii78, mesj78,mesj82}, along with constructions of suitable parametrices, see \cite{esk77,meta85p,meta85}. However, such parametrices, while efficient at proving that singularities travel
along the (generalized) bi-characteristic flow, do not seem
strong enough to obtain dispersion, at least in the presence of
gliding rays and the more flexible microlocal energy arguments from \cite{Ivrii78, mesj78,mesj82} do not provide any information on the amplitude of the wave. Nevertheless, outside strictly convex obstacles, parametrices from \cite{meta85p,meta85} were instrumental in matching results from $\R^{d}$:  Strichartz estimates for the wave equation were obtained in \cite{smso95}, and dispersion estimates were finally proved to hold for $d=3$ in \cite{ildispext}). For generic boundaries, some positive results for mixed space-time estimates (\cite{smso06,blsmso08} and references therein) have been obtained using the machinery developed for low regularity metrics (\cite{tat02}):  reflect the
metric across the boundary and consider a boundary less manifold
with a Lipschitz metric across an interface. These arguments require to work on very short time intervals, in order to consider only one reflection (and this, in turn, induces losses when summing time intervals). Counterexamples to the full set of Strichartz estimates inside a strictly convex domain were later constructed in \cite{doi,doi2}, by carefully propagating a cusp singularity along the boundary and across a large number of successive reflections, and these carefully crafted solutions provided hindsight for the parametrix construction on the model domain from \cite{Annals}.

Before stating our main result, let us define strict convexity: our boundary $\partial\Omega\neq \varnothing$ is said to be strictly (geodesically) convex if the induced second fundamental form on $\partial\Omega$ is positive definite. If $\Omega$ is actually a domain in $\R^{d}$ with the identity metric, this definition is equivalent to strict positivity of all principal curvatures at any point of the boundary, and $\Omega$ is a strictly convex domain (it admits a gauge function that is strictly convex.)
\begin{thm}\label{disper}
  
  Let $\varkappa \in C_{0}^{\infty}(]0,+\infty[)$. There exist
$C>0$, $T_{0}>0$ and $a_{0}>0$ such that, uniformly in $a\in ]0,a_{0}]$, $h\in (0,1)$ and $t\in [-T_{0},T_{0}]$, the solution $u_a$   to \eqref{WE} with $(u_0,u_1)=(\delta_a,0)$,  $\delta_a$ being any Dirac mass at distance  $a$ from $\partial\Omega$, is such that
\begin{equation}\label{dispco}
\|\varkappa(-h^2 \Delta_{g})u_{a}(t,\cdot)\|_{L^{\infty}}\leq\frac C {h^{d}} \min\left\{1,\left(\frac h{|t|}\right)^{\frac{d-2}{2}+\frac{1}{4}}\right\}\,.
\end{equation}
\end{thm}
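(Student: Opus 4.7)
The plan is to reduce the dispersion estimate for the general strictly convex domain to the model case treated in \cite{Annals}, via a parametrix construction that is sharp enough to preserve the delicate Airy--swallowtail structure responsible for the $t^{1/4}$ loss. I proceed in four steps.

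\textbf{Step 1: Localization and reduction away from the boundary.} By finite speed of propagation, for small $T_{0}$ I may localize the data and the solution to a small geodesic ball. If that ball stays at distance $\gtrsim a_{0}$ from $\partial\Omega$, then $a$ itself is bounded from below and the standard boundaryless semiclassical parametrix (WKB Fourier integral operator with non--degenerate phase) gives the full decay $(h/|t|)^{(d-1)/2}$, which is better than \eqref{dispco}. Hence the only case to analyze is a small patch of $\Omega$ near a boundary point $x_{0}\in\partial\Omega$, and frequencies microlocalized near the glancing cone (away from glancing one reduces to the transverse reflection setting, treated again by standard broken--bicharacteristic FIO parametrices and giving no loss).

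\textbf{Step 2: Normal form.} In geodesic boundary normal coordinates $(x,y)$, $x=\mathrm{dist}(\cdot,\partial\Omega)$, the semiclassical Laplacian reads $-h^{2}\Delta_{g}=(hD_{x})^{2}+R(x,y,hD_{y})+hQ$, with $R$ a tangential second order semiclassical operator and $Q$ of lower order. Strict convexity means $\partial_{x}r_{0}|_{x=0}>0$ on the glancing set $\{r_{0}(0,y,\eta)=1\}$, where $r_{0}$ is the principal symbol of $R$. A Melrose--Taylor--type canonical transformation puts the principal symbol in the Friedlander model form $\xi^{2}+(1+x)|\eta|^{2}$ modulo smooth lower order perturbations in $(x,y,h)$. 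This is the content of the classical glancing normal form; the amplitude transport produced by this reduction is smooth and does not destroy the model structure at leading order.

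\textbf{Step 3: Airy parametrix summed over reflections.} Following the model case, I seek the Green function as a sum $G=\sum_{N\ge 0} G_{N}$, where $G_{N}$ is an oscillatory integral with an Airy factor of the form
\begin{equation}
G_{N}(t,x,y;a)=\frac{1}{h^{d}}\int e^{\frac{i}{h}\Phi_{N}(t,x,y,a,\eta,\alpha)}\,\mathrm{Ai}\!\left(\zeta_{N}(\cdot)\right)\sigma_{N}(\cdot)\,d\eta\,d\alpha,
\end{equation}
where $N$ counts reflections, $\Phi_{N}$ solves the eikonal equation in the model coordinates with the appropriate boundary matching (incorporating $N$ bounces off $\{x=0\}$), $\zeta_{N}$ is the Airy argument encoding the glancing geometry, and $\sigma_{N}$ solves transport equations with source terms accounting for the perturbation from the model. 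The approximate Dirichlet condition at $x=0$ is enforced using the standard decomposition of $\mathrm{Ai}$ into incoming/outgoing Airy functions $A_{\pm}$; this is precisely the mechanism from \cite{Annals} and we extend it by treating the non--model remainders as lower order symbols, absorbed by iterating the Duhamel formula a finite (bounded in $T_{0}$) number of times.

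\textbf{Step 4: Stationary phase and swallowtail analysis.} The proof of \eqref{dispco} then reduces to bounding $\sum_{N} |\psi(-h^{2}\Delta_{g})G_{N}|$. For each $N$ one classifies the critical points of $\Phi_{N}$: generic non--degenerate points contribute the sharp $(h/|t|)^{(d-1)/2}$ rate, while cusp and swallowtail degeneracies (which appear at isolated times/locations when the projected Lagrangian $\Lambda_{N}$ folds) contribute only $(h/|t|)^{(d-2)/2+1/4}$, the $1/4$ being the classical decay exponent of an Airy--cusp integral. Summing over $N\lesssim |t|/h^{1/3}a^{1/2}$ (the number of reflections possible in time $|t|$) one recovers \eqref{dispco} by the same counting argument as in \cite{Annals}.

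\textbf{Main obstacle.} The hard step is Step 3: the model parametrix from \cite{Annals} is extremely rigid, and one must show that the true Green function of the variable coefficient problem admits an Airy--Lagrangian parametrix that agrees with the model at principal order while the perturbations enter only through symbols of order $h$, uniformly in $N$ up to the maximal number of reflections. Handling these perturbations without destroying the Airy structure, and in particular without introducing artificial singularities in $\Phi_{N}$ that would worsen the decay, is the technically demanding core of the argument.
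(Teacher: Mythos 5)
Your Steps 1 and 2 match the paper's localization and Melrose--Taylor normal form setup, and your Step 4 has the right flavor of stationary phase plus degenerate (cusp/swallowtail) stationary phase plus overlap counting. The central difficulty you identify in your ``Main obstacle'' paragraph is indeed the crux. But the way you propose to resolve it in Step~3 contains a genuine gap, and the paper takes a structurally different route around it.

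\textbf{The gap in Step 3.} You propose to absorb the non-model perturbations ``by iterating the Duhamel formula a finite (bounded in $T_0$) number of times.'' This cannot work uniformly in $a$: the number of reflections in time $T_0$ is of order $T_0/\sqrt{a}$, hence unbounded as $a\to 0$, and the theorem is explicitly stated for all $a\in(0,a_0]$ with $T_0$ independent of $a$ and $h$. An iterated-Duhamel reflection parametrix accumulates remainders at each bounce; without a separate mechanism guaranteeing uniformity in $N$, the error is only controlled for finitely many reflections, which would force $T_0 \lesssim \sqrt{a}$ and destroy the uniformity of the statement. Similarly, imposing the Dirichlet condition ``approximately'' at each bounce via the $A_\pm$ splitting (the mechanism of the model case) requires re-matching incoming and outgoing Airy expansions $N$ times, and the paper specifically points out that this matching is painful and is one of the things its construction avoids.

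\textbf{What the paper does instead.} Rather than iterating reflections, the paper constructs a single family of quasi-modes $G(x,y,\eta,\omega)=e^{i\psi}\bigl(p_0\,\mathrm{Ai}(\zeta)+x\,p_1|\eta|^{-1/3}\mathrm{Ai}'(\zeta)\bigr)$ directly from the Melrose--Taylor parametrix, with the two decisive structural properties that $\zeta|_{x=0}=-\omega$ is independent of $(y,\eta)$ and the coefficient of $\mathrm{Ai}'$ vanishes at $x=0$. It then defines the parametrix via the distributional Airy--Poisson identity
$\sum_{N\in\Z} e^{-iNL(\omega)} = 2\pi\sum_{k\ge1} L'(\omega_k)^{-1}\delta(\omega-\omega_k)$,
where $\omega_k$ are the Airy zeros. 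The spectral side makes the Dirichlet condition \emph{exact} in one stroke, since $G(0,\cdot,\omega_k)\propto \mathrm{Ai}(-\omega_k)=0$ for every $k$ simultaneously; no bounce-by-bounce $A_\pm$ matching is needed. The reflection side $\sum_N V_N$ is then the \emph{same object} rewritten by Poisson summation, and the initial data condition reduces to showing $V_N(0,\cdot)\in O(h^\infty)$ for $N\ne 0$ via non-stationary phase. This identity, not Duhamel iteration, is what delivers uniformity in $N$ and hence in $a$.

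\textbf{A second omission.} Your Step 4 sums over all $N\lesssim |t|/(h^{1/3}a^{1/2})$. When $a$ is small enough (roughly $a\lesssim h^{4/7}$) the reflected waves $V_N$ overlap at a single point in numbers that grow without bound, and the straight reflection sum no longer closes. The paper handles this by a dichotomy: for $a\ge h^{2/3-\epsilon}$ it refines the overlap count $|\mathcal N_1(x,y,t)|$ and exploits an extra gain from stationary phase in the radial Fourier variable $r$ when $|N|\gtrsim\lambda$; for $a\le h^{1/3+\epsilon}$ it abandons the reflection sum entirely and bounds the spectral (gallery mode) form of the parametrix directly, using almost-orthogonality of the modes and pointwise Airy sum estimates. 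Your outline contains only the reflection-sum picture, so it cannot by itself produce a bound valid down to $a\to 0$.

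In short: the architecture you describe is recognizably the right one, but the two load-bearing ideas in the paper---the Airy--Poisson summation formula (which removes the need for any Duhamel iteration and makes the Dirichlet condition exact), and the switch to the gallery-mode/spectral representation for small $a$---are missing, and the substitute you propose (finitely iterated Duhamel) does not give uniformity in $a$.
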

\begin{rmq}
By finite speed of propagation for the wave equation, estimate \eqref{dispco} is local in time {\bf and} space. Hence, compactness for $\Omega$ may be dropped if appropriate uniform assumptions are made on the metric.
\end{rmq}
The dispersion estimate \eqref{dispco} may be compared to
\eqref{disprd}: we notice a $1/4$ loss in the $h/t$ exponent, which
we may informally relate to the presence of caustics in arbitrarily
small times if $a$ is small. Moreover, one of the key features in Theorem \ref{disper} is that $T_{0}$ depends only on the geometry of $\partial\Omega$ and the metric $g$: \eqref{dispco} holds  { uniformly} with respect to both the source point and its distance $a$ to the boundary and the frequency $1/h$. In fact, say for $a=h^{\nu}$, $\nu>0$, there are at most $1/\sqrt a =h^{-\nu/2}$ reflections, and caustics in between them, as we will see later; so in the large frequencies regime $h\ra 0$, we have to deal with an increasingly large number of caustics, even to travel a small distance over a small time $T_{0}$. These caustics occur because optical rays are no longer
diverging from each other in the normal direction, where less
dispersion occurs when compared to the $\mathbb{R}^d$ case. 
In fact, we
can track caustics and therefore Theorem \ref{disper} is optimal.
\begin{thm}\label{disperoptimal}
Let  $u_a$ be the solution to \eqref{WE} with data
$(u_0,u_1)=(\delta_a,0)$. Let $h\in (0,1)$ and $a\geq h^{1/3}$. There exist a constant $C>0$, such that for all $\vartheta\in \mathbb{S}^{d-2}$, there exist a finite sequence
$(t_{n},x_{n},y_{n})_n$, $1\leq n\leq a^{-1/2}$ with $d(x_{n},\partial\Omega)\sim a$, $y_{n}/|y_{n}|\sim \vartheta$, such that
\begin{equation}\label{dispcooptimal}
h^{-d}(h/t_{n})^{\frac{d-2}{2}}n^{-1/4} a^\frac 1 8 h^{1/4}\sim a^\frac
1 4 h^{-d}(h/t_n)^{\frac{d-2}{2}+\frac 1 4}\leq C
 |\varkappa(-h^2 \Delta_{g})u_{a}(t_n,x_{n},y_{n})|\,.
\end{equation}
\end{thm}
As a byproduct, we get that even for $t\in ]0,T_{0}]$ with $T_{0}$ small,
the $1/4$ loss is unavoidable for $a$ comparatively small to $T_0$ and
independent of $h$. Later this optimal loss will be related to swallowtail type singularities in the wave
front set of $u_a$. 
\begin{rmq}
There is nothing specific about the cosine part of the wave propagator in Theorem \ref{disper} and \ref{disperoptimal}. Both hold equally true if one replaces $\varkappa(-h^2 \Delta_{g})u_{a}(t,x)$ by the half-wave propagators $\varkappa(hD_{t})e^{\pm it\sqrt{-\Delta_{g}}}\delta_{a}$ with $\varkappa\in C_{0}^\infty(\R^*)$.
\end{rmq}
As a consequence of more elaborate estimates that lead to \eqref{dispco}, we obtain improved Strichartz estimates when compared to those that routinely follow from fixed time dispersion.
\begin{thm}\label{thStri}
 Let $d\geq 3$ and $u$ be a solution of \eqref{WE} on a manifold $\Omega$ with strictly convex boundary. Then there exist $T$ such that for all $\varepsilon>0$, there exists $C_{T,\varepsilon}$ such that
 \begin{equation}
   \label{eq:SEF}
\|u\|_{L^q(0,T) L^r(\Omega)}\leq
C_{T,\varepsilon} \bigl(\,||u_0||_{\dot{H}^{\beta}(\Omega)} +
||u_1||_{\dot{H}^{\beta-1}(\Omega)} \bigr)\,,
\end{equation}
where $\beta=d/2-1/q-d/r$ (scaling condition) and $(d,q,r)$ such that $q\geq 2$ ($q\neq 2$ for $d=3$),
\[
\frac{1}{q}\leq\Bigl(\frac{d-1}{2}-\upgamma(d)\Bigr)\Bigl(\frac{1}{2}-\frac{1}{r}\Bigr)\,, \text{ with } \upgamma(d)=\frac 1 4-\frac 1 {4d}+\varepsilon=\frac 1 6+\frac 1 4\Bigl(\frac 1 3-\frac 1 d\Bigr)+\varepsilon\,.
\]
\end{thm}
In dimension $d=2$ the known range of admissible indices for which
sharp Strichartz are already known to hold is in fact slightly larger, see
\cite{blsmso08} where $\upgamma(2)=1/6$ (which we may recover with our argument).  Especially noteworthy is $d=3$, for which we get $\upgamma(3)= 1/6+\varepsilon$: such a loss corresponds heuristically to a fixed time dispersion \eqref{dispco} where the $1/4$ loss would be replaced by a $1/6$ loss. In dimensions $d\geq 3$, Theorem
\ref{thStri} improves the known range of indices for which Strichartz estimates hold, and it does so in a uniform way with respect to dimension, in
contrast to \cite{blsmso08}, where $\upgamma(3)=2/3$ and $\upgamma(d)=(d-3)/2$ for $d\geq 4$. The results in \cite{blsmso08} however apply to any domain or manifold with non-empty boundary. 

In the negative direction, counterexamples from \cite{doi,doi2} prove that $\upgamma(d)\geq 1/12$, for $d=2,3,4$. In other recent works \cite{ILP3}, \cite{ILP4}, on the model domain, both positive and negative results for $d=2$ are pushed further. Estimates \eqref{eq:SEF} are proved to hold with $\upgamma(2)= 1/9$ ; improvements on counterexamples yield $\upgamma(2) \geq 1/10$. These results extend beyond the model case for $d=2$, and provide similar improvements in higher dimensions; these extensions, for the general case, will be addressed elsewhere, as they require significant new developments that are out of scope here.

In the present work, we mainly focus on constructing a sharp parametrix for the wave equation \eqref{WE}, providing optimal bounds on the amplitude of the wave, including at a discrete set of caustics of swallowtail type that increase to arbitrarily large numbers when the source gets closer and closer to the boundary. While a natural outcome of this parametrix is optimal dispersion bounds, we believe that such a sharp parametrix will prove useful for a broad range of applications beyond the study of dispersive effects and localization of eigenfunctions, including sharp quantitative versions of propagation of singularities results that are of importance in control theory. 

We conclude this introduction with a brief overview of the content in the next sections.
\begin{itemize}[leftmargin=5.5mm]
\item The second section is devoted to building our parametrix for the wave propagator, which is the key tool to prove Theorems \ref{disper} and \ref{disperoptimal}. While one may think of \cite{Annals} as inspirational, its inner knowledge is by no way a prerequisite and the present construction differs significantly for several reasons we briefly outline: unlike in the model case, we lack an explicit spectral representation. We therefore need to construct quasi-modes, and for this we rely on a parametrix for the Helmholtz equation (see \cite{meta} which relies crucially on \cite{mel76}). Using the Airy-Poisson formula that we introduced in \cite{ILP-XEDP}, we then obtain a parametrix, both as a ``spectral'' sum and its counterpart after Poisson summation. One obvious benefit from this approach is that the Dirichlet boundary condition holds easily, unlike in \cite{Annals}. Moreover, the Poisson sum turns out to coincide with the carefully constructed sum of reflected waves in \cite{Annals}, as each term has essentially the same phase (in the model case). The present construction is therefore a sophisticated version of the method of images, which was our inspiration for constructing suitably matching incoming and outgoing waves in between consecutive reflections in \cite{Annals} (in turn drawing upon \cite{doi}). An additional benefit is that our parametrix holds for any $a$ and $h$: we extend the reflected waves construction to the range $h^{2/3}<a<h^{4/7}$ (a crucial tool in further improvements alluded to earlier, see \cite{ILP3}). The range $0<a\leq h^{2/3}$ requires to properly define gallery modes from the quasi-modes and prove that their decay properties are uniform with respect to their discrete parameter, at least in a range useful for our purpose. To our knowledge, these gallery modes had never been defined in such a uniform way in the general case before now; then, one has to carefully construct the initial data by decomposing over the gallery modes, a delicate issue that was notably absent from the model case.
\item The third section deals with dispersion estimates for reflected waves. There the analysis of the oscillatory integrals follows \cite{Annals} in spirit but it departs from it on several counts. We can no longer reduce the higher dimensional case to $d=2$ by rotational invariance (i.e., the underlying model case is no longer isotropic). For $a<h^{4/7}$, we need to estimate both the size of each wave and their overlap, which is no longer bounded: we observe that after a very large number of reflections, waves start to exhibit dispersion along the tangential variable. We therefore obtain   bounds that are sharper and cover an extended region when compared to \cite{Annals}.
  \item In the fourth section, for $a\leq h^{2/3}$, we use a mix of dispersion estimates on each gallery mode, the spectral sum, and Poisson summation on the worst terms to obtain a sharper decay than in \cite{Annals}, thereby proving that the worst decay (with a $1/4$ loss) really only happens when $h^{1/3}< a < 1$, whereas a lesser $1/6$ loss is seen below $h^{1/3}$, essentially due to cusp propagating and accumulating.
  \item The fourth section deals with Strichartz estimates and how to derive Theorem \ref{thStri}, taking advantage of the previously introduced decomposition with respect to angles of incidence, following \cite{ILP3}, combined with short time Strichartz estimates (similar to those from \cite{blsmso08}).
  \item Finally, the appendix provides hindsight on how to obtain the key properties (and required uniformity, in a suitable sense) of the generating function associated to the equivalence of glancing hypersurfaces (\cite{mel76}) in our setting.
\end{itemize}
In the remaining of the paper, $A\lesssim B$ means that there exists a constant $C$ such that $A\leq CB$ and this constant may change from line to line but is independent of all parameters. It will be explicit when (very occasionally) needed. Similarly, $A\sim B$ means both $A\lesssim B$ and $B\lesssim A$.
\section{A parametrix construction}\label{parconstruction}
By finite speed of propagation, we may work locally near the boundary and chose boundary normal coordinates $(x,y)$ on $\Omega$, with $x>0$ on $\Omega$, $y\in\mathbb{R}^{d-1}$ such that $\partial\Omega=\{(0,y): y\in \mathbb{R}^{d-1}\}$ (these coordinates may be interpreted as Fermi coordinates relative to the hypersurface that is the boundary); local coordinates on $\Omega\times\mathbb{R}_t$ are then $(x,y,t)$. Local coordinates on the base induce local coordinates on the cotangent bundle, namely $(x,y,t,\xi,\eta,\mathcolor{red}{\mathbf{\tau}})$ on $T^*(\Omega\times\mathbb{R}_t)$. The corresponding local coordinates on the boundary are $(y,t,\eta,\mathcolor{red}{\mathbf{\tau}})$. In this coordinates (and up to conjugation by a non vanishing smooth
factor $e_{g}(x,y)$), the Laplacian $\Delta_g$ can be written as (\cite[III, Appendix C]{Hormander})
\begin{equation}
  \label{eq:laptilde}
 \Delta=e_{g}^{-1} \Delta_g e_{g}=\partial^2_x+R(x,y,\partial_y)\,.
\end{equation}
We assume that the boundary is everywhere strictly (geodesically) convex: for every point $(0,y_0)\in\partial\Omega$ and every $(0,y_{0},0,\eta_0)\in T^*\Omega$ with $\eta_{0}\neq 0$,
\begin{gather*}
  \{\xi^2+R(x,y,\eta),x\}(0,y_0,0,\eta_{0})=0\,,\\
 \{\{\xi^2+R(x,y,\eta),x\},\xi^2+R(x,y,\eta)\}(0,y_0,0,\eta_{0})=2\partial_xR(0,y_0,\eta_0)>0,
\end{gather*}
where $\{.,.\}$ denotes the Poisson bracket (see \cite[III, 24.3]{Hormander}). We assume (without loss of generality) that $y_0=0$, hence $\kappa_0=(0,0,0,\eta_0)$. On the boundary
and for $(0,y)$ near $(0,0)$, the metric reads $\xi^2+\sum_{j,k=1}^{d-1}R^{j,k}(0,y)\eta_j\eta_k$; using again \cite[III, Appendix C]{Hormander}, we assume moreover that $(R_{j,k}(0,0))_{j,k}$ is the identity matrix, and define
\begin{equation}
  \label{eq:R01}
  R_0(y,\partial_y)  :=R(0,y,\partial_y)=\sum_{j}\partial_{y_j}^2+O(|y|)\,,\quad 
  R_1(y,\partial_y)  :=\partial_x R(0,y,\partial_y)=\sum_{j,k}R_1^{j,k}(y)\partial_{y_j}\partial_{y_k}\,.
\end{equation}
Recall that strict convexity for $\partial\Omega$ is equivalent to $R_1$ being elliptic (the associated quadratic form is  positive definite). Define our model Laplacian $\Delta_{M}$ and (Fourier) multipliers $q, {\qtau}$
\begin{equation}
  \label{eq:LapM}
\Delta_M=\partial^2_x+\sum_{j}\partial_{y_j}^2+ x\sum_{j,k}R_1^{j,k}(0)\partial_{y_j}\partial_{y_k}\,,\, q(\eta)=\sum_{j,k}R_1^{j,k}(0)\eta_j \eta_k\,,\,  {\qtau}(\omega,\eta)=\sqrt{ |\eta|^2+\omega q(\eta)^\frac 2 3}\,.
\end{equation}
Later we will use various functions of variables $(x,y,\eta,\omega,\sigma)$ (where some variables may be omitted depending on context and both new variables $\omega, \sigma \in \R$)
that will be defined in a conic neighborhood of the set
\begin{equation}
  \label{eq:N0}
 N_0=\{x=0\,,\,\,y=0\,,\,\,\omega=0\,,\,\,\sigma=0\,,\,\,\eta\in \R^{d-1}\setminus\{0\}\}\,.
\end{equation}
Such a function $f$ is said to be homogeneous of degree $k$ if
\[
f(x,y, \lambda \eta, \lambda^{2/3}\omega, \lambda^{1/3}\sigma)=\lambda^{k}
f(x,y,\eta,\omega,\sigma)\,.
\]
\begin{definition}
    A symbol $a(x,y,\eta,\omega,\sigma)$ is of order $m$ and type $((1, 2/3,1/3),0)$ if 
\[
\forall \beta=(\beta_0,\beta_1,\beta_2,\beta_3) \quad \exists C_{\beta} \quad |\partial^{\beta_0}_{(x,y)}\partial^{\beta_{1}}_{\eta} \partial^{\beta_{2}}_{\omega}\partial^{\beta_{3}}_{\sigma}a(x,y,\eta,\omega,\sigma)|\leq C_{\beta}(1+|\eta|)^{m-|\beta_1|-\frac 23|\beta_2|-\frac 13|\beta_3|}.
\]

\end{definition}
We now recall the Airy function, defined for $z\in \R$ as the oscillatory integral
\begin{equation}
  \label{eq:15}
  Ai(-z)=\int e^{i(\frac{\sigma^{3}} 3-\sigma z)}\,d\sigma\,.
\end{equation}
The choice of $\sigma$ as an integration variable is consistent with our later use of oscillatory integrals with related phases and with symbols within the class we just defined; and $\omega$ may be chosen as a zero of the Airy function.

Constructing a parametrix near glancing or gliding rays has a long history, starting with Andersson-Melrose \cite{AndMel77} and Eskin \cite{esk77}. We also refer to Melrose and Taylor (\cite{meta} and references therein) and Zworski \cite{Zwo} for the exterior case. We now state an important theorem for our purposes. To our knowledge, this result is stated
(for glancing rays) in \cite{Zwo} and a proof is available in \cite{meta}.
\begin{theoreme}\label{thmMelrose}[Melrose-Taylor, Zworski]
Let ${\qtau}(\omega,\eta)$ be defined in \eqref{eq:LapM}.  There exist a neighborhood $U$ of $(x,y,\eta,\omega)=(0,0,1,0)$, phase functions $\psi(x,y,\eta,\omega)$ and
  $\zeta(x,y,\eta,\omega)$, symbols $p_0(x,y,\eta,\omega)$ and
  $p_1(x,y,\eta,\omega)$ and a function $e_0(x,y,\eta,\omega)$  such that
  \begin{itemize}[leftmargin=5.5mm]
  \item the function $\psi$ is homogeneous of degree $1$, $(\nabla_y(\partial_{\eta_j}\psi))_{j=1,\cdots,d-1}$ are linearly independent;
  \item the function $\zeta$ is homogeneous of degree $2/3$, and
    \begin{equation}
      \label{eq:16}
      \zeta=\omega-xq(\eta)^{1/3} e_0(x,y,\eta/|\eta|,\omega/q(\eta)^{1/3})\,,
    \end{equation}
i.e. $e_{0}$ is homogeneous of degree $0$;
\item the symbols    $p_0$, $p_1$ (which do not depend on $\sigma$) and $p_{0}+\sigma p_{1}$ are of order $0$ and type $((1,2/3,1/3),0)$;
    \item the phase functions $\psi$ and $\zeta$ are solutions to the following eikonal equations 
    \begin{equation} \label{systeikeq} 
<\nabla_{(x,y)}\psi,\nabla_{(x,y)}\psi>+\zeta<\nabla_{(x,y)}\zeta,\nabla_{(x,y)}\zeta>={\qtau}^2(\omega,\eta),\quad    <\nabla_{(x,y)}\psi,\nabla_{(x,y)}\zeta>=0.
 \end{equation}
 Here $<.,.>$ is the symmetric bilinear form obtained by polarization of the principal symbol $\xi^2+R(x,y,\eta)$ of the operator $\Delta$ (which is a second order homogeneous polynomial).
\item Define the function $G(x,y,\eta,\omega)$ to be
  \begin{equation}
    \label{eq:defG}
    G(x,y,\eta,\omega)=e^{i\psi(\cdot)}(p_0(\cdot) Ai(-\zeta(\cdot))+ i p_1(\cdot ) q^{-1/6}(\eta) Ai'(-\zeta(\cdot)))\,;
  \end{equation}
  Then the following equation holds in $U$,
  \begin{equation}
  \label{eq:eqG}
  -\Delta G={\qtau}^2 G+O_{C^\infty}({\qtau}^{-\infty})\,,
\end{equation}
with $p_0$, $e_0$ elliptic symbols, $e_0>0$ near any $(0,0,\eta,0)$ with $\eta\in \R^{d-1}\setminus\{0\}$ and $p_{1}=0$ on $\{x=0\}$. We call $G$ a quasimode in $U$.
  \end{itemize}
\end{theoreme}

\begin{rmq}\label{rmqzeta}
Constructing an asymptotic solution to equation \eqref{eq:eqG} with ansatz \eqref{eq:defG} is a classical result in geometrical optics. However, that such a solution can be constructed with $\zeta\vert_{x=0}=\omega$ independent of $(y,\eta)$ is delicate and is a key point of the result. Moreover, that the construction can be done such that the symbol $p_{1}$ in front of $Ai'$ in \eqref{eq:defG} vanishes on the boundary $\{x=0\}$ is not obvious and proved in \cite[Paragraph. 4.4, formula 4.4.6 and paragraph 7.1]{meta}.
\end{rmq}
\begin{rmq}
Near glancing rays, the same theorem holds true with $Ai(e^{\pm i\pi/3}\cdot)$ instead of
$Ai(\cdot)$. As $Ai(e^{\pm i\pi/3}s)$ does not vanish for real values of $s$, one may define outgoing and incoming parametrix for the wave operator with given Dirichlet data on
the boundary. Near gliding rays, which is our case, the Airy function may vanish and the same methodology no longer applies. In
\cite{esk77}, Eskin deals with this difficulty by a conjugation of the
wave operator by $e^{-\zeta t}$, replacing $\mathcolor{red}{\mathbf{\tau}}$ by $\mathcolor{red}{\mathbf{\tau}}-i\zeta$, therefore avoiding zeros of the Airy function. While one may then prove propagation of singularities, it is unknown (and unlikely) to be enough for dispersive estimates near gliding rays.
\end{rmq}
Let us now briefly review how to prove Theorem \ref{thmMelrose}.
First, observe that Melrose's classification Theorem for glancing
hypersurfaces (see \cite{mel76}) applies, in the non-homogeneous setting, locally near any point in the set $\Sigma_0$, defined as
\begin{equation}
  \label{eq:18}
\Sigma_{0}  =\{(X_{M},Y_{M},\Xi,\Theta)\,:\,\, X_{M}=0\,,\,\,Y_{M}=0\,,\,\,
\Xi=0\,,\,\,|\Theta|=1\}\,.
\end{equation}
Therefore,  there exists a canonical transform $\canonchi_M$ such that, near $\Sigma_0$
\begin{equation}
  \label{eq:melrose}
  \canonchi_M(\{X_{M}=0\})=\{x=0\},\;\;\;\canonchi_{M}(\{\Xi^2+|\Theta|^2+X_{M}q(\Theta)=1\})=\{\xi^2+R(x,y,\eta)=1\}\,.
\end{equation}
The crucial fact that such a canonical transformation $\canonchi_{M}$ may actually be defined in a neighborhood of $\Sigma_{0}$ then follows from the transversality of the Hamiltonian flow with respect to $\Sigma_{0}$. The following proposition will be essential for us.
\begin{prop}\label{lemgamma}
  The generating function for $\canonchi_M$ may be written as $\varphi_{\Gamma}(x,y,\Xi,\Theta)=x\Xi+y\Theta+\Gamma(x,y,\Xi,\Theta)$, where $\Gamma(0,y,\Xi,\Theta)$ is independent of $\Xi$ (as $\canonchi_M(\{X_{M}=0\})=\{x=0\}$) and
\begin{equation}
  \label{eq:GAB}
\Gamma(x,y,\Xi,\Theta)=B_{\Gamma}(y,\Theta)+xA_{\Gamma}(x,y,\Xi,\Theta).
\end{equation}
The transformation $\canonchi_M$ is such that $\canonchi_M(\partial_{\Xi}\varphi_{\Gamma},\nabla_{\Theta}\varphi_{\Gamma},\Xi,\Theta)=(x,y,\partial_x\varphi_{\Gamma},\nabla_y\varphi_{\Gamma})$, and therefore generated by the following relations:
\begin{equation} \label{genchi} 
\left\{ \begin{array}{l}
X_{M}=x+x\frac{\partial A_{\Gamma}}{\partial \Xi}(x,y,\Xi,\Theta)\,,\,\,Y_{M}=y+\frac{\partial B_{\Gamma}}{\partial \Theta}(y,\Theta)+x\frac{\partial A_{\Gamma}}{\partial \Theta}(x,y,\Xi,\Theta)\\ 
\xi=\Xi+A_{\Gamma}(x,y,\Xi,\Theta)+x\frac{\partial A_{\Gamma}}{\partial x}(x,y,\Xi,\Theta)\,,\,\,
\eta=\Theta +\frac{\partial B_{\Gamma}}{\partial y}(y,\Theta)+x\frac{\partial A_{\Gamma}}{\partial y}(x,y,\Xi,\Theta)\,.
\end{array} \right.
 \end{equation}
There exists an elliptic symbol $p(x,y,\eta,\omega,\sigma)$ of order $0$ and type $((1,2/3,1/3),0)$
with support near $N_0$ (from \eqref{eq:N0}) 
and
\begin{equation}\label{eq:Gosc}
G(x,y,\eta,\omega):=\frac{1}{2\pi} e^{-i{\qtau} B_{\Gamma}(0,\eta/{\qtau})}\int e^{i(y\cdot \eta+\frac {s^3} 3 +s(xq^{\frac 13}(\eta)-\omega)+{\qtau}\Gamma(x,y,s q^{\frac 13}(\eta)/{\qtau},\eta/{\qtau}))}p(x,y,\eta,\omega,s)ds
\end{equation}
such that Theorem \ref{thmMelrose} holds with this $G$.
\end{prop}
\begin{proof}
  We postpone to the appendix how to obtain the generating function $\Gamma$ and the $B_{\Gamma}(y,\Theta)$ and $A_{\Gamma}(x,y,\Xi,\Theta)$ terms.  The function $\Gamma(x,y,\Xi,\Theta)$ is not unique:  the group of canonical transformations under which the model $\{X_{M}=0,\Xi^2+|\Theta|^2+X_{M}q(\Theta)=1\}$ is invariant is non trivial and includes any symplectic transformation $(X_{M},Y_{M},\Xi,\Theta)\rightarrow (X_{M},Y_{M}+h'(\Theta),\Xi,\Theta)$, where $h$ is any function defined near the set $\{|\Theta|=1\}$. Thus we may replace $\Gamma(x,y,\Xi,\Theta)$ by $\Gamma(x,y,\Xi,\Theta)+h(\Theta)$. We therefore assume that $B_{\Gamma}(0,\Theta)=0$, which is equivalent to $\Gamma(0,0,0,\Theta)=0$. This explains the factor $e^{-i{\qtau} B_{\Gamma}(0,\eta/{\qtau})}$ in \eqref{eq:Gosc}. Let us now verify that there exists a symbol $p(x,y,\eta,\omega,\sigma)$ such that $G$ defined by \eqref{eq:Gosc} is such that $(-\Delta-{\qtau}^2)G=O({\qtau}^{-\infty})$ near $x=0,y=0$.

 We will work microlocally near the set $\Sigma_0$, defined in \eqref{eq:18}, in the semiclassical setting with $0<h<1$ as small parameter. Set $P=-h^{2}\Delta-1$, $p_2(x,y,\xi,\eta)=\xi^2+R(x,y,\eta)-1$,
 $P_M=h^{2}(D^2_{X_{M}}+D^2_{Y_{M}}+Xq(D_{Y_{M}}))-1$, and $ p_{M,2}(X_{M},Y_{M},\Xi,\Theta)=\Xi^2+|\Theta|^2+X_{M}q(\Theta)-1$, where $D_{X_{M}}=\frac 1i \partial_{x_{M}}$, $D_{Y_{M}}=\frac 1i\nabla_{Y_{M}}$. Let $W\subset \tilde W$ be small neighborhoods of $\Sigma_0$. Let $\chi(X_{M},Y_{M},\Xi,\Theta)\in C^{\infty}_0(\tilde W)$ such that $\chi=1$ in a neighborhood of $\overline{W}$ and such that $\chi|_{X_{M}=0}$ is independent of $\Xi$. Let $\mathcal{G}_{h}$ be the following semiclassical Fourier integral operator
\[
\mathcal{G}_{h}(F)(x,y):=\frac{1}{(2\pi h)^d}\int e^{\frac ih(\varphi_{\Gamma}(x,y,\Xi,\Theta)-X\Xi-Y\Theta)}\chi(X,Y,\Xi,\Theta)F(X,Y)dXdYd\Xi d\Theta,
\]
where $\varphi_{\Gamma}$ has been introduced in Proposition \ref{lemgamma}. Then for any semiclassical operator $Q_M$ such that $Q_{M}= \sum_{n\geq 0} (-ih)^{n}Q_{M,n}(X_{M},Y_{M},{h}D_{X_{M}},{h}D_{Y_{M}})$ defined on $\tilde W$, there exists a semiclassical operator $Q= \sum_{n\geq 0} (-ih)^{n}Q_{n}(x,y,{h}D_x,{h}D_y)$ defined on $\canonchi_M(\tilde W)$ and unique on $\canonchi_M(W)$ such that one has 
\[
\WF_{h}\Big((Q\mathcal{G}_{{h}}-\mathcal{G}_{{h}}Q_M)(F)\Big)\cap \canonchi_M(W)=\emptyset,
\]
where $\WF_{{h}}$ denotes the semiclassical wavefront set (see \cite{ZworBook}) and $F$ denotes any function such that $\WF_{{h}}(F)\subset W$. Moreover, $Q_0(\canonchi_M(X_{M},Y_{M},\Xi,\Theta))=Q_{M,0}(X_{M},Y_{M},\Xi,\Theta)$ for $(X_{M},Y_{M},\Xi,\Theta)\in W$. Taking $Q_M=P_M$ and using \eqref{eq:melrose}, $Q$ has simple characteristics on the set $p_2(x,y,\xi,\eta)=0$ near $\canonchi_M(\Sigma_0)$. Thus, if $W\subset \tilde W$ are small enough, there exists a function $l(x,y,\xi,\eta)\in C^{\infty}_0(\canonchi_M(\tilde W))$, which is elliptic on $\canonchi_M(W)$, such that $l(x,y,\xi,\eta)p_2(x,y,\xi,\eta)=Q_0(x,y,\xi,\eta)=p_{M,2}\circ \canonchi_M^{-1}(x,y,\xi,\eta)$ in a neighborhood of $\overline{\canonchi_M(W)}$. Set $L=l(x,y,{h}D_x,{h}D_y)$, then there exists $R_M$ with $R_{M}= \sum_{n\geq 0} (-i{h})^{n}R_{M,n}(X_{M},Y_{M},{h}D_{X_{M}},{h}D_{Y_{M}})$ such that
\begin{equation}\label{g2}
\WF_{{h}}\Big((LP\mathcal{G}_{{h}}-\mathcal{G}_{{h}}(P_M-i{h}R_M))(F)\Big)\cap \canonchi_M(W)=\emptyset.
\end{equation}
We now exhibit a suitable $F=F_{\theta,p_{q},{h}}(X_{M},Y_{M})$ as an oscillatory integral with symbol $p_{q}$ to be chosen later and $\theta\in\mathbb{R}^{d-1}$ with $|\theta|$ close to $1$. Define a function
\begin{equation}
  \label{eq:17}
  \alpha_{q}(\theta)=\frac{1-|\theta|^2}{q^{2/3}(\theta)}\,,
\end{equation}
let $p_{q}$ be 
such that $p_{q}(X_{M},Y_{M},s,\theta,{h^{-1}})= \sum_{n\geq 0}(-i{h})^{n}p_{q,n}(X_{M},Y_{M},s,\theta)$, compactly supported near $X=0$, $Y=0$, $s=0$, with $p_{q,n}$ homogeneous of order $0$ and $|\partial^{\beta} p_{q,n}|\leq C_{\beta}$, and set
\[
F_{\theta,p_{q},{h}}(X_{M},Y_{M}):=\frac{1}{2\pi h^{1/3}}\int e^{\frac i{h}(Y\cdot\theta+s^3/3+s(X_{M}q^{1/3}(\theta)-\alpha_{q}(\theta)))}p_{q}(X_{M},Y_{M},s,\theta,{h^{-1}})\,ds\,,
\]
where $Y_{M}\cdot\theta$ denotes the scalar product in $\mathbb{R}^{d-1}$. 
Define the model Lagrangian submanifold $\Lambda_{M,\theta}$  
\[
\Lambda_{M,\theta}:=\Big\{(X_{M},Y_{M},\Xi,\Theta=\theta): \exists s\in\mathbb{R}\text{ such that } X_{M}=q(\theta)^{-1/3}(\alpha_{q}(\theta)-s^2), \Xi=sq(\theta)^{1/3}\Big\},
\]
then one has $\WF_{{h}}(F_{\theta,p_{q},{h}})\subset \Lambda_{M,\theta}$ and $\Lambda_{M,\theta}$ is contained in the characteristic set of $P_M$, defined by $p_{M,2}(X_{M},Y_{M},\Xi,\Theta)=0$, which is the same as the characteristic set of $P_M-i{h}R_M$ as $i{h}R_M$ is a lower order term. Let $V\subset W$ be a small neighborhood of $p_{q,0}$. By solving transport equations, we can select the symbol $p_{q}$ to be elliptic on $X_{M}=0,Y_{M}=0,s=0,|\theta|=1$, such that for all $\theta\in\mathbb{R}^{d-1}$ with $|\theta|$ close to $1$ one has
\[
\WF_{{h}}\Big((P_M-{i{h}}R_M)(F_{\theta,p_{q},{h^{-1}}})\Big)\cap V=\emptyset, \quad \WF_{{h}}(F_{\theta,p_{q},{h}})\subset W.
\]
We now set $G_{\theta,p_{q},{h}}(x,y):=\mathcal{G}_{{h}}(F_{\theta,p_{q},{h}})(x,y)$. Using \eqref{g2} and the ellipticity of $L$ on $\chi_M(V)$ yields
\begin{equation}\label{gl-1}
\WF_{{h}}\Big(P(G_{\theta,p_{q},{h}})\Big)\cap \canonchi_M(V)=\emptyset, \quad \WF_{{h}}(G_{\theta,p_{q},{h}})\subset \canonchi_M(\WF_{{h}}(F_{\theta,p_{q},{h}}))\subset \canonchi_M(W).
\end{equation}
Moreover, we may write $G_{\theta,p_{q},{h}}(x,y)$ as follows
\[
\frac 1 {(2\pi h)^d} \frac 1 {2\pi {h}^{1/3}}\int e^{\frac i{h}\Phi_{\theta}(x,y,X_{M},Y_{M},\Xi,\Theta,s)}\chi(X_{M},Y_{M},\Xi,\Theta)p_{q}(X_{M},Y_{M},s,\theta,{h^{-1}})\,dX_{M}dY_{M}d\Xi d\Theta ds\,,
\]
where the phase is defined as
\begin{equation}
  \label{eq:19}
\Phi_{\theta}(x,y,X_{M},Y_{M},\Xi,\Theta,s)=\varphi_{\Gamma}(x,y,\Xi,\Theta)-X_{M}\Xi-Y_{M}\cdot\Theta+Y_{M}\cdot\theta+s^3/3+s(X_{M}q^{1/3}(\theta)-\alpha_{q}(\theta))\,,
\end{equation}
and we recall that $\varphi_{\Gamma}(x,y,\Xi,\Theta)=x\Xi+y\cdot\Theta+\Gamma(x,y,\Xi,\Theta)$ and we required $\Gamma(0,0,0,\Theta)=0$. Since at $(x,y)=(0,0)$ we have $\varphi_{\Gamma}(0,0,0,\Theta)=0$ ($\Gamma|_{x=0}$ is independent of $\Xi$), $\Phi_{\theta}(0,0,X_{M},Y_{M},\Xi,\Theta,s)$ is explicit and we easily check that it has an unique non degenerate stationary point in the variables $X_{M},Y_{M},\Xi,\Theta$ at $X_{M,c}=0,Y_{M,c}=0,\Xi_c=sq^{1/3}(\theta),\Theta_c=\theta$. Therefore, for $(x,y)$ close to $(0,0)$, the phase function $\Phi_{\theta}$ also has a unique non degenerate critical point in these variables, such that $\Xi_c=sq^{1/3}(\theta)$ and $\Theta_c=\theta$; the critical value of the phase $\Phi_{\theta}$, that we denote $\phi_{\theta}(x,y,s)$ is given by
\[
\phi_{\theta}(x,y,s)=y\cdot\theta+\Gamma(x,y,sq^{1/3}(\theta),\theta)+s^3/3+s(xq^{1/3}(\theta)-\alpha_{q}(\theta)).
\]
Using \eqref{eq:GAB}, we have for the stationary point $X_{M,c}=x(1+\partial_{\Xi}A_{\Gamma})$; stationary phase provides a symbol $\tilde p_{q} $ such that $\tilde p_{q}= \sum_{n\geq 0}(-i{h})^{n}\tilde p_{q,n}(x,y,s,\theta)$, elliptic on $x=0,y=0,s=0, |\theta|=1$  and 
\begin{equation}
  \label{g3}
  G_{\theta,p_{q},{h}}(x,y)=\frac{1}{2\pi h^{1/3}} \int
  e^{\frac i{h}\phi_{\theta}(x,y,s)} \tilde p_{q}(x,y,s,\theta,{h^{-1}}) ds\,.
\end{equation}
Notice that $\WF_{{h}}(G_{\theta,p_{q},{h}})\subset \canonchi_M(\Lambda_{M,\theta})$, $\chi_M^{-1}(\{x=0\})=\{X=0\}$ and $A_{\Gamma}(0,0,0,\theta)=0$ (and also $\nabla_yB_{\Gamma}(0,\theta)=0$ for $|\theta|=1$). From \eqref{genchi} we thus get $\{ |\theta|=1 \text{ and }(0,0,\xi,\eta)\in \WF_{{h}}(G_{\theta,p_{q},{h}})\}$ if and only if $\{\xi=0\text{ and }\eta=\theta\}$. Together with \eqref{gl-1} which gives $\WF_{{h}}(P(G_{\theta,p_{q},{h}}))\cap \canonchi_M(V)=\emptyset$, we proved that there exists a small neighborhood $U$ of $(x,y)=(0,0)$ such that for all $|\theta|\sim 1$, one has $\WF_{{h}}(P(G_{\theta,p_{q},{h}}))\cap \{(x,y)\in U\}=\emptyset$.

Taking $\theta=h\eta$ and $ h^{2/3}\omega=\alpha_{q}(\theta)$, we obtain by direct computation that $1/h={\qtau}(\eta,\omega)$,  and \eqref{eq:eqG} holds. Rescaling the variable $s\rightarrow {\qtau}^{-1/3}s$, $G(x,y,\eta,\omega)=G_{\theta,p_{q},{\qtau}}(x,y)$ is given by the formula \eqref{eq:Gosc} where $p(x,y,\eta,\omega,s)=\tilde p_{q}(x,y,s/{\qtau}^{1/3},\eta/{\qtau},{\qtau})$ and is a symbol or order $0$ and type $((1,2/3,1/3),0)$.  Finally, integration with respect to the variable $s$ in \eqref{eq:Gosc} yields $G$ of the form \eqref{eq:defG} near $x=0$ as $\Gamma|_{x=0}$ is independent of $s$. 
\end{proof}
Let $a_0>0$ be small and $a\in (0,a_0]$; denote by $\mathcal{G}(t,x,y,a)$ the
Green function for the wave equation with Dirichlet boundary condition, and $\delta_{(a,0)}:=\delta_{x=a,y=0}$ the source point,
\begin{equation}
\label{waveeq}
(  \partial_t^2 -\Delta) \Grond=0 \,,\text{ for }\, x>0\,,
 \, \Grond_{{\textstyle |}x=0}=0, \Grond|_{t=0}= \delta_{(a,0)} \text{ and } \partial_t \Grond |_{t=0}=0.
\end{equation}
We will frequently need smooth cut-off functions $\varkappa\geq 0$ in $ C_{0}^{\infty}(\R^{m})$ with $m=1$ or with $m=d-1$. For $m=1$, $\varkappa$ will be such that $\varkappa=1$ near $1$, $\varkappa=0$ outside a small neighborhood of $1$, and for $m=d-1$, $\varkappa$ will be radial and such that ${\varkappa}=1$ near $\mathbb{S}^{m-1}$, ${\varkappa}=0$ outside a small neighborhood of $\mathbb{S}^{m-1}$. We will abuse notations and retain $\varkappa$ as a generic notation, irrespective of the value of $m$ (which will be clear from context) as well as the size of the (small) support of $\varkappa$, which we assume from now on to be smaller than $0<\ceps_{0}<1/100$.
\begin{dfn}\label{dfnparametrix}
  Let $h\in (0,1)$. A function $\mathcal{P}_{h,a}(t,x,y)$ is a parametrix for \eqref{waveeq} if and only if
 there exists $a_0>0$, $r>0$ and a neighborhood $V$ of $(t,y)=(0,0)$ such that for all $\alpha$ one has  
 \[
\sup_{0<a\leq a_0}\sup_{0<x\leq r}\sup_{(t,y)\in V} \Big|\partial^{\alpha}_{t,x,y}({\varkappa}(hD_{t}){\varkappa}(hD_{y})(\mathcal{P}_{h,a}-\Grond(\cdot,a))\Big| \in O(h^{\infty}).
 \]
\end{dfn}
\begin{rmq}
We have $\Grond=\cos(t\sqrt{|\Delta|})$, but we will work with the half wave propagator $e^{\pm it\sqrt{|\Delta|}}$, from which we may obtain $\Grond$ and $\partial_{t} \Grond$.  The operator $\varkappa(hD_{t})$ is really a spectral localization with respect to $\Delta$, if applied to a solution to the wave equation. The operator $\varkappa(hD_{y})$ further restricts this localization to spatial frequencies whose dominant part is tangential: the general heuristic is that waves propagating along the boundary are the most dangerous ones, whereas other waves are transverse and can be handled by simpler arguments (with a finite number of reflections). While $\varkappa(hD_{y})$ does not commute with  $\Delta$ (unlike in the model case), the support of $\eta$ in phase space will not significantly move over a finite time interval as a consequence of the Melrose-Sj\"ostrand propagation of singularities theorem. Therefore, up to $O_{C^{\infty}}(h^{\infty})$ terms, we may insert $\varkappa(hD_{y})$ operators before and after the propagator.
\end{rmq}
Rescale
$\omega=\frac{\alpha}{h^{2/3}}$, $\eta=\frac{\theta}{h}$, $s=\frac{\sigma}{h^{1/3}}$ in \eqref{eq:Gosc} (defining $G$), hence $\tau_q(\alpha,\theta)= h\tau_q(\omega,\eta)$. 
Let also $\cutoffchi^{\flat}\in C^{\infty}(\mathbb{R})$ such that $\cutoffchi^{\flat}=1$ on $(-\infty,1]$ and $\cutoffchi^{\flat}=0$ on $[2,\infty)$, and $\cutoffchi^{\sharp}=1-\cutoffchi^{\flat}$. We let
  \begin{equation}
    \label{eq:Phi}
    \Phi(x,y,\theta,\alpha,\sigma)=y\cdot \theta+\sigma^3/3+\sigma(xq^{1/3}(\theta)-\alpha) 
    +\tau_q(\alpha,\theta)
    \Gamma(x,y,\sigma q^{1/3}(\theta)/\tau_q(\alpha,\theta),\theta/\tau_q(\alpha,\theta))\,,
  \end{equation}
and as our change of variables $s=\frac{\sigma}{h^{1/3}}$ provides a factor $h^{-1/3}$, we set
\begin{equation}
  \label{eq:qh}
  p_h(x,y,\theta,\alpha,\sigma):=h^{-1/3}p(x,y,\theta/h,\alpha/h^{2/3},\sigma/h^{1/3}) \varkappa(\theta)\varkappa(\tau_q(\alpha,\theta))\cutoffchi^{\sharp}(\alpha/h^{2/3})\,,
\end{equation}
where the relevance of all cut-off functions will reveal itself later on. We get 
\begin{equation}
  \label{eq:12}
\cutoffchi^{\sharp}(\alpha/h^{2/3})  \varkappa(\theta)\varkappa(\tau_q(\alpha,\theta))G(x,y,\theta/h,\alpha/h^{2/3})=\frac{1}{2\pi}\int e^{\frac ih  \Phi(x,y,\theta,\alpha,\sigma)}  p_h(x,y,\theta,\alpha,\sigma)d\sigma.
\end{equation}
We now define an operator acting on smooth $f(y',\rho)$, with $\hat f$ its Fourier transform in all variables, 
\begin{equation}
  \label{eq:J}
    J(f)(x,y)=\int G(x,y,\eta,\omega)\cutoffchi^{\sharp}(\omega)q(\eta)^{1/6}\varkappa(h\eta)\varkappa(h\tau_q(\omega,\eta))\hat{f}(\eta,\omega/h^{1/3})d\eta d\omega\,.
  \end{equation}
  After rescaling and subtitution of \eqref{eq:12} in \eqref{eq:J},
\begin{equation}
  J(f)(x,y) 
   = \frac 1 {2\pi h^d} \int e^{\frac ih (\Phi(x,y,\theta,\alpha,\sigma)-y'\cdot\theta-{\tprime}\alpha)}p_h(x,y,\theta,\alpha,\sigma)q(\theta)^{1/6}
  f(y',{\tprime}) \,dy'd{\tprime}d\theta d\alpha d\sigma\,.
\end{equation}
\begin{lemma}
The operator $J$ is well defined from tempered distributions $\mathcal{S}'_{y',\tprime}$ into smooth functions of $(x,y)$ near $(0,0)$. In the semiclassical setting with $h$ as small parameter, $J$ is a semi-classical Fourier integral operator associated to a
canonical transform $\canonchi_J$, defined near the set $\{y'=0,\tprime=0, |\theta|=1,\alpha=0 \}$ and such that
 $ \canonchi_J(y'=0,{\tprime}=0,|\theta|=1,\alpha=0)=\{y=0,x=0,|\theta|=1,\xi=0\}$.
Moreover, $J$ is elliptic on this set and, microlocally near this set, an intertwining relation holds,
\begin{equation}
  \label{eq:LapJ}
  -h^2 \Delta J(f)= J(\tau_{q}^2(hD_{{\tprime}},hD_{y'}) f)+O(h^{\infty}).
\end{equation}
\end{lemma}
As the symbol $p_h$ is smooth and compactly supported in $(\theta,\alpha,\sigma)$, $J$ is easily extended to $\mathcal{S}'_{y',\tprime}$. The Lemma then follows from Theorem \ref{thmMelrose} ($p_0$ is elliptic and $x p_1$ vanishes on $\partial\Omega$.)  
\begin{rem}
  When $\Gamma=0$ (the model case), this canonical transform is given explicitly:
  \[
    \canonchi_J(y',{\tprime},\theta,\alpha)=(y,x,\theta,\xi)
  \]
  where
  \[
    y=y'+\tprime(\alpha-\tprime^2)\nabla q(\theta)/(3q(\theta))\,,\,\,x=(\alpha-\tprime^2)/q^{1/3}(\theta)\,,\,\,\xi=-\tprime q^{1/3}(\theta)\,.
  \]
\end{rem}
\subsection{Some useful results on Airy functions} 
We now digress and present a variation on the Poisson summation
formula, the "Airy-Poisson summation formula".
For $z\in\mathbb{C}$ we set 
\begin{equation}
  \label{eq:Apm}
  A_\pm(z)=e^{\mp i\pi/3} Ai(e^{\mp i\pi/3} z)\,,\,\,\text{ then } \,
Ai(-z)=A_+(z)+A_-(z)\, \text{ and } \overline{A_+(z)}=A_-(\overline{z}).
\end{equation}
The next two Lemmas are proved in \cite[Lemmas 1 and 3]{ILP4} :
\begin{lemma}\label{lemL}
Define, for $\omega \in \R$, the function $  L(\omega)=\pi+i\log \frac{A_-(\omega)}{A_+(\omega)}$: $L$ is an analytic, real valued, strictly increasing function with $L(0)=\pi/3$, $\lim_{\omega\rightarrow -\infty} L(\omega)=0$, and, for $\omega\geq 1$,
\begin{equation}
  \label{eq:propL}
    L(\omega)=\frac 4 3 \omega^{\frac 3 2} +\frac{\pi}{2} -B_L(\omega^{\frac 3
    2})\,,\quad
  B_L(u)= \sum_{k=1}^\infty b_k u^{-k}\,,\,\, (b_k)_{k}\in\R\,,\,\,
  b_1>0\,.
\end{equation}
Finally, let $\{-\omega_k\}_{k\geq 1}$ denote the zeros of the Airy function in decreasing order,
\begin{equation}
  \label{eq:propL2}
 L(\omega_k)=2\pi k \text{ and }
  L'(\omega_k)= 2\pi \int_0^\infty Ai^2(x-\omega_k) \,dx\,.
\end{equation}
\end{lemma}
\begin{lemma}
Let $\N^{*}=\N\setminus\{0\}$.  In $\mathcal{D}'(\R_\omega)$, one has
  \begin{equation}
    \label{eq:AiryPoisson}
        \sum_{N\in \Z} e^{-i NL(\omega)}= 2\pi \sum_{k\in \N^*} \frac 1
    {L'(\omega_k)} \delta(\omega-\omega_k)\,.
  \end{equation}
\end{lemma}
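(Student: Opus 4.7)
The plan is to derive the Airy--Poisson formula by pulling back the classical Poisson summation formula
$$\sum_{N\in\Z} e^{-iN\theta} = 2\pi \sum_{k\in \Z} \delta(\theta - 2\pi k) \quad \text{in}\quad \mathcal{D}'(\R_\theta)$$
along the smooth diffeomorphism $L$. The structural properties needed for this step are exactly the ones supplied by the previous lemma: $L$ is real analytic and strictly increasing with $L(-\infty)=0$ and $L(\omega)\sim\frac{4}{3}\omega^{3/2}\to +\infty$, so $L(\R)=(0,+\infty)$; and the formula $L'(\omega_k)=\int_0^\infty\Ai^2(x-\omega_k)\,dx>0$ combined with analyticity forces $L'>0$ everywhere on $\R$. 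Hence $L:\R\to (0,+\infty)$ is a $C^\infty$-diffeomorphism which sends $\omega_k$ to $2\pi k$ for each $k\in\N^*$, while $\theta=0$ and all non-positive integer multiples of $2\pi$ lie outside its image.

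My next step is to test both sides of \eqref{eq:AiryPoisson} against a test function $\phi\in C_c^\infty(\R_\omega)$. I would set $\psi(\theta):=\phi(L^{-1}(\theta))/L'(L^{-1}(\theta))$ for $\theta>0$ and $\psi\equiv 0$ for $\theta\leq 0$; because $\supp \phi$ is compact in $\R$, its image $L(\supp\phi)$ is compact in $(0,+\infty)$, so $\psi$ is a bona fide element of $C_c^\infty(\R_\theta)$. The change of variables $\theta=L(\omega)$, with $d\theta=L'(\omega)\,d\omega$, then yields
$$\Big\langle\sum_{N\in\Z}e^{-iNL(\omega)},\,\phi\Big\rangle=\sum_{N\in\Z}\int_\R e^{-iN\theta}\psi(\theta)\,d\theta,$$
and applying the classical Poisson formula to the test function $\psi$ gives $2\pi\sum_{k\geq 1}\phi(\omega_k)/L'(\omega_k)$, which is precisely the pairing of the right-hand side of \eqref{eq:AiryPoisson} against $\phi$.

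The only non-routine point, and the main obstacle, is to secure $L'>0$ on all of $\R$ so that $L$ really is a global diffeomorphism onto $(0,+\infty)$; the asymmetry between $\Z$ on the left and $\N^*$ on the right of \eqref{eq:AiryPoisson} is then a direct consequence of the image of $L$ being the positive half-line. Both facts are essentially handed to us by the previous lemma, so once they are noted the proof reduces to little more than a change of variables performed inside the classical Poisson identity.
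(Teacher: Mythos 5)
Your proof is correct and takes essentially the same route as the paper: the classical Poisson summation formula applied to a test function pulled back through the substitution $\theta=L(\omega)$, using that $L$ maps $\R$ bijectively onto $(0,\infty)$ and sends each $\omega_k$ to $2\pi k$, which is exactly why only $k\in\N^*$ survive on the right. One small imprecision worth flagging: your claim that $L'(\omega_k)>0$ ``combined with analyticity forces $L'>0$ everywhere'' is not a valid inference on its own (an analytic, strictly increasing function may still have isolated zeros of its derivative between the $\omega_k$, and this would spoil the smoothness of $L^{-1}$ and hence of your $\psi$). The global positivity of $L'$ is in fact true and is what makes the change of variables legitimate; it can be read off directly from the Wronskian identity $L'(\omega)=\frac{1}{2\pi|A_+(\omega)|^2}$, which follows from $A_-=\overline{A_+}$ on $\R$ together with $W(Ai(z),Ai(e^{2i\pi/3}z))=e^{-i\pi/6}/(2\pi)$ and the non-vanishing of $A_+$ on the real line. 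With that point secured, the rest of your argument is exactly the paper's proof.
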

Let us define, for $\omega\in \R$, and without loss of generality, an arbitrary choice of $+$ sign for the time propagator $\exp(it{\qtau}(\omega,\eta))$,
\begin{equation}
  \label{eq:Kequiv}
  K_\omega(f)(t,x,y)=\int e^{it{\qtau}(\omega,\eta)} G(x,y,\eta,\omega)\cutoffchi^{\sharp}(\omega) q^{1/6}(\eta)
\varkappa(h\eta) \varkappa(h{\qtau}(\omega,\eta)) \hat f(\eta,\frac{\omega}{h^{1/3}}) \, d\eta\,.
\end{equation}
Due to both cut-off in $\omega$ as well as that in $\eta$, $K_{\omega}(f)$ is supported in $1\leq \omega\leq \ceps_0h^{-2/3}$ and so is $R(t,x,y,\omega,a,h):=((\partial^2_{t}-\Delta)K_{\omega}(f))(t,x,y)$. By design of $G$, using \eqref{eq:eqG}, we have moreover that, for small $r_0$ and $a_0$ and for all (large) $M\in \N$,
\begin{equation}\label{eq:Kom1}
\sup_{|a|<a_0}\sup_{|(t,x,y)|<r_0}\sup_{\omega}\Big|\nabla^{\alpha}_{t,x,y,\omega}R\Big|\leq C_{M,\alpha}h^M\,.
\end{equation}
Moreover, at $x=0$, we have $K_{\omega_{k}}(f)(t,0,y)=0$ as $G(0,y,\eta,\omega_k)=0$ (recall $\zeta(x,y,\eta,\omega)|_{x=0}=\omega$ and \eqref{eq:defG}). In other words, $K_{\omega}(f)(t,x,y)$ is a solution to the wave equation,  up to $O(h^{\infty})$; and when $\omega=\omega_{k}$, it satisfies the Dirichlet boundary condition.

To get a sense of perspective, let us remark that, in the model case, $\Gamma=0$ and then (up to normalization) $G_{M}(x,y,\eta,\omega)=\int_{\eta} \exp(i y\cdot \eta) Ai(x q^{1/3}(\eta)-\omega)\, d\eta$; for $\omega=\omega_{k}$, $G_{M}$ is a so-called gallery mode, and $K_{\omega}(f)$ is an exact solution to the half-wave equation, satisfying the Dirichlet boundary condition if $\omega=\omega_{k}$, but $f$ should not be considered as its data: if one picks $f$ such that, on the model, $J(f)$ is a Dirac at $(x=a,y=0)$, then $ f=\int_{\eta} \exp(-iy\cdot \eta) Ai(aq^{1/3}(\eta)-\omega)\,d\eta$ and then integrating over $\omega$ recovers $\delta_{x=a,y=0}$ by a standard identity on Airy functions. For this $f$, the integral over $\omega$ of $K_{\omega}(f)(t,x,y)$ is then just an half-wave solution with no boundary condition. In \cite{Annals} such a solution is then iterated by reflecting it on the boundary; here, the Airy-Poisson formula would, on the model, directly provide a sum of waves (the sum over $N$) that may later be identified as analogue of the reflected waves from \cite{Annals}, while the spectral sum (over $k$) provides the boundary condition and a direct way to decompose the Dirac data.

We now revert to the general case, where we follow the model case strategy we just sketched, but replace gallery modes by $G(x,y,\eta,\omega)$. Recall we defined $J(f)(x,y)$ in \eqref{eq:J} and we may rewrite $J(f)(x,y)=\int_\R K_\omega(f)(0,x,y)\,d\omega$.
Let $\eta=\frac \theta h$, $q^{\frac 16}(\eta)=h^{-\frac 1 3}q^{\frac 1 6}(\theta)$ and $\alpha=h^{\frac 2 3}\omega$, then (with elliptic symbol $p_{h}$ defined in \eqref{eq:qh})
\begin{equation}
  \label{eq:Kom}
 K_\omega(f)(t,x,y)=\frac{h^{\frac 2 3-d}}{2\pi } \int e^{\frac i
    h(t\tau_q(\alpha,\theta)+\Phi(x,y,\theta,\alpha,\sigma)-y'\cdot \theta-{\tprime} \alpha )}
  p_h (x,y,\theta,\alpha,\sigma) q^{\frac 1 6}(\theta)f(y',{\tprime}) \,dy'd{\tprime}d\theta d\sigma\,.
\end{equation}
As we will see later, both cut-off functions $\varkappa$ in $K_{\omega}(f)$ relate to localization operators from Definition \ref{dfnparametrix}. Moreover, $K_{\omega}(f)$ is a suitable test function in $\omega$ (smooth and compactly supported in $\omega$). Using \eqref{eq:AiryPoisson},
\begin{equation}\label{formulafAP}
 \langle \sum_{N\in \Z} e^{-iN L(\omega)}
    , K_\omega(f)(t,x,y)\rangle_{\omega}=2\pi  \sum_{k\in \N^*} \frac 1
      {L'(\omega_k)}  K_{\omega_k}(f)(t,x,y)\,.
\end{equation}
The $N=0$ term in the sum over $N$ is $J(f)$. Moreover, at $x=0$ the RHS vanishes, as the sum over $k$ is finite and each term vanishes as we just observed, and this finite sum (on the RHS) satisfies the wave equation, up to $O(h^{\infty})$ terms, due to \eqref{eq:Kom1}.

These remarks will later be of crucial importance to verify that Definition \ref{dfnparametrix} will hold for the parametrix we shall introduce in the next sections, up to finding a suitable function $f$ that will recover the data at $t=0$ in \eqref{formulafAP}. This remaining step is far from trivial, unlike in the model case (see \cite{ILP4}), for which we know explicitly the spectral resolution of the Laplacian and can therefore expand a Dirac mass over the eigenmodes, as alluded to earlier.

One may expect that it should be enough to consider initial data (at time $0$) $\chi_0(hD_{x})\varkappa(hD_{y})\delta_{(a,0)}$, for $\varkappa$ supported near $\mathbb{S}^{d-1}$ and $\chi_0\in C^{\infty}_0$ supported near $0$. Indeed,  classical geometric optics arguments provide a parametrix for data $(1-\chi_0(hD_{x}))\varkappa(hD_{y})\delta_{(a,0)}$:  due to the cut-off $(1-\chi_0(hD_{x}))$, singularities are transverse to the boundary at $x=0$ (there is at most one reflection). However, 
\begin{equation}\label{datachi0var}
  \chi_0(hD_{x})\varkappa(hD_{y})\delta_{(a,0)}=\int e^{i((x-a)\xi+y\cdot\eta)}\chi_0(h\xi)\varkappa(h\eta)d\xi d\eta
  =\frac{1}{h^d}\widehat{\chi}_0\Big(\frac{x-a}{h}\Big)\widehat{\varkappa}\Big(\frac{y}{h}\Big)\,.
\end{equation}
Therefore, at $x=0$, this data will be $O(h^{\infty})$ only if we assume that $a\geq h^{1-\ceps}$ for some $\ceps>0$. For smaller $a$, in the case of the Friedlander model operator $\Delta_M$, we can take advantage of the known, explicit, spectral resolution of $-\Delta_{M}$  in order to consider an initial data $\chi_0(-h^2\Delta_M)\varkappa(hD_y)\delta_{(a,0)}$ that can be further expressed as a sum of eigenfunctions that vanish on the boundary. By contrast, in the general case, we only have quasimodes and this is a source of significant difficulties for these very small $a$. Nevertheless, we will decompose the parametrix construction according to the respective values of $a$ and $h^{2/3}$, with an overlap between the two regimes where any construction holds. In subsection \ref{ss22}, for $a\gg h^{2/3}$, we will use \eqref{datachi0var} as a data, and mainly proceed with the sum over $N$ in our Airy-Poisson formula \eqref{formulafAP}. In subsection \ref{lowparam}, dealing with $a \lesssim h^{2/3}$, stationary phase methods in this sum over $N$ break down (although one could push them down to $a\gg h$, matching the heuristic above, but with no obvious benefit) in addition to the problem of defining a suitable initial data. We will solve the data issue in subsection \ref{sss231} by choosing the model initial data $\chi_0(-h^2\Delta_M)\varkappa(hD_y)\delta_{(a,0)}$. In some sense, in the very narrow strip where it is located, the spectral localizations with respect to $\Delta_{M}$ or $\Delta$ are close enough that gallery modes are good substitutes to the quasimodes in defining said data.  One then proceeds with a parametrix construction where such data is, again, split according to the values of $k$ in the spectral sum defining it: either $k$ is large enough and we recover a large parameter and can proceed as in the previous regime, or we have the relatively small value of $k$ for which we proceed with the spectral sum, proving in subsections \ref{sectpseudocalcul} and \ref{sectproofs} that terms appearing in that expansion are close enough to the model gallery modes and therefore retain enough of their properties to provide a parametrix. This part of the construction is quite delicate and obviously absent in the model case, while of independent interest as far as uniform estimates on quasimodes are concerned as these will be proved in the range $k\ll h^{-1/4}$, exceeding by far what we need in our construction.
\subsection{Parametrix construction for $a\geq h^{\frac 23-\ceps}$, $0<\ceps<2/3$}\label{ss22}
An initial data \eqref{datachi0var} is $O(h^{\infty})$ on the boundary for any $\chi_0$, compactly supported near $0$. 
Let $\chi_0\in C^{\infty}_0(-2\ceps_0,2\ceps_0)$ with $\chi_{0\big|[-\ceps_0,\ceps_0]}=1$.
\begin{lemma}\label{lemgab}
Let $a_0>0$, $r_0>0$ be small enough. For all $a\in [h^{\frac 23-\ceps},a_0]$, there exists a smooth function $g_{h,a}$ such that $\varkappa(hD_{y'})g_{h,a}=g_{h,a}$ and
  \begin{equation}
    \label{eq:Jdirac}
    J(g_{h,a})(x,y)=\chi_0(hD_{x})\varkappa(hD_{y})\delta_{(a,0)} + O_{C^{\infty}(|(x,y)|\leq r_0)}(h^{\infty})\,,
  \end{equation}
where the remainder is  $O_{C^{\infty}(|(x,y)|\leq r_0)}(h^{\infty})$ uniformly in $a$.
\end{lemma}
The lemma follows  from the aforementioned fact that $J$ is an
elliptic Fourier integral operator, however, we compute $g_{h,a}$ explicitely:
\begin{lemma}
\label{lemp}
There exists a smooth phase function $\psi_a({\tprime},\theta')$ and a symbol $r_h(\tprime,\theta')$ of order $1/3$, with support near $\tprime =0$, $|\theta'|=1$, of the form $r_h(\tprime,\theta')=h^{1/3}\sum_{k\geq 0}r_{h,k}h^k$ with $r_{h,0}(0,\theta')\neq 0$ for $|\theta'|=1$, such that
\begin{equation}
  \label{eq:phia}
  \psi_a({\tprime},\theta')  ={\tprime}^3/3+a({\tprime}q^{1/3}(\theta')+O({\tprime}^3))+O(a^2)
\end{equation}
and the function $g_{h,a}$, defined as
\begin{equation}
    \label{eq:gosc}
        g_{h,a}(y',{\tprime})  = h^{-d}\int e^{\frac i h (\psi_a({\tprime},\theta')+y'\cdot \theta')}    r_h(\tprime, \theta')q^{-1/6}(\theta') \,d\theta'\,,
\end{equation}
 solves \eqref{eq:Jdirac}. Moreover, $\psi_a$ is the critical value of the phase ${\tprime}\alpha-\Phi(a,0,\theta',\alpha,s)$ at critical points in $(\alpha,s)$.
\end{lemma}
 \begin{proof}
We may invert microlocally the operator $J$ from \eqref{eq:J} by setting 
\[
J^{-1}(F)(y',\tprime)=h^{-d-1}\int e^{\frac ih(-\Phi(x,y,\theta',\alpha',s)+y'\cdot\theta'+\tprime\alpha')}q_h(x,y,\theta',\alpha',s)q^{-1/6}(\theta')F(x,y)dxdyd\theta' d\alpha' ds,
\]
where $q_h(x,y,\theta',\alpha',s)$ is a symbol of order $1/3$, $q_h= h^{1/3}\sum_{k\geq 0} h^kq_{h,k}$, with support near $\{x=0,y=0,\alpha'=0,s=0\}$ and elliptic on this set: we aim at proving that $J^{-1}\circ J(f)=f\text{ modulo } O(h^{\infty})$:
\begin{multline}
J^{-1}\circ J(f)=h^{-d-1}\int e^{\frac ih(-\Phi(x,y,\theta',\alpha',s)+y'\cdot\theta'+\tprime\alpha')}q_h(x,y,\theta',\alpha',s)q^{-1/6}(\theta')\\
\times \frac {1}{2\pi h^d} e^{\frac ih \Phi(x,y,\theta,\alpha,\sigma)}p_h(x,y,\theta,\alpha,\sigma)q^{1/6}(\theta) \hat{f}(\theta/h,\alpha/h)d\theta d\alpha d\sigma dxdy d\theta' d\alpha' ds.
\end{multline}
We now apply stationary phase in variables $(\sigma, s, x,y,\theta',\alpha')$: one checks that critical points are non-degenerate, such that $\theta'_c=\theta$, $\alpha'_c=\alpha$, and stationary phase provides a factor $h^{d-1+1+1}=h^{d+1}$ (one factor $h^{d-1}$ from $dyd\theta'$, one factor $h$ from $dxds$ and one factor $h$ from $d\sigma d\alpha'$). The critical value of the phase is $y'\theta+\tprime \alpha$ and we obtain (modulo $O(h^{\infty})$)
\[
J^{-1}\circ J(f)(y',\tprime)=h^{-d-1}\times \frac{h^{d+1}}{2\pi h^d}\int e^{\frac ih(y'\cdot\theta+\tprime \alpha)}\tilde q_h(\theta,\alpha)\hat{f}(\theta/h,\alpha/h)d\theta d\alpha,
\]
where $\tilde q_h$ is obtained from the product $q_h(x,y,\theta',\alpha',s)q^{-1/6}(\theta') p_h(x,y,\theta,\alpha,\sigma)q^{1/6}(\theta)$ after stationary phase; asking $\tilde q_h=1$ for $\theta$ such that $|\theta|\sim 1$ and $\tprime$ near $0$ allows to chose $q_h$; since $p_h=h^{-1/3} p(x,y,\theta/h,\alpha/h^{2/3},\sigma/h^{1/3})\cutoffchi^{\sharp}(\alpha/h^{2/3})\varkappa(\theta)\varkappa(\tau_q(\alpha,\theta))$, we obtain $q_h$ as announced. Define 
\begin{equation}\label{gl-2}
\tilde g_{h,a}:=J^{-1}(\chi_0(hD_x)\varkappa(hD_y)\delta_{(a,0)})\,.
\end{equation}
Then, using the second line in \eqref{datachi0var}, 
\begin{multline}\label{ghaF}
\tilde g_{h,a}(y',\tprime)=\frac 1{h^{d}}\int e^{\frac ih y'\cdot\theta'}F_{h,a}(\tprime,\theta')d\theta'\,,\,
  F_{h,a}(\tprime,\theta')=\frac 1 {h^{d+1}}\int e^{\frac ih (-\Phi(x,y,\theta',\alpha,s)+\tprime \alpha+(x-a)\sigma+y\cdot \theta)}\\
{}  \times q_h(x,y,\theta',\alpha,s)q^{-\frac 1 6}(\theta')\chi_0(\sigma)\varkappa(\theta)d\sigma d\theta dx dy d\alpha ds\,.
\end{multline}
We apply stationary phase to $F_{h,a}$ with respect to variables $(x,\sigma,y,\theta)$: (non-degenerate) critical points are $x=a$, $y=0$, $\sigma_c=\partial_x\Phi(a,0,\theta',\alpha,s)$ and $\theta_c=\partial_y\Phi(a,0,\theta',\alpha,s)$. The resulting symbol $\tilde q_h(\theta',\alpha,s)$ is of order $1/3$, with support near $\{|\theta'|=1,\alpha=0,s=0\}$ and elliptic on this set, and
\begin{equation}\label{Fha}
F_{h,a}(\tprime,\theta')=h^{-1}\int e^{\frac ih(-\Phi(a,0,\theta',\alpha,s)+\tprime \alpha)} \tilde q_h(\theta',\alpha,s)q^{-1/6}(\theta') d\alpha ds\,.
\end{equation}
Here $\alpha$ is bounded, as $\tau_q(\alpha,\theta')\in \mathrm{supp}\, \varkappa$; indeed, $\alpha=h^{2/3}\omega$ and  we assumed $|\omega|\leq \ceps_0h^{-2/3}$; therefore $\alpha\leq \ceps_0$ on the support of the symbol $p_h$, as well as on the support of $q_h$ and also $\tilde q_h$. The phase of $F_{h,a}$ is stationary in $\alpha$ for $s+\tprime+O(a)=0$ and in $s$ for $s^2+O(a)\sim \alpha\leq \ceps_0$ (as we will see below using the explicit form of $\Phi$) and $a\leq a_0<1$ is small enough, therefore $s^2\lesssim \ceps_0+a_0$ (otherwise non stationary phase in $s$ provides an $O(h^{\infty})$ contribution.) Hence there exists a cut-off $\chi(\tprime)\in C^{\infty}_0((-2r,2r))$, equal to $1$ on $[-r,r]$ for $r\sim \sqrt{\ceps_0}+ a_0$, such that $(1-\chi(\tprime))F_{h,a}(\tprime,\theta')=O(h^{\infty})$ in $\mathcal{S}_{\tprime}$,
 uniformly in $\theta'$ near $|\theta'|=1$.
 
We now apply stationary phase in \eqref{Fha} with respect to $\alpha$ and $s$: with $(\tprime,\theta',a)$ as parameters and for $\tprime\in (-2r,2r)$ and $|\theta'|$ close to $1$, let $(\alpha_c,s_c)$ denote the critical points of the phase, define $\psi_a({\tprime},\theta')={\tprime}\alpha_c-\Phi(a,0,\theta',\alpha_c,s_c)$, where $({\tprime},a)$ are small parameters in $(-2r,2r)\times (0,a_0)$. In order for  $\psi_a$ to be \eqref{eq:phia}, we need more information on the phase $\Phi$. Recall from \eqref{eq:Phi}
\[
\Phi(x,y,\theta,\alpha,\sigma)=y\theta+\sigma^3/3+\sigma(xq^{1/3}(\theta)-\alpha)+\tau_q(\alpha,\theta)\Gamma(x,y,\sigma q^{1/3}(\theta)/\tau_q(\alpha,\theta),\theta/\tau_q(\alpha,\theta)),
\]
and from the Appendix, Proposition \ref{propimpformgamma}, $\Gamma(x,y,\Xi,\Theta)=B_{\Gamma}(y,\Theta)+xA_{\Gamma}(x,y,\Xi,\Theta)$, where $B_{\Gamma}(0,\Theta)=0$ and 
 $A_{\Gamma}(x,y,\Xi,\Theta)=\Xi \Lp(y,\Theta/|\Theta|)+\mu(y,\Theta/|\Theta|)(\Xi^2+|\Theta|^2-1)+\mathcal{H}_{j\geq 3}$,
where $\Lp,\mu$ are smooth functions such that $\Lp(0,\omega)=0$, and $\mathcal{H}_{j\geq k}$ denotes any function which is an expansion of the form $\sum_{j\geq k}f_j$ with $f_j$ homogeneous of order $j$ with respect to weights on variable $(x,y,\Xi,\Theta)$: $x:2$, $(|\Theta|-1):2$, $\Xi:1$. Therefore, the phase of $F_{h,a}$ reads
\begin{multline}\label{phaFha}
-\Phi(a,0,\theta',\alpha,s)+\tprime \alpha=-s^3/3-s(aq^{1/3}(\theta')-\alpha)+\tprime\alpha\\
{}-a\tau_q(\alpha,\theta')A_{\Gamma}(a,0,sq^{1/3}(\theta')/\tau_q(\alpha,\theta'), \theta'/\tau_q(\alpha,\theta'))\,.
\end{multline}
Setting $\Xi:=s q^{1/3}(\theta')/\tau_q(\alpha,\theta')$ and $\Theta:=\theta'/\tau_q(\alpha,\theta')$ and using that $\Lp(0,\Theta/|\Theta|)=0$, 
\[
A_{\Gamma}(a,0,\Xi,\Theta)_{\bigl|\Xi=sq^{1/3}(\theta')/\tau_q(\alpha,\theta'),\Theta=\theta'/\tau_q(\alpha,\theta')}=\mu(0,\theta'/|\theta'|)\frac{(s^2-\alpha)q^{2/3}(\theta')}{{\qtau}^2(\alpha,\theta')}+\mathcal{H}_{j\geq 3},
\]
where $\mathcal{H}_{j\geq 3}$ contains terms with factors $as$, $s^3$ and $s\alpha$; as $A_{\Gamma}(a,\cdots)$ comes with a factor $a$, cancelling the derivative of the phase \eqref{phaFha} with respect to $s$ yields an equation for $s_{c}$,
\begin{gather}
  -s^2-(aq^{1/3}(\theta')-\alpha)-a\tau_q(\alpha,\theta')\Big(2s\mu(0,\theta'/|\theta'|)q^{2/3}(\theta')/{\qtau}^2(\alpha,\theta')+O(a,s^2,\alpha)\Big)=0\,,\\
s_c^2(1+O(a))+aq^{1/3}(\theta')(1+2s_{c}q^{1/3}(\theta')\mu/{\qtau})=\alpha_c(1+O(a))\,,
\end{gather}
while cancelling the derivative of \eqref{phaFha} with respect to $\alpha$ yields $\tprime+s_c(1+O(a))=0$.
Therefore, $s_c=-\tprime(1+O(a))$ and $\alpha_c=\tprime^2(1+O(a))+aq^{1/3}(\theta')(1+O(\tprime))$. We now compute the critical value of the phase \eqref{phaFha} at $s_c,\alpha_c$: let $\phi(a,\tprime,\theta'):=-\Phi(a,0,\theta',\alpha_{c},s_{c})+\tprime \alpha_{c}$
and write the Taylor expansion of $\phi$ near $a=0$. We have $\phi(a,\tprime,\theta')=\phi(0,\tprime,\theta')+a\partial_a\phi(0,\tprime,\theta')+O(a^2)$,
 where $\alpha_{c|a=0}=\tprime^2$ and $s_{c|a=0}=-\tprime$, and then, with $(\cdots)=(0,0,\theta')$, 
\begin{gather}
\phi(0,\tprime,\theta') =-\Phi(\cdots,\alpha_{c|a=0},s_{c|a=0})+\tprime \alpha_{c|a=0}=\big(-(\frac{s^3}3+s\alpha)+\tprime \alpha\big)_{\bigl|s=-\tprime,\alpha=\tprime^2} 
    =  \frac{\tprime^3}3\,,\\
\partial_a\phi(0,\tprime,\theta') 
\label{derivphia=0}
 =
    \begin{aligned}[t]
    -\partial_a\Phi(\cdots,\tprime^2,-\tprime)-\partial_a s_{c|a=0}\partial_{s}\Phi(\cdots,\tprime^2,-\tprime) &\\
 {} +\partial_a \alpha_{c|a=0}(\tprime-\partial_{\alpha}\Phi(\cdots,\tprime^2,-\tprime))\,.&  
    \end{aligned}
    \end{gather}
As $s_c,\alpha_c$ are critical points for $-\Phi(\cdot,\alpha,s)+\tprime\alpha$, the last two terms in \eqref{derivphia=0} vanish.
We have
\begin{align*}
\partial_a\phi(0,\tprime,\theta') & =-\partial_a\Phi(a,0,\theta',\alpha,s)|_{(0,0,\theta',\tprime^2,-\tprime)}        
  =\tprime q^{1/3}(\theta')-\tau_q(\tprime^2,\theta')A_{\Gamma}(0,0,-\tprime q^{1/3}(\theta')/{\qtau},\theta'/{\qtau})\,.
\end{align*}
For $(\Xi,\Theta)=(-\tprime q^{1/3}(\theta')/{\qtau},\theta'/{\qtau})$, with ${\qtau}=\tau_q(\tprime^2,\theta')$, the term homogeneous of degree $1$ of $A_{\Gamma}(0,0,\Xi,\Theta)$ is $\Xi \Lp(0,\Theta)=0$ and the term homogeneous  of degree $2$ equals $\mu(0,\Theta)(\Xi^2+|\Theta|^2-1)=0$ as $(\Xi^2+|\Theta|^2-1)=\frac{(s^2-\alpha)}{{\qtau}^2}|_{s=-\tprime,\alpha=\tprime^2}=0$. The terms homogeneous of higher order $j\geq 3$ of $A_{\Gamma}(0,0,\Xi,\Theta)$ are powers of $\Xi^j\sim \tprime^j$ (as we can replace $|\Theta|^2-1$ by $\Xi^2$), hence  $A_{\Gamma}(0,0,\Xi,\Theta)=O(\tprime^3)$. Therefore, stationary phase in $s,\alpha$ yields, for some new symbol $r_h(\tprime,\theta')$ of order $1/3$,
\begin{equation}
  \label{eq:ghaintegral}
  \tilde g_{h,a}(y',{\tprime})=h^{-d}\int e^{\frac i h(\psi_a({\tprime},\theta')+y'\cdot \theta')}
  r_h(\tprime,\theta')q^{-1/6}(\theta') \,d\theta'+O_{C^\infty}(h^\infty)\,,
\end{equation}
where we set $\psi_a(\tprime,\theta'):=\phi(a,\tprime,\theta')$ that is indeed the required \eqref{eq:gosc}. One has $\WF_h(\tilde g_{h,a})\subset \{(y',\tprime,\theta',\alpha'), y'=-\nabla_{\theta'}\psi_a(\tprime,\theta'), \alpha'=\partial_{\tprime}\psi_a(\tprime,\theta')\}$. Using \eqref{eq:phia} and \eqref{gl-2},  we now set $g_{h,a}(y',\tprime):=\chi(\tprime)\tilde g_{h,a}(y',\tprime)$, such that \eqref{eq:Jdirac} holds and this completes the proof of Lemma \ref{lemp}.
\end{proof}
\begin{dfn}\label{defparamPoisson}
    Let $g_{h,a}$ be defined in \eqref{eq:gosc}: using \eqref{eq:AiryPoisson}, we define $\Prond_{h,a}$ equivalently as
    \begin{align}
      \label{eq:Prond}
      \Prond_{h,a}(t,x,y) & = \langle \sum_{N\in \Z} e^{-iN L(\omega)}
    , K_\omega(g_{h,a})(t,x,y)\rangle_{\omega}\\
      \label{eq:Prond2}
      \Prond_{h,a}(t,x,y) & =2\pi  \sum_{k\in \N^*} \frac 1
      {L'(\omega_k)}  K_{\omega_k}(g_{h,a})(t,x,y)\,.
    \end{align}
    \end{dfn}
    We are abusing notation here: one should consider $\Prond^{\pm}$ depending on the sign on $t$ and then obtain $2 \Prond$ from Definition \ref{dfnparametrix} as $\Prond^{+}+\Prond^{-}$. Considering $\Prond^{+}$ is enough by time symetry and we therefore drop the $+$.

We now recall (see the discussion after \eqref{eq:Kequiv}) that, using both localizations in $\eta$ and $\tau_q(\omega,\eta)$, \eqref{eq:Prond2} may be reduced to a finite sum over $k\lesssim h^{-1}$:  support considerations on $K_{\omega}$ (as a function of $\omega$) provide $|\omega|\leq \ceps_0 h^{-2/3}$; after Airy-Poisson summation, this translates into $|\omega_{k}|\leq \ceps_0 h^{-2/3}$. The zeroes $\{-\omega_k\}_{k\geq 1}$ of the Airy function have asymptotic $\omega_k\sim (3\pi k/2)^{2/3}$. We therefore introduce a cut-off in the sum over $k$, $\cutoffchi^{\flat}_{\epsilon_{0}}(h^{2/3}\omega_{k}):={\cutoffchi^{\flat}}(h^{2/3}\omega_k/\ceps_{0})$. Then, \eqref{eq:Prond2} may be rewritten as a finite sum,
 \begin{equation}
      \label{eq:Prond2cut}
      \Prond_{h,a}(t,x,y):=2\pi  \sum_{k\in \N^*} \frac{{\cutoffchi^{\flat}_{\epsilon_{0}}}(h^{2/3}\omega_k
        )}
      {L'(\omega_k)}  K_{\omega_k}(g_{h,a})(t,x,y)\,.
    \end{equation}
    From $\omega_1\geq 2.33$, we remark that the cut-off function ${\cutoffchi^{\sharp}(\omega)}$ that was introduced in the definition \eqref{eq:Kequiv} of $K_{\omega}$ is no longer needed when restricting $\omega$ to the set $\{\omega_{k}\}_{k\in \N^{*}}$.  
    But it will help on the other sum \eqref{eq:Prond}, in estimating how many $N$'s contribute significantly.  Again with \eqref{eq:AiryPoisson}, we also have
 \begin{equation}
      \label{eq:Prondcut}
      \Prond_{h,a}(t,x,y)= \langle \sum_{N\in \Z} e^{-iN L(\omega)}
    , {\cutoffchi^{\flat}}(h^{2/3}\omega/\ceps_{0})K_\omega(g_{h,a})(t,x,y)\rangle_{\omega}\,.
    \end{equation}
The sum $\sum_{N\in\mathbb{Z}}$ converges in $\mathcal{D}'_{\omega}$ and ${\cutoffchi^{\flat}}(h^{2/3}\omega/\ceps_0)K_\omega(g_{h,a})(t,x,y)$ is smooth in $(t,x,y)$ in a neighborhood $W$ of $(0,0,0)$ and smooth and compactly supported in $\omega$. For the moment we use $g_{h,a}$ as expressed from \eqref{ghaF} and \eqref{Fha} (integral over $\alpha,s,\tprime$).
We can however replace the cut-off $\chi(\tprime)$, introduced in the proof of Lemma \ref{lemp}, by $\chi(s)$;  as $s_{c}=-\tprime(1+O(a))$, defining $g_{h,a}$ without the factor $\chi(\tprime)$ but with $\chi(s)$ inside the integral provides the same contribution modulo $O(h^{\infty})$ but allows to immediately obtain $\hat{g}_{h,a}(\theta/h,\alpha/h)$, which is useful 
in the formula for $K_{\omega}(g_{h,a})$:
\[
\hat{g}_{h,a}(\theta/h,\alpha/h)=h^{-1}q^{-1/6}(\theta)\int e^{-\frac ih \Phi(a,0,\theta,\alpha,s)}\chi(s)\tilde q_h(\theta,\alpha,s) ds,
\]
and substitution in \eqref{eq:Kom} yields
\begin{multline}\label{Kghauseful}
K_{\omega}(g_{h,a})(t,x,y)=\frac{h^{2/3}}{2\pi h^{d+1}}\int e^{\frac ih (t\tau_q(h^{2/3}\omega,\theta)+\Phi(x,y,\theta,h^{2/3}\omega,\sigma)-\Phi(a,0,\theta,h^{2/3}\omega,s))}\\
\times p_h(x,y,\theta,h^{2/3}\omega,\sigma)\chi(s)\tilde q_h(\theta,h^{2/3}\omega,s)dsd\theta d\sigma\,.
\end{multline}
We set   $  \Prond_{h,a}(t,x,y)=\sum_{N\in \Z} V_N(t,x,y)$, where $V_{N}$ is defined as
\begin{align}
  \label{eq:newVN}
  \quad V_N(t,x,y) := & \int e^{-iNL(\omega)}{\cutoffchi^{\flat}}(h^{2/3}\omega/\ceps_0 )K_{\omega}(g_{h,a})(t,x,y)d\omega\\
 = & \begin{multlined}[t] \frac 1 {2\pi h^{d+1}} \int e^{\frac i h (
    t\tau_q(\alpha,\theta)+\Phi(x,y,\theta,\alpha,\sigma)-\Phi(a,0,\theta,\alpha,s)-Nh L(h^{-2/3}\alpha)
    )} \\
\quad\quad\quad{}\times {\cutoffchi^{\flat}}(\alpha/\ceps_0)\chi(s)p_h(x,y,\theta,\alpha,\sigma)\tilde q_h (\theta,\alpha,s) ds \,d\theta  d{\sigma}d\alpha\,.\end{multlined}
\end{align}
The symbol ${\cutoffchi^{\flat}}(\alpha/\ceps_0) \chi(s)p_h \tilde q_h$ of $V_N$  is the same for every $N$,  is of order $0$ and is given by an asymptotic expansion with small parameter $h$ and main term equal to $1$ (indeed, since $\tilde q_h$ has been obtained by inverting $J$, whose symbol is $p_h$). Note that we do not have a finite sum over $N$: convergence should be understood in the distributional sense. The cut-off in $\alpha$ is redundant but we will leave it there to emphasize compact support in $\alpha$. In the forthcoming Lemma \ref{sommeNfinie}, we prove that for a generic function $f_{h}$ replacing $g_{h,a}$ the sum over $N$ converges and is $O(h^{\infty})$ for $N>h^{-1/3}$, provided $f_{h}$ is of moderate growth with respect to $h$. Practically, $f_{h}$ is an oscillatory integral with an Airy type phase and with a smooth rapidly decaying or compactly supported symbol and as such, is of moderate growth.

As such, we may indeed replace $g_{h,a}$ modulo $O(h^{\infty})$ as it will only concern a (large but) finite number of terms. Our main result in this section is the following proposition:
\begin{prop}\label{propsommefinie}
Let $a\in (h^{\frac{2}{3}-\ceps},a_0]$ with small $a_0,\ceps>0$.
\begin{itemize}[leftmargin=5.5mm]
\item
For $|t|\lesssim 1$, $\Prond_{h,a}$ is essentially a finite sum in $N$ at any given time,
\begin{equation}
  \label{eq:newProndcut}
  \Prond_{h,a}(t,x,y)=\sum_{|N|\lesssim |t| a^{-1/2}} V_N(t,x,y)+O_{C^{\infty}}(h^{\infty})\,. 
  \end{equation}
  Moreover we can introduce a cut-off ${\cutoffchi}^{\sharp}(4\alpha/a)$ in the definition of $V_N$ without changing its main contribution modulo $O(h^{\infty})$ terms. 
\item At $t=0$, we have $\mathcal{P}_{h,a}(0,x,y)=\chi_0(hD_{x}){\varkappa}(hD_y)\delta_{(a,0)}+O_{C^\infty}(h^\infty)$.
  \item $\Prond_{h,a}$ is a parametrix in the sense of the Definition \ref{dfnparametrix}.
\end{itemize}
\end{prop}
\begin{rem}\label{rmqalpha>a}
  The cut off ${\cutoffchi^{\sharp}}(h^{-2/3}\alpha)$ from $K_{\omega}$ restricts to $1\leq h^{-2/3}\alpha$. The last statement in the first part of Proposition \ref{propsommefinie} translates into the contribution of the integrals defining $V_N$ being irrelevant for small values $\alpha\leq a/2$: for $a>h^{2/3 -\ceps}$ we can further restrict to $\alpha>a/2$. This follows right away from the expression of $G(x,y,\theta/h,\omega)$ appearing in the definition of $K_{\omega}(g_{h,a})$ (recall \eqref{eq:Kequiv}): using \eqref{eq:defG}, $G$ reads as a sum of Airy functions computed at $-\zeta=x|\eta|^{2/3}e_0(x,y,\eta,\omega)-\omega$ with an elliptic $e_0$, close to $1$. These Airy functions are exponentially decreasing for $-\zeta>0$; hence, if $a>h^{2/3-\ceps}$, $\eta=\theta/h$ with $|\theta|\sim 1$ and $\omega=h^{-2/3}\alpha$, we must have $a\lesssim \alpha$ since otherwise the contribution from $G$ is $O(h^{\infty})$. Note that $h^{2/3-\ceps}\lesssim \alpha$ is required to perform stationary phase arguments; for $\alpha\leq h^{2/3-\ceps}$ (to be dealt with if $a\leq h^{2/3-\ceps}$ !), rescaling no longer provides a large parameter.
\end{rem}
\begin{proof}
  We start with the easiest part: from Theorem \ref{thmMelrose} $G(0,y,\eta,\omega_k)=e^{i\psi(0,y,\eta,\omega_k)}
  p_0Ai(-\omega_k)$, which immediately yields $G(0,y,\eta,\omega_k)=0$. Therefore, from
\eqref{eq:Prond2cut} being a finite sum, we get
  $\Prond_{h,a}(t,x,y)_{{\textstyle |}\partial\Omega} =0$,
 which is to say, the Dirichlet boundary condition holds for $\Prond_{h,a}$. From \eqref{eq:AiryPoisson}, we get that the distribution $\sum_{N\in\mathbb{Z}}e^{-iNL(\omega)}\in \mathcal{S}'(\R)$. Moreover, from upcoming Lemma \ref{sommeNfinie}, for $|N|>h^{-1/3}$, the sum is $O(h^{\infty})$ irrespective of $g_{h,a}$. As such, we are reduced to a finite number of $N$'s, and  from \eqref{eq:Prond} and \eqref{eq:Kom1}, it follows that, taking $W$ smaller if needed, and uniformly in $a<a_0$, one has
$(\partial^2_{t}-\Delta)\mathcal{P}_{h,a}\in O_{C^{\infty}(W)}(h^{\infty})$, not only for $x>0$ but in the full neighborhood $W$ of $(0,0,0,1)$.  Both statements on $\mathcal{P}_{h,a}$ are independent on the particular choice of the function $g_{h,a}$ such that \eqref{eq:Jdirac} holds. It remains to check that, with our choice of $g_{h,a}$ given in \eqref{eq:Jdirac}, $\mathcal{P}_{h,a}(0,x,y)$ is the right initial value, which turns out to be the most difficult part of the proof. We first prove that the sum over $N$ is (large but) finite and that at $t=0$, in the sum over $N$ in \eqref{eq:newProndcut}, all the oscillatory integrals $V_N(0,x,y)$ for $|N|\geq 1$ provide a $O(h^{\infty})$ contribution, while $V_0(0,x,y)=J(g_{h,a})(x,y)$ which, by design, is our initial data. The fact that the number of $N$ is finite will allow to deduce that $\mathcal{P}_{h,a}(0,x,y)=V_0(0,x,y)=J(g_{h,a})(x,y)=\chi_0(hD_x)\varkappa(hD_y)\delta_{(a,0)}$ and conclude.
\begin{lemma}
  \label{sommeNfinie}
  Let $f_{h}$ be a smooth function of $(y',\tprime)$, with compact support in $\tprime$ and of moderate growth in $h$, and $K_{\omega}(f_{h})$ be defined by \eqref{eq:Kom}. Then
\begin{equation}\label{eq:suminN}
  \langle \sum_{|N|\gtrsim h^{-1/3}}e^{-iN L(\omega)}
    , {\cutoffchi^{\flat}}(h^{2/3}\omega/\ceps_{0})K_\omega(f_{h})(t,x,y)\rangle_{\omega} = O(h^{\infty})\,.
\end{equation}
\end{lemma}
\begin{proof}
We consider the sum over all $N$: all phases in the sum are linear in $t$ and $N$ and given by
\begin{equation}\label{phaKom}
t\tau_q(\alpha,\theta)+\Phi(x,y,\theta,\alpha,\sigma)-N h L(\alpha h^{-2/3}) -y'\theta-\tprime \alpha,
\end{equation}
with large parameter $1/h$ as a factor; it follows from \eqref{eq:propL} that
\[
NL(\alpha h^{-2/3})=N\frac{\pi}{2}+\frac 1h \Big(\frac 43 N\alpha^{3/2} -Nh B_L(\alpha^{3/2}/h)\Big).
\]
Integration variables are $\sigma,\alpha$, $\theta$ and also $y'$ and $\tprime\,$;  only stationary points with respect to $\alpha$ and $\sigma$ will be required for the sum in $N$ to be finite.
Critical points in $\alpha$ are such that
\begin{equation}\label{eq:critalphaNN}
t\partial_{\alpha}{\qtau}(\alpha,\theta) +\partial_{\alpha}\Phi(x,y,\theta,\alpha,\sigma)=\tprime+2N\alpha^{1/2}\Big(1-\frac 34 B'_L(\alpha^{3/2}/h)\Big)\,,
\end{equation}
while those with respect to $\sigma$ are such that 
$\partial_{\sigma}\Phi(x,y,\theta,\alpha,\sigma)=0$. We used $1\leq h^{-2/3}\alpha$ to expand $L(h^{-2/3}\alpha)$ with \eqref{eq:propL}. 
Recall that
\[
\Phi(x,y,\theta,\alpha,\sigma)=y\cdot\theta+\sigma^3/3+\sigma(xq^{1/3}(\theta)-\alpha)+\tau_q(\alpha,\theta)\Gamma(x,y,\sigma q^{1/3}(\theta)/\tau_q(\alpha,\theta),\theta/\tau_q(\alpha,\theta)),
\]
where $\Gamma(x,y,\Xi,\Theta)=B_{\Gamma}(y,\Theta)+xA_{\Gamma}(x,y,\Xi,\Theta)$ from Proposition \ref{propimpformgamma} in the Appendix. In addition to properties of $A_{\Gamma}$ and $B_{\Gamma}$ listed in Proposition \ref{propimpformgamma}, we will use Lemma \ref{lemB0}. We start with computing derivatives of ${\qtau}(\alpha,\theta)\Gamma(x,y,\sigma sq^{1/3}(\theta)/{\qtau},\theta/{\qtau})$ which depends on $\alpha$ only through ${\qtau}$.
Take $\Theta=\theta/\tau_q(\alpha,\theta)$, $\vartheta=\theta/|\theta|$, then
\[
{\qtau} B_{\Gamma}(y,\theta/{\qtau})={\qtau} \bigl(B_0(y,\vartheta)+(|\theta|/{\qtau}-1)B_2(y,\vartheta)+...+(|\theta|/{\qtau}-1)^jB_{2j}(y,\vartheta)+...\bigr)
\]
and, writing $\rho=|\theta|/{\qtau}$, $\theta/{\qtau}=\rho {\vartheta}$,
\begin{multline}\label{partialBGamma}
\partial_{\alpha}\Big({\qtau} B_{\Gamma}(y,\theta/{\qtau})\Big)=\partial_{\alpha}{\qtau}\partial_w\Big(wB_{\Gamma}(y,\theta/w)\Big)_{{\textstyle |}w={\qtau}}\\
=\partial_{\alpha}{\qtau}\Big(B_{\Gamma}(y,\theta/{\qtau})-\frac{|\theta|}{{\qtau}}\partial_{\rho}B_{\Gamma}(y,\rho{\vartheta})_{{\textstyle |}\rho=|\theta|/{\qtau}}\Big)\\
=\partial_{\alpha}{\qtau}\Big[ B_0(y,\vartheta)-B_2(y,\vartheta)-\Big(\frac{|\theta|^2}{{\qtau}^2}-1\Big)B_4(y,\vartheta)+\mathcal{H}_{j\geq 4}\Big].
\end{multline}
\begin{multline}\label{partialAGamma}
\partial_{\alpha}\Big({\qtau} A_{\Gamma}(x,y,sq^{1/3}(\theta)/{\qtau},\theta/{\qtau})\Big)=\partial_{\alpha}{\qtau}\partial_w\Big(wA_{\Gamma}(x,y,sq^{1/3}/w,\theta/w)\Big)_{{\textstyle |}w={\qtau}}\\
=\partial_{\alpha}{\qtau}\Big(A_{\Gamma}(x,y,sq^{1/3}/{\qtau},\theta/{\qtau})-sq^{1/3}(\theta)\Lp(y,\vartheta)/{\qtau}\\
-2\mu(y,\vartheta)(s^2q^{2/3}(\theta)/{\qtau}^2+|\theta|^2/{\qtau}^2)+\mathcal{H}_{j\geq 3}\Big)\\
=\partial_{\alpha}{\qtau}\Big[-\mu(y,\vartheta)\Big(s^2q^{2/3}(\theta)/{\qtau}^2+|\theta|^2/{\qtau}^2+1\Big)+ \mathcal{H}_{j\geq 3}\Big],
\end{multline}
where in the second to last line we used that the terms in $\mathcal{H}_{j\geq 3}$ are powers of $x,sq^{1/3}(\theta)/w, \theta/w$ and therefore $w\partial_w\mathcal{H}_{j\geq 3{\textstyle |} w={\qtau}}=\mathcal{H}_{j\geq 3}$. 
We have $\partial_{\alpha}{\qtau}(\alpha,\theta)=\frac{q^{2/3}(\theta)}{2\tau_q(\alpha,\theta)}$ and using \eqref{partialBGamma}, \eqref{partialAGamma} (which comes with a factor $x\in \mathcal{H}_{j\geq 2}$), \eqref{eq:critalphaNN} becomes
\begin{multline}\label{critalphaNNf}
\partial_{\alpha}{\qtau}\Big[ t+B_0(y,\vartheta)-B_2(y,\vartheta)-\Big(\frac{|\theta|^2}{{\qtau}^2}-1\Big)B_4(y,\vartheta)+\mathcal{H}_{j\geq 4}\Big]+\sigma=\tprime\\
+2N\alpha^{1/2}\Big(1-\frac 34 B'_L(\alpha^{3/2}/h)\Big).
\end{multline}
Using now \eqref{partialAGamma} and that $B_{\Gamma}(y,\theta/{\qtau})$ does not depend on $\sigma$, $\partial_{\sigma}\Phi(\cdots)=0$ 
becomes
\begin{equation}\label{sigmacrit}
\sigma^2+xq^{1/3}(\theta)-\alpha+\tau_q(\alpha,\theta)x\partial_{\sigma}A_{\Gamma}(x,y,\sigma q^{1/3}(\theta)/\tau_q(\alpha,\theta),\theta/\tau_q(\alpha,\theta))=0.
\end{equation}
Recalling \eqref{AGam}, let $\Xi=\frac{\sigma q^{1/3}(\theta)}{\tau_q(\alpha,\theta)}$ and $\Theta=\frac{\theta}{\tau_q(\alpha,\theta)}$.
Taking a derivative with respect to $\sigma$ of $A_{\Gamma}$ always provides a factor $q^{1/3}(\theta)/{\qtau}$: $A_{\Gamma}(x,y,\Xi,\Theta)$ depends on $\sigma$ only through $\Xi=\frac{\sigma q^{1/3}(\theta)}{\tau_q(\alpha,\theta)}$, hence 
\[
\partial _{\sigma}A_{\Gamma}(x,y,\sigma q^{1/3}(\theta)/{\qtau},\theta/{\qtau})=\partial_{\sigma}\Xi \times \partial_{\Xi}A_{\Gamma}(x,y,\Xi,\Theta), 
\] 
and $\partial_{\Xi}A_{\Gamma}(x,y,\Xi,\Theta)=\Lp(y,\Theta/|\Theta)+2\Xi \mu(y,\vartheta)+\mathcal{H}_{j\geq 2}$. This yields
\begin{equation}\label{partialsigmaAGamma}
\partial _{\sigma}A_{\Gamma}(x,y,\sigma q^{1/3}(\theta)/{\qtau},\theta/{\qtau})=\frac{q^{1/3}(\theta)}{\tau_q(\alpha,\theta)}\Big(\Lp(y,\vartheta)+2\frac{\sigma q^{1/3}(\theta)}{\tau_q(\alpha,\theta)}\mu(y,\vartheta)+\mathcal{H}_{j\geq 2}\Big),
\end{equation}
where $\mathcal{H}_{j\geq 2}$ contains weights $x$, $\frac{\sigma^2q^{2/3}(\theta)}{{\qtau}^2}$ and $\frac{|\theta|}{{\qtau}}-1=\frac{|\theta|^2-{\qtau}^2}{{\qtau}(|\theta|+{\qtau})}=-\frac{\alpha q^{2/3}(\theta)}{{\qtau} (|\theta|+{\qtau})}$.
Using \eqref{sigmacrit} and \eqref{partialsigmaAGamma},
\begin{equation}\label{critsigma}
\sigma^2+xq^{1/3}(\theta)\Big(1+\Lp(y,\vartheta)+2\frac{\sigma q^{1/3}(\theta)}{{\qtau}(\alpha,\theta)}\mu(y,\vartheta)+\mathcal{H}_{j\geq 2}\Big)=\alpha\,.
\end{equation}
Recall that $\Lp$ was chosen after \eqref{eqorder2} and depends  on the curvature at the boundary near $y=0$:
\[
1+\Lp(y,\vartheta)=\Big(R_1(y,\vartheta+\nabla_y B_0(y,\vartheta))/q(\vartheta)\Big)^{1/3},  \quad R_1(0,\vartheta)=q(\vartheta),
\]  
where from \eqref{eqB0} we have $\vartheta+\nabla_yB_0(y,\vartheta)=\vartheta(1+O(y))$. By finite speed of propagation of the wave flow, for bounded time $|t|$ we must have $|y|$ bounded (see Lemma \ref{derriere}); hence, there exists $T_0<1$ sufficiently small such that if $|t|\leq T_0$ then 
\begin{equation}\label{R1q>12}
\Big(R_1(y,\vartheta+\nabla_y B_0(y,\vartheta))/q(\vartheta)\Big)^{1/3}>1/2.
\end{equation}
Recall that $\alpha\leq \ceps_{0}$, small. We have $B_0=O(|y|^2)$, $B_2=O(|y|^2)$, $B_{2j}=O(|y|)$, $\forall j\geq 2$ and $\frac{|\theta|}{{\qtau}}-1=\frac{|\theta|^2-{\qtau}^2}{{\qtau}(|\theta|+{\qtau})}=-\frac{\alpha q^{2/3}(\theta)}{{\qtau} (|\theta|+{\qtau})}=O(\alpha)$, therefore the coefficient of $\partial_{\alpha}{\qtau}$ in \eqref{partialBGamma} is like 
\[
B_0(y,\vartheta)-B_2(y,\vartheta)+O(\alpha y)=y(O(y)+O(\alpha)).
\]
For $|t|\leq T_0$, taking $T_0$ smaller if necessary, we assume $|B_0(y,\vartheta)-B_2(y,\vartheta)|=O(y^2)\leq |t|/8$. As $y \lesssim t$ by finite speed of propagation, taking $\ceps_0$ smaller if necessary, we assume $O(\alpha y)\leq |t|/8$ for $\alpha\leq \ceps_0$. Therefore, the contribution from $\partial_{\alpha}({\qtau} B_{\Gamma}(y,\theta/{\qtau}))/\partial_{\alpha}{\qtau}$ in \eqref{critalphaNNf} is $O(|t|/4)$ and the coefficient of $\partial_{\alpha}{\qtau}$ in \eqref{critalphaNNf} behaves like $t\leq T_0<1$. As $\tprime$ is bounded ($f$ has compact support in $\tprime$), it remains to compare $\sigma$ and $2N\alpha^{1/2}$ in \eqref{critalphaNNf}. Going back to \eqref{critsigma}, using that $0\leq x<1$ and that the terms in $\mathcal{H}_{j\geq 2}$ come with the factors $\sigma^2, x,\alpha$, it follows that
\[
(\sigma+x\mu q^{2/3}(\theta)/{\qtau})^2+x q^{1/3}(1+\Lp +\mathcal{H}_{j\geq 2})=\alpha+ x^2\mu^2 q^{4/3}(\theta)/{\qtau}^2,
\]
which implies that $\sigma^2$ is bounded; hence for $|\sigma|>C$ for some constant $C$, repeated integrations by parts in $\sigma$ provide a contribution $O(h^{\infty})$ in every integral in the sum in $N$ in \eqref{eq:suminN}. We obtain that the phase functions in \eqref{eq:suminN} may be stationary in $\alpha$ only for 
\begin{equation}\label{nrN}
2|N|\alpha^{1/2}\leq 2(\frac{c}{2} |t|+|\tprime|+|\sigma|), \quad c:=\sup_{|\theta|\sim 1, \alpha\leq \ceps_0}\frac{q^{2/3}(\theta)}{{\qtau}(\alpha,\theta)},
\end{equation}
where, as $h^{2/3-\ceps}\lesssim \alpha$, we used $|B_L'(\alpha^{3/2}/h)|= \frac{h^2}{\alpha^3}\Big(b_1+\sum_{j\geq 2} jb_j(\frac{h}{\alpha^{3/2}})^{j-1}\Big)\leq h^{\ceps}$.
As the righthand side from \eqref{nrN} is bounded, phase functions in \eqref{eq:suminN} are stationary in $\alpha$ only for 
$|N|\leq \tilde C\alpha^{-1/2}$ for some constant $\tilde C:=\frac c2 T_0+1+C$. Again, as $h^{2/3-\ceps}\lesssim \alpha$ (which is crucial here),  we get for $N$'s that may provide non-trivial contributions
 $|N|\lesssim \alpha^{-1/2}\lesssim  h^{-1/3+\ceps/2}$.
For $|N|$ larger than $h^{-1/3}$ we perform non stationary phase with respect to $\alpha$:  each integration by parts provides a factor $h$ and
"loses" a factor $h^{-2/3}$ corresponding to taking a derivative on $p_h$ which has been defined in \eqref{eq:qh} in terms of $p(x,y,\theta/h,\alpha/h,\sigma/h^{1/3}$), together with a negative power of the derivative of the phase of $K_{\omega}(f)$ with respect to $\alpha$, which depends on $N$ (through the term $2\sqrt{\alpha}N$). Therefore, if $|N|\geq h^{-1/3}$,  we get, after $M\geq 1$ integrations by parts,
\begin{equation*}
\Big|  \langle \sum_{|N|\geq h^{-\frac 13}}e^{-iN L(\omega)}
    , {\cutoffchi^{\flat}}(h^{\frac 23}\omega/\ceps_{0})K_\omega(f)(t,x,y)\rangle_{\omega}\Big|\leq  C_{M} \sum_{|N|\geq h^{-\frac 1 3}} \Big(\frac{h^{1-\frac 2 3}}{\sqrt{\alpha}N}\Big)^{M}
    \leq  C_{M} h^{M\frac \ceps 2}\,.
\end{equation*}
as the sum in $N$ is bounded for $M\geq 2$, and therefore this provides a contribution $O(h^{\infty})$. We just proved that, for any smooth $f$, the sum over $N$ \eqref{eq:suminN} is essentially finite over $|N|\lesssim h^{-1/3}$.
\end{proof}
In the following we introduce $g_{h,a}$ provided by Lemma \ref{lemp} in this finite sum (for $|N|\lesssim h^{-1/3}$) and prove that for $a>h^{2/3-\ceps}$, $N>|t| a^{-1/2}$, $V_N$ provides an $ O_{C^{\infty}}(h^{\infty})$ contribution.
Let $K_{\omega}(g_{h,a})$ be given by \eqref{Kghauseful}. The phase function of $V_N(t,x,y)$ defined in \eqref{eq:newVN} is 
\[
t{\qtau}(\alpha,\theta)+\Phi(x,y,\theta,\alpha,\sigma)-\Phi(a,0,\theta,\alpha,s)
  -N h L(\alpha h^{-2/3})\,,
\]
with large parameter $1/h$ in front. This phase function is stationary with respect to $\alpha$ if
\begin{multline}\label{critalphaNN}
  \partial_{\alpha}{\qtau}\Big[ t+B_0(y,\vartheta)-B_2(y,\vartheta)-\Big(\frac{|\theta|^2}{{\qtau}^2}-1\Big)B_4(y,\vartheta)+\mathcal{H}_{j\geq 4}\Big]\\
  {}+\sigma-s=2N\alpha^{1/2}\Big(1-\frac 34 B'_L(\alpha^{3/2}/h)\Big)\,.
\end{multline}
Using now \eqref{partialAGamma},
the phase is stationary in $\sigma$ and $s$ when
\begin{gather}
\sigma^2+xq^{1/3}(\theta)-\alpha+{\qtau}(\alpha,\theta)x\partial_{\sigma}A_{\Gamma}(x,y,\sigma q^{1/3}(\theta)/{\qtau}(\alpha,\theta),\theta/{\qtau}(\alpha,\theta))=0,\\
s^2+aq^{1/3}(\theta)-\alpha+{\qtau}(\alpha,\theta)a\partial_{s}A_{\Gamma}(a,0,s q^{1/3}(\theta)/{\qtau}(\alpha,\theta),\theta/{\qtau}(\alpha,\theta))=0\,.
\end{gather}
Using \eqref{partialsigmaAGamma} for $\partial_{\sigma}A_{\Gamma}$, we obtain as in \eqref{critsigma}
\begin{gather}\label{critssigma}
\sigma^2+xq^{1/3}(\theta)\Big(1+\Lp(y,\vartheta)+2\frac{\sigma q^{1/3}(\theta)}{{\qtau}(\alpha,\theta)}\mu(y,\vartheta)+\mathcal{H}_{j\geq 2}\Big)=\alpha,\\
s^2+aq^{1/3}(\theta)\Big(1+2\frac{s q^{1/3}(\theta)}{{\qtau}(\alpha,\theta)}\mu(0,\vartheta)+\mathcal{H}_{j\geq 2}\Big)=\alpha\,.
\end{gather}
Both $a\leq a_0$ and $\alpha\leq \ceps_0$ being small, the second equation in \eqref{critssigma} yields 
\[
\Big(s(1+O(a)+aq^{2/3}(\theta)/{\qtau}\Big)^2+aq^{1/3}(\theta)(1+O(a))\sim \alpha,
\]
and therefore $|s|=\sqrt{\alpha}+O(a)$. Moreover, we must have $a\lesssim\alpha$, otherwise non stationary phase in $s$ provides an $O(h^{\infty})$ contribution.Therefore we introduce a cut-off $(1-\chi)(4\alpha/a)$ supported for $\alpha>a/4$ in the symbol of $V_N$ without changing its contribution modulo $O(h^{\infty})$ (see Remark \ref{rmqalpha>a}).
For $T_0$ sufficiently small such that \eqref{R1q>12} to hold, the first equation in \eqref{critssigma} yields, 
\[
\Big(\sigma(1+O(x))+x\mu q^{2/3}(\theta)/{\qtau}\Big)^2+x q^{1/3}(1+\Lp(y,\vartheta) +O(x))\sim\alpha.
\]
Note that $x$ remains small (comparable to $\alpha$), otherwise non stationary phase with respect to $\sigma$ will provide an $O(h^{\infty})$ contribution.  We also obtain $|\sigma|=\sqrt{\alpha}+O(x)$. Moreover, both $\sigma$ and $s$ are small and $x\geq 0$, so that \eqref{critssigma} implies that $|\sigma|,|s|\leq \sqrt{\alpha}$ for the phase of $V_N$ to be stationary in $\sigma,s$; for $|\sigma|,|s|\geq 2\sqrt{\alpha}$ we can apply the non-stationary phase theorem, and from \eqref{critalphaNN}, for $T_0$ sufficiently small such that \eqref{R1q>12} to hold, the phase of $V_N(t,x,y)$ is stationary in $\alpha$ only for
\begin{equation}\label{statN}
2|N|\sqrt{\alpha}\leq |t|+|\sigma|+|s|\leq |t|+2\sqrt{\alpha}\,.
\end{equation}
We used again here that for $T_0$ sufficiently small and by finite speed of propagation, the coefficient of $\partial_{\alpha}{\qtau}$ in the lefthand side of \eqref{critalphaNN} is $t+ y(O(y)+O(a))\sim t$ and that $\partial_{\alpha}{\qtau} \sim 1/2$. Moreover, for values $2|N|\sqrt{\alpha}\geq 2 |t|+4\sqrt{\alpha}$, non stationary phase in $\alpha$ provides an $O(h^{\infty})$ contribution from all $V_N$ with $|N|\geq 2+|t|/\sqrt{\alpha}$. As  both $a\lesssim \alpha$ and the number of $V_N$ is (large but) finite ($|N|\leq h^{-1/3}$),  non-trivial contributions in $\mathcal{P}_{h,a}(t,x,y)$ may only be provided by the sum over $|N|\lesssim |t|/\sqrt{\alpha}\lesssim |t|/\sqrt{a}$. We have thus proved the second point in Proposition \ref{propsommefinie}. 

Finally, we turn to the data: by design, $V_{0}(0,x,y)=\chi_0(hD_{x}){\varkappa}(hD_y)\delta_{(a,0)}+O_{C^\infty}(h^\infty)$, and we are left to proving that, for $0<|N|\lesssim h^{-1/3}$, $V_N (0,x,y)\in
O_{C^\infty}(h^\infty)$. In this part we consider $g_{h,a}$ as provided by Lemma \ref{lemp}, of the form \eqref{eq:gosc}
with phase function $\psi_a({\tprime},\theta')={\tprime}^3/3+a({\tprime}q^{1/3}(\theta')+O({\tprime}^3))+O(a^2)\,$.
Then $K_{\omega}(g_{h,a})$ is of the form \eqref{eq:Kom} with $f$ replaced by $g_{h,a}$, that we re-write here
\begin{multline}
  \label{eq:Komnew}
  K_\omega(g_{h,a})(t,x,y)=\frac{h^{2/3}}{2\pi h^d} \int e^{\frac i
    h(t{\qtau}(h^{2/3} \omega,\theta)+\Phi(x,y,\theta,h^{2/3}\omega,\sigma)-y'\cdot \theta-{\tprime}h^{2/3} \omega)}
  p_h(x,y,\theta,h^{2/3}\omega,\sigma)q^{1/6}(\theta)\\
  \times h^{-d} e^{\frac i h (\psi_a({\tprime},\theta')+y'\cdot \theta')}
   r_h(\tprime, \theta')q^{-1/6}(\theta') \,d\theta'
 \,dy'd{\tprime}d\theta d\sigma\,.
\end{multline}
For all $N$, $V_N$ is an oscillatory
integral, that we rewrite, using \eqref{eq:newVN} and Proposition \ref{propsommefinie},
\begin{align}
  \label{eq:newwNN}
V_N(t,x,y)  = & h^{-2/3} \int e^{-iNL(h^{-2/3}\alpha)}{\cutoffchi^{\flat}}(\alpha/\ceps_{0}){\cutoffchi}^{\sharp}(4 \alpha/a)K_{h^{-2/3}\alpha}(g_{h,a})(t,x,y)d\alpha\\
 = & \frac{1}{2\pi h^{d}} \int e^{-iNL(h^{-2/3}\alpha)}{\cutoffchi^{\flat}}(\alpha/\ceps_0)\cutoffchi^{\sharp} (4\alpha/a) e^{\frac i
    h(t{\qtau}(\alpha,\theta)+\Phi(x,y,\theta,\alpha,\sigma)-y'\cdot \theta-{\tprime}\alpha)}\\
 & \times   p_h(x,y,\theta,\alpha,\sigma)q^{1/6}(\theta)\nonumber h^{-d} e^{\frac i h (\psi_a({\tprime},\theta')+y'\cdot \theta')}
    r_h(\tprime, \theta')q^{-1/6}(\theta') \,d\theta' 
 \,dy'd{\tprime}d\theta d\sigma\,.
\end{align}
We can write $q_h:={\cutoffchi^{\flat}}(\alpha/\ceps_{0}) {\cutoffchi^{\sharp}}(4\alpha/a)p_h r_h$, which is an elliptic symbol of order $0$; indeed, recall that $p_h$ comes with a factor $h^{-1/3}$ while $r_h$ comes with a factor $h^{1/3}$. At $t=0$, the stationary points of $V_{N}$ with respect to $\alpha,s,\tprime, y',\theta',\theta$ are solutions to the following  equations
\begin{equation}\label{systt=0}
  \left\{ \begin{array}{l}
\partial_{\alpha}\Phi(x,y,\theta,\alpha,\sigma)=\tprime+2N\alpha^{1/2}\Big(1-\frac 34 B'_L(\alpha^{3/2}/h)\Big)\,,\\
\sigma^2+xq^{1/3}(\theta)+{\qtau}(\alpha,\theta)x\partial_{\sigma}A_{\Gamma}(x,y,\sigma q^{1/3}(\theta)/{\qtau}(\alpha,\theta),\theta/{\qtau}(\alpha,\theta))=\alpha\,,\\
\partial_{\tprime}\psi_{a}(\tprime,\theta')=\alpha\,,\\
\theta'=\theta\,,\,\,\,\,\nabla_{\theta'}\psi_a(\tprime,\theta')=y'\,,\,\,\,\,\nabla_{\theta}\Phi(x,y,\theta,\alpha,\sigma)=y'\,.
\end{array}\right.
\end{equation}
From the first two equations we get \eqref{statN} (at $t=0$), which allows to conclude that if $|N|\geq 2$ the phase is non-stationary in $\alpha$. Hence we are left with $|N|=1$. 
The equations from the last line in  \eqref{systt=0} give $\nabla_{\theta}\Phi(x,y,\theta,\alpha,\sigma)=\nabla_{\theta}\psi_a(\tprime,\theta)$ and therefore
\begin{equation}\label{derivthet}
a\tprime \nabla_{\theta}(q^{1/3}(\theta))+O(a\tprime^2,a^2)=y+\sigma x\nabla_{\theta}(q^{1/3}(\theta))+\nabla_{\theta}({\qtau}(\alpha,\theta)\Gamma).
\end{equation}
For ${\qtau}={\qtau}(\alpha,\theta)$ we have $\nabla_{\theta}{\qtau}=(\theta+\alpha\frac{\nabla_{\theta}(q^{1/3}(\theta))}{3q^{1/3}(\theta)})/{\qtau}$ and
\begin{multline}\label{partialthetatauGamma}
\nabla_{\theta}\Big({\qtau}\Gamma(x,y,\sigma q^{1/3}(\theta)/{\qtau},\theta/{\qtau})\Big)=(\nabla_{\theta}{\qtau})\Gamma(x,y,\sigma q^{1/3}(\theta)/{\qtau},\theta/{\qtau})\\{}+{\qtau} \Big(\nabla_{\theta}(B_{\Gamma}(y,\theta/{\qtau}))+x\nabla_{\theta}(A_{\Gamma}(x,y,\sigma q^{1/3}(\theta)/{\qtau},\theta/{\qtau}))\Big).
\end{multline}
From $B_0=O(|y|^2)$, $B_2=O(|y|^2)$, $B_{2j}=O(|y|)$ we get
\begin{align*}
  \partial_{\theta}({\qtau}(\alpha,\theta)\Gamma) & = \partial_{\theta}{\qtau}\Big(O(y^3)+O(y^2\alpha)+O(y\alpha^2)+x(O(\sigma)+O(\alpha))\Big) +{\qtau}(O(y\alpha)+xO(\sigma))\\
   & =O(y^3)+O(y\alpha)+O(x\sigma)\,.
\end{align*}
Combining this with \eqref{derivthet} yields $y=O(a\tprime)+ O(x\sigma)+O(a^2)$, at the stationary phase points $y',\theta',\theta$ of $V_{N}$. Rescale $x=aX$, 
$\alpha=aq^{1/3}(\theta)\Arond$, $\sigma=\sqrt{a}q(\theta)^{1/6}\Srond$ and $\tprime=\sqrt{a}q(\theta)^{1/6}\Trondprime$; from the first three equations of \eqref{systt=0} (and $\theta'=\theta$) we get
\begin{gather}
  \label{eq:111}
  \Srond+\Trondprime+2{\Arond}^\frac 1 2= O(a^{1/2})\,,\quad    \Srond^2+(1+\Lp)X+O(a^{1/2})={\Arond}={\Trondprime}^2+1+O(a^{1/2})\,.
   \end{gather}
As $a\leq a_0$ is small (taking $a_0$ even smaller if necessary), $\Trondprime^2\leq \Arond-\frac12$; then, using that $\Lp=O(y)=O(a^{3/2})$, there exists $\varepsilon>0$ such that for $X\geq - \varepsilon$, $\Srond^2\leq \Arond+\frac{1}{10}$. For these values, the first equation cannot hold and non stationary phase in $\alpha$ allows to conclude. Again, that we can integrate by parts in $V_{\pm 1}(0,x,y)$ relies on $a\geq h^{2/3-\ceps}$.
\end{proof}
\subsection{Parametrix construction for $a< h^{2/3-\ceps}$, $0<\ceps<1/12$}\label{lowparam}
In formulas \eqref{eq:Prond} and \eqref{eq:Prond2}, $G(\cdots,\omega)$ can be written in terms of the Airy function $Ai(-\zeta)$ and its derivative, as stated in Theorem \ref{thmMelrose}. Using the explicit form of $\zeta$, for $\zeta<0$ (hence for $\omega\leq a/(4h^{2/3})$), these Airy factors in $G$ are exponentially decreasing, so the main contribution comes from values $a\lesssim 4\omega h^{2/3}$. For $a\gtrsim h^{2/3-\ceps}$, this implies $\omega\gtrsim h^{-\ceps}$ and one may perform stationary phase arguments in the integrals from \eqref{eq:Prond}, as long as we pick any $\ceps>0$. In this section, $a$ is much smaller and very different issues arise compared to the previous one; we need a different way to construct a suitable $g_{h,a}$ to recover the initial data. The explicit upper bound on $\ceps$ will be of use in this section and later required in Section \ref{secdispapetit}; there is quite an overlap between both parametrix constructions for $h^{2/3}<a<h^{2/3-1/12}$, but we made not attempt at enlarging it; the reader is advised to think $\ceps$ to be really small.

Besides the lack of a large parameter, which forces us to work with \eqref{eq:Prond2}, the regime $a< h$ has its own difficulties: even deciding how the initial data should be chosen in order the Dirichlet condition to be satisfied on the boundary becomes a non trivial issue. Indeed, \eqref{datachi0var} as initial data  provides a non-trivial contribution on the boundary. Taking $\varkappa(-h^2\Delta)\varkappa(hD_y)\delta_{(a,0)}$ would be a natural choice: it may be expanded on eigenfunctions of the Laplace operator on the compact set $\Omega$, but we know very little on them. Instead, we use the spectral theory for the model Laplace operator \eqref{eq:LapM} in order to expand
$\varkappa(-h^2\Delta_M)\varkappa(hD_y)\delta_{(a,0)}$
in terms of the eigenfunctions of $-\Delta_M$, as they have been used extensively in our previous work \cite{Annals}, \cite{ILP3}. This will turn out to be sufficient for our later purposes. We now recall some properties of $-\Delta_M$.
\subsubsection{Spectral theory for $-\Delta_M$: the initial data in terms of model gallery modes}\label{sss231}
Let $-\Delta_M$ be the Friedlander model operator introduced in \eqref{eq:LapM},
and recall $q$ is a positive definite quadratic form.
Taking the Fourier transform in the $y$ variable, the operator $-\Delta_{M}$ becomes
$-\partial^2_x+|\eta|^2+xq(\eta)$. For  $\eta\neq 0$, this operator is a positive self-adjoint operator 
on $L^2(\mathbb{R}_+)$, with compact resolvent. The next Lemma is proved in \cite{ILP4} (with $q(\eta)=|\eta|^2$, but only using $q(\eta)\neq 0$):
\begin{lemma}\label{lemorthog}(see \cite[Lemma 2]{ILP4})
There exists an Hilbert basis of $L^{2}(\mathbb{R}_{+})$ where $\{e_k(x,\eta)\}_{k\geq 0}$ are eigenfunctions of $-\partial^2_x+|\eta|^2+xq(\eta)$, with eigenvalues $\lambda_k(\eta)=|\eta|^2+\omega_k q(\eta)^{2/3}=\tau_{q}^2(\omega_k,\eta)$. These eigenfunctions are translated and rescaled Airy functions:
\begin{equation}\label{eig_k}
 e_k(x,\eta)=\frac{\sqrt{2\pi} q(\eta)^{1/6}}{\sqrt{L'(\omega_k)}}
Ai\Big(xq(\eta)^{1/3}-\omega_k\Big),
\end{equation}
where $L'(\omega_k)$ (from \eqref{eq:propL2}) normalizes $\|e_k(.,\eta)\|_{L^2(\mathbb{R}_+)}=1$.
\end{lemma}
For $a>0$, the Dirac distribution $\delta_{x=a}$ on $\mathbb{R}_+$ may be decomposed as 
 $\delta_{x=a}=\sum_{k\geq 1} e_k(x,\eta)e_k(a,\eta)$. Eigenfunctions of 
$-\partial^2_{x}+xq(\eta)$ are $e_k(x,\eta)$ with eigenvalue $\lambda_k(\eta)-|\eta|^2=\tau_{q}^2(\omega_k,\eta)-|\eta|^2=\omega_k q^{2/3}(\eta)$,  and for cut-offs $\varkappa$, $\chi_0$ to be chosen, the following spectral decomposition holds
\begin{equation}\label{diracmoddeltaM<} 
\chi_0(-h^2\partial^2_{x}+xq(h\partial_y))\varkappa(hD_y)\delta_{(a,0)}=\sum_{k\geq 1}\int
e^{iy\cdot \eta} \\ 
\chi_0\bigl(h^{\frac 23}\omega_kq^{\frac 23}(h\eta)\bigr) \varkappa(h\eta)e_k(x,\eta)e_k(a,\eta)d\eta\,.
\end{equation}
With $\chi_0\in C^{\infty}_0(-2\ceps_0, 2\ceps_0)$, $\chi_{0}=1$ on $[-\ceps_0,\ceps_0]$, \eqref{diracmoddeltaM<} is a finite sum with $O(\ceps_0/h)$ terms. 
Setting $\theta=h\eta$, from support considerations, taking the support of $\chi_0$ smaller if necessary, we can assume that $\chi^{\flat}(\alpha/\ceps_{0})\chi_0(\alpha q^{2/3}(\theta))\varkappa(\theta)=\chi_0(\alpha q^{2/3}(\theta))\varkappa(\theta)$,
where $\alpha=h^{2/3}\omega$ and $\chi^{\flat}(h^{2/3}\omega/\ceps_{0})$ is the cut-off introduced in \eqref{eq:Prond2cut} (which restricts the support of $K_{\omega}$ to values $\omega\leq \ceps_0/h^{2/3}$).

For $a\leq h^{2/3-\ceps}$, the easiest way to define an initial data is to chose the lefthand side term in \eqref{diracmoddeltaM<}, which does vanish on the boundary ($e_k(0,\eta)=0$ for every $k\geq 1$). Using both \eqref{eq:Prondcut} and \eqref{eq:Prond2cut}, we are left to obtain a smooth function $g_{h,a}$ such that for $\Prond_{h,a}$ as in \eqref{eq:Prond2cut} to have
\begin{equation}\label{dataamic}
\Prond_{h,a}(0,x,y)=\chi_0(-h^2\partial^2_{x}+xq(h\partial_y))\varkappa(hD_y)\delta_{(a,0)}+O_{C^{\infty}}(h^{\infty}).
\end{equation}
We proceed as follows : for a given $K_{\ceps}$ such that $h^{-\ceps}\ll h^{-2\ceps}\lesssim K_{\ceps}\lesssim h^{-1/4+\ceps}\ll h^{-1/4}$, define $\mathcal{E}_{M}$ and split the sum over $k$ in \eqref{diracmoddeltaM<},
\begin{gather}
\label{defEmathcalM}\,\,\quad\begin{multlined}[t]
\mathcal{E}_M(x,y,a,\omega_k):=\int e^{iy\cdot \eta} 
\chi_0(h^{2/3}\omega_kq^{2/3}(h\eta)) \varkappa(h\eta)e_k(x,\eta)e_k(a,\eta)d\eta\\
=\frac{2\pi}{L'(\omega_k)}\int e^{iy\cdot \eta}\chi_0(h^{2/3}\omega_kq^{2/3}(h\eta)) \varkappa(h\eta)q^{1/3}(\eta)Ai(xq^{1/3}(\eta)-\omega_k)Ai(aq^{1/3}(\eta)-\omega_k)d\eta.
\end{multlined}\\
\label{dataEMEM}
  \sum_{k\geq 1}(\cdots)=
\sum_{k\geq1}\cutoffchi^{\flat}\Big(\frac{\omega_k}{\omega_{K_{\ceps}}}\Big)\mathcal{E}_M(x,y,a,\omega_k)+\sum_{k\geq 1}\chi^{\flat}(h^{2/3}\omega_k/\ceps_{0})\cutoffchi^{\sharp}\Big(\frac{\omega_k}{\omega_{K_{\ceps}}}\Big)\mathcal{E}_M(x,y,a,\omega_k)\,.
\end{gather}
\begin{prop}\label{propdataapetitpetit1}
For all $a\in (0,h^{2/3-\ceps})$ there exists a smooth function $g_{h,a,1}$ such that
\begin{equation}\label{trucmuche}
\sum_{k\geq 1}\frac{2\pi}{L'(\omega_k)}\cutoffchi^{\flat}(h^{2/3}\omega_k/\ceps_{0})K_{\omega_k}(g_{h,a,1})(0,x,y)=\sum_{k\geq1}\cutoffchi^{\flat}\Big( \frac{\omega_k}{\omega_{K_{\ceps}}}\Big)\mathcal{E}_M(x,y,a,\omega_k)+O(h^{\infty})\,.
\end{equation}
\end{prop}
The proof of Proposition \ref{propdataapetitpetit1} is postponed to Section \ref{sectproofs}, as it requires arguments from section \ref{sectpseudocalcul}; we will introduce the cutoff $\cutoffchi^{\flat}( {\omega_k}/{\omega_{K_{\ceps}}})$ in the (LHS) term of \eqref{trucmuche} as well, due to how $g_{h,a,1}$ is obtained.
\begin{prop}\label{propdataapetitpetit2}
  For all $a\in (0,h^{2/3-\ceps})$, there exists a smooth function $g_{h,a,2}$ such that
  \begin{gather}
\label{eq:g1}\begin{multlined}[t]
\langle \sum_{N\in \Z} e^{-iN L(\omega)}
    , {\cutoffchi^{\flat}}(h^{2/3}\omega/\ceps_{0}){\cutoffchi^{\sharp}}(h^{2\ceps}\omega)K_\omega(g_{h,a,2})(0,x,y)\rangle_{\omega}\\
\quad\quad\quad\quad    {}=\sum_{k\geq 1}\cutoffchi^{\sharp}\Big(\frac{\omega_k}{\omega_{K_{\ceps}}}\Big)\chi^{\flat}(h^{2/3}\omega_k/\ceps_{0})\mathcal{E}_M(x,y,a,\omega_k)+O(h^{\infty}),
\end{multlined}\\
\label{eq:g12}
\langle \sum_{N\in \Z} e^{-iN L(\omega)}
    , {\cutoffchi^{\flat}}(h^{2/3}\omega/\ceps_{0}){\cutoffchi^{\sharp}}(\omega){\cutoffchi^{\flat}}( h^{2\ceps}\omega)K_\omega(g_{h,a,2})(0,x,y)\rangle_{\omega}=O(h^{\infty}).
\end{gather}
\end{prop}
We introduced a new cut-off ${\cutoffchi^{\sharp}}(h^{2\ceps}\omega)$ in \eqref{eq:g1} and removed ${\cutoffchi^{\sharp}}(\omega)$ (which is supported for $\omega\geq 2$ and identically $1$ on the support of ${\cutoffchi}^{\sharp}(h^{2\ceps}\omega)$). The proof of Proposition \ref{propdataapetitpetit2} will be provided in Section \ref{proofprop2}.
The sum of \eqref{eq:g1} and \eqref{eq:g12} yields the second term in \eqref{dataEMEM}. Finally, we have
\begin{prop}\label{propparama<}
For all $a\in (0,h^{2/3-\ceps})$, let $g_{h,a}:=g_{h,a,1}+g_{h,a,2}$, then $\Prond_{h,a}$, defined in Definition \ref{defparamPoisson} is a parametrix in the sense of Definition \ref{dfnparametrix} and \eqref{dataamic} holds.
\end{prop}
Proposition \ref{propparama<} follows easily from Propositions \ref{propdataapetitpetit1} and \ref{propdataapetitpetit2} using $K_{\omega}(g_{h,a,1}+g_{h,a,2})=K_{\omega}(g_{h,a,1})+K_{\omega}(g_{h,a,2})$ together with \eqref{eq:Prond2}.
\begin{rmq}
For later purposes (dispersion for small $a$, Section \ref{secdispapetit}), taking $K_{\ceps}\sim h^{-2\ceps}$ would be enough. However, in the next subsection, we aim at obtaining gallery modes for $k$ as large as possible, which turns out to be up to $K_{\ceps}\sim h^{-1/4+\ceps}$. This is of independent interest and will prove useful to deal with the Schr\"odinger operator as well as generalize \cite{ILP3} from the model case to the general case; both will be adressed elsewhere.
\end{rmq}
\subsubsection{Pseudo-differential calculus; construction of gallery modes.
}\label{sectpseudocalcul}
\begin{dfn}
Let $G$ be defined in \eqref{eq:Gosc}.
We set
\begin{equation}\label{defeomkgen}
e(x,y,\eta,\omega)=\frac{\sqrt{2\pi} q(\eta)^{1/6}}{\sqrt{L'(\omega)}}e^{-iy\cdot \eta}G(x,y,\eta,\omega).
\end{equation}
Replacing $G(\cdots)$ with $G_M(\cdots):=e^{i y\cdot \eta}Ai(xq(\eta)^{1/3}-\omega)$  in \eqref{defeomkgen} yields \eqref{eig_k} instead of $e(x,y,\eta,\omega)$.
\end{dfn}
\begin{dfn}
Let $\varkappa$ be  like in Definition \eqref{dfnparametrix}. For $g\in L^2(\mathbb{R}^{d-1})$, we define
\begin{equation}\label{defFk}
F_{\omega_k}(g)(x,y): =\frac{1}{(2\pi )^{d-1}}\int e^{i(y-y')\cdot \eta}e(x,y,\eta,\omega_k)(\varkappa(hD_{y'})g)(y')d\eta dy'\,. 
\end{equation}
\end{dfn}
\begin{dfn}\label{blabla}
Let $\tilde \varkappa\in C^{\infty}_0(\mathbb{R}^{d-1})$ be such that $\tilde\varkappa=1$ on the support of $\varkappa$ and vanishing outside a neighborhood of $\mathbb{S}^{d-2}$. Let also $\chi\in C^{\infty}_0$ be a smooth cutoff supported in the ball of center $0$ and radius $1/16$ of $\mathbb{R}^{d-1}$.  
For $f\in L^2(\mathbb{R}^{d-1})$ we define an operator $\Lo$ as
\begin{equation}\label{def:L}
\Lo(f)(y):=\int e^{i(y-y')\cdot\eta-i|\eta|B_0(y',\eta/|\eta|)}\tilde\varkappa(h\eta)\chi(y')f(y')dy'd\eta,
\end{equation}
where $B_0$ is the first term in the development of $B_{\Gamma}$ in \eqref{formalseriesBA} and is homogeneous of degree $0$.
\end{dfn}
To define $g_{h,a,1}$ and $g_{h,a,2}$, we need to "invert" $F_{\omega_k}$, which requires estimating derivatives of $e(x,y,\eta,\omega_k)$ with respect to $(y,\eta)$: in the Friedlander model case the corresponding mode $e_k(x,\eta)$ from \eqref{eig_k} does not depend  on $y$, but here, deriving with respect to $y$ yields a large factor due to $\mathcolor{red}{\mathbf{\tau}} B_0$ in the phase $\mathcolor{red}{\mathbf{\tau}} \Gamma(x,y,\sigma q^{1/3}(\eta)/\mathcolor{red}{\mathbf{\tau}},\eta/\mathcolor{red}{\mathbf{\tau}})$. 
In order to get rid of the homogeneous term of degree zero $|\eta|B_0(y,\eta/|/\eta|)$ in the phase function of $e(x,y,\eta,\omega_k)$, we set $\tilde F_{\omega_k}:=F_{\omega_k}\circ \Lo$ to obtain a new operator whose phase function does not include the contribution $|\eta|B_0(y,\eta/|\eta|)$; we need to prove that, at least for $k\ll h^{-1/4}$, these operators can be inverted and this will be our main result in this section:
\begin{prop}\label{propmatrix}
Let $0<\ceps_{1}<1/4$ be small. The operators $\tilde F_{\omega_k}^*\circ \tilde F_{\omega_k}:L^2(\mathbb{R}^{d-1})\rightarrow L^2(\mathbb{R}^{d-1})$ are pseudo-differential operators that are uniformly elliptic with respect to $1\leq k\leq h^{-1/4+\ceps_{1}}$.
\end{prop}
Note that we would like to construct quasi-modes for as many $k$ as possible in the next section, and therefore $\ceps_{1}$ should really be seen as very small. We momentarily postpone the proof of Proposition \ref{propmatrix} to perform some preliminary steps.

We compute $F_{\omega_k}\circ \Lo(f)$ for $f\in L^2(\mathbb{R}^{d-1})$, setting $\tilde{e}(x,y,\eta,\omega_k)=e^{-i|\eta|B_0(y,\eta/|\eta|)}e(x,y,\eta,\omega_k)$:
\begin{align*}
F_{\omega_k}\circ \Lo(f) (x,y) 
&=\frac{1}{(2\pi )^{d-1}} \int_{\eta} e^{i y\cdot \eta}e(x,y,\eta,\omega_k)\varkappa(h\eta)\widehat{\Lo(f)}(\eta)d\eta \\
&=\frac{1}{(2\pi )^{d-1}} \int_{y'}\int_{\eta} e^{i (y-y')\cdot\eta+i|\eta|(B_0(y,\eta/|\eta|)-B_0(y',\eta/|\eta|))}\tilde{e}(x,y,\eta,\omega_k)\varkappa(h\eta)\chi(y')f(y')d\eta dy'.
\end{align*}
\begin{rmq}
As $y'$ is small on the support of $\chi(y')$,  there exists a cut-off $\tilde\chi$, supported in a ball of radius $1/8$ in $\mathbb{R}^{d-1}$, such that $\tilde\chi=1$ on the support of $\chi$ and 
\[
F_{\omega_k}\circ \Lo(f) (x,y)=\tilde\chi(y) F_{\omega_k}\circ \Lo(f) (x,y)+O(h^{\infty}).
\]
When it will be necessary to emphasize the small size of the support in $y$ of the operators we will work with, we will add the cutoff $\tilde\chi(y)$.
\end{rmq}
Setting $\tilde F_{\omega_k}=F_{\omega_k}\circ \Lo$, we compute the adjoint operator $\tilde F_{\omega_k}^*$ using that for every $f\in L^2(\mathbb{R}^{d-1})$ and $\mathcal{E}\in L^2(\Omega)$ we have $<\tilde F_{\omega_k}(f),\mathcal{E}>_{L^2(\Omega)}=<f,\tilde F_{\omega_k}^*(\mathcal{E})>_{L^2(\mathbb{R}^{d-1})}$. This yields
\begin{multline*}
<\tilde F_{\omega_k}(f),\mathcal{E}>_{L^2(\Omega)} =\int_{\Omega}\Big( \frac{1}{(2\pi )^{d-1}} \int_z\int_{\eta} e^{i (y-z)\cdot \eta}e(x,y,\eta,\omega_k)\varkappa(h\eta)\Lo(f)(z)d\eta dz\Big) \overline{\mathcal{E}}(x,y)dxdy\\
=\int_{y'} f(y')\Big(\overline{\frac{1}{(2\pi )^{d-1}}\int_{\Omega}\int_{\eta} e^{i(y'-y)\cdot \eta+i|\eta|(B_0(y',\eta/|\eta|)-B_0(y,\eta/|\eta|))}\overline{\tilde e(x,y,\eta,\omega_k)}\varkappa(h\eta)\chi(y')\mathcal{E}(x, y)d\eta dx dy}\Big)dy'\\
=\int_z f(z)\overline{\tilde F_{\omega_k}^*(\mathcal{E})}(z)dz=<f,\tilde F_{\omega_k}^*(\mathcal{E})>_{L^2(\mathbb{R}^{d-1})},
\end{multline*}
which yields $\tilde F_{\omega_k}^*(\mathcal{E})$.
We can now explicitly compute $\tilde F_{\omega_k}^*\circ \tilde F_{\omega_k}(f)(z)$ for $f\in L^2(\mathbb{R}^{d-1})$ and $z\in\mathbb{R}^{d-1}$:
\begin{align}\label{FistarFj}
\nonumber
\tilde F_{\omega_k}^*\circ \tilde F_{\omega_k}(f)(z)&=\frac{\chi(z)}{(2\pi )^{d-1}}\int_{\Omega}\int_{\eta} e^{i(z-y)\cdot \eta+i|\eta|(B_0(z,\eta/|\eta|)-B_0(y,\eta/|\eta|))}\varkappa(h\eta)\overline{\tilde e(x,y,\eta,\omega_k)}  \tilde F_{\omega_k}(f)(x,y)d\eta dxdy\\
&=\int_{z'}M_{k}(z,z')f(z')dz' +O(h^{\infty}),
\end{align}
where we have introduced the cut-off $\tilde\chi(y)$ and have set
\begin{multline*}
M_{k}(z,z')=\frac{1}{(2\pi )^{d-1}}\int_{y,\eta,\eta'} e^{i\big((z-y)\cdot \eta+|\eta|(B_0(z,\eta/|\eta|)-B_0(y,\eta/|\eta|))\big)-i\big((z'-y)\cdot \eta'+|\eta'|(B_0(z',\eta'/|\eta'|)-B_0(y,\eta'/|\eta'|))\big)}
\\
\times \tilde\chi(y) \chi(z)\chi(z'){\varkappa}(h\eta) {\varkappa}(h\eta')\int_0^{\infty}\overline{\tilde{e}(x,y,\eta,\omega_k)} \tilde{e}(x,y,\eta',\omega_k) dx d\eta' d\eta dy. 
\end{multline*}
We let $\tiTheta=\tiTheta(\eta,z,y):=\eta+|\eta|\int_0^1\nabla_yB_0(\varpi y+(1-\varpi)z,\eta/|\eta|)d\varpi$ (resp. $\tiTheta'=\tiTheta(\eta',z',y)$) such that 
\begin{gather}
(z-y)\tiTheta =(z-y)\cdot \eta+|\eta|(B_0(z,\eta/|\eta|)-B_0(y,\eta/|\eta|))\,,\\
(z'-y)\tiTheta'=(z'-y)\cdot \eta'+|\eta'|(B_0(z',\eta'/|\eta'|)-B_0(y,\eta'/|\eta'|))\,.
\end{gather}
$\nabla_{\eta}\tiTheta = \mathbb{I}_{d-1}+O(|\nabla_yB_0|)$ being close to the identity matrix $\mathbb{I}_{d-1}$ for small $y,z$, $|\nabla_{\tiTheta}\eta|= 1+O(|\nabla_yB_0|)$ and, in the same way $|\nabla_{\tiTheta'}\eta'|= 1+O(|\nabla_yB_0|)$. Denoting $\eta(\tiTheta,z,y)=\tiTheta+|\tiTheta|O(y,z)$ the inverse function, we have $\eta=\eta(\tiTheta,z,y)$ and $\eta'=\eta(\tiTheta',z',y)$, respectively, and therefore
\begin{multline}\label{thetajk}
M_{k}(z,z')=\frac{1}{(2\pi )^{d-1}}\int_{y,\tiTheta,\tiTheta'} e^{i(z-y)\cdot\tiTheta-i(z'-y)\cdot\tiTheta'}|\nabla_{\tiTheta}\eta||\nabla_{\tiTheta'}\eta'|\times  \tilde\chi(y) \chi(z)\chi(z')\\
\times {\varkappa}(h\eta(\tiTheta,z,y)){\varkappa}(h\eta(\tiTheta',z',y)) \int_0^{\infty}\overline{\tilde{e}(x,y,\eta(\tiTheta,z,y),\omega_k)} \tilde{e}(x,y,\eta(\tiTheta',z',y),\omega_k) dx d\tiTheta' d\tiTheta dy.
\end{multline}
Taking $y=z'+w$, $\tiTheta'=\tiTheta-\zeta$  in \eqref{thetajk}, yields 
\begin{gather}\label{mjk}
M_{k}(z,z')=\int_{\tiTheta}e^{i(z-z')\cdot\tiTheta} m_{k}(z,z',\tiTheta) d\tiTheta\,,\\
m_{k}(z,z',\tiTheta) = \begin{multlined}[t]
\frac{1}{(2\pi )^{d-1}}\int _{\zeta}\int_{w} \int_0^{\infty} e^{-iw\cdot\zeta}|\nabla_{\tiTheta}\eta|(\tiTheta,z,z'+w)|\nabla_{\tiTheta'}\eta|(\tiTheta-\zeta,z',z'+w)\\ 
\times {\varkappa}(h\eta(\tiTheta,z,z'+w)){\varkappa}(h\eta'(\tiTheta-\zeta,z',z'+w))\tilde\chi(z'+w)\chi(z)\chi(z')\\
\times \overline{\tilde{e}(x,z'+w,\eta(\tiTheta,z,z'+w),\omega_k)} \tilde{e}(x,z'+w,\eta(\tiTheta-\zeta,z',z'+w),\omega_k) dx dwd\zeta.
\end{multlined}
\end{gather}
On the support of the two cut-offs $\varkappa$, $h(\tiTheta+O(z,z'+w))\in [1/2,2]$,  $h(\tiTheta-\zeta+O(z',z'+w))\in[1/2,2]$, and on the support of $\chi(z)\chi(z'+w)\tilde\chi(z'+w)$, $|z,z'|<1/16, |z'+w|<1/8$; then, set $\tiTheta=\frac{\theta}{h}$, $\zeta=\frac{\varrho}{h}$ where $\theta\in[1/4,5/4]$ and $|\varrho|\leq 2$. In \eqref{mjk} we may replace $m_{k}(z,z',\tiTheta)$ by $m_{k}(z',z',\tiTheta)=:\tilde{m}_{k}(z',\tiTheta)$ without changing the integral modulo $O(\tiTheta^{-\infty})=O(h^{\infty})$. In the new variables (and modulo $O(h^{\infty})$ terms), the symbol of \eqref{mjk} becomes
\begin{equation}\label{eq:mktilde}
  \tilde{m}_{k}(z',\theta/h) =\frac{1}{(2\pi h )^{d-1}}\int _{\varrho}\int_{w} 
  e^{-\frac ih w\cdot \varrho}
a_{k}((z',\theta); (w,\varrho);h)
  dw d\varrho\,,
\end{equation}
where
\begin{multline}\label{defajk}
a_{k}((z',\theta); (w,\varrho);h):=|\nabla_{\tiTheta}\eta|(\theta/h,z',z'+w)|\nabla_{\tiTheta'}\eta|((\theta-\varrho)/h,z',z'+w)\\\times {\varkappa}(h\eta(\theta/h,z',z'+w)){\varkappa}(h\eta((\theta-\varrho)/h,z',z'-w))\tilde\chi(z'+w)\chi(z)\chi(z')\\
\times \int_0^{\infty}\overline{\tilde{e}(x,z'+w,\eta(\theta/h,z,z'+w),\omega_k)} \tilde{e}(x,z'+w,\eta((\theta-\varrho)/h,z',z'+w),\omega_k) dx\,.
\end{multline}
 Define
\begin{equation}
  \label{eq:20}
  \mathcal{S}_{\gamma}:=\{a\in C^{\infty} \,\,\text{such that}\,\, |\partial_{w}^{\beta_{1}}\partial_{\varrho}^{\beta_{2}}a(w,\varrho,h)|\leq C_{\beta} h^{-\gamma (|\beta_{1}|+|\beta_{2}|)}\}\,.
\end{equation}
We now prove
\begin{prop}\label{propsymbole}
Let $0<\ceps_{1}<1/4$ be small. The symbols $a_k((z',\theta);(w,\varrho);h)$ are in the class $\mathcal{S}_{\frac{1}{2}-\frac 2 3\ceps_{1}}$, uniformly with respect to $1\leq k\leq h^{-\frac 1 4+\ceps_{1}}$.
\end{prop}
\begin{proof}
We check that there exists $\ceps_{1}>0$ such that for every $|\beta|\geq 1$,
\begin{equation}\label{akderb}
h^{|\beta|}\Big|\partial^{\beta}_w\partial^{\beta}_{\varrho}a_{k}(z',\theta;w,\varrho;h)|_{w=0,\varrho=0}\Big|\lesssim h^{\frac 4 3\ceps_{1} |\beta|}\,.
\end{equation}
This easily follows from following lemma whose proof is postponed to the Appendix:
\begin{lemma}\label{lemestimderivekk}
Uniformly for $h^{2/3}\omega_k\ll 1$, we have:
\begin{equation}\label{estimderivekk}
\|\partial^{\beta}_{(y,\theta)}\tilde{e}(.,y,\theta/h,\omega_k)\|_{L^2(x\geq 0)}\lesssim \big(\omega_k/h^{1/3}\big)^{|\beta|}\,.
\end{equation}
\end{lemma}
Using \eqref{defajk}, \eqref{estimderivekk}, and $\omega_k\sim k^{2/3}$, we get that for every $|\beta|\geq 1$, as $k\leq h^{-1/4+\ceps_{1}}$.
\[
  h^{|\beta|}\Big|\partial^{\beta}_w\partial^{\beta}_{\varrho}a_{k}((z',\theta);(w,\varrho);h)|_{w=0,\varrho=0}\Big|
  \lesssim C_{\beta}h^{|\beta|}\big(\omega_k/h^{1/3}\big)^{2|\beta|}\lesssim C_{\beta}\big(h^{4\ceps_{1}/3}\big)^{|\beta|}\,,
\]
which proves the Lemma and completes the proof of Proposition \ref{propsymbole}.
\end{proof}
We now turn our attention to $\tilde m_{k}$ and recall the classical expansion:
\begin{lemma}\label{propstationaryphase}
Let $a(w,\varrho,h):\mathbb{R}^{2n}\rightarrow \mathbb{C}$ be $C^{\infty}$ with $a\in \mathcal{S}_{\frac 12-\ceps_{1}}$, with small $\ceps_{1}>0$. Then 
\[
  \frac{1}{(2\pi h)^n}\int _{\varrho}\int_{w} e^{-\frac ih w\cdot \varrho}a(w,\varrho,h)dw d\varrho=
  \sum_{\beta} \frac{h^{|\beta|}}{i^{|\beta|}|\beta|!}\partial^{\beta}_w \partial_{\varrho}^{\beta} a(w,\varrho,h)\Big|_{(w,\varrho)=(0,0)} \,.
\]
\end{lemma}
Using Lemma \ref{propstationaryphase} for $n=d-1$ with $a_k$ defined in \eqref{defajk} and Proposition \ref{propsymbole}, we get that $\tilde{m}_{k}(z',\theta/h)$ may be written as
\begin{equation}\label{mtildeh}
\tilde{m}_{k}(z',\theta/h)= \sum_{\beta}\frac{h^{|\beta|}}{i^{|\beta|}|\beta|!}\partial^{\beta}_w\partial^{\beta}_{\varrho}a_{k}((z',\theta);(w,\varrho);h)|_{w=0,\varrho=0}\,.
\end{equation}
We may now return to the proof of our main Proposition.
\begin{proof}(of Proposition \ref{propmatrix})
  From \eqref{FistarFj} and \eqref{mjk},
  \begin{equation}
    \label{eq:21}
        \tilde F_{\omega_k}^*\circ \tilde F_{\omega_k}(f)(z)=\int e^{\frac ih (z-z')\cdot\theta}\tilde m_k(z',\theta/h)f(z')dz'\,,
      \end{equation}
 where the symbol $\tilde m_k(z',\theta/h)$ is given by \eqref{mtildeh} and where, for every $k\leq h^{-1/4-\ceps_{1}}$, \eqref{akderb} holds true. Moreover, $\tilde m_k$ is elliptic. Indeed, from \eqref{defajk}, it follows that
\[
a_{k}((z',\theta);(0,0);h)={\sigma}(z',\theta/h;0,0) \|\tilde{e}(.,z',\theta/h,\omega_k) \|^2_{L^2(x>0)},
\]
where for $\tiTheta=\theta/h$ and $\zeta=\varrho/h$ like before, we define
\begin{multline}\label{sigmaandtildes}
\sigma((z',\tiTheta);(w,\zeta))=|\nabla_{\tiTheta}\eta|(\tiTheta,z',z'+w)|\nabla_{\tiTheta'}\eta|(\tiTheta-\zeta,z',z'+w)\\\times {\varkappa}(h\eta(\tiTheta,z',z'+w)){\varkappa}(h\eta(\tiTheta-\zeta,z',z'-w))\tilde\chi(z'+w)\chi(z)\chi(z').
\end{multline} 
Using Lemma \ref{lemequivL2normstildee} in the Appendix,
the $e(.,y,\eta,\omega_k)$ are almost $L^2$-normalized in $x>0$ and $\|\tilde e(\cdot,y,\eta,\omega_k)\|_{L^2(x>0)}\sim 1$. On the other hand the symbol $\sigma$ defined in \eqref{sigmaandtildes} is elliptic, therefore $a_{k}$ is elliptic and $a_{k}((z',\theta);(0,0);h)\sim 1$ for $\theta$ close to $1$; using \eqref{akderb}, $\tilde m_k$ is therefore elliptic; $\tilde F_{\omega_k}^*\circ \tilde F_{\omega_k}$ are pseudo-differential operators, uniformly elliptic for $k\leq h^{-1/4+\ceps_{1}}$.
\end{proof}
\subsubsection{Construction of quasi-modes $k\ll h^{-1/4}$ (proof of Proposition \ref{propdataapetitpetit1}).}\label{sectproofs}
It suffices to  construct a smooth function $g_{h,a,1}$ such that, for all $k$ such that $\chi_{\flat}(\omega_{k}/\omega_{K_{\ceps}})\neq 0$, 
\begin{equation}\label{eqKE}
\frac{2\pi}{L'(\omega_k)}K_{\omega_k}(g_{h,a,1})(0,x,y)=\mathcal{E}_M(x,y,a,\omega_k) +O(h^{\infty}).
\end{equation}
Indeed, for a function $g(y',\tprime)$, let $\hatdeux{g}(y',\alpha/h)$ be its Fourier transform w.r.t. $\tprime$ at $\alpha/h$. 
Using the definition of $K_{\omega}(g)$, we only need $\hatdeux{g}_{h,a,1}(.,\omega/h^{1/3})$ for $\omega\in\{\omega_k, 1\leq k\ll h^{-1/4}\}$.
\begin{lemma}\label{lemfkhatgk}
Let $h^{-2\ceps}\lesssim K_{\ceps}\lesssim h^{-1/4+\ceps}$, $\ceps>0$ small. For $1\leq k\leq 4 K_{\ceps}$, define
\begin{equation}
  \label{eq:13}
  f_{\omega_k}(y):=\Big(\tilde F^*_{\omega_k}\circ \tilde F_{\omega_k}\Big)^{-1}\Big(\tilde F^*_{\omega_k}(\mathcal{E}_M(.,\omega_k))\Big)(y)\,,
\end{equation}
then define $g_{h,a,1}$ with: $\forall k$ such that $\chi^{\flat}(\omega_k/\omega_{K_{\ceps}})\neq 0$,
\begin{equation}
  \label{eq:14}
  \hatdeux{g}_{h,a,1}(.,\omega_k/h^{1/3}) :=\frac{\sqrt{L'(\omega_k)}}{\sqrt{2\pi}}\Lo(f_{\omega_k})\,,\\
\end{equation}
and $\forall k$ such that $\chi^{\flat}(\omega_k/\omega_{K_{\ceps}})=0$,
$\hatdeux{g}_{h,a,1}(.,\omega_k/h^{1/3}) :=0$. Then Proposition \ref{propdataapetitpetit1} 
holds for $g_{h,a,1}$.
\end{lemma}
\begin{proof}
  As $k\leq 4 K_{\ceps}\lesssim h^{-1/4+\ceps}\ll h^{-1/4}$, we can use Proposition \ref{propmatrix} and define $f_{\omega_k}$ by \eqref{eq:13}. For such $f_{\omega_k}$ and $\chi_{\flat}(\omega_{k}/\omega_{ K_{\ceps}})\neq 0$, we now define $\hatdeux{g}_{h,a,1}(y,\omega_k/h^{1/3})$ by \eqref{eq:14} (and zero for larger $k$'s).

  By construction, on the support of $\chi_{\flat}(\omega_{k}/\omega_{K_{\ceps}})$, we have $F_{\omega_k}(\Lo(f_{\omega_k}))(x,y)=\tilde{F}_{\omega_k}(f_{\omega_k})(x,y)=\mathcal{E}_M(x,y,a,\omega_k)$, as we chose $f_{\omega_k}$ such that $\tilde F^*_{\omega_k}\circ \tilde F_{\omega_k}(f_{\omega_k})=\tilde F^*_{\omega_k}(\mathcal{E}_M(.,\omega_k))$. In turn, we have
\begin{equation}\label{eqKEfgfg}
F_{\omega_k}\Big(\frac{2\pi}{\sqrt{L'(\omega_k)}}\hatdeux{g}_{h,a,1}(.,\omega_k/h^{1/3})\Big)(x,y)=\mathcal{E}_{M}(x,y,a,\omega_k)+O(h^{\infty})\,,
\end{equation}
(inverting $\tilde F^{*}_{\omega_{k}}$); using \eqref{eq:Kequiv} and \eqref{defFk}, $\frac{2\pi}{L'(\omega_k)}K_{\omega_k}(g_{h,a,1})(0,x,y)=\mathcal{E}_{M}(x,y,a,\omega_k)+O(h^\infty)$. 
\end{proof}
We obtain the explicit form of $\hatdeux{g}_{h,a,1}$ as a corollary of Lemma \ref{lemfkhatgk}:
\begin{cor}\label{lemgomegak}
We keep the notations from the proof of Lemma \ref{lemfkhatgk}.
Let 
\begin{equation}\label{Io}
I_a(\eta,\omega_k):=\int_{x,y} e^{-iy\cdot \eta}\overline{e(x,y,\eta,\omega_k)}\mathcal{E}_M(x,y,a,\omega_k)dxdy.
\end{equation}
For $1\leq k\leq K_{\ceps}$, $g_{h,a,1}$ (from Lemma \ref{lemfkhatgk}) may be rewritten
\[
\hatdeux{g}_{h,a,1}(y',\omega_k/h^{1/3})=\frac{\sqrt{L'(\omega_k)}}{\sqrt{2\pi}}\int e^{iy'\cdot\eta}\varkappa(h\eta)r(\eta,\omega_k)I_a(\eta,\omega_k)d\eta,
\]
where $r(.,\omega_k)$ is an elliptic symbol of order $0$ and main contribution $1/\tilde m_k(y,\eta+|\eta|\partial_yB_0(y,\eta/|\eta|))$ with $\tilde m_k$ defined in \eqref{mtildeh}. 
\end{cor}
\begin{proof}
We compute explicitly 
\[
\tilde F^*_{\omega_k}(\mathcal{E}_M)(z)=\frac{1}{(2\pi)^{d-1}}\int e^{i(z\cdot \eta+|\eta|B_0(z,\eta/|\eta|))}\varkappa(h\eta)\int_{x,y} e^{-iy\cdot \eta}\overline{e(x,y,\eta,\omega_k)}\mathcal{E}_M(x,y,a,\omega_k)dxdy d\eta.
\]
Moreover, using \eqref{FistarFj} and \ref{mjk}, there exists an elliptic symbol $\tilde r_k(y',\tiTheta)$ of order $0$ with main contribution $1/\tilde m_k$ and $\tilde\varkappa$ supported for $\tiTheta\sim \frac 1h$ and equal to $1$ on the support of $\varkappa$ such that $(\tilde F^*_{\omega_k}\circ \tilde F_{\omega_k})^{-1}(F)(y')=\int e^{i(y'-z)\cdot \eta}\tilde\varkappa(h\eta)\tilde r_k(y',\eta)F(z)dzd\eta$. Taking $F=\tilde F^*_{\omega_k}(\mathcal{E}_M)$ yields
\[
f_{\omega_k}(y')=\frac{1}{(2\pi)^{d-1}}\int e^{i(y'-z)\cdot \eta}\tilde r_k(y',\eta)\varkappa(h\eta')\int e^{i(z\cdot \eta'+|\eta'|B_0(z,\eta'/|\eta'|))} I_a(\eta',\omega_k) d\eta' dz d\eta.
\]
Applying stationary phase with respect to $z,\eta$, critical points are $z=y'$, $\eta=\eta'+|\eta'|\partial_zB_0(z,\eta'/|\eta'|)|_{z=y'}$ and
$$
f_{\omega_k}(y')=\frac{1}{(2\pi)^{d-1}}\int e^{i(y'\cdot \eta+|\eta|B_0(y',\eta/|\eta|))}r_k(y',\eta) I_a(\eta,\omega_k)d\eta
$$
where the new symbol $r_k(y',\eta)$ is obtained from $\tilde r_k$ and has main contribution $\tilde r_k(y',\eta)=1/\tilde m_k$ and where $I$ remains unchanged since depended only on $\eta'$ and not on $x,\eta$. We therefore get
\[
\Lo(f_{\omega_k})(y)=\frac{1}{(2\pi)^{d-1}}\int e^{i(y-y')\cdot\tilde\eta-|\tilde \eta|B_0(y',\tilde\eta/|\tilde\eta|)}\varkappa(h\tilde\eta)\int e^{i(y'\cdot\eta+| \eta|B_0(y',\eta/|\eta|))}r_k(y',\eta) I_a(\eta,\omega_k)d\eta dy'd\tilde \eta
\]
and integrating in $y',\tilde \eta$ give $y'=y$, $\tilde \eta=\eta$ achieves the proof.
\end{proof}

\subsubsection{Proof of Proposition \ref{propdataapetitpetit2}}\label{proofprop2}
Our goal is to obtain $g_{h,a,2}$ such that \eqref{eq:g1} holds. Observe that the sum in the second line of \eqref{eq:g1}, involving $\mathcal{E}_M$ for large $\omega_k$, is, using \eqref{eq:AiryPoisson},
\begin{multline}\label{EMomegamare}
\sum_{k\geq 1}\cutoffchi^{\sharp}\Big( \frac{\omega_k}{\omega_{K_{\ceps}}}\Big)\cutoffchi^{\flat}(h^{2/3}\omega_k/\ceps_{0})\mathcal{E}_M(x,y,a,\omega_k)
\\
=\langle \sum_{N\in \Z} e^{-iN L(\omega)}
    , {\cutoffchi^{\flat}}(h^{2/3}\omega/\ceps_{0})\cutoffchi^{\sharp}\Big(\frac{ \omega}{\omega_{K_{\ceps}}}\Big)\frac{L'(\omega)}{2\pi}\mathcal{E}_M(x,y,a,\omega))\rangle_{\omega},
\end{multline}
and non-stationary phase (with respect to $\omega$) easily applies for all $|N|\geq 2$ in the second line of \eqref{EMomegamare}, providing an $O(h^{\infty})$ contribution (this is just the model case). Therefore, we are left to obtain $g_{h,a,2}$ such that
\begin{multline}\label{tosolve}
\int {\cutoffchi^{\flat}}(h^{2/3}\omega)\cutoffchi^{\sharp}( h^{2\ceps} \omega)\Big(1+\sum_{\pm} e^{\pm i L(\omega)}\Big)K_\omega(g_{h,a,2})(0,x,y)d\omega\\+\sum_{|N|\geq 2}\int e^{- iN L(\omega)}{\cutoffchi^{\flat}}(h^{2/3}\omega/\ceps_{0})\cutoffchi^{\sharp}( h^{\ceps}\omega)K_\omega(g_{h,a,2})(0,x,y)d\omega \\
=\int  {\cutoffchi^{\flat}}(h^{2/3}\omega/\ceps_{0})\cutoffchi^{\sharp}\Big(\frac{ \omega}{\omega_{K_{\ceps}}}\Big)\Big(1+\sum_{\pm} e^{\pm i L(\omega)}\Big)\frac{L'(\omega)}{2\pi}\mathcal{E}_M(x,y,a,\omega)d\omega+O(h^{\infty}).
\end{multline}
Let us analyze the last line of \eqref{tosolve}, corresponding to the sum over model gallery modes. Here $\mathcal{E}_M$ is a product of two Airy functions $e^{iy\cdot \eta}\mathrm{Ai}(-\zeta_M(x,\eta,\omega))\mathrm{Ai}(-\zeta_M(a,\eta,\omega))$, where the phases $\zeta_M=\omega-xq^{1/3}(\eta)$ and $\psi_M(y,\eta)=y\cdot \eta$ are such that \eqref{systeikeq} holds with $<.,.>$ replaced by the scalar product obtained by polarization of the principal symbol $\xi^2+|\eta|^2+xq(\eta)$ of the model Laplace operator $\Delta_M$. Using the definition of $L$ in Lemma \ref{lemL},
$
1+\sum_{\pm}e^{\pm iL(\omega)}=1-\Big(\frac{A_+}{A_-}\Big)(\omega)-\Big(\frac{A_-}{A_+}\Big)(\omega)
$. As $h^{2/3}\omega_{K_{\ceps}}\sim (hK_{\ceps})^{2/3}\gtrsim (h^{1-2\ceps})^{2/3}\gg h^{2/3-\ceps}$, it follows that $h^{2/3}\omega$ is much larger than $a$ on the support of the symbol of the integral in the last line of \eqref{tosolve}, and we can use \eqref{eq:Apm} to write $Ai(-\zeta_M(a,\eta,\omega))=\sum_{\pm}A_{\pm}(\zeta_M(a,\eta,\omega))$. The phase of $\mathcal{E}_M$ is now
\begin{equation}\label{phaEm}
y\cdot \eta+\xi^3/3+\xi(xq^{1/3}(\eta)-\omega)\pm \frac 23 (\omega-aq^{1/3}(\eta))^{3/2}.
\end{equation}
\begin{lemma}
In the integral defining $\mathcal{E}_M(.,\omega)$, the usual stationary phase in $\xi$ applies. Moreover, for the phase corresponding to $N=0$ in the second line of \eqref{EMomegamare}, we have
\[
\phi_{M,\pm,\mp}(x,y,\eta,\omega):= y\cdot \eta\pm\frac 23 \Big((\omega-xq^{1/3}(\eta))^{3/2}-(\omega-aq^{1/3}(\eta))^{3/2}\Big).
\]
In the same way, the phases corresponding to $N=\pm1 $ in the second line of \eqref{EMomegamare} are
\[
\phi_{M,\pm,\pm}(x,y,\eta,\omega)\mp\frac 43 \omega^{3/2}:= y\cdot \eta\pm \frac 23\Big((\omega-xq^{1/3}(\eta))^{3/2}+(\omega-aq^{1/3}(\eta))^{3/2}-2\omega^{3/2}\Big).
\]
Moreover, for $x>2h^{2/3-\ceps}$, the integral in the second line of \eqref{tosolve} is $O(h^{\infty})$.
\end{lemma}
\begin{proof}
Let $N=0$. The derivative with respect to $\omega$ of the phase \eqref{phaEm} of $\mathcal{E}_M$ vanishes when $\xi=\pm \sqrt{\omega-aq^{1/3}(\eta)}$. As $a\leq h^{2/3-\ceps}$ and $\omega\geq \omega_{K_{\ceps}}\gtrsim h^{-2\ceps}\gg a/h^{2/3}$, we introduce a cut-off in $\xi$ localizing for $|\xi|\in[\frac 12 \sqrt{\omega}, 2\sqrt{\omega}]$ without changing the contribution of the integral corresponding to $N=0$ modulo $O(h^{\infty})$ (for $|\xi|\leq \frac 12 \sqrt{\omega}$ the phase \eqref{phaEm} is non-stationary in $\omega$). The second derivative of \eqref{phaEm} with respect to $\xi$ is $2\xi\sim \pm \sqrt{\omega}$ and for $\xi$ such that $|\xi|\sim \sqrt{\omega}\geq \sqrt{\omega_{K_{\ceps}}}\gg h^{-\ceps}$ on the support of the symbol, stationary phase yields the following phase for $N=0$,
\[
\phi_{M,\pm,\pm}(x,y,\eta,\omega):=y\cdot \eta\pm \frac 23 (\omega-x q(\eta)^{1/3})^{3/2}\pm \frac 23(\omega-aq(\eta)^{1/3})^{3/2}\,.
\]
We further notice that the phases $\phi_{M,+,+}$ and $\phi_{M,-,-}$ are non-stationary in $\omega$: indeed, $\partial_{\omega}\phi_{M,+,+}\sim 2\sqrt{\omega}$ and we get $O(h^{\infty})$ by integrations by parts. Moreover, taking switching signs, the derivative with respect to $\omega$ becomes 
\begin{equation}\label{derivphaEMN=0}
\sqrt{\omega-xq(\eta)^{1/3}}-\sqrt{\omega-aq(\eta)^{1/3}}\sim \frac{(a-x)}{2\sqrt{\omega}} q(\eta)^{1/3}\,,
\end{equation}
and from \eqref{derivphaEMN=0}, for $x-a\geq 2h^{2/3-\ceps}-a\geq h^{2/3-\ceps}$ and $\omega_{K_{\ceps}}\lesssim \omega\leq \ceps_0h^{-2/3}$, integrations by parts in $\omega$ provide a $O(h^{\infty})$ contribution in the integral corresponding to $N=0$ in the second line of \eqref{EMomegamare}. In fact, with $\mathcal{E}_M$ as in \eqref{defEmathcalM}, the symbol of the integral in the second line of \eqref{EMomegamare} depends on $\omega$ only through $\chi^{\flat}(h^{2/3}\omega/\ceps_{0})\chi^{\sharp}({\omega}/{\omega_k})$, and therefore, in order to integrate by parts in $\omega$, it remains to check that, for some $\tilde\ceps>0$, we have
\begin{equation}\label{derivtocheck}
\frac{2\sqrt{\omega}}{(x-a)q(\eta)^{1/3}}\partial_{\omega}\Big(\cutoffchi^{\sharp}\Big(\frac{\omega}{\omega_{K_{\ceps}}}\Big)\Big)\ll h^{\tilde\ceps}\,.
\end{equation}
As $\cutoffchi^{\sharp}({\omega}/{\omega_{K_{\ceps}}})$ is constant everywhere but for $\omega\sim \omega_{K_{\ceps}}$, \eqref{derivtocheck} vanishes everywhere but for $\omega\sim\omega_{K_{\ceps}}$; from $\eta\sim \frac 1h$ and $\omega_{K_{\ceps}}\sim K_{\ceps}^{2/3}\gg h^{-4\ceps/3}$, the lefthand side in \eqref{derivtocheck} is at most $\frac{h^{2/3}}{h^{2/3-\ceps}\sqrt{\omega_{K_{\ceps}}}}$ ; 
we obtain \eqref{derivtocheck} with $\tilde \ceps=5\ceps/3$ and we can integrate by parts infinitely many times. 

For $N=\pm1 $ we proceed in a similar manner. Let for instance $N=1$, then the phase of $e^{-iL(\omega)}\mathcal{E}_M$ is just \eqref{phaEm}$-\frac 43\omega^{3/2}$; it is stationary with respect to $\omega$ for $\xi=2\sqrt{\omega}-\sqrt{\omega-aq^{1/3}(\eta)}$. As $\omega\geq \omega_{K_{\ceps}}\gtrsim h^{-4\ceps/3}\gg a/h^{2/3}$ we again introduce a cut-off, supported for $|\xi|\in [\frac 12 \sqrt{\omega},2\sqrt{\omega}]$ without changing the contribution of the integral modulo $O(h^{\infty})$. Stationary phase then applies in $\xi$ and provides the phase function $\phi_{M,\pm,\pm}-\frac 43\omega^{3/2}$. We easily see that $\phi_{M,\pm,\mp}$ and $\phi_{M,-,-}$ are non-stationary with respect to $\omega$ and provide a $O(h^{\infty})$ contribution. We are left with $\phi_{M,+,+}-\frac 43 \omega^{3/2}$ whose derivative with respect to $\omega$ is
\begin{equation}\label{derivphaEMN=1}
\sqrt{\omega-xq(\eta)^{1/3}}+\sqrt{\omega-aq(\eta)^{1/3}}-2\sqrt{\omega}\sim -(x+a)q(\eta)^{1/3}/(2\sqrt{\omega})\,.
\end{equation}
From $a\lesssim h^{2/3-\ceps}$ we obtain that, for $x\geq 2h^{2/3-\ceps}$, the phase $\phi_{M,+,+}-\frac 43 \omega^{3/2}$ is non-stationary in $\omega$ and yields an $O(h^{\infty})$ contribution. The exact same line of reasoning applies to $N=-1$. 
\end{proof}
\begin{rmq}
For $|x,a|\leq \frac{h^{2/3}}{\sqrt{\omega_{K_{\ceps}}}}$, we cannot get an  $O(h^{\infty})$ contribution for $N=\pm 1$: \eqref{derivtocheck} does not hold anymore even though for $x\geq 0$ the derivative with respect to $\omega$ does not vanish; for such small values of $a$ we cannot perform integrations by parts. Specifically, as $K_{\ceps}\ll h^{-1/4}$, we are to deal with this case for all $|x,a|\leq h^{2/3+1/12}=h^{3/4}(\ll\frac{h^{2/3}}{\sqrt{\omega_{K_{\ceps}}}})$.
\end{rmq}
Putting all this together, the integral in the last line in \eqref{tosolve} reads as 
\[
\int  {\cutoffchi^{\flat}}(h^{2/3}\omega/\eps_{0})\cutoffchi^{\sharp}\Big(\frac{ \omega}{\omega_{K_{\ceps}}}\Big)\Big(1+\sum_{\pm} e^{\pm i L(\omega)}\Big)\frac{L'(\omega)}{2\pi}\mathcal{E}_M(x,y,a,\omega)d\omega=E_{M,+}(x,y,a)+E_{M,-}(x,y,a),
\]
where we have set, modulo $O(h^{\infty})$,
\begin{multline}\label{EM1}
E_{M,+}(x,y,a):=\int  {\cutoffchi^{\flat}}(h^{2/3}\omega/\eps_{0})\cutoffchi^{\sharp}({\omega}/{\omega_{K_{\ceps}}}) \int q(\eta)^{1/6}\varkappa(h\eta)\chi_0(h^{2/3}\omega q^{2/3}(h\eta))\\
\times e^{iy\cdot \eta}\Big(A_+(\zeta_M(x,\eta,\omega))-\Big(\frac{A_+}{A_-}\Big)(\omega)A_-(\zeta_M(x,\eta,\omega))\Big)A_-(\zeta_M(a,\eta,\omega))d\eta d\omega,
\end{multline}
\begin{multline}\label{EM2}
E_{M,-}(x,y,a):=\int  {\cutoffchi}^{\flat}(h^{2/3}\omega/\eps_{0})\cutoffchi^{\sharp}({\omega}/{\omega_{K_{\ceps}}})\int q(\eta)^{1/6}\varkappa(h\eta)\chi_0(h^{2/3}\omega q^{2/3}(h\eta))\\
\times e^{iy\cdot \eta}\Big(A_-(\zeta_M(x,\eta,\omega))-\Big(\frac{A_-}{A_+}\Big)(\omega)A_+(\zeta_M(x,\eta,\omega))\Big)A_+(\zeta_M(a,\eta,\omega))d\eta d\omega,
\end{multline}
where the phase functions of the Airy terms in the second line of \eqref{EM1} are $\phi_{M,+,-}$ and $\phi_{M,-,-}+L(\omega)$, while the phase functions of the Airy terms in the second line of \eqref{EM2} are $\phi_{M,-,+}$ and $\phi_{M,+,+}-L(\omega)$. Moreover, for $x>2h^{2/3-\ceps}$, $E_{M,\pm}(x,y,a)=O(h^{\infty})$ and $E_{M,\pm}(0,y,a)=0$. This means that we can introduce a smooth cut-off $\chi_1( x/h^{2/3-\ceps})$ with $\chi_1\in C^{\infty}_0$ equal to $1$ on $[-1,1]$ and equal to $0$ for $x\geq 2h^{2/3-\ceps}$ such that $E_{M,\pm}(x,y,a)=\chi_1(x/h^{2/3-\ceps})E_{M,\pm}(x,y,a)+O(h^{\infty})$, and therefore we need to construct $g_{h,a,2}$ such that \eqref{tosolve} holds with the last line replaced by $\chi_1(x/h^{2/3-\ceps})(E_{M,+}(x,y,a)+E_{M,-}(x,y,a))$ (instead of $E_{M,+}(x,y,a)+E_{M,-}(x,y,a)$). 

We now go back to \eqref{tosolve}: the symbol of its left hand side has support in $\omega\gtrsim h^{-2\ceps}$, while the right hand side is essentially supported for $x,a\lesssim h^{2/3-\ceps}$. For such values of $x$ and $\omega$ we have $\zeta(x,y,\eta,\omega)=\omega-x|\eta|^{2/3}e_0(x,y,\eta/|\eta|,\omega_k/|\eta|^{2/3})\geq \omega/2$; using \eqref{eq:defG} we write
\[
G(x,y,\eta,\omega)=e^{i\psi}\sum_{\pm}\Big(p_0A_{\pm}(\zeta)+i  |\eta|^{-1/3}p_1A'_{\pm}(\zeta)\Big)=:G_{\pm}(x,y,\eta,\omega).
\]
\begin{prop}\label{propapetitg2}
There exists smooth functions $g_{h,a,2,\pm}$ such that, with $g_{h,a,2}:=\sum_{\pm} g_{h,a,2,\pm}$,
\begin{multline}\label{g2-}
\int \cutoffchi^{\flat}(h^{2/3}\omega/\ceps_{0})\cutoffchi^{\sharp}( h^{2\ceps}\omega)\Big(G_+(x,y,\eta,\omega)-\Big(\frac{A_+}{A_-}\Big)(\omega)G_-(x,y,\eta,\omega)\Big)\\
q(\eta)^{1/6}\varkappa(h\eta)\varkappa(h\tau_{q}(\omega,\eta))\hat{g}_{h,a,2,-}(\eta,\omega/h^{1/3})d\eta d\omega
=E_{M,+}(x,y,a)+O(h^{\infty}),
\end{multline}
\begin{multline}\label{g2+}
\int\cutoffchi^{\flat}(h^{2/3}\omega/\ceps_{0})\cutoffchi^{\sharp}( h^{2\ceps}\omega) \Big(G_-(x,y,\eta,\omega)-\Big(\frac{A_-}{A_+}\Big)(\omega)G_+(x,y,\eta,\omega)\Big)\\
\times q(\eta)^{1/6}\varkappa(h\eta)\varkappa(h\tau_{q}(\omega,\eta))\hat{g}_{h,a,2,+}(\eta,\omega/h^{1/3})d\eta d\omega
=E_{M,-}(x,y,a)+O(h^{\infty}).
\end{multline}
\begin{multline}\label{g2+0}
\int\cutoffchi^{\flat}(h^{2/3}\omega/\ceps_{0})\cutoffchi^{\sharp}( h^{2\ceps}\omega) \Big(G_+(x,y,\eta,\omega)-\Big(\frac{A_+}{A_-}\Big)(\omega)G_-(x,y,\eta,\omega)\Big)\\
q(\eta)^{1/6}\varkappa(h\eta)\varkappa(h\tau_{q}(\omega,\eta))\hat{g}_{h,a,2,+}(\eta,\omega/h^{1/3})d\eta d\omega
=O(h^{\infty}),
\end{multline}
\begin{multline}\label{g2-0}
\int\cutoffchi^{\flat}(h^{2/3}\omega/\ceps_{0})\cutoffchi^{\sharp}( h^{2\ceps}\omega) 
\Big(G_-(x,y,\eta,\omega)-\Big(\frac{A_-}{A_+}\Big)(\omega)G_+(x,y,\eta,\omega)\Big)\\
\times q(\eta)^{1/6}\varkappa(h\eta)\varkappa(h\tau_{q}(\omega,\eta))\hat{g}_{h,a,2,-}(\eta,\omega/h^{1/3})d\eta d\omega
=O(h^{\infty}),
\end{multline}
\begin{equation}\label{toshowsecond}
  \sum_{|N|\geq 2 }\int e^{- iN L(\omega)}\int\cutoffchi^{\flat}(h^{2/3}\omega/\ceps_{0})\cutoffchi^{\sharp}( h^{2\ceps}\omega)
  K_\omega(g_{h,a,2})(0,x,y)d\omega =O(h^{\infty}).
\end{equation}
\end{prop}
\begin{proof}
Proving that both operators in the first lines of \eqref{g2+} and \eqref{g2-} are invertible is sufficient: once we define $g_{h,a,2,\pm}$ it will be clear that \eqref{g2+0}, \eqref{g2-0} and \eqref{toshowsecond} hold (using non-stationary phase arguments). In fact, we notice from \eqref{g2-} that $g_{h,a,2,-}$ has to have a phase function whose derivative with respect to $\omega$ should equal $ 2\sqrt{\omega}+O(a)$, since otherwise the phase of \eqref{g2-} is non-stationary in $\omega$. Introducing such a function $\hat{g}_{h,a,2,-}$ in the integral in the first line of \eqref{g2-0} yields a phase function (for \eqref{g2-0}) whose derivative with respect to $\omega$ behaves like $4\sqrt{\omega}$ and since $\omega$ is large on the support of the symbol this allows to perform repeated integrations by part with respect to $\omega$ to obtain a $O(h^{\infty})$ contribution. In the same way we prove \eqref{toshowsecond}, since for $|N|\geq 2$ all the phase functions will be non-stationary in $\omega$ and after each integration by parts we obtain a factor $(N\sqrt{\omega})^{-1}$, which will allow to sum up over $N$ to conclude. We are reduced to proving that we can define $g_{h,a,2,-}$ satisfying \eqref{g2-} (solving \eqref{g2+} follows in exactly the same way).
\begin{prop}\label{proptildeJ}
Let $\tilde J_+:=J_++R_+$, with
\begin{multline}
  J_+(f)(x,y)=\int \cutoffchi^{\flat}(h^{2/3}\omega/\ceps_{0})\cutoffchi^{\sharp})(h^{2\ceps}\omega)
  G_+(x,y,\eta,\omega)\chi_1(x/h^{2/3-\ceps}) \\
\times q(\eta)^{1/6}\varkappa(h\eta)\varkappa(h\tau_{q}(\omega,\eta))e^{i(y'\cdot\eta+\tprime \omega/h^{1/3})} f(y',\tprime )d\eta d\omega dy'd\tprime.
\end{multline}
\begin{multline}
  R_+(f)(x,y)=\int \cutoffchi^{\flat}(h^{2/3}\omega/\ceps_{0})\cutoffchi^{\sharp})(h^{2\ceps}\omega)
  \Big(\frac{A_+}{A_-}\Big)(\omega)G_-(x,y,\eta,\omega)\chi_1(x/h^{2/3-\ceps})  \\
\times q(\eta)^{1/6}\varkappa(h\eta)\varkappa(h\tau_{q}(\omega,\eta))e^{i(y'\cdot\eta+\tprime \omega/h^{1/3})} f(y',\tprime )d\eta d\omega dy'd\tprime.
\end{multline}
The operator $\tilde J_+$ is well defined from $\mathcal{S}'_{y',\tprime}$ into the space of functions of $(x,y)$ near $(0,0)$, and with $h$ as small parameter, $J_+$ is an elliptic semi-classical Fourier integral operator. Moreover,  $\|J_+^{-1}\circ R_+\|_{\mathcal{L}(L^{2})}=O(h^{\infty})$, hence $\tilde J_+$ is invertible and $\tilde J^{-1}_+=\Big(I+J^{-1}_+\circ R_+\Big)^{-1}\circ J^{-1}_+$.
\end{prop}
If we now chose $g_{h,a,2,-}(y',\tprime):=\tilde J_+^{-1}(E_{M,+})$, this achieves the proof of Proposition \ref{propapetitg2}.
\end{proof}
\begin{proof}(of Proposition \ref{proptildeJ})
The operator $J_+$ is easily elliptic and invertible with phase function $\psi+\frac 23\zeta^{3/2}$ with $\psi$ and $\zeta$ defined in Theorem \ref{thmMelrose}. The phase function of $R_+$ is $\psi-\frac 23\zeta^{3/2}+\frac 43 \omega^{3/2}$. Therefore the phase function of $J^{-1}_+\circ R_+$ is given by
\begin{equation}\label{phaBR}
-\psi(x,y,\eta,\omega)-\frac 23\zeta^{3/2}(x,y,\eta,\omega)+\psi(x,y,\tilde \eta,\tilde\omega)-\frac 23 \zeta^{3/2}(x,y,\tilde\eta,\tilde \omega)+\frac 43 \tilde\omega^{3/2},
\end{equation}
where $x,y$ are now integration variables. The derivative of \eqref{phaBR} with respect to $x$ is
\[
-\partial_x\zeta(x,y,\eta,\omega)\sqrt{\zeta(x,y,\eta,\omega)}-\partial_x\zeta(x,y,\tilde \eta,\tilde\omega)\sqrt{\zeta(x,y,\tilde\eta,\tilde\omega)}-\partial_x\psi(x,y,\eta,\omega)+\partial_x\psi(x,y,\tilde\eta,\tilde\omega),
\]
where $|\eta|,|\tilde \eta|\sim 1/h$, $\omega,\tilde\omega\geq h^{-2\ceps}$ and $xq^{1/3}(\eta)\leq h^{2/3-\ceps-2/3}=h^{-\ceps}$ on the support of the symbol. As $\zeta=\omega-x|\eta|^{2/3}e_0(x,y,\eta/|\eta|,\omega/|\eta|^{2/3})$ with $e_0$ elliptic and close to $1$, the derivatives of the two terms involving $\zeta$ in \eqref{phaBR} are such that 
\[
\Big|\partial_x\zeta(x,y,\eta,\omega)\sqrt{\zeta(x,y,\eta,\omega)}+\partial_x\zeta(x,y,\tilde \eta,\tilde\omega)\sqrt{\zeta(x,y,\tilde\eta,\tilde\omega)}\Big|\gtrsim \frac 12 (\sqrt{\omega}|\eta|^{2/3}+\sqrt{\tilde \omega}|\tilde\eta|^{2/3}).
\]
On the other hand, using Corollary \ref{corUpsi} and \eqref{eq:Ups} in particular, the derivative with respect to $x$ of $\psi$ is of the form
\[
\partial_x\psi(x,y,\eta,\omega)= \mu(y,\eta/|\eta|)q^{2/3}(\eta)/\tau_{q}(\omega,\eta)\Big(\omega+2xq^{1/3}(1+\Lp(y,\eta/|\eta|))+\mathcal{H}_{j\geq 4}\Big),
\]
and therefore
\[
\Big|\partial_x\psi(x,y,\eta,\omega)-\partial_x\psi(x,y,\tilde \eta,\tilde\omega)\Big|\lesssim  \omega |\eta|^{1/3}+\tilde\omega |\tilde\eta|^{1/3},
\]
where we have used that $q^{2/3}(\eta)/\tau_{q}(\omega,\eta)=|\eta|^{4/3}q^{2/3}(\eta/|\eta|)/(|\eta|\tau_{q}(\omega/|\eta|^{2/3},\eta/|\eta|))\sim |\eta|^{1/3}$. On the support of the symbol $\chi(h^{2/3}\omega)\chi(h^{2/3}\tilde\omega)$ we have $|\omega|,|\tilde\omega|\leq \ceps_0h^{-2/3}$, which means that the main contribution of the derivative of \eqref{phaBR} comes from the terms involving $\zeta$ and behaves like $\simeq (\sqrt{\omega}|\eta|^{2/3}+\sqrt{\tilde \omega}|\tilde\eta|^{2/3})$, as for $|\eta|,|\tilde\eta|\simeq 1/h$ we have $\sqrt{\omega}|\eta|^{2/3}\gg \omega|\eta|^{1/3}$  ($\omega h^{2/3}\ll 1$). To perform non stationary phase and obtain an $O(h^{\infty})$ contribution, we check that taking one derivative with respect to $x$ of the symbol provides a factor $O(h^{\ceps})$. Indeed, ${h^{2/3}}\partial_x(\chi(x/h^{2/3-\ceps}))/{\sqrt{\omega}}\sim h^{\ceps}/\sqrt{\omega}\leq h^{\frac 53 \ceps}$, which completes the proof.
\end{proof}
To complete the proof of Proposition \ref{propdataapetitpetit2}, it remains to prove that, for $g_{h,a,2,-}:=\tilde J^{-1}_+(E_{M,+})$, \eqref{eq:g12} holds: but then in \eqref{eq:g12} one obtains a vanishing symbol as $\chi^{\flat}(h^{2\ceps}\omega)\chi^{\sharp}( h^{2\ceps}\omega)=0$.\qed
\section{Dispersion estimates when $a\geq h^{2/3-\ceps}$}
\label{sec:dispersion-estimates}
Here again one should think of $\ceps$ as being very small: we may set $0<\ceps<1/12$ to be consistent with the parametrix construction we just did in the opposite regime (subsection \ref{lowparam}), to have an overlap between both regimes where we get dispersion estimates by different arguments. We now use the parametrix as a sum over $N$ to obtain the following dispersion estimates, restricting to positive times for the sake of simplicity.
\begin{thm}\label{dispintermediaire}
There exist $a_{0}$, $c$, $C$, $\ceps$ such that for all $|(t,x,y,h,a)|<a_{0}$, one has
\begin{itemize}
\item for $t\leq c \sqrt a$,
  \begin{equation}
    \label{eq:1}
|  \Prond_{h,a}(t,x,y)|\leq C h^{-d}  \min \left(1, (h/t)^{\frac{d-1} 2}\right) \,;
  \end{equation}
\item For $a\geq h^{2/3-\ceps}$, $t>c\sqrt a$,
  \begin{equation}
    \label{eq:2}
|  \Prond_{h,a}(t,x,y)|\leq C h^{-d}  \left(\frac h t\right)^{\frac{d-2} 2} \left(  (\max(a,x))^{\frac 1 4} \left(\frac h t \right)^{\frac 1 4} +h^{\frac 1 3}\right)\,;
  \end{equation}
\item For $a\leq h^{1/3+\ceps}$,
  \begin{equation}
    \label{eq:3}
|  \Prond_{h,a}(t,x,y)|\leq C h^{-d}  \left(\frac h t\right)^{\frac{d-2} 2+\frac 1 3} \,.
  \end{equation}
\end{itemize}
\end{thm}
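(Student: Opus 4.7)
The plan is to treat the three estimates separately by choosing in each regime the most efficient of the two dual representations of $\Prond_{h,a}$: the reflected-wave expansion $\sum_{N\in\Z} V_N$ given by \eqref{eq:newVN}, or the gallery-mode expansion \eqref{eq:Prond2}. In both cases, after stationary phase in the $(d-1)$ tangential variables $\theta$ we will extract a universal factor $(h/t)^{(d-2)/2}$ (respectively $(h/t)^{(d-1)/2}$ when no reflections are involved), and everything else concerns a one-dimensional integral in $\alpha$ whose phase carries an Airy-type structure through the function $L$.

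For the first estimate, valid in the no-reflection window $t\leq c\sqrt a$: recall that $\sqrt a$ is essentially the travel time between two successive reflections of a ray starting at distance $a$ from the strictly convex boundary. For $N\neq 0$ the phase $\Psi_N$ contains the term $NhL(\alpha h^{-2/3})$ whose $\alpha$-derivative is of order $|N|\alpha^{1/2}\gtrsim |N|\sqrt a\gg t$, so integrations by parts in $\alpha$ yield $V_N=O_{C^\infty}(h^\infty)$ uniformly in $N\neq 0$. One is thus reduced to estimating $V_0$, which is a standard boundaryless half-wave parametrix applied to $\delta_{(a,0)}$: stationary phase in $(\varrho,s,\alpha)$ followed by tangential stationary phase in $\theta$ produces the free-wave decay $(h/t)^{(d-1)/2}$.

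For the second estimate in the regime $a\geq h^{2/3-\varepsilon}$, we retain the reflected-wave sum $\sum_N V_N$ and estimate each $V_N$ individually. Stationary phase in $\varrho$ and $s$ localizes the integrand near the $N$-th reflection and reduces $V_N$ to a $(d-1)+1$-dimensional oscillatory integral in $(\theta,\alpha)$, the tangential part of which contributes $(h/t)^{(d-2)/2}$. For the $\alpha$-integral, one uses the asymptotics $L(\omega)=\tfrac 43\omega^{3/2}-B(\omega^{3/2})$ of \eqref{eq:propL} together with the scaling \eqref{eq:rescale}, and distinguishes three regimes for the critical point $\alpha_c$: in the non-degenerate regime (off-caustic, $\alpha_c\gtrsim a$) standard stationary phase gives a gain $(h/t)^{1/2}$; near the swallowtail-type caustics of the $N$-th reflected wave the second $\alpha$-derivative of $\Psi_N$ vanishes and Van der Corput with the third derivative gives only $(\sup(a,x))^{1/4}(h/t)^{1/4}$; in the Airy boundary layer $\alpha\sim h^{2/3}$ one keeps the full Airy profile and obtains $h^{1/3}$. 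Summing over $N$ is harmless because the $V_N$ are essentially supported on disjoint time windows of length $\sqrt a$ and only $O(t/\sqrt a)$ values of $N$ are seen for a given $t$, a count absorbed in each of the three gains above.

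For the third estimate, in the whispering-gallery regime $a\leq h^{1/3+\varepsilon}$, the gallery-mode representation \eqref{eq:Prond2} is preferable: the source $g_{h,a}$ effectively excites only modes with $\omega_k\lesssim a h^{-2/3}$, and since $\omega_k\sim k^{2/3}$ this means $k\leq K\sim a^{3/2}/h$, all larger $k$'s contributing $O(h^\infty)$ by a non-stationary phase argument analogous to the one for $|N|\geq 2$ in the parametrix proof. Each $K_{\omega_k}(g_{h,a})$ is a tangential semi-classical F.I.O. with amplitude governed by $Ai(-\omega_k+x|\eta|^{2/3}e_0)$; tangential stationary phase gives $(h/t)^{(d-1)/2}$, weighted by $L'(\omega_k)^{-1}\sim \omega_k^{-1/2}$, and since the Airy profiles $Ai(\cdot-\omega_k)$ are essentially disjoint one sums in $k$ to the claimed $(h/t)^{(d-2)/2+1/3}$. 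The main obstacle is the middle regime, where the interaction between Airy-type degeneracies and the large count $t/\sqrt a$ of overlapping reflected waves must be carefully balanced; the anisotropic scaling \eqref{eq:rescale} and the precise control of the phase $\Psi_N$ obtained through Lemma \ref{lemp} and Theorem \ref{thmMelrose} are precisely what makes this balance quantitative.
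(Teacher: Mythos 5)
Your sketch of the first estimate \eqref{eq:1} is fine: on the window $t\le c\sqrt a$ the $\alpha$-derivative of $NhL(\alpha h^{-2/3})$ is $\sim N\alpha^{1/2}$ and dominates $t\,\partial_\alpha\rho=O(t)$ for $N\ne 0$, so the $V_N$ with $N\ne 0$ are negligible, and the paper in fact only remarks on this case (citing \cite{blsmso08}). Your description of the third estimate \eqref{eq:3} is also roughly in the spirit of the paper, although the paper does not rely on disjointness of the Airy profiles: it uses explicit sum estimates of the type $\sup_b\sum_{k\le L}k^{-1/3}Ai^2(b-\omega_k)\lesssim L^{1/3}$ for small $t$, stationary phase in $\theta$ for moderate $t$ with $k\lesssim h^{-\epsilon}$, and a separate nonstationary argument for $k\gtrsim h^{-\epsilon}$.

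The genuine gap is in your treatment of the second estimate \eqref{eq:2}, and it is precisely the point the paper flags as its main new difficulty over \cite{Annals}. You write that summing over $N$ is harmless because ``the $V_N$ are essentially supported on disjoint time windows of length $\sqrt a$ and only $O(t/\sqrt a)$ values of $N$ are seen for a given $t$.'' This fails in the regime $h^{2/3-\varepsilon}\le a< h^{4/7}$ that this theorem must cover: the number of reflected waves $V_N$ that overlap at a given $(x,y,t)$, quantified by $|\Nrond_1(x,y,t)|$, is no longer bounded. The paper proves $|\Nrond_1|\le C+CT\lambda^{-2}$ (with $\lambda=a^{3/2}/h$) and must compensate for this unbounded count by an extra gain coming from stationary phase in $r=|\theta|$, which is effective only when $|N|\gtrsim\lambda$, together with the refined count $|\Nrond_1|\lesssim C+CT\lambda^{-2}(T/|N|)^{-7}$ in the regime $T/|N|\ge K_0$. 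The sum over $N$ is then organized through the split $V_N=V_{N,1}+V_{N,2}$ (according to whether the critical variable $\Arond$ is bounded or large), with stationary phase first in $\omega\in\S^{d-2}$ (giving $(h/t)^{(d-2)/2}$), then in $\Arond$, then degenerate stationary phase in $(\Srond,\Trondprime)$ via the analysis around equations \eqref{eq:77}--\eqref{eq:85}, distinguishing $|N|\le\lambda^{1/3}$ from $|N|>\lambda^{1/3}$, and finally the overlap count enters in \eqref{eq:86}--\eqref{eq:91}. Your proposal never addresses how the sum over $N$ is controlled when the $V_N$ overlap, and without that there is no route to the stated bound below $a\sim h^{4/7}$.

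A secondary inaccuracy: the caustic contribution $(\sup(a,x))^{1/4}(h/t)^{1/4}$ in \eqref{eq:2} does not come from a one-dimensional Van der Corput bound with the third $\alpha$-derivative of $\Psi_N$; in the paper it emerges from a two-dimensional degenerate stationary phase in $(\Srond,\Trondprime)$ along the zero set of the Hessian determinant (the role of Lemma 2.21 of \cite{Annals}), combined with the nondegenerate stationary phase in $\Arond$ which contributes the factor $(\tilde\lambda|N|)^{-1/2}$, see \eqref{eq:169}, \eqref{eq:87}--\eqref{eq:88}. The one-dimensional $\alpha$-picture misses the $(\Srond,\Trondprime)$ degeneracy structure that actually produces the swallowtail exponent.
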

The first estimate, \eqref{eq:1}, is just the (short time) dispersion for a free wave. On this timescale, the wave has at most one reflection and singularities have not appeared yet. One should point out that (a suitable version of) such dispersion is already proved in \cite{blsmso08}, for a more general boundary.

The second estimate, \eqref{eq:2}, is proved using the parametrix as a sum over reflected waves. The first term is due to swallowtail singularities (and always larger than the corresponding factor in the free dispersion) and the second term is due to the presence of cusps appearing after each swallowtail singularity, between two consecutive reflections; notice that here we use the parametrix construction in an extended region $a\geq h^{2/3-\ceps}$ (when compared to the previous parametrix construction in \cite{Annals}, where it was obtained as a superposition of waves only for $a\geq h^{4/7-\ceps}$).

The third estimate, \eqref{eq:3}, will be proved using the parametrix as a sum over quasi-modes, and we postpone its proof to the last section, where we deal with decay of such quasi-modes.

We start with a lemma which allows to deal with the parametrix ``behind the wave front''.
\begin{lemma}
\label{derriere}
There exist $c_{0}$ and $T_{0}$ such that , with $\mathcal{B}=\{ 0\leq x\leq a, |y|\leq c_{0} t, 0<h\leq t \}$, 
  \begin{equation}
    \label{eq:2bis}
\forall t\in [0, T_{0}]\,,\quad\quad   \sup_{x,y,t\in \mathcal{B}}  |  \Prond_{h,a}(t,x,y)|\leq C h^{-d}  O(( h/ t)^{\infty})\,.
  \end{equation}
\end{lemma}
\begin{proof}
The lemma follows from classical propagation of singularities: for $t\leq a/C_{1}$ with a large $C_{1}$, we apply Melrose-Sjöstrand's theorem in the interior of $\Omega$.
Now, let $T_{0}$ be small enough and consider a given $s\in [h,T_{0}]$. Rescale with $(t',x',y')$ as new variables and $\hbar$ a new parameter: $t=st'$, $x=sx'$, $y=sy'$, $\hbar=h/s$, let $v_{s}(t',x',y')=v(st',sx',sy')$ for any function $v$, then
\begin{equation}
  \label{eq:5}
  (\Box v)_{s} = \frac 1 {s^{2}} \Box_{s} v_{s}\,\,\text{with }\,\, \Box_{s} = - \partial^{2}_{t'}+\partial^{2}_{x'}+s^{2} R(sx', sy', s^{-1} D_{y'})\,.
\end{equation}
Set $b=a/s$, $0< b\leq c_{1}$ and $\Prond_{b,s,\hbar}=s^{d} (\Prond_{a,h})_{s}$; then one may apply the Melrose-Sj\"ostrand theorem to $\Prond_{b,s,\hbar}$ to obtain $\Prond_{b,s,\hbar}\in O(\hbar^{\infty})$ for $0\leq x'\leq b$, $|y'|\leq c_{0}$ and $t'=1$, when $s\leq T_{0}$ and $T_0$ is small enough. In fact, $\Prond_{b,s,\hbar}$ is a parametrix for $\Box_{s}$, and $\Box_{s}$ is smooth in $s$ and for $s$ small enough, $\Box_{s}$ is close to the usual wave operator $-\partial^{2}_{t'}+\partial^{2}_{x'}+\Delta_{y'}$ (recall that we picked boundary normal coordinates, and $R_{0}(y,
\eta)=|\eta|^{2}+O(|y|)$). If we denote by $\Grond_{b,s}$ the Green function of $\Box_{s}$,
\begin{equation}
  \label{eq:6}
  \Box_{s} \Grond_{b,s}=0 \text{ in  } \Omega,\quad \Grond_{b,s}=0 \text{ on } \partial\Omega,
\end{equation}
 with $\Grond_{b,s}|_{t'=0}=\delta_{(b,0)}$ and $\partial_{t'}  \Grond_{b,s}|_{t'=0}=0$, then we have $\Prond_{b,s,\hbar}={\varkappa}(\hbar D_{t'})\mathcal{Q}(sx',sy', \hbar D_{y'}) \Grond_{b,s}$ and its wave front set is described by the Melrose-Sj\"ostrand theorem. To complete our proof, we need to check the following property of an optical ray $\sigma\in [0,1]\ra \mathcolor{red}{\mathbf{\tau}}_{s}(\sigma)$: if it starts at $\sigma=0$ from $(b,0,\xi_{0},\eta_{0})$ with $\xi^{2}_{0}+R(sb,0;\eta_{0})=1$ and  if $|\eta_{0}|\geq c_{1}>0$ for some constant $c_1$, then, if $\mathcolor{red}{\mathbf{\tau}}_{s}(\sigma)=(x'(\sigma),y'(\sigma),\xi'(\sigma),\eta'(\sigma))$, there exists $c_0$ such that $|y'(1)|\geq c_{0}>0$. Note that $(y',\eta')(\sigma)$ are solutions to the Hamilton-Jacobi equations $\partial_{\sigma} y'=\partial_{\eta'} R_{s}$, and $\partial_{\sigma} \eta'=- \partial_{y'} R_{s}$, with $y'(0)=b$, $\eta'(0)=\eta_{0}$, $R_{s}(x',y',\eta')=s^{2} R(sx',xy',\eta'/s)$. For some $c>0$ small, $|y'(\sigma)|\geq 2 c_{1} \sigma-cs\sigma$ for all $\sigma \in [0,1]$, and if $T_{0}$ is small enough, we get  the lower bound $|y'(1)|\geq c_{0}>0$ for some $c_0>0$.
\end{proof}
\subsection{Number of waves that contribute in $\Prond_{h,a}$}\label{sectcardN1}
We further localize our parametrix $\Prond_{h,a}$: let $\phi\in C^{\infty}_{0}(\R)$ be even, $\phi=1$ on $[-1,1]$ and $\phi=0$ outside $(-3/2,3/2)$, set $\chi_1=\phi-\phi(2\cdot)$, then define (see \eqref{eq:Kequiv}), for any dyadic $\gamma$ (i.e. $1/\gamma \in 2^{\N}$) such that $\gamma \leq \ceps_{0}$,
\begin{equation}\label{KKKgam}
K_{\omega,\gamma}(f)(t,x,y):=\int e^{it\tau_{q}(\omega,\eta)}G(x,y,\eta,\omega)\cutoffchi^{\sharp}(\omega) q^{1/6}(\eta)\varkappa(h\eta)\varkappa(h\tau_{q}(\omega,\eta))\chi_1\Big(\frac\omega{\gamma |\eta|^{2/3}}\Big)\hat{f}\Big(\eta,\frac \omega{h^{1/3}}\Big)\,d\eta
\end{equation}
as well as $\Prond_{h,a,\gamma}$ by replacing $K_{\omega}$ with $K_{\omega,\gamma}$ in \eqref{eq:Prond2cut}.
We just reduced the sum over $k$ to $k$'s such that $k\sim \gamma^{3/2}/h$, where $h^{2/3-\ceps}\lesssim \gamma\leq \ceps_{0}$. Then,
$\Prond_{h,a}(t,x,y)=\sum_{\gamma}\Prond_{h,a,\gamma}(t,x,y)$, where the sum is intended to be dyadic $\gamma's$ with $\gamma<\ceps_{0}$.
For $\gamma\ll a$, the corresponding $\Prond_{h,a,\gamma}$ is irrelevant, as the phase of $g_{h,a}$ is non-stationary. Using  \eqref{eq:Prondcut} or \eqref{eq:AiryPoisson},
\begin{equation}\label{defProndgamma}
\Prond_{h,a,\gamma}(t,x,y)= \langle \sum_{N\in \Z} e^{-iN L(\omega)}
    , {\cutoffchi^{\flat}}(h^{2/3}\omega/\ceps_0)K_{\omega,\gamma}(g_{h,a})(t,x,y)\rangle_{\omega}.
\end{equation}
We will deal successively with $\gamma\sim a$ and $4a\leq \gamma<1$. Heuristically, the first case contains "tangent" initial directions (worst scenario); the second case contains all "almost transverse" directions and provide less important contributions (even after summing up in $\gamma$). We write
\begin{align}
  \label{eq:newVNgam}
  V_{N,\gamma}(t,x,y) & =\int e^{-iNL(\omega)}\cutoffchi^{\flat}(h^{2/3}\omega/\ceps_{0})K_{\omega,\gamma}(g_{h,a})(t,x,y)d\omega\\
  & = \begin{multlined}[t]\frac 1 {2\pi h^{d+1}} \int e^{\frac i h (
    t\tau_{q}(\alpha,\theta)+\Phi(x,y,\theta,\alpha,\sigma)-\Phi(a,0,\theta,\alpha,s)-Nh L(h^{-2/3}\alpha)
    )}\chi_1(\alpha/(|\theta|^{2/3}\gamma)) \\
  {}\times {\cutoffchi^{\flat}}(\alpha/\ceps_{0}){\cutoffchi^{\sharp}}(\alpha/h^{2/3})\chi(s)p_h(x,y,\theta,\alpha,\sigma)\tilde q_h (\theta,\alpha,s) ds \,d\theta  d{\sigma}d\alpha\,,\end{multlined}\\
    \label{eq:newProndgam}
  \Prond_{h,a,\gamma}(t,x,y) & =\sum_{N\in \Z} V_{N,\gamma}(t,x,y)\,,
\end{align}
where the symbol of $V_{N,\gamma}$ (the same for every $N$) is of order $0$.
\begin{lemma}
  At fixed $|t|\lesssim T_0$, the significant contributions in the sum \eqref{eq:newProndgam} defining $\Prond_{h,a,\gamma}$ come from  $|N|\lesssim |t|/\sqrt \gamma$:
  \begin{equation}
    \label{eq:24}
    \sum_{|N|\geq 4 |t|/\sqrt \gamma} V_{N,\gamma}(t,x,y)=O(h^{\infty})\,.
  \end{equation}
\end{lemma}
The proof of the lemma reproduces that of Proposition \ref{propsommefinie}; the maximum number of integrals that provide non-trivial contributions in the sum over $N$ is $1/\sqrt{a}$ when $\gamma\sim a$. Observe that Proposition \ref{propsommefinie} tells us that for $t=0$, only the $N=0$ term may contribute. 

At fixed $t\geq C\gamma^{1/2}$, we can further bound the cardinal of those $N$ that contribute significantly among the $C|t|\gamma^{-1/2}$ which are left. We introduce a few notations before stating a sharp bound of the number of waves that can overlap when $t/\sqrt{\gamma}$ is large : let $\mathcal{N}(x,y,t)$ be the set of $N$ with significant contributions (e.g., we have a stationary point for the phase in all variables),
\begin{equation}\label{Ncal}
\mathcal{N}(t,x,y)=\{N\in\mathbb{Z}, (\exists)(\sigma,s,\alpha,\theta) \text{ such that } \nabla_{(\sigma,s,\alpha,\theta)}\Phi_{N,a,\gamma}=0 \},
\end{equation}
where $\Phi_{N,a,\gamma}$, the phase function of $V_{N,\gamma}$ for the large parameter $1/h$, is defined as follows
\begin{equation}\label{PhiNagamma}
\Phi_{N,a,\gamma}:= t\tau_{q}(\alpha,\theta)+\Phi(x,y,\theta,\alpha,\sigma)-\Phi(a,0,\theta,\alpha,s)-\frac 43N\alpha^{3/2}+NhB_L(\alpha^{3/2}/h).
\end{equation}
Let $(t,x,y)$ such that $\mathcal{N}(t,x,y)\neq\emptyset$ and assume without loss of generality that $t>0$ and $t/\sqrt{\gamma}$ is large. As we shall see below, at a critical point $(\sigma,s,\alpha, \theta)$ of $\Phi_{N,a,\gamma}$,
 \[
\frac{y+\nabla B_0(y,\vartheta)}{|y+\nabla B_0(y,\vartheta)|}=-{\vartheta} +O(\alpha),\quad  \frac{|y+\nabla B_0(y,\vartheta)|}{t}=1+O(\alpha),
 \] 
and on the support of the symbol $\chi_{1}$, $\alpha \sim\gamma \ll 1$. Therefore, as $B_0(y,\omega)=O(|y|^2)$ and $\nabla B_0(y,\omega)=O(|y|^2)$,  if $\mathcal{N}(t,x,y)\neq\emptyset$ then we must have $\frac{|y|}{t}= 1+O(|y|)$.

Consider first $d>2$. Let $c$ satisfy $4\gamma <c<\frac{1}{16}$ and such that $1-c\leq \frac{|y|}{t}\leq 1+c$ (notice that such $c$ does exist since otherwise $\mathcal{N}(t,x,y)=\emptyset$). As $B_2(y,\vartheta)=O(|y|^2)$, $|y|\lesssim T_0$ with $T_0$ sufficiently small, there exists $\tilde \vartheta=\tilde\vartheta(t,y)\in\mathbb{R}^{d-1}\setminus \{0\}$ such that, with $\anabla_{\theta}=\nabla_{\theta}-\frac \theta{|\theta|}(\frac\theta {|\theta|}\cdot\nabla)  $,
\begin{equation}\label{tildeomega}
\tilde \vartheta= -\frac{y+\nabla B_0(y,\tilde\vartheta)}{|y+\nabla B_0(y,\tilde\vartheta)|}\\{}-2\Big(\frac{|y+\nabla B_0(y,\tilde\vartheta)|}{t}-1\Big)
\Big[\frac{\anabla q(\tilde\vartheta)}{q(\tilde\vartheta)}-\frac{3\anabla B_2(y,\tilde\vartheta)}{2t(1-B_2(y,\tilde\vartheta)/t)}\Big]\,,
\end{equation}
as a fixed point of a continuous map from a ball to a ball. In fact, as $B_0(y,\theta)$ (resp. $B_2(y,\theta)$) is homogeneous of degree $1$ (resp. $0$) in $\theta$, so is $\nabla B_0$ (of degree $0$), hence the right hand side in \eqref{tildeomega} does not depend on $|\tilde\vartheta|$. Uniqueness follows by taking differences and using smallness of $B_{0}$. Moreover, for sufficiently small $c$ and $T_0$ we have $|\tilde \vartheta|\in [\frac 12,\frac 32]$. 
As we shall see below (in \eqref{eqNcontYmodgennewnew} from Lemma \ref{lemdetails}), $\tilde\vartheta(t,,y)$ is an approximation (modulo $O(\gamma^{3/2}/t)$ terms) of the critical point $\vartheta_{\theta}$ of the phase $\Phi_{N,a,\gamma}$.
Let $(t',x',y')$ be  such that
$\mathcal{N}(t',x',y')\neq \emptyset$, then $1-c\leq \frac{|y'|}{t'}\leq 1+c$ and there exists $\tilde\vartheta(t',y')$ solution to \eqref{tildeomega} with $(t,y)$ replaced by $(t',y')$.
We now define a cylinder $\mathcal{C}_{\gamma}(t,x,y)$ as the set of $(t',x',y')$ such that
\begin{gather}
  \label{eqcondTjYjB2gen}|(t'-B_2(y',\tilde\vartheta(t',y')))-(t-B_2(y,\tilde\vartheta(t,y)))|\leq r_0\sqrt{\gamma},\\
    \label{eqcondTjXjYjgen}|x'(1+\Lp(y,\tilde\vartheta(t',y')))-x(1+\Lp(y,\tilde\vartheta(t,y)))|\leq r_0 \gamma, \\
   \label{eqcondTjYjB0gen}  ||y'+\nabla B_0(y',\tilde\vartheta(t',y'))|-t'-|y+\nabla B_0(y,\tilde\vartheta(t,y))|+t|\leq r_0\gamma^{3/2},
\end{gather}
and let, with $\mathcal{N}$ defined in \eqref{Ncal},
\begin{equation}\label{Ncal1}
 \mathcal{N}^1_{d>2}(t,x,y)=\cup_{\mathcal{C}_{\gamma}(t,x,y)}\mathcal{N}(t',x',y'). 
 \end{equation}
 For $d=2$, replace $\nabla B_{0}(y,\tilde\vartheta(t,y))$ by $B_{0}(y)$, $B_2(y,\tilde\vartheta(t,y))$ by $B_2(y)$ and $\Lp(y,\tilde\vartheta(t,y))$ by $\Lp(y)$ to define $\mathcal{N}^1_{d=2}(t,x,y)$.
\begin{prop}\label{propcardN}
For any $d\geq 2$, the following optimal upper bound holds for $\mathcal{N}^1_{d}$ defined in \eqref{Ncal1}
\begin{equation}
  \label{eq:113}
  \left| \mathcal{N}^1_{d}(t,x,y)\right| \lesssim O(1)+\gamma^{-1/2}{|t|}(\gamma^{3/2}/h)^{-2}\,.
\end{equation}
\end{prop}
\begin{rmq}
A particular case of the two dimensional version of Proposition \ref{propcardN} has been proved in \cite[Lemma 2.17, Lemma 2.18]{Annals} in the case $a\gg h^{4/7}$ (and without the $\gamma$ cut-off), where we proved  $\left|\mathcal{N}^1_{d=2}(t,x,y)\right|$ was bounded by a constant and that $\mathcal{N}(t,x,y)\subset [1,t/(2\sqrt{a})+N_0]$, with $N_0$ being an absolute constant. For $a\sim \gamma \gg h^{4/7}$ and $|t|\lesssim 1$, one easily sees that $|t|/(\gamma^{1/2} (\gamma^{3}/h^{2}))=O(1)$. Indeed, $\gamma^{7/2}/h^2\gg 1$, hence the right hand side term in \eqref{eq:113} is $O(1)$. Therefore, if $\gamma\gg h^{4/7}$,  we only get non-trivial contributions from an uniformly bounded number of waves at a fixed $t$. 
The case $\gamma \gg h^{2/3}$ was recently dealt with in \cite{ILP3} where Propositions \ref{propcardN} and \ref{propcardoutN1} were proved in the 2D Friedlander model domain. Compared to \cite{ILP3}, there are significant additional difficulties with angles in the higher dimensional case (even in the model situation !)
\end{rmq}
\begin{proof}
We first provide a proof in all dimensions $d\geq 2$ for the model case, with $\Delta_M$: a parametrix of the wave equation reads as \eqref{eq:newProndgam}, where $V_{N,\gamma}$ has symbol $\chi_{1}(\alpha/(\gamma |\theta|^{2/3})) \cutoffchi^{\flat}(\alpha/\eps_{0}){\cutoffchi^{\sharp}}(\alpha/h^{2/3})\chi(s)$ and phase function $\Phi^M_{N,a,\gamma}$ given by
\begin{equation*}
\Phi^M_{N,a,\gamma}(t,x,y,\sigma,s,\alpha,\theta):= t\tau_{q}(\alpha,\theta)+\Phi_M(x,y,\theta,\alpha,\sigma)-\Phi_M(a,0,\theta,\alpha,s)-\frac 43N\alpha^{3/2}+NhB_L(\alpha^{3/2}/h),
\end{equation*}
where $\Phi_M(x,y,\theta,\alpha,\sigma)=y\cdot\theta+\frac{\sigma^3}{3}+\sigma(xq^{1/3}(\theta)-\alpha)$; note that the only difference between $\Phi^M_{N,a,\gamma}$ and $\Phi_{N,a,\gamma}$ comes from the additional terms $\tau_{q}(\alpha,\theta)\Gamma(x,y,\frac{\sigma q^{1/3}(\theta)}{\tau_{q}(\alpha,\theta)},\frac{\theta}{\tau_{q}(\alpha,\theta)})-\tau_{q}(\alpha,\theta)\Gamma(a,0,\frac{s q^{1/3}(\theta)}{\tau_{q}(\alpha,\theta)},\frac{\theta}{\tau_{q}(\alpha,\theta)})$ (difference between $\Phi_M$ and $\Phi$). In the model case, without the additional phase function $\Gamma$, we rescale $x=\gamma X$, $t=\sqrt{\gamma}T$, $y=\sqrt{\gamma}Y$, and our cylinder $\mathcal{C}_{\gamma}(t,x,y)$ and $\mathcal{N}^1_{d\geq 2}$ simplify to $\mathcal{N}^{1,M}_d(T,X,Y):=\cup_{\mathcal{C}^{M}_{\gamma}(T,X,Y)}\mathcal{N}^M(T',X',Y')$ and
 \[
\mathcal{C}^{M}_{\gamma}(T,X,Y)=\{(T',X',Y')\,\,: \,\, |Y'-Y|\leq r_0, |X-X'|<r_0 , ||Y'|-T'-|Y|+T|<r_0\gamma\}\,.
\]
Note that $(X',Y',Y') \in \mathcal{C}^{M}_{\gamma}(T,X,Y)$ implies $||Y'|-|Y||\leq  r_0 $, $|Y|\Big|\frac{Y'}{|Y'|}-\frac{Y}{|Y|}\Big|\leq 2 r_0$ and $|T'-T|\leq r_0$.
We also rescale  $\sigma=\sqrt{\gamma}|\theta|^{1/3}\Sigma$, $\alpha=\gamma |\theta|^{2/3} A$, $s=\sqrt{\gamma}|\theta|^{1/3}S$ and we let $\lambda_{\gamma}=\frac{\gamma^{3/2}}{h}$. 
Define our new phase to be (with large parameter $1/h$ replaced by $\lambda_{\gamma}$)
\begin{multline}\label{Psimodel}
\Psi^M_{N,a,\gamma}(T,X,Y,\Sigma,S,A,\theta)=|\theta|\Big(\frac{Y\cdot \vartheta+T\sqrt{1+\gamma A q^{2/3}(\vartheta)}}{\gamma}+\frac{\Sigma^3}{3}+\Sigma(Xq^{1/3}(\vartheta)-A)\\
-\frac{S^3}{3}-S(\frac{a}{\gamma}q^{1/3}(\vartheta)-A)-\frac 43 N A^{3/2}\Big)+\frac{N}{\lambda_{\gamma}}B_L(|\theta|\lambda_{\gamma} A^{3/2})\,.
\end{multline}
The phase $\Psi^M_{N,a,\gamma}$ defines a Lagrangian $\Lambda^M_N$, which is described by $\nabla_{A,S,\Sigma,\theta}\Psi^{M}_{N,a\gamma}=0$:
\begin{gather}
\label{systemICmodelA}  \frac{T q^{2/3}(\vartheta)}{2\sqrt{1+\gamma A q^{2/3}(\vartheta)}}-(\Sigma-S)-2N A^{1/2}(1-\frac 3 4 {B_L}'(|\theta|\lambda_{\gamma}A^{3/2})) =0, \\
\label{systemICmodelSS}\Sigma^{2}+X q^{1/3}(\vartheta)-A=0,\quad 
S^{2}+\frac{a}{\gamma} q^{1/3}(\vartheta)-A=0,\\
\label{systemICmodeltheta}\begin{multlined} Y+T\vartheta\sqrt{1+\gamma A q^{2/3}(\vartheta)}+\gamma \Big(\frac{TAq^{1/3}(\vartheta)}{\sqrt{1+\gamma A q^{2/3}(\vartheta)}}+(\Sigma X-S\frac{a}{\gamma}) \Big)\frac{\anabla q(\vartheta)}{3 q^{2/3}(\vartheta)} \\
{}+\frac 2 3\gamma \vartheta(S^{3}-\Sigma^3)= \frac 43 \gamma N A^{3/2}(1-\frac 3 4 {B_L}'(|\theta|\lambda_{\gamma}A^{3/2}))\vartheta \,,\end{multlined}
\end{gather}
where we used the second and third equation for substitution in the last one.
We recover \cite[formula (2.13) to (2.15)]{Annals} when $d=2$, with small adjustments due to our (more complicated) phase construction there. Assume, without loss of generality, that $T>0$, then eliminating $N$ between \eqref{systemICmodelA} and \eqref{systemICmodeltheta},
\begin{multline}\label{systemICmodelA-theta}
  \frac{Y}{T}=-\Big(1+\frac{\gamma A q^{2/3}(\vartheta)}{1+\sqrt{1+\gamma Aq^{2/3}(\vartheta)}}-\frac{\gamma A q^{2/3}(\vartheta)}{3\sqrt{1+\gamma Aq^{2/3}(\vartheta)}}+\frac {2\gamma} {3T}( A(\Sigma-S)+(S^{3}-\Sigma^3))\Big)\vartheta\\
  {}-\Big(\frac{\gamma A  q^{1/3}(\vartheta)}{\sqrt{1+\gamma Aq^{2/3}(\vartheta)}}+\frac{\gamma}{T}\Big((\Sigma X-S\frac{a}{\gamma})\Big) \frac{\anabla q(\vartheta)}{3 q^{2/3}(\vartheta)}
\end{multline}
Using that $\vartheta\cdot\anabla q(\vartheta)=0$, we compute $|Y|^{2}/T^{2}$ and expand its square root to get
\begin{equation}\label{eqAYT}
  \quad\quad\frac{|Y|}{T}  -  1=\frac \gamma 6 Aq^{2/3}(\vartheta)+\frac{2\gamma}{3T}\big(A(\Sigma-S)+S^{3}-{\Sigma^3}\big)+{\gamma^2}\mathcal{E}\,,
\end{equation}
where $\mathcal{E}=\mathcal{E}(1/T,\Sigma X-S\frac a \gamma,A,S,\Sigma,\vartheta)$  is a smooth function of its arguments (that may be computed explicitly, although irrelevant). We then compute
\begin{equation}
\label{eqomegathetaYT}
\frac{Y}{|Y|}  =  -\vartheta-\frac{\gamma A}{3}\Big(\frac{\anabla q(\vartheta)}{q^{1/3}(\vartheta)}\Big)+\frac{\gamma}{T}(\Sigma X-S\frac{a}{\gamma}) \frac{\anabla q(\vartheta)}{3 q^{2/3}(\vartheta)}+\gamma^{2} \vec{\mathcal{E}}\,,
\end{equation}
where $\vec{\mathcal{E}}=\vec{\mathcal{E}}(1/T,\Sigma X-S\frac a \gamma,A,S,\Sigma,\vartheta)$ is an explicit smooth, vector-valued function. Later we will use $O(\gamma^{2})\lesssim O(\gamma/T)$, as $T\sqrt\gamma=O(1)$. We now estimate the distance between any two elements of $\mathcal{N}^{1,M}_d(T,X,Y)$. Pick $(T, X, Y)$ with $T>0$, and let $N_j\in \mathcal{N}^{1,M}_d(T,X,Y)$, $j\in \{1,2\}$. There exist $(T_j,X_j,Y_j)\in \mathcal{C}^{M}_{\gamma}$ 
and there exist $(\theta_j=|\theta_j|\vartheta_{j},A_j,\Sigma_j,S_j)$, $j\in\{1,2\}$, $A_j$ close to $1$, such that \eqref{systemICmodelA}, \eqref{systemICmodelSS}, \eqref{systemICmodelA-theta} hold.
Taking the difference between \eqref{systemICmodelA} for $j=1,2$ and using \eqref{systemICmodelSS} ($\frac{(\Sigma_j-S_j)}{A_j^{1/2}}=O(1)$) and $\gamma T_j=O(\sqrt{\gamma}t_j)=O(\sqrt{\gamma})$,
\begin{multline}\label{eqN1N2dif}
N_1-N_2=\frac 34 \Big(N_1B_L'(|\theta_1|\lambda_{\gamma}A_1^{3/2})-N_2B_L'(|\theta_2|\lambda_{\gamma}A_2^{3/2})\Big)-\frac{\Sigma_1-S_1}{2\sqrt{A_1}}+\frac{\Sigma_2-S_2}{2\sqrt{A_2}}\\
+\frac{T_1 q^{2/3}(\vartheta_{1})}{4A_1^{1/2}\sqrt{1+\gamma A_1q^{2/3}(\vartheta_{1})}}-\frac{T_2 q^{2/3}(\vartheta_{2})}{4A_2^{1/2}\sqrt{1+\gamma A_2q^{2/3}(\vartheta_{2})}}\\
=O\Big(\frac{N_1+N_2}{\lambda_{\gamma}^2}\Big)+O(1)+\frac{T_1 q^{2/3}(\vartheta_{1})}{4A_1^{1/2}}-\frac{T_2 q^{2/3}(\vartheta_{2})}{4A_2^{1/2}}\,.
\end{multline}
\begin{rmq}\label{rmqN1N2dif}
The first term in the first line of \eqref{eqN1N2dif} behaves like $(|N_1|+|N_2|)/\lambda_{\gamma}^2$, using that $B'(\theta\lambda A^{3/2})\sim -\frac{b_1}{\theta^2\lambda^2A^3}$ and $\theta,A\sim 1$. We cannot take advantage of the difference, since each term $N_jB'(\cdot)$ corresponds to some $\theta_j,A_j$ (close to $1$) and, although the difference between $A_j$ turns out to be $O(1/T)$, we do not have any better information about the difference between $|\theta_j|$ than bounded by a small constant on the support of $\chi_{1}$. Therefore the bound $(N_1+N_2)/\lambda_{\gamma}^2$ for the terms involving $B'_L$ in \eqref{eqN1N2dif} is sharp. As $N_j\sim T_j$, and $|T_j-T|\leq 1$, this contribution is of order $|T|/\lambda^2_{\gamma}$. 
\end{rmq}
We are reduced to proving that the following difference (from \eqref{eqN1N2dif}) is $O(1)$. Write
\begin{multline}\label{estimdifT1T2}
\frac{T_1}{A_1^{1/2}} q^{2/3}(\vartheta_{1})-\frac{T_2 }{A_2^{1/2}}q^{2/3}(\vartheta_{2})=\frac{(T_1-T_2)}{A_1^{1/2}}q^{2/3}(\vartheta_{1})+\frac{T_2}{\sqrt{A_1q^{2/3}(\vartheta_{1})}}(q(\vartheta_{1})-q(\vartheta_{2}))\\
+T_2q(\vartheta_2)(\frac{1}{\sqrt{A_1q^{2/3}(\vartheta_{1})}}-\frac{1}{\sqrt{A_2q^{2/3}(\vartheta_{2})}}).
\end{multline}
As $|T_1-T_2|\leq |T_1-T|+|T_2-T|\leq 2r_0$, it remains to prove
\begin{lemma}\label{derderlemma}
Let $(T,X,Y)$ be fixed, let $(T_j,X_j,Y_j)\in \mathcal{C}^{M}_{\gamma}$ and let $(\theta_j,A_j,\Sigma_j,S_j)$, $j\in\{1,2\}$ with $A_j$ close to $1$ such that \eqref{systemICmodelA}, \eqref{systemICmodelSS} and \eqref{systemICmodelA-theta} hold, then
\begin{equation}\label{eqAjomega_jdif}
T|\vartheta_{1}-\vartheta_{2}|\lesssim 1, \quad T|A_1q^{2/3}(\vartheta_{1})-A_2q^{2/3}(\vartheta_{2})|\lesssim 1.
\end{equation}
\end{lemma}
\begin{proof}
 When $T$ is not too large, \eqref{eqAjomega_jdif} immediately follows. We consider $T$ sufficiently large. For $T_j,Y_j,A_j,\vartheta_{j}$, \eqref{eqAYT} and \eqref{eqomegathetaYT} hold.
Taking difference between \eqref{eqomegathetaYT} for $j=1$ and $j=2$,
\begin{multline}\label{difomega1omega2}
-\frac{Y_1}{|Y_1|}+\frac{Y_2}{|Y_2|}=(\vartheta_{1}-\vartheta_{2})\Big(1-\frac 23 \gamma A_1q^{2/3}(\vartheta_{1})\Big)-\frac 23 \vartheta_{2}\gamma(A_1q^{2/3}(\vartheta_{1})-A_2q^{2/3}(\vartheta_{2}))\\
+\gamma(A_1q^{2/3}(\vartheta_{1})-A_2q^{2/3}(\vartheta_{2}))\frac{\nabla q(\vartheta_{2})}{q(\vartheta_{2})}+\gamma A_1 q^{2/3}(\vartheta_{1})\Big(\frac{\nabla q(\vartheta_{1})}{q(\vartheta_{1})}-\frac{\nabla q(\vartheta_{2})}{q(\vartheta_{2})}\Big)+O(\frac{\gamma}{T}),
\end{multline}
where we used that $O(\frac{\gamma}{T_1})=O(\frac{\gamma}{T_2})=O(\frac{\gamma}{T})$. As $A_j$'s stay close to $1$ and $\vartheta_{j}\in\mathbb{S}^{d-2}$, we get
\begin{equation}\label{difomega1omega2new}
(\vartheta_{1}-\vartheta_{2})(1+O(\gamma))=\Big(-\frac{Y_1}{|Y_1|}+\frac{Y_2}{|Y_2|}\Big)+O(\gamma)\,.
\end{equation}
Using $\big|-\frac{Y_1}{|Y_1|}+\frac{Y_2}{|Y_2|}\big|=\big|-\frac{y_1}{|y_1|}+\frac{y_2}{|y_2|}\big|\leq 2\frac{r_0}{|Y|}$ and  $T|\vartheta_{1}-\vartheta_{2}|\lesssim t(2r_0+O(\sqrt{\gamma})) \lesssim 2r_0$, we obtain the first inequality in \eqref{eqAjomega_jdif}. Taking now the difference between \eqref{eqAYT} for $j=1,2$,
\[
6\Big(\frac{|Y_1|}{T_1}-\frac{|Y_2|}{T_2}\Big)=\gamma(A_1q^{2/3}(\vartheta_{1})-A_2q^{2/3}(\vartheta_{2}))+O(\frac{\gamma}{T}),
\]
and using $|(|Y_j|-T_j)-(|Y|-T)|\leq r_0 \gamma$, $|T_1-T_2|\leq 2r_0$ and \eqref{eqAYT} with $j=2$ yields
\begin{align}
T|A_1q^{2/3}(\vartheta_{1})-A_2q^{2/3}(\vartheta_{2})| & \leq \frac{6T}{\gamma}\Big|\frac{|Y_1|}{T_1}-\frac{|Y_2|}{T_2}\Big|+O(1)\\
 & \leq \frac{6T}{\gamma}\frac{|(|Y_1|-T_1)-(|Y_2|-T_2)|}{T_1}
+ \frac{6T}{\gamma}\Big(\frac{|Y_2|}{T_2}-1\Big)\frac{(T_2-T_1)}{T_1}+O(1)\\
 & \leq 12r_0\frac{T}{T_1}+2r_0\frac{T}{T_1} \Big(A_2q^{2/3}(\vartheta_{2})+O(\frac{1}{T_2})\Big)+O(1)=O(1),
\end{align}
where, again, on the support of the symbol $\chi_{1}$, $A_j$ is close to $1$. The proof of Lemma \ref{derderlemma} is complete and, combined with \eqref{estimdifT1T2} and \eqref{eqN1N2dif}, this yields \eqref{eq:113} for the model operator $\Delta_M$.
\end{proof}
We now proceed with the general case, following the same steps as above. We first deal with the most complicated situation $d>2$. We also rescale 
variables as follows $\sigma=\sqrt{\gamma}|\theta|^{1/3}\Sigma$, $\alpha=\gamma |\theta|^{2/3} A$, $s=\sqrt{\gamma}|\theta|^{1/3}S$ and let $\lambda_{\gamma}=\frac{\gamma^{3/2}}{h}$. With $\vartheta=\theta/|\theta|$ we get $q(\theta)=|\theta|^2q(\vartheta)$, $\tau_{q}(\alpha,\theta)=\tau_{q}(\gamma A|\theta|^{2/3},\theta)=|\theta|\sqrt{1+\gamma A q^{2/3}(\vartheta)}=:|\theta|\tau_{q}(\gamma A,\vartheta)$. We retain  space-time variables $(t,x,y)$ as our phase function is no longer homogeneous in $y$.
Recall that
\begin{equation}
  \label{eq:22}
\Phi(x,y,\theta,\alpha,\sigma)=\Phi_M(x,y,\theta,\alpha,\sigma)+\tau_{q}(\alpha,\theta)(B_{\Gamma}(y,\theta/\tau_{q})+xA_{\Gamma}(x,y,\sigma q^{1/3}(\theta)/\tau_{q},\theta/\tau_{q})\,,
\end{equation}
where $\tau_{q}=\tau_{q}(\alpha,\theta)=\sqrt{|\theta|^2+\alpha q^{2/3}(\theta)}$. Let
\begin{align}
\tilde\Phi_{M,\gamma}(x,y,\vartheta,A,\Sigma):&=|\theta|^{-1}\Phi_M(x,y,\theta,\gamma |\theta|^{2/3}A,\sqrt{\gamma}|\theta|^{1/3}\Sigma)\\
&=y\cdot \vartheta+\gamma^{3/2}\Big(\frac{\Sigma^3}{3}+\Sigma(\frac{x}{\gamma}q^{1/3}(\vartheta)-A)
\Big).
\end{align}
Set $\tilde\Phi_{\gamma}(x,y,\vartheta,A,\Sigma):=|\theta|^{-1}\Phi(x,y,\theta,\gamma |\theta|^{2/3}A,\sqrt{\gamma}|\theta|^{1/3}\Sigma)$, then, using the homogeneity in $|\theta|$,
\begin{gather}
\tilde\Phi_{\gamma}(x,y,\vartheta,A,\Sigma)  := \begin{multlined}[t]\tilde\Phi_{M,\gamma}(x,y,\vartheta,A,\Sigma)+\tau_{q}(\gamma A,\vartheta)\Big[B_{\Gamma}(y,\vartheta/\tau_{q}(\gamma A,\vartheta))\\
\label{tildePhigammadefnew}{}+xA_{\Gamma}\Big(x,y,\frac{\sqrt{\gamma}\Sigma q^{1/3}(\vartheta)}{\tau_{q}(\gamma A,\vartheta)},\frac{\vartheta}{\tau_{q}(\gamma A,\vartheta)}\Big)\Big]\,,\end{multlined}\\
\tilde\Phi_{\gamma}(a,0,\vartheta,A,S) :=\tilde\Phi_{M,\gamma}(a,0,\vartheta,A,S)+\tau_{q}(\gamma A,\vartheta)aA_{\Gamma}\Big(a,0,\frac{\sqrt{\gamma}S q^{1/3}(\vartheta)}{\tau_{q}(\gamma A,\vartheta)},\frac{\vartheta}{\tau_{q}(\gamma A,\vartheta)}\Big)\,,
\end{gather}
where in the last line we used that $B_{\Gamma}(0,\vartheta/\tau_{q})=0$ and where in the new variables we have
\begin{gather}
\tau_{q}(\gamma A,\vartheta) B_{\Gamma}(y,\frac{\vartheta}{\tau_{q}(\gamma A,\vartheta)})=  \begin{multlined}[t]\label{Bgamhomog}
 B_0(y,\vartheta)+(1-\tau_{q}(\gamma A,\vartheta))B_2(y,\vartheta)\\{}+\tau_{q}(\gamma A,\vartheta)\sum_{j\geq 2}(\frac{1}{\tau_{q}(\gamma A,\vartheta)}-1)^jB_{2j}(y,\vartheta)\,,
\end{multlined}\\
\tau_{q}(\gamma A,\vartheta) A_{\Gamma}\Big(x,y,\frac{\sqrt{\gamma}\Sigma q^{1/3}(\vartheta)}{\tau_{q}(\gamma A,\vartheta)},\frac{\vartheta}{\tau_{q}(\gamma A,\vartheta)}\Big)=\begin{multlined}[t]\label{Agamtau}
(\sqrt{\gamma}q^{1/3}(\vartheta))\Sigma \Lp(y,\vartheta)+\mathcal{H}_{j\geq 3}\\{}+\frac{\gamma q^{2/3}(\vartheta)}{\tau_{q}(\gamma A,\vartheta)}(\Sigma^2-A)\mu(y,\vartheta)\,,
\end{multlined}
\end{gather}
where we used \eqref{AGam}; homogeneous terms of order $j$ in $\mathcal{H}_{j\geq 3}$ have weights $\sqrt{\gamma}^j$. We also set
\begin{equation}\label{deftildephiNagamma}
\tilde\Phi_{N,a,\gamma}(t,x,y,\Sigma,S,A,\theta):=\Phi_{N,a,\gamma}(t,x,y,\sqrt{\gamma}|\theta|^{1/3}\Sigma,\sqrt{\gamma}|\theta|^{1/3}S,\gamma |\theta|^{2/3}A,\theta)\,,
\end{equation}
and from \eqref{PhiNagamma},
\begin{multline}
\label{eqtildePhiN}
\tilde\Phi_{N,a,\gamma}(t,x,y,\Sigma,S,A,\theta)=|\theta|\Big(t\sqrt{1+\gamma A q^{2/3}(\vartheta)}
+\tilde\Phi_{\gamma}(x,y,\vartheta,A,\Sigma)-\tilde\Phi_{\gamma}(a,0,\vartheta,A,S)\Big)\\-\frac 43\gamma^{3/2} |\theta| N A^{3/2}+N hB_L((|\theta|\lambda_{\gamma} A^{3/2}).
\end{multline}
The phase function $\tilde\Phi_{N,a,\gamma}$ defines a Lagrangian $\Lambda_N$ and, as in the model case,
we obtain a parametrization of $\pi_N(\Lambda_N)$ by $(d+1)$ parameters $(\rho=|\theta|,\vartheta,\Sigma,A)$ as follows 
\begin{gather}\label{desclambdaN}
\,\,\,\left\{ \begin{array}{l}
  \frac{t \gamma q^{2/3}(\vartheta)}{2\sqrt{1+\gamma A q^{2/3}(\vartheta)}}+\partial_{A}\Big(\tilde\Phi_{\gamma}(x,y,\vartheta,A,\Sigma)-\tilde\Phi_{\gamma}(a,0,\vartheta,A,S)\Big)=2N \gamma^{3/2}A^{1/2}(1-\frac 3 4 {B_L}'(\rho\lambda_{\gamma} A^{3/2})), \\
  \Sigma^{2}+\frac{x}{\gamma} \Big(q^{1/3}(\vartheta)+\frac{\tau_{q}(\gamma A,\vartheta) }{\sqrt{\gamma}}\partial_{\Sigma}\Big(A_{\Gamma}(x,y,\frac{\sqrt{\gamma}\Sigma q^{1/3}(\vartheta)}{\tau_{q}(\gamma A,\vartheta)},\frac{\vartheta}{\tau_{q}(\gamma A,\vartheta)})\Big)\Big)=A,\\
 S^{2}+\frac{a}{\gamma} \Big(q^{1/3}(\vartheta)+\frac{\tau_{q}(\gamma A,\vartheta) }{\sqrt{\gamma}}\partial_{S}\Big(A_{\Gamma}(a,0,\frac{\sqrt{\gamma}S q^{1/3}(\vartheta)}{\tau_{q}(\gamma A,\vartheta)},\frac{\vartheta}{\tau_{q}(\gamma A,\vartheta)})\Big)\Big)=A,\\
t\sqrt{1+\gamma Aq^{2/3}(\vartheta)}+\tilde\Phi_{\gamma}(x,y,\vartheta,A,\Sigma)-\tilde\Phi_{\gamma}(a,0,\vartheta,A,S)=\frac 43 \gamma^{3/2}NA^{3/2}(1-\frac 34 B_L'(\rho\lambda_{\gamma}A^{3/2})),\\
\partial_{{\vartheta}_j}\Big(t\sqrt{1+\gamma Aq^{2/3}(\vartheta)}+\tilde\Phi_{\gamma}(x,y,\vartheta,A,\Sigma)-\tilde\Phi_{\gamma}(a,0,\vartheta,A,S)\Big)=0,\quad \forall 1\leq j\leq d-2,
   \end{array} \right.
\end{gather}
where ${\vartheta}=({\vartheta}_1,...,{\vartheta}_{d-1})\in\mathbb{S}^{d-2}$ and, as $t\neq 0$ implies $|y|\neq 0$, we assumed that $y_{d-1}\neq 0$; then ${\vartheta}_{d-1}=\pm \sqrt{1-\sum_{j=1}^{d-2}{\vartheta}_j^2}$. The last line in the system \eqref{desclambdaN} reads as follows
\begin{equation}\label{ecomegacy}
\partial_{{\vartheta}_j}\tilde\Phi_{0,a,\gamma}-\partial_{{\vartheta}_{d-1}}\tilde\Phi_{0,a,\gamma}\frac{{\vartheta}_j}{{\vartheta}_{d-1}}=0, \quad j\in\{1,...,d-2\}.
\end{equation}
With $\tilde\Sigma:=\frac{\sqrt{\gamma}\Sigma q^{1/3}({\vartheta})}{\tau_{q}(\gamma A,{\vartheta})}$, $\frac{d\tilde\Sigma}{d\Sigma}=\frac{\sqrt{\gamma} q^{1/3}({\vartheta})}{\tau_{q}(\gamma A,{\vartheta})}$ the second equation in the system \eqref{desclambdaN} reads as 
\[
  \Sigma^{2}+\frac{x}{\gamma} q^{1/3}(\vartheta)\Big(1+\partial_{\tilde\Sigma}\Big(A_{\Gamma}(x,y,\tilde \Sigma,\frac{\vartheta}{\tau_{q}(\gamma A,\vartheta)})\Big)\Big)=A.
\]
We further compute, with $\tau_{q}=\tau_{q}(\gamma A,{\vartheta})=\sqrt{1+\gamma Aq^{2/3}({\vartheta})}$,
\begin{multline}
\label{derivAPhi}
\quad  \partial_{A}\tilde\Phi_{\gamma}(x,y,{\vartheta},A,\Sigma)=\partial_{A}\tau_{q}\Big(\sum_{k\geq 2}\partial_{\tau_{q}}\Big(\frac{(1-\tau_{q})^k}{\tau_{q}^{k-1}}\Big)B_{2k}(y,\vartheta)-B_2(y,\vartheta)\Big)\\
  {}+\gamma^{3/2}\Big(-\Sigma+\frac{\partial_{A}\tau_{q}}{\sqrt{\gamma}}\frac{x}{\gamma}\partial_{\tau_{q}}\Big(\tau_{q} A_{\Gamma}(x,y,\sqrt{\gamma}\Sigma q^{1/3}(\vartheta)/\tau_{q},\vartheta/\tau_{q})\Big)\Big)\,,
\end{multline}
where the last term in the second line is small and behaves like $xO(\sqrt{\gamma})$; indeed, it follows from \eqref{Agamtau} that $\partial_{\tau_{q}}(\tau_{q} A_{\Gamma})=-\gamma q^{2/3}({\vartheta})\frac{(\Sigma^2-A)}{\tau_{q}^2}\mu(y,{\vartheta})+\mathcal{H}_{j\geq 3}$, hence its main contribution is $O(\gamma)$ (here we have also used the second and the third equations in \eqref{desclambdaN} which imply that $\Sigma^2,S^2\leq 2A$ where the phase may be stationary in $\Sigma,S$). Using that for $k\geq 2$, all terms in $B_{\Gamma}$ come with factors $O(\gamma^2)$ and $\partial_A\tau_{q}= \gamma q^{2/3}({\vartheta})/(2\tau_{q})$, the main contribution in the first equation in \eqref{desclambdaN} reads 
 \begin{equation}\label{desclambdaNmaincontrib}
  \frac{q^{2/3}(\vartheta)}{2\sqrt{1+\gamma A q^{2/3}(\vartheta)}}\frac{(t-B_2(y,\vartheta))}{\sqrt{\gamma}}-\Sigma+S+O(\sqrt{\gamma}x)+O(\gamma^{3/2}|y|)=2NA^{1/2}(1-\frac 3 4 {B_L}'(\rho\lambda_{\gamma} A^{3/2})).
 \end{equation}
We now turn to critical points with respect to $\vartheta$ and deal with the last equation in \eqref{desclambdaN}.
\begin{lemma}\label{lemomegac}
The last equation in the system \eqref{desclambdaN} has two solutions, denoted ${\vartheta}_{\pm}$, such that
\begin{multline}\label{eqNcontYmodgen}
\pm {\vartheta}_{\pm}=\frac{y+\nabla B_0(y,{\vartheta}_{\pm})}{|y+\nabla B_0(y,{\vartheta}_{\pm})|}
- \frac{\gamma A q^{2/3}({\vartheta}_{\pm})}{|y+\nabla B_0(y,{\vartheta}_{\pm})|}\Big[\frac{\anabla q({\vartheta}_{\pm})}{3q({\vartheta}_{\pm})}\Big(t-B_2(y,{\vartheta}_{\pm})\Big)
-\frac{1}{2}\anabla B_2(y,{\vartheta}_{\pm})\Big]\\
{}-\frac{\gamma^{3/2}}{|y+\nabla B_0(y,{\vartheta}_{\pm})|}\Big[\frac{\anabla q({\vartheta}_{\pm})}{3q^{2/3}({\vartheta}_{\pm})}\Big(\frac{x}{\gamma}\Sigma (1+\Lp(y,{\vartheta}_{\pm}))-\frac{a}{\gamma}S\Big)+\frac{x}{\gamma}\Sigma q^{1/3}({\vartheta}_{\pm}) \anabla (\Lp(y,{\vartheta}_{\pm}))\Big]+\frac{\gamma^{2}}{|y|}\mathcal{E}_{\pm},
\end{multline}
with $\mathcal{E}_{\pm}(t,x,y,\frac{t}{|y+\nabla B_0|},\frac{\sqrt{\gamma}}{t},{\vartheta})$ smooth functions. Moreover, $\Big(\frac{y+\nabla B_0(y,{\vartheta}_{\pm})}{|y+\nabla B_0(y,{\vartheta}_{\pm})|}\Big)\cdot {\vartheta}_{\pm}=\pm 1+\frac{\gamma^2}{|y+\nabla B_0|}\mathcal{E}_{\pm}$.
\end{lemma}
\begin{proof}
Using \eqref{ecomegacy} with ${\vartheta}^2_{d-1}= 1-\sum_{j=1}^{d-2}{\vartheta}_j^2$, $\Big(\sum_{j=1}^{d-2}|\partial_{{\vartheta}_j}\tilde\Phi_{0,a,\gamma}|^2\Big){\vartheta}_{d-1}^2=(1-{\vartheta}_{d-1}^2)|\partial_{{\vartheta}_{d-1}}\tilde\Phi_{0,a,\gamma}|^2$. We eventually obtain
\begin{equation}\label{eqomega}
{\vartheta}_{d-1}=\pm\frac{\partial_{{\vartheta}_{d-1}}\tilde\Phi_{0,a,\gamma}}{|(\sum_{j=1}^{d-1}|\partial_{{\vartheta}_{j}}\tilde\Phi_{0,a,\gamma}|^2|^{1/2}},\quad {\vartheta}_j={\vartheta}_{d-1} \frac{\partial_{{\vartheta}_j}\tilde\Phi_{0,a,\gamma}}{\partial_{{\vartheta}_{d-1}}\tilde\Phi_{0,a,\gamma}}=\pm\frac{\partial_{{\vartheta}_{j}}\tilde\Phi_{0,a,\gamma}}{|\sum_{j=1}^{d-1}|\partial_{{\vartheta}_{j}}\tilde\Phi_{0,a,\gamma}|^2|^{1/2}}.
\end{equation}
For each sign $\pm$ there exists an unique solution as the maps from $\mathbb{S}^{d-2}$ to $\mathbb{S}^{d-2}$: ${\vartheta}\rightarrow \pm\frac{\nabla_{{\vartheta}} \tilde\Phi_{0,a,\gamma}}{|\nabla_{{\vartheta}} \tilde\Phi_{0,a,\gamma}|}$ each have a unique fixed point for small $|y|, t$ (as $B_0=O(|y|^2)$, $B_2=O(|y|^2)$ and $\gamma$ is small).

In the following we approximate the equation satisfied by ${\vartheta}$ up to $O(\gamma^2)$ terms. This will turn out to be useful later on, in the proof of Proposition \ref{propcardoutN1}. Using \eqref{eqtildePhiN} with $N=0$ together with \eqref{tildePhigammadefnew},\eqref{Bgamhomog} and \eqref{Agamtau}, we write
\begin{multline}
\tilde\Phi_{0,a,\gamma}(t,x,y,\Sigma,S,A,\rho{\vartheta})=\rho\Big(t\tau_{q}+\tilde \Phi_{M,\gamma}(x,y,{\vartheta},A,\Sigma)-\tilde\Phi_{M,\gamma}(a,0,{\vartheta},A,S)\\+B_0(y,{\vartheta})+(1-\tau_{q})B_2(y,{\vartheta})+\gamma^{3/2}\frac{x}{\gamma}q^{1/3}({\vartheta})\Sigma \Lp(y,{\vartheta})+\gamma^2\tilde \Gamma_{\gamma}(x,a,y,{\vartheta},\sqrt{\gamma}\Sigma,\sqrt\gamma S,A)\Big),
\end{multline}
where we have set, for $\tau_{q}=\tau_{q}(\gamma A,{\vartheta})$,
\begin{equation}\label{deftildeGammagam}
\tilde \Gamma_{\gamma}:=\gamma^{-2}\Big(\tau_{q} B_{\Gamma}(y,{\vartheta}/\tau_{q})-B_0(y,{\vartheta})-(1-\tau_{q})B_2(y,{\vartheta})+x(\tau_{q} A_{\Gamma}-\sqrt{\gamma}xq^{1/3}({\vartheta}) \Lp(y,{\vartheta})\Sigma)
-a\tau_{q} A_{\Gamma}S\Big).
\end{equation}
From \eqref{Bgamhomog} and \eqref{Agamtau}, $\tilde\Gamma_{\gamma}$ is a smooth bounded function and 
\begin{multline}
\tilde \Gamma_{\gamma}=\frac{(1-\tau_{q})^2}{\gamma^2 \tau_{q}}\Big(B_4(y,{\vartheta})+\sum_{j\geq 3}(\frac{1}{\tau_{q}}-1)^{j-2}B_{2j}(y,{\vartheta})\Big)+\frac{x}{\gamma}\Big(q^{2/3}({\vartheta})(\Sigma^2-A)\mu(y,{\vartheta})+\gamma^{-1}\mathcal{H}_{j\geq 3}\Big)\\
-\frac{a}{\gamma}\Big(q^{2/3}({\vartheta})(S^2-A)\mu(0,{\vartheta})+\gamma^{-1}\mathcal{H}_{j\geq 3}\Big),
\end{multline}
where homogeneous terms of order $j$ in $\gamma^{-1}\mathcal{H}_{j\geq 3}$ come from $A_{\Gamma}$ and have factors $\gamma^{j/2-1}$, $j\geq 3$ and where $(1-\tau_{q})^2/\gamma^2=A^2q^{4/3}({\vartheta})/(1+\tau_{q})^2$ is bounded. We compute explicitly 
\begin{multline}
\label{derivjomtildePhi}
 \frac 1 {\rho}\partial_{{\vartheta}_j}\tilde\Phi_{0,a,\gamma}=y_j+\partial_j B_0(y,{\vartheta})+\frac{\gamma A}{2\tau_{q}}q^{2/3}({\vartheta})\Big[\frac{2\partial_jq({\vartheta})}{3q({\vartheta})}(t-B_2(y,{\vartheta}))-\frac{2}{1+\tau_{q}}\partial_{j}B_2(y,{\vartheta})\Big]\\
 +\gamma^{3/2}\Big[\frac{\partial_j q({\vartheta})}{3q^{2/3}({\vartheta})}\Big(\frac{x}{\gamma}\Sigma (1+\Lp(y,{\vartheta}))-\frac{a}{\gamma}S\Big)+\frac{x}{\gamma}\Sigma q^{1/3}({\vartheta}) \partial_j \Lp(y,{\vartheta})\Big]+\gamma^{2}\partial_{{\vartheta}_j}\tilde\Gamma_{\gamma}.
\end{multline}
With $\nabla F=(\partial_{{\vartheta}_1}F,...,\partial_{{\vartheta}_{d-1}}F)$, let $\Omega_1=\frac{ A}{2\tau_{q}}q^{2/3}({\vartheta})\Big(\frac{2\nabla q({\vartheta})}{3q({\vartheta})}(1-\frac{B_2(y,{\vartheta})}{t})-\frac{2}{1+\tau_{q}}\frac{\nabla B_2(y,{\vartheta})}{t}\Big)$, $\Omega_2=\frac{\nabla q({\vartheta})}{3q^{2/3}({\vartheta})}\Big(\frac{x}{\gamma}\Sigma (1+\Lp(y,{\vartheta}))-\frac{a}{\gamma}S\Big)+\frac{x}{\gamma}\Sigma q^{1/3}({\vartheta}) \nabla \Lp(y,{\vartheta})$, then, using \eqref{derivjomtildePhi} we compute,
\begin{multline}\label{eqomegamodul}
\sum_{j=1}^{d-1}|\partial_{{\vartheta}_{j}}\tilde\Phi_{0,a,\gamma}|^2=\Big|y+\nabla B_0+\gamma t\Omega_1+\gamma^{3/2}\Omega_2+\gamma^{2}\nabla\tilde\Gamma_{\gamma}\Big|^2=|y+\nabla B_0|^2\Big[1+\frac{2\gamma t}{|y+\nabla B_0|}\times \\{} \Omega_1\cdot\Big(\frac{y+\nabla B_0}{|y+\nabla B_0|}\Big)+\frac{2\gamma^{3/2}}{|y+\nabla B_0|}\Omega_2\cdot\Big(\frac{y+\nabla B_0}{|y+\nabla B_0|}\Big)
+\frac{\gamma^2}{|y+\nabla B_0|}\Big(\mathcal{E}_1+\frac{t^2\mathcal{E}_2+t\sqrt{\gamma}\mathcal{E}_3+\gamma \mathcal{E}_{4}}{|y+\nabla B_0|}\Big)\Big],
\end{multline}
where we have set $\mathcal{E}_1:=2\Big(\frac{y+\nabla B_0}{|y+\nabla B_0|}\Big)\cdot \nabla \tilde \Gamma_{\gamma}$, $\mathcal{E}_2:=|\Omega_1|^2$, $\mathcal{E}_3=2\Omega_1\cdot (\Omega_2+\sqrt{\gamma}\nabla \tilde\Gamma_{\gamma})$, $\mathcal{E}_4=|\Omega_2|^2+\sqrt{\gamma}\Omega_2\cdot(\nabla\tilde \Gamma_{\gamma})+\gamma |\nabla \tilde\Gamma_{\gamma}|^2$. As $\frac{\gamma}{|y+\nabla B_0|}=\sqrt{\gamma}\frac{\sqrt{\gamma}}{t}\frac{t}{|y+\nabla B_0|}$ is small when $t/\sqrt{\gamma}$ is sufficiently large, the coefficient of $\mathcal{E}_3$ in \eqref{eqomegamodul} is bounded by $O(\sqrt{\gamma})$. All the three terms $\mathcal{E}_j$ are smooth, bounded functions and we relabel their sum as follows
\[
\mathcal{E}_1+\frac{t^2\mathcal{E}_2+t\sqrt{\gamma} \mathcal{E}_3}{|y+\nabla B_0|}+\frac{\gamma}{|y+\nabla B_0|} \mathcal{E}_4=\mathcal{E}(t,x,y,\frac{t}{|y+\nabla B_0|},\frac{\sqrt{\gamma}}{t},{\vartheta}).
\]
We eventually find 
\begin{multline}
\Big(\sum_{j=1}^{d-1}|\partial_{{\vartheta}_{j}}\tilde\Phi_{0,a,\gamma}|^2\Big)^{1/2}=|y+\nabla B_0|\Big(1+\frac{\gamma t}{|y+\nabla B_0|}<\Omega_1,\frac{y+\nabla B_0}{|y+\nabla B_0|}>\\+\frac{\gamma^{3/2}}{|y+\nabla B_0|}<\Omega_2,\frac{y+\nabla B_0}{|y+\nabla B_0|}>
+\frac{\gamma^2}{|y+\nabla B_0|} \mathcal{\tilde E}\Big),
\end{multline}
where $\mathcal{\tilde E}$ is a smooth function of $(t,x,y,\frac{t}{|y+\nabla B_0|},\frac{\sqrt{\gamma}}{t},{\vartheta})$ obtained by taking the square root in \eqref{eqomegamodul} and using the asymptotic expansion of $\sqrt{1+\kappa}=1+\kappa/2+O(\kappa^2)$.
From \eqref{eqomega} we have 
\[
\pm{\vartheta}=\frac{y+\nabla B_0+\gamma t\Omega_1+\gamma^{3/2}\Omega_2+\gamma^{2}\nabla\tilde\Gamma_{\gamma}}{|y+\nabla B_0|\Big(1+\frac{\gamma t}{|y+\nabla B_0|}\Omega_1\cdot\Big(\frac{y+\nabla B_0}{|y+\nabla B_0|}\Big)+\frac{\gamma^{3/2}}{|y+\nabla B_0|}\Omega_2\cdot\Big(\frac{y+\nabla B_0}{|y+\nabla B_0|}\Big)
+\frac{\gamma^2}{|y+\nabla B_0|}\mathcal{\tilde R}\Big)}.
\]
Setting $\tilde \Omega_j=\frac{\Omega_j}{|y+\nabla B_0|}$ and using $(1+\kappa)^{-1}=1-\kappa+O(\kappa^2)$, we obtain, for another smooth and bounded function $\mathcal{\breve E}(t,x,y,\frac{t}{|y+\nabla B_0|},\frac{\sqrt{\gamma}}{t},{\vartheta})$,
\begin{multline}
\pm{\vartheta}=\Big\{\frac{y+\nabla B_0}{|y+\nabla B_0|}+\gamma t \tilde\Omega_1+\gamma^{3/2}\tilde\Omega_2+\gamma^2\nabla \tilde \Gamma_{\gamma}/|y+\nabla B_0|\Big\}\\\times \Big\{1-\gamma t \tilde \Omega_1\cdot\Big(\frac{y+\nabla B_0}{|y+\nabla B_0|}\Big)-\gamma^{3/2}\tilde\Omega_2\cdot\Big(\frac{y+\nabla B_0}{|y+\nabla B_0|}\Big)+\frac{\gamma^2}{|y+\nabla B_0|}\breve R\Big\}\\
=\frac{y+\nabla B_0}{|y+\nabla B_0|}+\frac{\gamma t}{|y+\nabla B_0|}\Big(\Omega_1-\frac{y+\nabla B_0}{|y+\nabla B_0|}\Omega_1\cdot\Big(\frac{y+\nabla B_0}{|y+\nabla B_0|}\Big)\Big)\\
+\frac{\gamma^{3/2}}{|y+\nabla B_0|}\Big(\Omega_2-\frac{y+\nabla B_0}{|y+\nabla B_0|}\Omega_2\cdot\Big(\frac{y+\nabla B_0}{|y+\nabla B_0|}\Big)\Big)+\frac{\gamma^2}{|y+\nabla B_0|}\mathcal{E},
\end{multline}
and replacing $\frac{y+\nabla B_0}{|y+\nabla B_0|}$ in the last two lines by $\pm{\vartheta}-\gamma t\tilde\Omega_1-\gamma^{3/2}\tilde\Omega_2-\gamma^2\nabla \tilde \Gamma_{\gamma}/|y+\nabla B_0|$ yields \eqref{eqNcontYmodgen} where $\mathcal{E}_{\pm}$ are smooth, bounded functions of $(t,x,y,\frac{t}{|y+\nabla B_0|},\frac{\sqrt{\gamma}}{t},{\vartheta})$.
\end{proof}
\begin{rmq}\label{rmqsigneomegac}
For $t>0$, considering the critical point satisfying \eqref{eqNcontYmodgen} with $+$ sign in the left hand side term produces $O(h^{\infty})$ contributions, as the critical value of the phase is not stationary in $\rho$. Therefore in the following for $t>0$ we pick ${\vartheta}_c(t,x,y,\Sigma,S,A):={\vartheta}_{-}(t,x,y,\Sigma,S,A)$ such that \eqref{eqNcontYmodgen} holds (and for $t<0$ one should take ${\vartheta}_c:={\vartheta}_+$).
\end{rmq}
In Lemma \ref{lemomegac} we only considered critical points of $\tilde\Phi_{N,a,\gamma}$ with respect to ${\vartheta}$. In the next Lemma we deal with critical points with respect to $A$, $\rho$ and ${\vartheta}$ of $\tilde \Phi_{N,a,\gamma}$, where $\rho=|\theta|$, $\theta=\rho{\vartheta}$. 
\begin{lemma}\label{lemdetails}
Let $t>0$. If $\nabla_{A,\rho,{\vartheta}}\tilde \Phi_{N,a,\gamma}=0$ then there exists a smooth, bounded function $\tilde{\mathcal{E}}$ depending on $(t,x,y,\frac{t}{|y+\nabla B_0|},\frac{\sqrt{\gamma}}{t},{\vartheta})$ such that for the critical point with respect to ${\vartheta}$, that we denote ${\vartheta}_c^{\sharp}$ (and for which \eqref{eqNcontYmodgen} holds with ${\vartheta}_-$)
\begin{multline}\label{desclambdaNmaincontribymod}
  \frac{|y+\nabla B_0(y,{\vartheta}_c^{\sharp})|}{t}=1+\frac 16 \gamma Aq^{2/3}({\vartheta}_c^{\sharp})\Big(1-\frac{B_2(y,{\vartheta}_c^{\sharp})}{t}\Big)+\frac{\gamma^{3/2}}{t}\Big(\frac 23 A(\Sigma-S)\\
{}+\frac{\Sigma^3}{3}+\Sigma(\frac{x}{\gamma}q^{1/3}({\vartheta}_c^{\sharp})(1+\Lp(y,{\vartheta}^{\sharp}_{c}))-A)-\frac{S^3}{3}-S(\frac{a}{\gamma}q^{1/3}({\vartheta}_c^{\sharp})-A)\Big)+\frac{\gamma^{2}}{t}\tilde{\mathcal{E}}\,.
\end{multline}
Moreover, the critical point ${\vartheta}_c^{\sharp}$ is such that
\begin{equation}\label{eqNcontYmodgennewnew}
- {\vartheta}_c^{\sharp}=\frac{y+\nabla B_0(y,{\vartheta}_c^{\sharp})}{|y+\nabla B_0(y,{\vartheta}_c^{\sharp})|}
-2\Big(\frac{|y+\nabla B_0(y,{\vartheta}_c^{\sharp})|}{t}-1\Big)\Big[\frac{\anabla q({\vartheta}_c^{\sharp})}{q({\vartheta}_c^{\sharp})}
-\frac{3\anabla B_2(y,{\vartheta}_c^{\sharp})}{2t(1-B_2(y,{\vartheta}^{\sharp}_{c})/t)}\Big]+O(\frac{\gamma^{\frac 32}}{t})\,.
\end{equation}
\end{lemma}
\begin{proof}
From Lemma \ref{lemomegac}, \eqref{eqNcontYmodgen} holds  (critical point w.r.t. ${\vartheta}$). Let $\partial_{A}\tilde\Phi_{N,a,\gamma}=\partial_{\rho}\tilde\Phi_{N,a,\gamma}=0$ (first and 
 second to last equations in \eqref{desclambdaN}): substitution between the two equations yields
\begin{multline}
t\sqrt{1+\gamma Aq^{2/3}({\vartheta}_{})}+\tilde\Phi_{\gamma}(x,y,{\vartheta}_{},A,\Sigma)-\tilde\Phi_{\gamma}(a,0,{\vartheta}_{},A,S)\\
=\frac 23 A\Big(  \frac{t \gamma q^{2/3}({\vartheta}_{})}{2\sqrt{1+\gamma A q^{2/3}({\vartheta}_{})}}+\partial_{A}\Big(\tilde\Phi_{\gamma}(x,y,{\vartheta}_{},A,\Sigma)-\tilde\Phi_{\gamma}(a,0,{\vartheta}_{},A,S)\Big)\Big),
\end{multline}
and using \eqref{tildePhigammadefnew} and \eqref{derivAPhi} we further obtain, with ${\tau_{q}=\tau_{q}(\gamma A,\vartheta)}$, $\partial_A\tau_{q}=\gamma q^{2/3}({\vartheta})/(2\tau_{q})$,
\begin{multline}\label{eq1eq4}
t\tau_{q}+y\cdot {\vartheta}+\gamma^{3/2}\Big(\frac{\Sigma^3}{3}+\Sigma(\frac{x}{\gamma}q^{1/3}(\vartheta)-A)
-\frac{S^3}{3}-S(\frac{a}{\gamma}q^{1/3}(\vartheta)-A)\Big)
+B_0(y,{\vartheta})\\
{}+(1-\tau_{q})B_2(y,{\vartheta})+\gamma^{3/2}\frac{x}{\gamma}q^{1/3}({\vartheta})\Sigma \Lp(y,{\vartheta}) +\gamma^2\tilde\Gamma_{\gamma}=\frac 23 \frac{\gamma Aq^{2/3}}{2\tau_{q}}\Big[t-B_2(y,{\vartheta})+\sum_{k\geq 2}\partial_{\tau_{q}}\Big(\frac{(1-\tau_{q})^k}{\tau_{q}^{k-1}}\Big)B_{2k}(y,{\vartheta}_{})\\
 {}+x\partial_{\tau_{q}}\Big(\tau_{q} A_{\Gamma}(x,y,\sqrt{\gamma}\Sigma q^{1/3}({\vartheta}_{})/\tau_{q},{\vartheta}_{}/\tau_{q})\Big)-a\partial_{\tau_{q}}\Big(\tau_{q} A_{\Gamma}(a,0,\sqrt{\gamma}S q^{1/3}({\vartheta}_{})/\tau_{q},{\vartheta}_{}/\tau_{q})\Big)\Big]-\frac 23\gamma^{3/2}(\Sigma-S)A
\end{multline}
with $\tilde \Gamma_{\gamma}$ defined in \eqref{deftildeGammagam}. As, from \eqref{Agamtau}, we obtain $\partial_{\tau_{q}}(\tau_{q} A_{\Gamma})=-\gamma q^{2/3}({\vartheta})(\Sigma^2-A)\mu(y,{\vartheta})/\tau_{q}^2+\mathcal{H}_{j\geq 3}$, we define a smooth, bounded function $\breve\Gamma_{\gamma}$ 
\begin{multline}
\breve \Gamma_{\gamma}=\frac 13 \frac{ Aq^{1/3}({\vartheta})}{\tau_{q}}\Big[\sum_{k\geq 2}\partial_{\tau_{q}}\Big(\frac{(1-\tau_{q})^k}{\gamma \tau_{q}^{k-1}}\Big)B_{2k}(y,{\vartheta}_{})\\
+\frac{x}{\gamma}\partial_{\tau_{q}}\Big(\tau_{q} A_{\Gamma}(x,y,\sqrt{\gamma}\Sigma q^{1/3}({\vartheta}_{})/\tau_{q},{\vartheta}_{}/\tau_{q})\Big)- \frac{a}{\gamma}\partial_{\tau_{q}}\Big(\tau_{q} A_{\Gamma}(a,0,\sqrt{\gamma}S q^{1/3}({\vartheta}_{})/\tau_{q},{\vartheta}_{}/\tau_{q})\Big].
\end{multline}
Note that $\breve \Gamma_{\gamma}=\frac{Aq({\vartheta})}{3\tau_{q}^2}(\frac{2A}{(1+\tau_{q})}-x(\Sigma^2-A)\mu(y,{\vartheta})+a(S^2-A)\gamma(0,{\vartheta})+\gamma^{-1}\mathcal{H}_{j\geq 3})$, where terms in $\mathcal{H}_{j\geq 3}$ come with wights $\gamma^{j/2}$ and $j\geq 3$. Equation \eqref{eq1eq4} becomes
\begin{multline}
 t+y\cdot {\vartheta}+B_0(y,{\vartheta})= -\gamma Aq^{2/3}({\vartheta})(t-B_2(y,{\vartheta}))(\frac{1}{1+\tau_{q}}-\frac{1}{3\tau_{q}})-\gamma^2(\tilde\Gamma_{\gamma}-\breve \Gamma_{\gamma})\\{}-\gamma^{3/2}\Big(\frac{\Sigma^3}{3}
  +\Sigma(\frac{x}{\gamma}q^{1/3}(\vartheta)(1+\Lp(y,{\vartheta}))-A) -\frac{S^3}{3}-S(\frac{a}{\gamma}q^{1/3}(\vartheta)-A)+\frac 23(\Sigma-S)A\Big)\,.
\end{multline}
As $\tau_{q}^2-1=\gamma Aq^{2/3}/(1+\tau_{q})$ and $(\frac{1}{1+\tau_{q}}-\frac{1}{3\tau_{q}})=\frac 16+\frac{1}{12}\frac{(\tau_{q}^2-1)(1-(\tau_{q}^2-1)/4)}{\tau_{q}^2(1+(\tau_{q}^2-1)/2)}$, we move the part of the coefficient of $(t-B_2(y,{\vartheta}))$ with factor $\gamma^2$ into the next term. Recall $B_0(y,{\vartheta})={\vartheta}\cdot \nabla B_0(y,{\vartheta})$, which eventually yields
\begin{multline}
(y+\nabla B_0(y,{\vartheta}))\cdot{\vartheta}+t=- \gamma Aq^{2/3}({\vartheta})(t-B_2(y,{\vartheta}))\Big(\frac 16+\frac{1}{12}\frac{(\tau_{q}^2-1)(1-(\tau_{q}^2-1)/4)}{\tau_{q}^2(1+(\tau_{q}^2-1)/2)}\Big)\\
{}-\gamma^{3/2}\Big(\frac{\Sigma^3}{3}+\Sigma(\frac{x}{\gamma}q^{1/3}(\vartheta)(1+\Lp(y,{\vartheta}))-A)
-\frac{S^3}{3}-S(\frac{a}{\gamma}q^{1/3}(\vartheta)-A)+\frac 23(\Sigma-S)A\Big)+\gamma^2(\tilde\Gamma_{\gamma}-\breve \Gamma_{\gamma})\,.
\end{multline}
As ${\vartheta}$ is itself a critical point, we obtain, using the last statement of Lemma \ref{lemomegac} and Remark \ref{rmqsigneomegac}, that \eqref{eqNcontYmodgen} holds with $-$ sign on the left, and that 
$\vartheta\cdot (y+\nabla B_0(y,{\vartheta}))=-|y+\nabla B_0|+\gamma^2\mathcal{E}_-$. As a result we obtain \eqref{desclambdaNmaincontribymod} with $\tilde{\mathcal{E}}=\tilde{\mathcal{E}}(t,x,y,\frac{t}{|y+\nabla B_0|},\frac{\sqrt{\gamma}}{t},{\vartheta})$, defined as follows
\[
\tilde{\mathcal{E}}:=\tilde\Gamma_{\gamma}-\breve \Gamma_{\gamma}+\mathcal{E}_-+\frac{1}{12}\frac{A^2q^{4/3}({\vartheta}v)}{1+\tau_{q}}\frac{(1-(\tau_{q}^2-1)/4)}{\tau_{q}^2(1+(\tau_{q}^2-1)/2)}(t-B_{2}(y,\vartheta)\,,
\]
which completes the proof of Lemma \ref{lemdetails}.
 \end{proof}
Pick $(t,x,y)$ with $t>0$ and $\mathcal{N}(t,x,y)\neq\varnothing$. As already noticed, we must have $||y|/t-1|\leq c$ (since otherwise $\mathcal{N}(t,x,y)=\emptyset$ by \eqref{desclambdaNmaincontribymod}). Let now  $N_j\in \mathcal{N}^1_d(t,x,y)$, $j\in \{1,2\}$. Then there exist $(t_j,x_j,y_j)$ such that $N_j\in\mathcal{N}(t_j,x_j,y_j)$ and $(t_{j},x_{j},y_{j})\in \mathcal{C}_{\gamma}(t,x,y)$; there exist $(\theta_j,A_j,\Sigma_j,S_j)$, $j\in\{1,2\}$, $A_j$ close to $1$ 
such that \eqref{desclambdaN} holds with $(t,x,y,\theta,A,\Sigma,S)$ replaced by $(t_j,x_j,y_j,\theta_j,A_j,\Sigma_j,S_j)$. Then we have, with $\vartheta_{j}=\theta_{j}/|\theta_{j}|$,
\begin{gather}\label{eqNjcontgen}
\quad 2N_j (1-\frac 3 4   {B_L}'(|\theta_{j}|\lambda_{\gamma} A_j^{3/2}))=\frac{q^{2/3}(\vartheta_{j})}{2A_j^{1/2}\sqrt{1+\gamma A_jq^{2/3}(\vartheta_{j})}}\frac{1}{\sqrt{\gamma}}\Big(t_j-B_2(y_j,\vartheta_{j})\Big)+O(1)\,,\\
\label{eqNjcontSjgen}
\Sigma_j^2+\frac{x_j}{\gamma}q^{1/3}(\vartheta_{j})\Big(1+\Lp(y_j,\vartheta_{j})+\mathcal{H}_{j\geq 1}\Big)=A_j,
S_j^2+\frac{a}{\gamma}q^{1/3}(\vartheta_{j})(1+\mathcal{H}_{j\geq 1})=A_j\,,\\
\label{eqNjcontYjgen}
\begin{multlined}[t]\frac{y_j+\nabla B_0(y_j,\vartheta_{j})}{t_j}=-\vartheta_{j}+\gamma A_j q^{2/3}(\vartheta_{j})\Big[(1-\frac{B_2(y_j,\vartheta_{j})}{t_j})\Big(\frac 12 \vartheta_{j}-\frac{\nabla q(\vartheta_{j})}{3q(\vartheta_{j})}\Big)\\{}+\frac{1}{2t_j}\anabla B_2(y_j,\vartheta_{j})\Big]
+O(\frac{\gamma^{3/2}}{t_j})\,.\end{multlined}
\end{gather}
Taking the difference between \eqref{eqNjcontgen} for $j=1,2$ yields
\begin{multline}\label{eqN1N2difgen}
N_1-N_2=\frac 34 \Big(N_1B_L'(|\theta_1|\lambda_{\gamma}A_1^{3/2})-N_2B_L'(|\theta_2|\lambda_{\gamma}A_2^{3/2})\Big)+O(1)\\
+\frac{(t_1-B_2(y_1,\vartheta_{1})) q^{2/3}(\vartheta_{1})}{4\sqrt{\gamma}A_1^{1/2}\sqrt{1+\gamma A_1q^{2/3}(\vartheta_{1})}}-\frac{(t_2-B_2(y_2,\vartheta_{2})) q^{2/3}(\vartheta_{2})}{4\sqrt{\gamma}A_2^{1/2}\sqrt{1+\gamma A_2q^{2/3}(\vartheta_{2})}}.
\end{multline}
Using Remark \ref{rmqN1N2dif}, the first term in the RHS of \eqref{eqN1N2difgen} is $O\Big(\frac{N_1+N_2}{\lambda_{\gamma}^2}\Big)$. We are reduced to proving
\begin{equation}\label{bounddeuxTjgen}
\Big|\frac{(t_1-B_2(y_1,\vartheta_{1})) q(\vartheta_{1})}{\sqrt{A_1q^{2/3}(\vartheta_{1})}}-\frac{(t_2-B_2(y_2,\vartheta_{2})) q(\vartheta_{2})}{\sqrt{A_2q^{2/3}(\vartheta_{2})}}\Big|=O(\sqrt{\gamma}).
\end{equation}
This will follow from the next lemma :
\begin{lemma}\label{lemcardN1gen}
Let $(t,x,y)$ be fixed, let $(t_j,x_j,y_j)$ satisfy \eqref{eqcondTjXjYjgen}, \eqref{eqcondTjYjB0gen} and let $(\theta_j=|\theta_j|\vartheta_{j},A_j,\Sigma_j,S_j)$, $j\in\{1,2\}$ with $A_j$ close to $1$ such that \eqref{eqNjcontgen}, \eqref{eqNjcontSjgen} and \eqref{eqNjcontYjgen} hold true, then
\begin{equation}\label{eqAjomega_jdifgen}
t|\vartheta_{1}-\vartheta_{2}|\lesssim \sqrt{\gamma}, \quad t|A_1q^{2/3}(\vartheta_{1})-A_2q^{2/3}(\vartheta_{2})|\lesssim \sqrt{\gamma}.
\end{equation}
\end{lemma}
\begin{proof}
If $\frac{t}{\sqrt{\gamma}}$ is bounded then both inequalities in \eqref{eqAjomega_jdifgen} follow immediately. Suppose $\frac{t}{\sqrt{\gamma}}$ is sufficiently large.
From \eqref{eqNjcontYjgen} and \eqref{eqNcontYmodgen} we get (as in \eqref{desclambdaNmaincontribymod})
\begin{align}\label{desclambdaNmaincontribymodbis}
\frac{|y_j+\nabla B_0(y_j,\vartheta_{j})|-t_j}{t_j-B_2(y_j,\vartheta_{j})} & =\frac 16\gamma A_jq^{2/3}(\vartheta_{j})+O(\frac{\gamma^{3/2}}{t_j})\,.\\
\label{eqNjcontYjmodgen}
\frac{y_j+\nabla B_0(y_j,\vartheta_{j})}{|y_j+\nabla B_0(y_j,\vartheta_{j})|}+\vartheta_{j} & =\begin{multlined}[t]
-\gamma A_jq^{2/3}(\vartheta_{j})\Big[\frac{\anabla q(\vartheta_{j})}{3q(\vartheta_{j})}\Big(1-\frac{B_2(y_j,\vartheta_{j})}{t_j}\Big)
\\{}-\frac{1}{2t_j}\anabla B_2(y_j,\vartheta_{j})\Big]+O(\frac{\gamma^{3/2}}{t_j})\,.
\end{multlined}
  \end{align}
Taking the difference between \eqref{eqNjcontYjmodgen} for $j=1$ and $j=2$ yields 
\[
|\vartheta_{1}-\vartheta_{2}|=\Big|\frac{y_1+\nabla B_0(y_1,\vartheta_{1})}{|y_1+\nabla B_0(y_1,\vartheta_{1})|}-\frac{y_2+\nabla B_0(y_2,\vartheta_{2})}{|y_2+\nabla B_0(y_2,\vartheta_{2})|}\Big|+O(\gamma).
\]
As $\nabla B_0(y,\vartheta)=O(|y|^2)$, $\Big|\frac{y_1}{|y_1|}-\frac{y_2}{|y_2|}\Big|\leq \frac{2r_0\sqrt{\gamma}}{|y|}$ from \eqref{eqcondTjXjYjgen} and $\frac{t}{|y|}\sim 1$, the first inequality in \eqref{eqAjomega_jdifgen} holds true. We proceed with the second one, which is, as for the model, more delicate to handle.
\begin{lemma}\label{lemomegcrit}
Let ${\tilde\vartheta}_j={\tilde \vartheta}_j(t_j,y_j)$ be the solution to \eqref{eq:omegatildethetaj} below, then $|\vartheta_{j}-{\tilde \vartheta}_j|\lesssim \frac{\gamma^{3/2}}{t_j}$.
\begin{equation}\label{eq:omegatildethetaj}
{\tilde\vartheta}_{j}+\frac{y_j+\nabla B_0(y_j,{\tilde \vartheta}_{j})}{|y_j+\nabla B_0(y_j,{\tilde\vartheta}_{j})|}=-2\Big(\frac{|y_j+\nabla B_0(y_j,{\tilde\vartheta}_{j})|}{t_j}-1\Big)\Big[\frac{\anabla q({\tilde \vartheta}_{j})}{q({\tilde\vartheta}_{j})}-\frac{3\anabla B_2(y_j,{\tilde\vartheta}_{j})}{2t_{j}(1-\frac{B_2(y_j,{\tilde\vartheta}_{j})}{t_j})}\Big]\,.
\end{equation}
\end{lemma}
\begin{proof}
The coefficient of $\gamma A_j$ in \eqref{desclambdaNmaincontribymodbis} does not vanish as $B_2(y_j,\vartheta_{j})=O(|y_j|^2)$ and $|y_j|/t_j\sim 1$; we replace $\gamma A_j$ in \eqref{eqNjcontYjmodgen} by its first approximation given in \eqref{desclambdaNmaincontribymodbis}. We obtain
\begin{multline}\label{eq:omegathetaj}
  \vartheta_{j}=-\frac{y_j+\nabla B_0(y_j,\vartheta_{j})}{|y_j+\nabla B_0(y_j,\vartheta_{j})|}\\-2\Big(\frac{|y_j+\nabla B_0(y_j,\vartheta_{j})|}{t_j}-1\Big)\Big[\frac{\anabla q(\vartheta_{j})}{q(\vartheta_{j})}-\frac{3\anabla B_2(y_j,\vartheta_{j})}{2t_j{(1-\frac{B_2(y_j,\vartheta_{j})}{t_j})}}\Big]+O(\frac{\gamma^{3/2}}{t_j}).
\end{multline}
Taking the difference between \eqref{eq:omegathetaj} and \eqref{eq:omegatildethetaj}, using $\nabla B_2(y_j,\vartheta)=O(|y_j|^2)$ and smallness of $\Big|\frac{|y_j+\nabla B_0(y_j,{\tilde\vartheta}_{j})|}{t_j}-1\Big|$, completes the proof of Lemma \ref{lemomegcrit}.
\end{proof}
Using \eqref{desclambdaNmaincontribymodbis}, in order to achieve the proof of Lemma \ref{lemcardN1gen} we are reduced to proving  that
\begin{equation}\label{remainstoprove}
\frac{t}{\gamma^{3/2}}\Big|\frac{|y_1+\nabla B_0(y_1,\vartheta_{1})|-t_1}{t_1-B_2(y_1,\vartheta_{1})}-\frac{|y_2+\nabla B_0(y_2,\vartheta_{2})|-t_2}{t_2-B_2(y_2,\vartheta_{2})}\Big|=O(1).
\end{equation}
Set $\tilde y_{j}=y_{j}+\nabla B_{0}(y_{j},\vartheta_{{j}})$ and write 
\begin{multline}
  \Big|\frac{|{\tilde y_1}|-t_1}{t_1-B_2(y_1,\vartheta_{1})}-\frac{|{\tilde y_2}|-t_2}{t_2-B_2(y_2,\vartheta_{2})}\Big|\leq \Big|\frac{|{\tilde y_1}|-t_1+t_2-|{\tilde y_2}|}{t_1-B_2(y_1,\vartheta_{1})}\\
{}\quad\quad\quad\quad\quad\quad\quad\quad\quad\quad\quad\quad\quad\quad\quad  {}-(t_2-|{\tilde y_2}|)\Big|\frac{1}{t_1-B_2(y_1,\vartheta_{1})}-\frac{1}{t_2-B_2(y_2,\vartheta_{2})}\Big|\\
\leq  \Big|\frac{|y_1+\nabla B_0(y_1,{\tilde\vartheta}_{1})|-t_1+t_2-|y_2+\nabla B_0(y_2,{\tilde\vartheta}_{2})|}{t_1-B_2(y_1,\vartheta_{1})}\Big|+\Big|\frac{|{\tilde y_1}|-|y_1+\nabla B_0(y_1,{\tilde \vartheta}_{1})|}{t_1-B_2(y_1,\vartheta_{1})}\Big|\\ +\Big|\frac{|{\tilde y_2}|-|y_2+\nabla B_0(y_2,{\tilde \vartheta}_{2})|}{t_1-B_2(y_1,\vartheta_{1})}\Big|
+\Big|\frac{|{\tilde y_2}|-t_2}{t_2-B_2(y_2,\vartheta_{2})}\Big| \Big| \frac{t_2-B_2(y_2,\vartheta_{2})}{t_1-B_2(y_1,\vartheta_{1})}-1\Big|
\end{multline}
and using \eqref{eqcondTjYjB2gen} and \eqref{eqcondTjYjB0gen}, Lemma \ref{lemomegcrit} and \eqref{desclambdaNmaincontribymodbis} for $j=2$ yields
\begin{multline}
\frac{t}{\gamma^{3/2}}\Big|\frac{|{\tilde y_1}|-t_1}{t_1-B_2(y_1,\vartheta_{1})}-\frac{|{\tilde y_2}|-t_2}{t_2-B_2(y_2,\vartheta_{2})}\Big|\leq \frac{t}{\gamma^{3/2}}\times \frac{r_0\gamma^{3/2}}{t_1} \\ +\frac{t}{\gamma^{3/2}}\times \Big(O(\frac{|y_1|^2}{t_1})|\vartheta_{1}-{\tilde \vartheta}_1|+O(\frac{|y_2|^2}{t_1})|\vartheta_{2}-{\tilde \vartheta}_2|\Big)\\
+\frac{t}{\gamma^{3/2}}\times O(\gamma)\times \Big(\frac{|t_2-B_2(y_2,{\tilde \vartheta}_{2})-t_1+B_2(y_1,{\tilde\vartheta}_{1})|}{t_1-B_2(y_1,\vartheta_{1})}+O(\frac{|y_1|^2}{t_1})|\vartheta_{1}-{\tilde \vartheta}_1|+O(\frac{|y_2|^2}{t_1})|\vartheta_{2}-{\tilde\vartheta}_2|\Big)
=O(1).
\end{multline}
For $d=2$ the proof is much simpler (no angle $\vartheta$). In this case Lemma \ref{lemcardN1gen} reduces to obtaining suitable estimates for $|A_1-A_2|$ ; this follows from the $2D$ equivalent of
\eqref{desclambdaNmaincontribymodbis} which reads 
\[
\frac{|y_j+ B_0(y_j)|}{t_j}=1+\frac 16 \gamma A_j\Big(1-\frac{B_2(y_j)}{t_j}\Big)+O(\gamma^{3/2}/t)\,,
\]
and we are done with Lemma \ref{lemcardN1gen}.\end{proof}
This completes the proof of Proposition \ref{propcardN} in the general case.
\end{proof}
\begin{prop}\label{propcardoutN1}
Let $(t,x,y)$ such that $\mathcal{N}(t,x,y)\neq\emptyset$. Then,
\[
\sum_{N\notin \mathcal{N}^1_d(t,x,y), |N|\lesssim \frac{1}{\sqrt{\gamma}}} V_{N,\gamma}(t,x,y)=O(h^{\infty}).
\]
\end{prop}
\begin{proof}
We, again, consider first the model case:  in rescaled variables $T,X,Y$, the Lagrangian $\Lambda^M_N$ is defined by \eqref{systemICmodelA}, \eqref{systemICmodelSS}, \eqref{systemICmodeltheta}. 
We start with integration in $\vartheta$ : $\Psi^M_{N,a,\gamma}=\Psi^M_{0,a,\gamma}-
\frac 43 |\theta|NA^{3/2}+\frac{N}{\lambda_{\gamma}}B_L((|\theta|\lambda_{\gamma}A^{3/2})$ and the last two terms do not depend on $\vartheta$;  $\nabla^{2}_{\vartheta}\Psi^M_{N,a,\gamma}=\nabla_{\vartheta}^{2}\Psi^M_{0,a,\gamma}=\frac{|Y|}{\gamma}\times O(1)=\frac{|y|}{\gamma^{3/2}}\times O(1)$. Using Lemma \ref{derriere}, we have $|y|>c_0 |t|$, hence critical points with respect to $\vartheta$ are non-degenerate and stationary phase applies (providing a factor $(\lambda_{\gamma}\frac{|y|}{\gamma^{3/2}})^{-(d-2)/2}\lesssim (\frac{h}{t})^{(d-2)/2}$). Let $t>0$, then using Lemma \ref{lemomegac} with $\Gamma=0$ (hence $B_0=B_2=l=0$), there exists a smooth, bounded function $\mathcal{E}_{M,-}$ such that the critical point (associated to the $-$ sign in \eqref{eqNcontYmodgen}) is
\begin{equation}\label{omegacformmodel}
\vartheta_c=-{\vartheta_Y}-\frac{\gamma A}{3}\frac{T}{|Y|}\frac{\anabla q(-{\vartheta_Y})}{q^{1/3}(-{\vartheta_Y})}+\frac{\gamma}{|Y|}(\Sigma X-S\frac{a}{\gamma})\frac{\anabla q(-{\vartheta_Y})}{3q^{2/3}(-{\vartheta_Y})}+\frac{\gamma^{3/2}}{|Y|}\mathcal{E}_{M,-}\,,
\end{equation}
where we set $\vartheta_{Y}=Y/|Y|$.
The critical point (up to its sign) is unique, and from Lemma \ref{lemdetails} (in the model case) we see that we recover \eqref{eqomegathetaYT}. Let $\rho:=|\theta|$ and denote $\tilde\Psi^M_{N,a,\gamma}(T,X,Y,\Sigma,S,A,\rho):=\Psi^M_{N,a,\gamma}(T,X,Y,\Sigma,S,A,\rho\vartheta_c)$ the critical value of the phase at $\vartheta_c$ given above, then
\begin{multline}\label{defnewPsitildemodel}
\tilde\Psi^M_{N,a,\gamma}(T,X,Y,\Sigma,S,A,\rho)=\rho\Big(\frac{Y\cdot \vartheta_{c}+T\sqrt{1+\gamma A q^{2/3}(\vartheta_{c})}}{\gamma}+\frac{\Sigma^3}{3}+\Sigma(Xq^{1/3}(\vartheta_{c})-A)\\
-\frac{S^3}{3}-S(\frac{a}{\gamma}q^{1/3}(\vartheta_{c})-A)-\frac 43 N A^{3/2}\Big)+\frac{N}{\lambda_{\gamma}}B_L(\rho\lambda_{\gamma} A^{3/2})\,.
\end{multline}
Define sets of integers related to stationary points of this phase:
\begin{gather}
  \mathcal{N}^{M}_{\tilde\Psi}(T,X,Y):=\{N\in \mathbb{Z}, \exists (\Sigma,S,A,\rho) \text{ such that } \nabla_{(\Sigma,S,A,\rho)}\tilde\Psi^M_{N,a,\gamma}(T,X,Y,\Sigma,S,A,\rho)=0\}\,,\\
  \mathcal{N}^{1,M}_{\tilde\Psi}(T,X,Y):=\cup_{\mathcal{C}^M_{\gamma}(T,X,Y)}\mathcal{N}^M_{\tilde\Psi}(T',X',Y')\,.
\end{gather}
One sees that $\mathcal{N}^M_{\tilde \Psi}(T,X,Y)=\mathcal{N}^M(t,x,y)$, which implies $ \mathcal{N}^{1,M}_{\tilde \Psi}(T,X,Y)=\mathcal{N}^{1,M}_d(t,x,y)$, where $(T,X,Y)=(\frac{t}{\sqrt{\gamma}},\frac{x}{\gamma},\frac{y}{\sqrt{\gamma}})$: indeed, if $N$ is such that there exists a critical point $(\Sigma,S,A,\rho)$ for $\tilde\Psi^M_{N,a,\gamma}$ at $(T,X,Y)$, then $(\Sigma,S,A,\rho,\vartheta_c(T,X,Y,\Sigma,S,A,a,\gamma))$ is a critical point for $\Psi^M_{N,a,\gamma}$ and the converse also holds. We now  need to prove that 
\[
\sum_{N\notin\mathcal{N}^{1,M}_{\tilde\Psi}(T,X,Y),  |N|\lesssim \frac{1}{\sqrt{\gamma}}} V^M_{N,\gamma}(T,X,Y)=O(h^{\infty}),
\]
where $V^M_{N,\gamma}(T,X,Y)$ had phase $\Psi^M_{N,a,\gamma}$ that became $\tilde \Psi^M_{N,a,\gamma}$ after the stationary phase in $\vartheta$. 

Let first $4a\lesssim \gamma$: stationary phase applies in $S$. Indeed, $\tilde \Psi^M_{N,a,\gamma}$ is stationary in $S$ when $S^2+\frac{a}{\gamma}q^{1/3}(\vartheta_c)=A$ and for $\frac{a}{\gamma}$ small enough there are two non-degenerate critical points $S_{\pm}$ (with main contributions $\pm \sqrt{A}$). We denote 
by $\tilde \Psi^{M,\pm}_{N,a,\gamma}$ the critical values of the phase $\tilde \Psi^{M}_{N,a,\gamma}$ at $S_{\pm}$. For $\cveps\in\{\pm\}$, we define
\begin{gather*}
  \mathcal{N}^{M,\cveps}_{\tilde\Psi}(T,X,Y):=\{N\in \mathbb{Z}\,:\,\, \exists (\Sigma,A,\rho) \text{ such that } \nabla_{(\Sigma,A,\rho)}\tilde\Psi^{M,\cveps}_{N,a,\gamma}(T,X,Y,\Sigma,A,\rho)=0\}\,,\\
 \mathcal{N}^{1,M,\cveps}_{\tilde\Psi}(T,X,Y):=\cup_{\mathcal{C}^M_{\gamma}(T,X,Y)}\mathcal{N}^{M,\cveps}_{\tilde\Psi}(T',X',Y')
\,\,\text{ and }\,\,\cap_{\pm} \mathcal{N}^{M,\pm}_{\tilde\Psi}(T,X,Y)=\mathcal{N}^M_{\tilde \Psi}(T,X,Y)\,.
\end{gather*}
 If $N\in \mathcal{N}^M_{\tilde \Psi}(T,X,Y)$, then there exists a critical point $\Sigma_c,S_c,A_c,\rho_c$ for the phase $\tilde\Psi^M_{N,a,\gamma}$; $S_c$ satisfies $S^2+\frac{a}{\gamma}q^{1/3}(\vartheta_c)=A$, hence $S_c\in \{S_{\pm}\}$. Therefore $(\Sigma_c,A_c,\rho_c)$ is a critical point for $\tilde\Psi^M_{N,a,\gamma}(T,X,Y,\Sigma,S_{\pm},A,\rho\vartheta_c|_{S_{\pm}})=\tilde\Psi^{\pm}_{N,a,\gamma}$. Conversely, let $\Sigma_{\pm},A_{\pm},\rho_{\pm}$ be a critical point for $\tilde\Psi^{M,\pm}_{N,a,\gamma}$; for each sign $\cveps\in\{\pm\}$, let $S_{\cveps}^{\pm}$ denote both solutions to $S^2+\frac{a}{\gamma}q^{1/3}(\vartheta_c|_{A_{\cveps},\Sigma_{\cveps},S})=A_{\cveps}$, then $(\Sigma_{\cveps},S_{\cveps}^{\pm}, A_{\cveps},\rho_{\cveps})$ are critical points for $\tilde\Psi^{M}_{N,a,\gamma}$ and both inclusions hold. Using  $\cap_{\pm} \mathcal{N}^{M,\pm}_{\tilde\Psi}(T,X,Y)=\mathcal{N}^M_{\tilde \Psi}(T,X,Y)$ we obtain $\cap_{\pm} \mathcal{N}^{1,M,\pm}_{\tilde\Psi}(T,X,Y)=\mathcal{N}^{1,M}_{\tilde \Psi}(T,X,Y)$ and therefore 
$(\mathcal{N}^{1,M}_{\tilde \Psi}(T,X,Y))^c=\cup_{\pm} (\mathcal{N}^{1,M,\pm}_{\tilde\Psi}(T,X,Y))^c$, where $(\mathcal{N}^{1,M}_{\tilde \Psi})^c$ is the complement set. Hence, the proof of Proposition \ref{propcardoutN1} for $\Delta_M$ (for $4a\lesssim \gamma$) rests on proving
\[
\sum_{N\notin\cup_{\cveps\in \{\pm\}}\mathcal{N}^{1,M,\cveps}_{\tilde\Psi}(T,X,Y), |N|\lesssim \frac{1}{\sqrt{\gamma}}} V^{M,\cveps}_{N,\gamma}(T,X,Y)=O(h^{\infty}),
\]
where now $V^{M,\cveps}_{N,\gamma}$ has phase function $\tilde\Psi^{M,\cveps}_{N,a,\gamma}$ and its symbol is obtained from the one of $V^{M}_{N,\gamma}$ after stationary phase in $S$. Using \eqref{defnewPsitildemodel}, we obtain, with $\vartheta_c$ satisfying \eqref{eqomegathetaYT}
\begin{multline}\label{defnewPsitildepmmodel}
\tilde\Psi^{M,\pm}_{N,a,\gamma}(T,X,Y,\Sigma,A,\rho)=\rho\Big(\frac{Y\cdot \vartheta_{c}+T\sqrt{1+\gamma A q^{2/3}(\vartheta_{c})}}{\gamma}+\frac{\Sigma^3}{3}+\Sigma(Xq^{1/3}(\vartheta_{c})-A)\\
\mp\frac 23(\frac{a}{\gamma}q^{1/3}(\vartheta_{c})-A)^{3/2}-\frac 43 N A^{3/2}\Big)+\frac{N}{\lambda_{\gamma}}B_L(\rho\lambda_{\gamma} A^{3/2}).
\end{multline}
\begin{lemma}\label{rmqIC}
There exists a uniform constant $c>0$ such that, if $N\notin\cup_{\cveps\in \{\pm\}}\mathcal{N}^{1,M,\cveps}_{\tilde\Psi}(T,X,Y)$ then, for all $(\Sigma, A,\rho)$ on the support of the symbol of $V^{M,\pm}_{N,\gamma}(T,X,Y)$,
\begin{equation}\label{infboundforPsimodel}
|\nabla_{(\Sigma,A,\rho)}\tilde\Psi^{\cveps}_{N,a,\gamma}(T,X,Y,\Sigma,A,\rho)|\geq c.
\end{equation}
\end{lemma}
\begin{rmq}
The lemma allows to conclude the proof of Proposition \ref{propcardoutN1} in the case $4a\lesssim \gamma$ as, using \eqref{infboundforPsimodel}, we apply non stationary phase with respect to $(\Sigma,A,\rho)$ and obtain a contribution $O(h^{\infty})$ for each such $V^{M,\cveps}_{N,a,\gamma}$; the sum over $N$ is finite (up to $|N|\lesssim\frac{1}{\sqrt{\gamma}}$) and we can sum up.
\end{rmq}
Before dealing with the proof of  Lemma \ref{rmqIC}, we go back to \eqref{systemICmodelA}, \eqref{systemICmodelSS} and \eqref{systemICmodeltheta}:
 we set $S=S_{\pm}(A,\vartheta)$ is such that $S^2+\frac{a}{\gamma}q^{1/3}(\vartheta)=A$. For each $\cveps\in\{\pm\}$, we explicitly obtain the integral curves $(T_{\cveps}, X_{\cveps}, Y_{\cveps})(A,\Sigma,\theta)$
depending on the parameters $(A,\Sigma,\theta)$,
\begin{gather}\label{courbesint}
\,\,\quad  \left\{ \begin{array}{l}
 X_{\cveps}(A,\Sigma,\vartheta)q^{1/3}(\vartheta)=A-\Sigma^2,\quad S_{\cveps}=\cveps\sqrt{A-\frac{a}{\gamma}q^{1/3}(\vartheta)},\\
T_{\cveps}(A,\Sigma,|\theta|,\vartheta)\frac{q^{2/3}(\vartheta)}{2\sqrt{1+\gamma Aq^{2/3}(\vartheta)}}=2NA^{1/2}(1-\frac 34 B'_L(|\theta|\lambda_{\gamma}A^{3/2}))+(\Sigma-S_{\cveps}(A,\vartheta)),\\
\begin{multlined} Y_{\cveps}(A,\Sigma,|\theta|,\vartheta)= -\frac{ T_{\cveps}(A,\Sigma,|\theta|,\vartheta)q^{2/3}(\vartheta)}{\sqrt{1+\gamma Aq^{2/3}(\vartheta)}}\Big(\frac{\vartheta}{q^{2/3}(\vartheta)}(1+\gamma A q^{2/3}(\vartheta))+\frac{\gamma A}{3}\frac{\anabla q(\vartheta)}{q(\vartheta)}\Big)\\
 {} -\gamma \Big(\frac{\Sigma^3}{3}+\Sigma(X_{\cveps}q^{1/3}(\vartheta_{c})-A)-\cveps\frac{2}{3}S^3_{\cveps}\Big)\vartheta+\frac 43 \gamma NA^{3/2}(1-\frac 34 B'_L(|\theta|\lambda_{\gamma}A^{3/2}))\vartheta\\
{} -\gamma \Big(\Sigma  X_{\cveps}(A,\Sigma,\vartheta)-S_{\cveps}\frac{a}{\gamma}\Big)q^{1/3}(\vartheta)\frac{\anabla q(\vartheta)}{3q(\vartheta)}
 \,.\end{multlined}
\end{array} \right.
v\end{gather}
Taking the scalar product with $\vartheta$ in the last equation and using that $\vartheta\cdot \anabla q(\vartheta)=0$ yields
\[
 Y_{\cveps}\cdot \vartheta+ T_{\cveps}\sqrt{1+\gamma Aq^{\frac 23}(\vartheta)}+\gamma \Big(\frac{\Sigma^3}{3}+\Sigma(X_{\cveps}q^{\frac  13}(\vartheta_{c})-A)-\cveps\frac{2}{3}S^3_{\cveps}\Big)=\frac 43 \gamma NA^{\frac 32}(1-\frac 34 B'_L(|\theta|\lambda_{\gamma}A^{\frac 32})),
\]
which is nothing but the derivative of the phase with respect to $\rho=|\theta|$. 
\begin{proof} (of \eqref{infboundforPsimodel})
  Fix $(T,X,Y)$. Let $N\notin\cup_{\cveps\in \{\pm\}}\mathcal{N}^{1,M,\cveps}_{\tilde\Psi}(T,X,Y)$, hence $N\notin\cup_{\cveps\in \{\pm\}}\mathcal{N}^{M,\cveps}_{\tilde\Psi}(T',X',Y')$ for all $(T',X',Y')\in \mathcal{C}^M_{\gamma}(T,X,Y)$, which is equivalent to $\nabla_{(\Sigma,A,\rho)}\tilde\Psi^{M,\cveps}_{N,a,\gamma}(T',X',Y',\Sigma,A,\rho)\neq 0$ for all $(T',X',Y')\in \mathcal{C}^M_{\gamma}(T,X,Y)$.  Pick a sign $\cveps\in\{\pm\}$, let $(T',X',Y')\in \mathcal{C}^M_{\gamma}(T,X,Y)$, and let $(A,\Sigma,\rho)$ be a given point (on the support of the symbol of $V^{M,\cveps}_{N,a,\gamma}$); let $S_{\cveps}(A,\vartheta_c')$ and $\vartheta_c'$ be solutions to $S^2+\frac{a}{\gamma}q^{1/3}(\vartheta_c')=A$ and \eqref{omegacformmodel}, where in \eqref{omegacformmodel} we replace $(T,X,Y)$ by $(T',X',Y')$ and where the sign of $S_{\cveps}$ is $\cveps$. We may compute $\nabla_{(A,\Sigma,\rho)}\tilde \Psi^{M,\cveps}_{N,a,\gamma}(T',X',Y',\Sigma,A,\rho)$,
taking advantage of $\vartheta_c'$ and $S_{\cveps}(A,\vartheta_c')$ being stationary points for $\Psi^{M}_{N,a,\gamma}(\cdots)$ to cancel their derivatives. Let now $\vartheta\in\mathbb{S}^{d-2}$,
using \eqref{courbesint} and
$S_{\cveps}(A,\vartheta)=\cveps\sqrt{A-\frac{a}{\gamma}q^{1/3}(\vartheta)}$), we get
\begin{gather} \label{derAetc}
  \,  \left\{ \begin{array}{l}
  \partial_{A}\tilde\Psi^{M,\cveps}_{N,a,\gamma}(T',X',Y',\cdot)  =\frac{T'q^{2/3}(\vartheta'_{c})}{2\sqrt{1+\gamma Aq^{2/3}(\vartheta'_{c})}}-\frac{T_{\cveps}q^{2/3}(\vartheta)}{2\sqrt{1+\gamma Aq^{2/3}(\vartheta)}}+S_{\cveps}(A,\vartheta'_c)-S_{\cveps}(A,\vartheta),\\
\partial_{\Sigma}\tilde\Psi^{M,\cveps}_{N,a,\gamma}(T',X',Y',\cdot)  =X'q^{1/3}(\vartheta'_c)-X_{\cveps}q^{1/3}(\vartheta),\\
                \partial_{\rho}\tilde\Psi^{M,\cveps}_{N,a,\gamma}(T',X',Y',\cdot)  = \begin{multlined}[t]\gamma^{-1}\bigl( {Y'\cdot\vartheta'_{c}+ T'\sqrt{1+\gamma Aq^{\frac 23}(\vartheta'_{c})}}
                  -{Y_{\cveps}\cdot \vartheta 
                    - T_{\cveps}\sqrt{1+\gamma Aq^{\frac 23}(\vartheta)}}\bigr) \\
  {}+\Sigma\Big(X'q^{\frac 13}(\vartheta'_c)-X_{\cveps}q^{\frac 13}(\vartheta)\Big)-\frac 23\Big(S^3_{\cveps}(A,\vartheta'_c)-S^3_{\cveps}(A,\vartheta)\Big)
  \,.\end{multlined}
\end{array} \right.
\end{gather}
Recall that $\vartheta'_c$ is provided by \eqref{omegacformmodel} with $(T,X,Y)$ replaced by $(T',X',Y')$; the same formula holds for $\vartheta$ with $(T,X,Y)$ replaced by $(T_{\cveps},X_{\cveps},Y_{\cveps})$. For such $\vartheta'_c$ (resp. $\vartheta$) we have $Y'\cdot \vartheta'_c=-|Y'|+O(\gamma^2)$ (resp. $Y_{\cveps}\cdot \vartheta=-|Y_{\cveps}|+O(\gamma^2)$). Setting $\gamma Z':={-|Y'|+T'}$ and $\gamma Z_{\cveps}:= {-|Y_{\cveps}|+T_{\cveps}}$, $\partial_{\rho}\tilde\Psi^{M,\cveps}_{N,a,\gamma}$ may be rewritten as
\begin{multline}\label{derrho}
\quad\quad\partial_{\rho}\tilde\Psi^{M,\cveps}_{N,a,\gamma}(T',X',Y',\cdots)=A\partial_A\tilde\Psi^{M,\cveps}_{N,a,\gamma}(T',X',Y',\cdots)+\Sigma \partial_{\Sigma}\tilde\Psi^{M,\cveps}_{N,a,\gamma}(T',X',Y',\cdots)\\
{}+Z'-Z_{\cveps}-A(S_{\cveps}(A,\vartheta'_c)-S_{\cveps}(A,\vartheta))
- \Big(\frac{S^3_{\cveps}(A,\vartheta'_c)}{3}-\frac{S^3_{\cveps}(A,\vartheta)}{3}\Big)+O(\gamma),
\end{multline}
where all the small terms $O(\gamma)$ come with differences $T'q^{2/3}(\vartheta'_{c})-T_{\cveps}q^{2/3}(\vartheta)$, $q^{1/3}(\vartheta'_c)-q^{1/3}(\vartheta)$, $X'q^{1/3}(\vartheta'_c)-X_{\cveps}q^{1/3}(\vartheta)$ etc. Using \eqref{derAetc} we bound the gradient of $\tilde\Psi^{M,\cveps}_{N,a,\gamma}$
\begin{multline}\label{inegtildePsi} 
(|\partial_{A}\tilde\Psi^{M,\cveps}_{N,a,\gamma}|+|\partial_{\Sigma}\tilde\Psi^{M,\cveps}_{N,a,\gamma}|+|\partial_{\rho}\tilde\Psi^{M,\cveps}_{N,a,\gamma}|)(T',X',Y',\Sigma,A,\rho)\leq 4 (|T'q^{2/3}(\vartheta'_{c})-T_{\cveps}q^{2/3}(\vartheta)|\\
+|q^{1/3}(\vartheta'_c)-q^{1/3}(\vartheta)|+|X'q^{1/3}(\vartheta'_c)-X_{\cveps}q^{1/3}(\vartheta)| +|Z'-Z_{\cveps}|)\,.
\end{multline}
As $N\notin \mathcal{N}^{1,M,\cveps}_{\tilde\Psi}(T,X,Y)$ implies that $\nabla_{(\Sigma,A,\rho)}\tilde\Psi^{M,\cveps}_{N,a,\gamma}(T',X',Y',\Sigma,A,\rho)\neq 0$ for all $(T',X',Y')\in\mathcal{C}^M_{\gamma}(T,X,Y)$, it follows that the right hand side of \eqref{inegtildePsi} doesn't vanish for any $(A,\Sigma,\rho,\vartheta)$. Hence, for all $(A,\Sigma,\rho)$ on the support of the symbol and for every $\vartheta\in\mathcal{S}^{d-2}$,
\begin{equation}\label{notintheballr0}
(T_{\cveps}(A,\Sigma,\rho,\vartheta),X_{\cveps}(A,\Sigma,\rho,\vartheta),Z_{\cveps}(A,\Sigma,\rho,\vartheta))\notin B_{r_0}(T,X,Z),\quad \gamma Z={-|Y|+T}\,.
\end{equation}
In fact, if the last statement does not hold, then there exist $(A,\Sigma,\rho,\vartheta)$ such that
\[
(|T-T_{\cveps}(A,\Sigma,\rho,\vartheta)|
+|X-X_{\cveps}(A,\Sigma,\rho,\vartheta)|+|Z-Z_{\cveps}(A,\Sigma,\rho,\vartheta)|\leq  r_0.
\]
Taking $(T',X',Y')=(T_{\cveps},X_{\cveps},Y_{\cveps})(A,\Sigma,\rho,\vartheta)$, $\gamma Z'={-|Y'|+T'}$, then $\vartheta'_c=\vartheta$ and therefore the right hand side term in \eqref{inegtildePsi} vanishes which contradicts  $(T',X',Y')\in \mathcal{C}_{\gamma}^M(T,X,Y)$.
As \eqref{notintheballr0} holds true for all $\vartheta$, it also holds for $\vartheta={\vartheta_c}$ from \eqref{omegacformmodel} (corresponding to $(T,X,Y,\Sigma,A,\rho)$), and 
\[
|T-T_{\cveps}(A,\Sigma,\rho,{\vartheta_c})|
+|X-X_{\cveps}(A,\Sigma,\rho,{\vartheta_c})|+|Z-Z_{\cveps}(A,\Sigma,\rho,{\vartheta_c})|\geq  r_0.
\]
Moreover, at $(T,X,Y,\Sigma,A,\rho)$ and $\vartheta={\vartheta_c}$ we obtain from \eqref{derAetc}
\begin{gather*}
|\partial_{A}\tilde\Psi^{M,\cveps}_{N,a,\gamma}|(T,X,Y,\Sigma,A,\rho)= \frac{q^{2/3}(\vartheta_{c})}{2\sqrt{1+\gamma Aq^{2/3}({\vartheta_c})}}|T-T_{\cveps}(A,\Sigma,\rho,{\vartheta_c})|,\\
|\partial_{\Sigma}\tilde\Psi^{M,\cveps}_{N,a,\gamma}|(T,X,Y,\Sigma,A,\rho)=q^{1/3}({\vartheta_c})|X-X_{\cveps}|,\\
|\partial_{\rho}\tilde\Psi^{M,\cveps}_{N,a,\gamma}|(T,X,Y,\Sigma,A,\rho)\geq |Z-Z_{\cveps}(A,\Sigma,\rho,{\vartheta_c})|-\frac{Aq^{2/3}({\vartheta_c})|T-T_{\cveps}|}{1+\sqrt{1+\gamma A q^{2/3}({\vartheta_c})}}-|\Sigma|q^{1/3}({\vartheta_c})|X'-X_{\cveps}|,
\end{gather*}
and therefore there exists a uniform constant (depending only on $q$) such that
\begin{multline}\label{inegtildePsilowerbound} 
(|\partial_{A}\tilde\Psi^{M,\cveps}_{N,a,\gamma}|+|\partial_{\Sigma}\tilde\Psi^{M,\cveps}_{N,a,\gamma}|+|\partial_{\rho}\tilde\Psi^{M,\cveps}_{N,a,\gamma}|)(T,X,Y,\Sigma,A,\rho)
\geq C(|T-T_{\cveps}(A,\Sigma,\rho,{\vartheta_c})|\\
+|X-X_{\cveps}(A,\Sigma,\rho,{\vartheta_c})| +|Z'-Z_{\cveps}(A,\Sigma,\rho,{\vartheta_c})|)\geq Cr_0.
\end{multline}
which allows to conclude the proof of Lemma \ref{rmqIC}. \end{proof}
Let now $a\sim \gamma$. We rescale variables as follows $s=\sqrt{a}|\theta|^{1/3}S$, $\sigma\sqrt{a}|\theta|^{1/3}\Sigma$, $\alpha=a|\theta|^{2/3}A$, let $\lambda=\lambda_a=\frac{a^{3/2}}{h}$ and  $\Psi^M_{N,a,a}(T,X,Y,\Sigma,S,A,\theta):=\Phi^M_{N,a,a}(\sqrt{a}T,aX,\sqrt{a}Y,\sqrt{a}|\theta|^{1/3}\Sigma,\sqrt{a}|\theta|^{1/3}S,a|\theta|^{2/3}A)$. The saddle points of $\Psi^M_{N,a,a}$ satisfy $S^2=A-q^{1/3}({\vartheta_c})$ and they undergo coalescence when $A=q^{1/3}({\vartheta_c})$. Let $\chi_0\in C^{\infty}$ be a smooth cutoff, equal to $1$ on $[0,\infty]$ and equal to $0$ on $[-\infty,-2]$. Then $(\chi_0Ai)(-(|\theta|\lambda)^{2/3}(A-q^{1/3}({\vartheta_c})))$ is a symbol of order $2/3$ supported for values $(|\theta|\lambda)^{2/3}(A-q^{1/3}({\vartheta_c}))\leq 2$ and $(1-\chi_0)Ai(-(|\theta|\lambda)^{2/3}(A-q^{1/3}({\vartheta_c})))$ is supported for $A\geq q^{1/3}({\vartheta_c})$ and is equal to $1$ on $(|\theta|\lambda)^{2/3}(A-q^{1/3}({\vartheta_c}))\geq 2$. Notice that on the support of $(1-\chi_0)$ the Airy function can be written as a sum of two contributions $A_{\pm}((|\theta|\lambda)^{2/3}(A-q^{1/3}({\vartheta_c})))$ corresponding to the saddle points $S_{\pm}$. We split the symbol of $V^M_N$ in two parts using $\chi_0+(1-\chi_0)=1$ and notice that on the support of $\chi_0$ the Airy function behaves as a symbol of order $2/3$.
Therefore we can write each integral $V^M_{N,a}(t,x,y)$ as a sum $V^M_{N,a}(t,x,y)=\sum_{\cveps\in \{0,\pm\}}V^{M,\cveps}_{N,a}(t,x,y)$ where for $\cveps\in\{\pm\}$, $V^{M,\cveps}_N$ has phase  $\tilde \Psi^{M,\cveps}_{N,a,a}(T,X,Y,\Sigma,A,\rho)$ while $V^{M,0}_{N,a}$ has phase function $\Psi_{N,a,a}(t,x,y,\Sigma, 0,A,\rho)$. 
We are left to prove that for ever $\cveps\in\{0,\pm\}$ the following holds
\begin{equation}\label{VNtangent}
\sum_{N\notin \mathcal{N}^{1,M,\cveps}_{\Psi}(T,X,Y),|N|\lesssim 1/\sqrt{a}}V^{M,\cveps}_{N,a}(T,X,Y)=O(h^{\infty}).
\end{equation}
For $\cveps\in\{\pm\}$ we act exactly like in the transverse case $4a\leq\gamma$ since on the support of $(1-\chi_0)$ we obtain two distinct saddle points $S_{\pm}$. For $\cveps=0$ we use the fact that we got rid of variable $S$ hence $\partial_S \Psi_{N,a,a}(T,X,Y,\Sigma,0,A,\rho)=0$ and act again as in the previous case, completing the proof of Proposition \ref{propcardoutN1} for the model Laplace $\Delta_M$.

We can now proceed with the proof of Proposition \ref{propcardoutN1} in the general situation.
Recall that the phase of $V_N$ is $\Phi_{N,a,\gamma}(t,x,y,\sigma,s,\alpha,\theta)=t\tau_{q}(\alpha,\theta)+\Phi(x,y,\theta,\alpha,\sigma)-\Phi(a,0,\theta,\alpha,s) -NhL(\alpha^{3/2}/h)$ (see \eqref{PhiNagamma}). Rescaling $s=\sqrt{\gamma}|\theta|^{1/3}S$, $\alpha=\gamma |\theta|^{2/3}A$ 
we obtain a new phase $\tilde \Phi_{N,a,\gamma}(t,x,y,\Sigma,S,A,\theta):=\Phi_{N,a,\gamma}(t,x,y,\sigma,s,\alpha,\theta)$ whose saddle points $S_{\pm}$ are solutions to the third equation in \eqref{desclambdaN} that we re-write here, using \eqref{partialsigmaAGamma},
\begin{equation}\label{thirdeqdesclambdaN}
S^2+\frac{a}{\gamma}q^{1/3}(\vartheta)\Big(1+\mathcal{H}_{j\geq 1}\Big)=A,
\end{equation}
where terms in $\mathcal{H}_{j}$ come with factors $O(\gamma^{j/2})$. As this equation is independent of $|\theta|$, we recover that $S_{\pm}=S_{\pm}(a,\gamma,\vartheta,A)$ are independent of $|\theta|$ or $N$.

Let first $4a\lesssim \gamma$; as $A$ is close to $1$ on the support of the symbol, critical points $S_{\pm}$ are non-degenerate and have opposite signs.
Stationary phase in $S$ yields $\tilde\Phi_{N,a,\gamma}\in\{\tilde\Phi^{\cveps}_{N,a,\gamma}\}$, where for $\cveps\in\{\pm\}$ we  set $\tilde\Phi^{\cveps}_{N,a,\gamma}(t,x,y,\Sigma,A,\theta):=\tilde \Phi_{N,a,\gamma}(t,x,y,\Sigma,S_{\cveps},A,\theta)$. Assume $t>0$. For $\cveps\in\{\pm\}$, let
\[
\mathcal{N}^{\cveps}(t,x,y):=\{(N\in\mathbb{Z}, \exists (\Sigma,A,\theta) \text{ such that } \nabla_{(\Sigma,A,\theta)}\tilde\Phi^{\cveps}_{N,a,\gamma}(t,x,y,\Sigma,A,\theta)=0\},
\]
then $\mathcal{N}(t,x,y)=\cap_{\cveps=\pm}\mathcal{N}^{\cveps}(t,x,y)$. Indeed, if $N\in \mathcal{N}(t,x,y)$, then there exists a critical point $\Sigma_c,S_c,A_c,\theta_c$ for the phase $\tilde\Phi_{N,a,\gamma}$; $S_c$ satisfies \eqref{thirdeqdesclambdaN}, hence $S_c\in \{S_{\pm}\}$. Therefore $(\Sigma_c,A_c,\theta_c)$ is a critical point for $\tilde \Phi_{N,a,\gamma}(t,x,y,\Sigma,S_{\pm}(a,\gamma,\vartheta,A),A,\theta)=\tilde\Phi^{\pm}_{N,a,\gamma}$. Conversely, if $\Sigma_{\pm},A_{\pm},\theta_{\pm}$ is a critical point for $\tilde\Phi^{\pm}_{N,a,\gamma}$, then $(\Sigma_{\pm},S_{\pm}(a,\gamma,\vartheta_{{\pm}},A_{\pm}), A_{\pm},\theta_{\pm})$ are critical points for $\tilde\Phi_{N,a,\gamma}$. 
For $\cveps\in \{\pm\}$, let $\mathcal{N}^{1,\cveps}_d(t,x,y)=\cup_{\mathcal{C}_{\gamma}(t,x,y)}\mathcal{N}^{\cveps}(t',x',y')$, 
then we have $\mathcal{N}^1_d=\cap_{\pm}\mathcal{N}^{1,\pm}_d$ and therefore, $(\mathcal{N}_1)^c=\cup_{\cveps\in \{\pm 1\}}(\mathcal{N}^{1,\cveps}_d)^c$. Proposition \ref{propcardoutN1} will follow from proving
\begin{equation}\label{WNdecaytoprove}
\forall \cveps\in\{\pm\}\,,\quad\quad\sum_{N\notin \cup_{\pm}\mathcal{N}^{1,\pm}_d(t,x,y),|N|\lesssim \frac{1}{\sqrt{\gamma}}}V^{\cveps}_{N,\gamma}(t,x,y)=O(h^{\infty})\,,
\end{equation}
where $V^{\cveps}_{N,\gamma}(t,x,y)$ has phase function $\tilde\Phi^{\cveps}_{N,a,\gamma}$ and symbol obtained from the one of $V_{N,\gamma}$ after stationary phase in $S$. 
For every $N$ the phase $\tilde \Phi^{\cveps}_{N,a,\gamma}$ has two critical points with respect to $\vartheta$, which are non-degenerate. Indeed, $\tilde \Phi^{\cveps}_{N,a,\gamma}$ depends on $N$ through $-NhL(|\theta|\lambda_{\gamma} A^{3/2})$ and therefore its dependence on $\vartheta$ comes only through $\tilde\Phi^{\cveps}_{0,a,\gamma}=\tilde \Phi_{0,a,\gamma}|_{S=S_{\cveps}}$. Moreover, $\nabla^2_{\vartheta}\tilde\Phi_{0,a,\gamma}=|y|O(1)$, $(\nabla^2_{\vartheta}\tilde\Phi_{0,a,\gamma})^{-1}=|y|^{-1}O(1)$ and using Lemma \ref{derriere} it follows that $|y|>c_0 |t|$ and in particular $y\neq 0$. Lemma \ref{lemomegac} provide the explicit form of the critical point ${\vartheta_c}$ and for $t>0$ we set ${\vartheta_c}(t,x,y,\Sigma,A)=\vartheta_-(t,x,y,\Sigma,S_{\cveps},A)$. Let $\rho:=|\theta|$ and set
\begin{gather*}
  \Psi^{\cveps}_{N,a,\gamma}(t,x,y,\Sigma,A,\rho):=\tilde \Phi^{\cveps}_{0,a,\gamma}(t,x,y,\Sigma,A,\rho\vartheta_{c})-\frac 43 N\rho \gamma^{3/2}A^{3/2}+NhB_L(\rho \lambda_{\gamma}A^{3/2})\,,\\
\mathcal{N}_{\Psi^{\cveps}}(t,x,y):=\{(N\in\mathbb{Z}, \exists (\Sigma,A,\rho) \text{ such that } \nabla_{(\Sigma,A,\rho)}\Psi^{\cveps}_{N,a,\gamma}(t,x,y,\Sigma,A,\rho)=0\}\,.
\end{gather*}
Remark again that $\mathcal{N}_{\Psi^{\cveps}}(t,x,y)=\mathcal{N}^{\cveps}(t,x,y)$: indeed, if $N$ is such that there exists a critical point $(\Sigma,A,\rho)$ for $\Psi^{\cveps}_{N,a,\gamma}$ at $(t,x,y)$, then $(\Sigma,A,\rho, {\vartheta_c}(t,x,y,\Sigma,A))$ is a critical point for $\tilde\Phi^{\cveps}_{N,a,\gamma}$ and the reverse statement holds as well. Let $\cveps\in\{\pm\}$ and $N\notin \mathcal{N}^{1,\cveps}_d$. Then $N\notin \mathcal{N}^{\cveps}(t',x',y')$ for all $(t',x',y')\in\mathcal{C}_{\gamma}(t,x,y)$ and therefore for all such $(t',x',y')$ we have
\begin{equation}\label{criPsiNout}
\nabla_{(\Sigma,A,\theta)}\tilde\Phi^{\cveps}_{N,a,\gamma}(t',x',y',\Sigma,A,\theta)\neq 0,\quad \forall (A,\Sigma,\rho).
\end{equation}
This translates into $N\notin \mathcal{N}_{\Psi^{\cveps}}(t',x',y')$ and $\nabla_{(\Sigma,A,\rho)}\Psi^{\cveps}_{N,a,\gamma}(t',x',y',\Sigma,A,\rho)\neq 0$
for all $(t',x',y')\in\mathcal{C}_{\gamma}(t,x,y)$ and all $\Sigma,A,\rho$ on the support of the symbol of $V^{\cveps}_{N,\gamma}(t,x,y)$. 
\begin{lemma}\label{borneinfgradcg}
There exists a uniform constant $c>0$ such that, for all $N\notin\cup_{\cveps\in \{\pm\}}\mathcal{N}^{1,\cveps}_{\tilde\Psi}(t,x,y)$,
\begin{equation}
  \label{eq:4}
|\nabla_{(\Sigma,A,\rho)}\Psi^{\cveps}_{N,a,\gamma}(t,x,y,\Sigma,A,\rho)|\geq c \gamma^{3/2}
\end{equation}
for all $(\Sigma, A,\rho)$ on the support of the symbol of $V^{\cveps}_{N,\gamma}$.
\end{lemma}
This lemma allows to perform non stationary phase in $V^{\cveps}_{N,\gamma}(t,x,y)$ with
large parameter $\lambda_{\gamma}$ and conclude that \eqref{WNdecaytoprove} holds true.  
\begin{rmq}\label{rmqICgen}
We need to define the integral curves of $\tilde\Phi^{\cveps}_{N,a,\gamma}$ :  recall that \eqref{desclambdaN} allows to parametrize the Lagrangian $\Lambda_N$; the projection $\pi_N(\Lambda_N)$   can be parametrized by $(\rho,\vartheta,\Sigma, A)$, with $\rho\sim 1$, $\vartheta\in\mathbb{S}^{d-2}$, $A,\Sigma,S$ on the support of the symbol of $V_{N,\gamma}$ and $S\in \{S_{\pm}(a,\gamma,\vartheta,A)\}$. We define the integral curves $\tilde t_{\cveps}=\tilde t_{\cveps}(\Sigma,A,\rho, \vartheta, a,\gamma)$, $\tilde x_{\cveps} =\tilde x_{\cveps}(\Sigma,A,\rho,\vartheta,a, \gamma)$, $\tilde y_{\cveps}=\tilde y_{\cveps}(\Sigma,A,\rho,\vartheta,a,\gamma)$ (as we did in Remark \ref{rmqIC} in the model case) such that \eqref{desclambdaN} holds at $(\tilde t_{\cveps},\tilde x_{\cveps},\tilde y_{\cveps})$. Therefore, at a given point $(\rho,\vartheta,\Sigma,A)$, $(\tilde t_{\cveps},\tilde x_{\cveps},\tilde y_{\cveps})$ is defined such that \eqref{desclambdaN} holds with $(t,x,y)=(\tilde t_{\cveps},\tilde x_{\cveps},\tilde y_{\cveps})$, $S=S_{\cveps}(a,\gamma,\vartheta,A)$ and $\theta=\rho \vartheta$. 
Using Lemma \ref{lemomegac} with $\vartheta\in \{\vartheta_{\pm}\}$ and $(t,x,y)$ replaced by $(\tilde t_{\cveps},\tilde x_{\cveps},\tilde y_{\cveps})$, we obtain 
$(\tilde y_{\cveps}+\nabla B_0(\tilde y_{\cveps},\vartheta))\vartheta=-|\tilde y_{\cveps}+\nabla B_0(\tilde y_{\cveps},\vartheta)|+\gamma^{2}O(\frac{|\tilde y_{\cveps}|}{\tilde t_{\cveps}})$. 
\end{rmq}
\begin{proof}
Let $(\Sigma,A,\rho)$ be a point on the support of the symbol of $V^{\cveps}_{N,\gamma}(t,x,y)$, let $\vartheta\in \mathbb{S}^{d-2}$ and define $S_{\cveps}(a,\gamma,\vartheta,A)$ to be the solution to the third equation in \eqref{desclambdaN} (where $\vartheta$ is replaced by $\vartheta$) with sign $\cveps$. Let $(\tilde t_{\cveps},\tilde x_{\cveps},\tilde y_{\cveps}):=(\tilde t_{\cveps},\tilde x_{\cveps},\tilde y_{\cveps})(\Sigma,A,\rho,\vartheta,a,\gamma)$ denote the corresponding point on the integral curve (as in Remark \ref{rmqICgen}); then, by construction of the integral curves,
\[
\nabla_{(A,\Sigma,\rho)}\tilde \Psi^{\cveps}_{N,a,\gamma}(\tilde t_{\cveps},\tilde x_{\cveps},\tilde y_{\cveps},\Sigma, A,\rho)=\nabla_{(A,\Sigma,\rho)}\tilde \Phi^{\cveps}_{N,a,\gamma}((\tilde t_{\cveps},\tilde x_{\cveps},\tilde y_{\cveps})(\Sigma,\cdots,\gamma),\Sigma, A,\rho\vartheta)=0\in\mathbb{R}^3\,.
\]
Using the explicit form of \eqref{desclambdaN}, this translates into
\begin{gather*}
  \frac{\gamma q^{2/3}(\vartheta)}{2\sqrt{1+\gamma A q^{2/3}(\vartheta)}}(\tilde t_{\cveps}-B_2(\tilde y_{\cveps},\vartheta))+\gamma^{\frac 32}(\Sigma-S_{\cveps}(a,\gamma,\vartheta,A))=2N \gamma^{\frac 32}A^{1/2}(1-\frac 34 B'_L(\rho \lambda_{\gamma}A^{\frac 32}))+O(\gamma^2)\,,\\
\tilde x_{\cveps}(1+\Lp(\tilde y_{\cveps},\vartheta)+O(\sqrt{\gamma}))=\gamma(A-\Sigma^2)\,,\\
\begin{multlined}
\Big((\tilde y_{\cveps}+\nabla B_0(\tilde y_{\cveps},\vartheta))\vartheta+\tilde t_{\cveps}\Big)-(1-\tau_{q}(A,\vartheta))(\tilde t_{\cveps}-B_2(\tilde y_{\cveps},\vartheta))\\
+\gamma^{\frac 32}\Big[\frac{\Sigma^3}{3}+\Sigma\Big(\frac{\tilde x_{\cveps}}{\gamma}q^{1/3}(\vartheta)(1+\Lp(\tilde y_{\cveps},\vartheta)+O(\sqrt{\gamma}))-A\Big)-\frac{S_{\cveps}^3}{3}-S_{\cveps}\Big(\frac{a}{\gamma}q^{1/3}(\vartheta)(1+O(\sqrt{\gamma}))-A\Big)\Big]\\
=\frac 43 \gamma^{\frac 32}N A^{\frac 32}(1-\frac 34 B'_L(\rho\lambda_{\gamma}A^{\frac 32}))+O(\gamma^2)\,,
\end{multlined}
\end{gather*}
where the last equation is obtained using the second to last equation in the system \eqref{desclambdaN}, as satisfied by $(\tilde t_{\cveps},\tilde x_{\cveps},\tilde y_{\cveps})(\Sigma,A,\rho,\vartheta,a,\gamma),\Sigma,S_{\cveps}(a,\gamma,\vartheta,A),A,\rho,\vartheta$. Let $(t',x',y')\in\mathcal{C}_{\gamma}(t,x,y)$ and let $\vartheta'_c:={\vartheta_c}(t',x',y',\Sigma,A)$ be the critical point of $\tilde \Phi^{\cveps}_{N,a,\gamma}(t',x',y',\Sigma,A,\rho\vartheta)$, then, for any $\vartheta\in\mathbb{S}^{d-2}$,
\begin{gather} \quad\begin{multlined}[t]\label{A}
  \partial_A\Psi^{\cveps}_{N,a,\gamma}=\frac{\gamma q^{2/3}(\vartheta'_c)}{2\sqrt{1+\gamma q^{2/3}(\vartheta'_c)}}(t'-B_2( y',\vartheta'_c))\\
  {}-\frac{\gamma A q^{2/3}(\vartheta)}{2\sqrt{1+\gamma A q^{2/3}(\vartheta)}}(\tilde t_{\cveps}-B_2(\tilde y_{\cveps},\vartheta))-\gamma^{3/2}(S_{\cveps}(a,\gamma,\vartheta'_c,A)-S_{\cveps}(a,\gamma,\vartheta,A))+O(\gamma^2)\,,
\end{multlined}\\
\label{Sigma}
\partial_{\Sigma}\Psi^{\cveps}_{N,a,\gamma}=\gamma^{\frac 32}\Bigl(q^{1/3}(\vartheta'_c)\frac{x'}{\gamma}(1+\Lp(y',\vartheta'_c)+O({\gamma}^{\frac 1 2})\Bigr)-q^{1/3}(\vartheta)\frac{\tilde x_{\cveps}}{\gamma}\bigr(1+\Lp(\tilde y_{\cveps},\vartheta)+O({\gamma}^{\frac 1 2}))\bigl)\,,
\end{gather}
\begin{multline}
\partial_{\rho}\Psi^{\cveps}_{N,a,\gamma}=((y'+\nabla B_0(y',\vartheta_c'))\cdot \vartheta_c'+t')
-((\tilde y_{\cveps}+\nabla B_0(\tilde y_{\cveps},\vartheta))\cdot\vartheta+\tilde t_{\cveps})\\
\label{rho}{}+\frac{\gamma A q^{2/3}(\vartheta'_c)}{2\sqrt{1+\gamma A q^{2/3}(\vartheta'_c)}}(t'-B_2( y',\vartheta'_c))-\frac{\gamma A q^{2/3}(\vartheta)}{2\sqrt{1+\gamma A q^{2/3}(\vartheta)}}(\tilde t_{\cveps}-B_2(\tilde y_{\cveps},\vartheta))\\
{}+\gamma^{3/2}\Sigma\Big(\frac{x'}{\gamma}q^{1/3}(\vartheta'_c)(1+\Lp(y',\vartheta'_c)+O(\sqrt{\gamma}))-\frac{\tilde x_{\cveps}}{\gamma}q^{1/3}(\vartheta)(1+\Lp(\tilde y_{\cveps},\vartheta)+O(\sqrt{\gamma}))\Big)\\
{}-\frac 23\gamma^{3/2}\Big(S^3_{\cveps}(a,\gamma,\vartheta'_c,A)-S^3_{\cveps}(a,\gamma,\vartheta,A)\Big)+O(\gamma^2)\,,
\end{multline}
where in the last line we used $\frac{S_{\cveps}^3}{3}+S_{\cveps}\Big(\frac{a}{\gamma}q^{1/3}(\vartheta)(1+O(\sqrt{\gamma}))-A\Big)=-\frac 23 S^{3/2}_{\cveps}+O(\sqrt{\gamma})$.
\begin{rmq}\label{rmqgamsmalldiff}
All terms $O(\gamma^2)$ are smooth functions of differences: $\vartheta'_c-\vartheta$, $q^{2/3}(\vartheta'_c)(t'-B_2(y',\vartheta'_c))-q^{2/3}(\vartheta)(\tilde t_{\cveps}-B_2(\tilde y_{\cveps},\vartheta))$, $q^{1/3}(\vartheta'_c)\frac{x'}{\gamma}-q^{1/3}(\vartheta)\frac{\tilde x_{\cveps}}{\gamma}$ as well as $(f(y',\vartheta'_c)-f(\tilde y_{\cveps},\vartheta))$ where $f$ is either $B_2(y,\vartheta)$ or $\nabla B_2(y,\vartheta)$ and such that $f(y,\vartheta)=O(|y|^2)$ or $f\in \{B_{2k}(y,\vartheta),\nabla B_{2k}(y,\vartheta)\}$ is such that $f(y,\vartheta)=O(|y|)$ and coefficients $O(\gamma^{2+k})$ instead of $O(\gamma^2)$, or $f\in\{\alpha_j(y,\vartheta),\gamma_j(y,\vartheta)\}$ (where $\alpha_j, \gamma_j$ are coefficients of homogeneous terms of degree $2j$ in $A_{\Gamma}$) with coefficients $O(\gamma^{2+j/2})$. 
\end{rmq}
We then obtain and upper bound for $\nabla_{A,\Sigma,\rho}\tilde\Psi^{\cveps}_{N,a,\gamma}$ (similar to \eqref{inegtildePsi})
 \begin{multline}\label{inegtildePsigen} 
\gamma^{-3/2}(|\partial_{A}\tilde\Psi^{\cveps}_{N,a,\gamma}|+|\partial_{\Sigma}\tilde\Psi^{\cveps}_{N,a,\gamma}|+|\partial_{\rho}\tilde\Psi^{\cveps}_{N,a,\gamma}|)(t',x',y',\Sigma,A,\rho)\\
\leq C\Big(\Big|q^{2/3}(\vartheta'_{c})\frac{(t'-B_2( y',\vartheta'_c))}{\sqrt{\gamma}}-q^{2/3}(\vartheta)\frac{(\tilde t_{\cveps}-B_2( \tilde y_{\cveps},\vartheta))}{\sqrt{\gamma}}\Big|\\
+|q^{1/3}(\vartheta'_c)-q^{1/3}(\vartheta)|(1+O(\sqrt{\gamma}))+|q^{1/3}(\vartheta'_c)\frac{x'}{\gamma}(1+\Lp(y',\vartheta'_c))-q^{1/3}(\vartheta)\frac{\tilde x_{\cveps}}{\gamma}(1+\Lp(\tilde y_{\cveps},\vartheta))|\\
+\Big|\frac{(y'+\nabla B_0(y',\vartheta_c'))\cdot\vartheta_c'+t'}{\gamma^{3/2}}
-\frac{(\tilde y_{\cveps}+\nabla B_0(\tilde y_{\cveps},\vartheta))\cdot\vartheta+\tilde t_{\cveps}}{\gamma^{3/2}}\Big|+O(\sqrt{\gamma})|y'-\tilde y_{\cveps}|\Big),
\end{multline}
where the term $|q^{1/3}(\vartheta'_c)-q^{1/3}(\vartheta)|$ comes from $\Big(S_{\cveps}(a,\gamma,\vartheta'_c,A)-S_{\cveps}(a,\gamma,\vartheta,A)\Big)$ and the term $O(\sqrt{\gamma})|y'-\tilde y_{\cveps}|$ comes from differences involving $B_{2k}$, $\alpha_j$, $\gamma_j$. The constant $C>0$ depends on $q$.
\begin{lemma}
Let $(t,x,y)$ be fixed, let $(\Sigma,A,\rho,\vartheta)$ belong to the support of the symbol of $V^{\cveps}_{N,\gamma}$, then $(\tilde t_{\cveps},\tilde x_{\cveps},\tilde y_{\cveps})(\Sigma,A,\rho,\vartheta,a,\gamma)\notin \mathcal{C}_{\gamma}(t,x,y)$.
\end{lemma}
\begin{proof}
If not, then taking $(t',x',y'):=(\tilde t_{\cveps},\tilde x_{\cveps},\tilde y_{\cveps})(\Sigma,A,\rho,\vartheta)$ we have $\vartheta_c'=\vartheta$, $(t',x',y')\in\mathcal{C}_{\gamma}$ and $\nabla_{A,\Sigma,\rho}\tilde\Psi^{\cveps}_{N,a,\gamma}(t',x',y',\Sigma,A,\rho)=0$, which cannot happen for $N\notin \mathcal{N}_{\Psi^{\cveps}}(t',x',y')$.
\end{proof} 
We now prove \eqref{eq:4}: from the previous lemma, for all $A,\Sigma,\rho$ we have $(\tilde t_{\cveps},\tilde x_{\cveps},\tilde y_{\cveps})(\Sigma,A,\rho,{\vartheta_c},a,\gamma)\notin \mathcal{C}_{\gamma}(t,x,y)$, where ${\vartheta_c}={\vartheta_c}(t,x,y,\Sigma,A)$ is the critical point of $\tilde \Phi^{\cveps}_{N,a,\gamma}(t,x,y,\Sigma,A,\rho\vartheta)$ given in Lemma \ref{lemomegac} (with ${\vartheta_c}=\vartheta_-(t,x,y,\Sigma,A)$ for $t>0$).
This yields
\begin{multline}\label{lowerinegtildePsigentilde} 
 \Big|\frac{t-B_2( y,\tilde \vartheta(t,y))}{\sqrt{\gamma}}-\frac{\tilde t_{\cveps}-B_2( \tilde y_{\cveps},\tilde \vartheta(\tilde t_{\cveps},\tilde y_{\cveps})}{\sqrt{\gamma}}\Big|
+|\frac{x}{\gamma}(1+\Lp(y,\tilde\vartheta(t,y)))-\frac{\tilde x_{\cveps}}{\gamma}(1+\Lp(\tilde y_{\cveps},\tilde \vartheta(\tilde t_{\cveps},\tilde y_{\cveps})))|\\
+\Big|\frac{|y+\nabla B_0(y,\tilde \vartheta(t,y))|-t}{\gamma^{3/2}}
-\frac{|\tilde y_{\cveps}+\nabla B_0(\tilde y_{\cveps},\tilde \vartheta(\tilde t_{\cveps},\tilde y_{\cveps}))|-\tilde t_{\cveps}}{\gamma^{3/2}}\Big|> r_0.
\end{multline}
As \eqref{A},\eqref{Sigma} and \eqref{rho} do hold for any $\vartheta\in\mathbb{S}^{d-2}$, taking $(t',x',y')=(t,x,y)$ and $\vartheta:={\vartheta_c}(=\vartheta_c')={\vartheta_c}(t,x,y,\Sigma,A)$ yields $S_{\cveps}(a,\gamma,\vartheta'_c,A)=S_{\cveps}(a,\gamma,\vartheta,A)$ and therefore, for all $(A,\Sigma,\rho)$ on the support of $V^{\cveps}_{N,\gamma}(t,x,y)$ and $(\tilde t_{\cveps},\tilde x_{\cveps},\tilde y_{\cveps})=(\tilde t_{\cveps},\tilde x_{\cveps},\tilde y_{\cveps})(\Sigma,A,\rho,{\vartheta_c})$ we have
\begin{align*}
  \gamma^{-3/2}|\partial_{A}\tilde\Psi^{\cveps}_{N,a,\gamma}|(t,x,y,\Sigma,A,\rho) & =\frac 12 q^{2/3}(\vartheta_{c})\Big|\frac{(t-B_2( y,{\vartheta_c}))}{\sqrt{\gamma}}-\frac{(\tilde t_{\cveps}-B_2( \tilde y_{\cveps},{\vartheta_c}))}{\sqrt{\gamma}}\Big|+O(\sqrt{\gamma})\,,\\
\gamma^{-3/2}|\partial_{\Sigma}\tilde\Psi^{\cveps}_{N,a,\gamma}|(t,x,y,\Sigma,A,\rho) & =q^{1/3}({\vartheta_c})|\frac{x}{\gamma}(1+\Lp(y,{\vartheta_c}))-\frac{\tilde x_{\cveps}}{\gamma}(1+\Lp(\tilde y_{\cveps},{\vartheta_c}))|+O(\sqrt{\gamma})\,,\\
\gamma^{-3/2}|\partial_{\rho}\tilde\Psi^{\cveps}_{N,a,\gamma}|(t,x,y,\Sigma,A,\rho) & \geq 
\begin{multlined}[t] \Big|\frac{(y+\nabla B_0(y,{\vartheta_c}))\cdot{\vartheta_c}+t)}{\gamma^{3/2}}
-\frac{(\tilde y_{\cveps}+\nabla B_0(\tilde y_{\cveps},{\vartheta_c}))\cdot{\vartheta_c}+\tilde t_{\cveps})}{\gamma^{3/2}}\Big|\\
-\frac 12 q^{2/3}(\vartheta_{c})\Big|\frac{(t-B_2( y,{\vartheta_c}))}{\sqrt{\gamma}}-\frac{(\tilde t_{\cveps}-B_2( \tilde y_{\cveps},{\vartheta_c}))}{\sqrt{\gamma}}\Big|\\
-q^{1/3}({\vartheta_c})|\frac{x}{\gamma}(1+\Lp(y,{\vartheta_c}))-\frac{\tilde x_{\cveps}}{\gamma}(1+\Lp(\tilde y_{\cveps},{\vartheta_c}))|-O(\sqrt{\gamma}),
\end{multlined}
\end{align*}
and therefore there exists a uniform constant $C>0$ (depending only on $q$) such that
\begin{multline}\label{lowerinegtildePsigen} 
C\gamma^{-3/2}(|\partial_{A}\tilde\Psi^{\cveps}_{N,a,\gamma}|+|\partial_{\Sigma}\tilde\Psi^{\cveps}_{N,a,\gamma}|+|\partial_{\rho}\tilde\Psi^{\cveps}_{N,a,\gamma}|)(t,x,y,\Sigma,A,\rho)\\
\geq \Big[\Big|\frac{(t-B_2( y,{\vartheta_c}))}{\sqrt{\gamma}}-\frac{(\tilde t_{\cveps}-B_2( \tilde y_{\cveps},{\vartheta_c}))}{\sqrt{\gamma}}\Big|
+|\frac{x}{\gamma}(1+\Lp(y,{\vartheta_c}))-\frac{\tilde x_{\cveps}}{\gamma}(1+\Lp(\tilde y_{\cveps},{\vartheta_c}))|\\
+\Big|\frac{(y+\nabla B_0(y,{\vartheta_c}))\cdot{\vartheta_c}+t)}{\gamma^{3/2}}
-\frac{(\tilde y_{\cveps}+\nabla B_0(\tilde y_{\cveps},{\vartheta_c}))\cdot{\vartheta_c}+\tilde t_{\cveps})}{\gamma^{3/2}}\Big|-O(\sqrt{\gamma})\Big].
\end{multline}
As $(y+\nabla B_0(y,{\vartheta_c}))\cdot{\vartheta_c}=-|y+\nabla B_0(y,{\vartheta_c})|+O(\gamma^2)$ by Lemma \ref{lemomegac}, $(\tilde y_{\cveps}+\nabla B_0(\tilde y_{\cveps},{\vartheta_c}))\cdot{\vartheta_c}=-|\tilde y_{\cveps}+\nabla B_0(\tilde y_{\cveps},{\vartheta_c})|+O(\gamma^2)$ by construction and Lemma \ref{lemomegac}, the last line in \eqref{lowerinegtildePsigen} may be rewritten as
\[
\Big|\frac{|y+\nabla B_0(y,{\vartheta_c})|-t}{\gamma^{3/2}}
-\frac{|\tilde y_{\cveps}+\nabla B_0(\tilde y_{\cveps},{\vartheta_c})|-\tilde t_{\cveps}}{\gamma^{3/2}}\Big|-O(\sqrt{\gamma}).
\]
As $(\tilde t_{\cveps},\tilde x_{\cveps},\tilde y_{\cveps})=(\tilde t_{\cveps},\tilde x_{\cveps},\tilde y_{\cveps})(\Sigma,A,\rho,{\vartheta_c})$ is the integral curve associated to $(\Sigma,A,\rho,{\vartheta_c}(t,x,y,\Sigma,A))$, it follows from the definition of $\tilde\vartheta(t,y)$ in \eqref{tildeomega} and Lemma \ref{lemomegcrit} (applied to ${\vartheta_c}$ and $\tilde\vartheta(\tilde t_{\cveps}, \tilde y_{\cveps})$) that 
\begin{equation}\label{inftildeCI}
|\tilde\vartheta(\tilde t_{\cveps}, \tilde y_{\cveps})-{\vartheta_c}(t,x,y,\Sigma,A)|=O(\gamma^{3/2}/\tilde t_{\cveps})\,,
\end{equation}
which further yields
\[
|\tilde y_{\cveps}+\nabla B_0(\tilde y_{\cveps},{\vartheta_c})|-|\tilde y_{\cveps}+\nabla B_0(\tilde y_{\cveps},\tilde \vartheta(\tilde t_{\cveps},\tilde y_{\cveps}))|=O(\frac{|\tilde y_{\cveps}|^2}{\tilde t_{\cveps}}\gamma^{3/2})\,.
\]
We now consider the difference between ${\vartheta_c}$ and $\tilde\vartheta(t,y)$ : the assumption of Proposition \ref{propcardoutN1} is $\mathcal{N}(t,x,y)\neq \emptyset$, hence there exists $N_0\in\mathcal{N}(t,x,y)$ for which $\tilde\Phi_{N_0,a,\gamma}$ is stationary with respect to $A,\rho,\vartheta$ and from Lemma \ref{lemdetails} it follows that \eqref{desclambdaNmaincontribymod} must hold. Then $\tilde\vartheta(t,y)$ is an approximation modulo $O(\gamma^{3/2}/t)$ of the critical point $\vartheta_{}$ (when we consider all variables $(A,\rho,\vartheta_{})$) satisfying \eqref{eqNcontYmodgennewnew}. From the formulas \eqref{eqNcontYmodgen} for ${\vartheta_c}$ and \eqref{eqNcontYmodgennewnew} for $\vartheta_{c}^{\sharp}$, we always have $|{\vartheta_c}-{\vartheta_c}^{\sharp}|\lesssim \gamma$, but this is not small enough to conclude. If $|{\vartheta_c}-{\vartheta_c}^{\sharp}|\lesssim \gamma^{3/2}/t$, then, using \eqref{lowerinegtildePsigentilde}, \eqref{lowerinegtildePsigen}, \eqref{inftildeCI}, we obtain 
\[
|(y+\nabla B_0(y,{\vartheta_c}))\cdot{\vartheta_c}-(y+\nabla B_0(y,\tilde\vartheta(t,y)))\cdot \tilde\vartheta(t,y)|= O(|y|^2\gamma^{3/2}/t)=O(|y|\gamma^{3/2}).
\]
Suppose that, with ${\vartheta_c}^{\sharp}$ given in \eqref{eqNcontYmodgennewnew} from Lemma \ref{lemdetails}, we have 
$\gamma\gtrsim |{\vartheta_c}-{\vartheta_c}^{\sharp}|\gtrsim \gamma^{3/2}/t$ such that
\begin{equation}\label{assumptomegactilde}
|(y+\nabla B_0(y,{\vartheta_c}))\cdot{\vartheta_c}-(y+\nabla B_0(y,{\vartheta_c}^{\sharp}))\cdot{\vartheta_c}^{\sharp}|\geq r_0\gamma^{3/2}\,.
\end{equation}
Let $C_{q}\geq 2 $ be such that $\|q(\vartheta)\|_{\infty}\leq C_{q}$. If the first line in \eqref{lowerinegtildePsigentilde} is bounded from below by $r_{0}/(10C_{q})$, then using $|{\vartheta_c}-\tilde\vartheta(t,y)|\leq |{\vartheta_c}-{\vartheta_c}^{\sharp}|+|{\vartheta_c}^{\sharp}-\tilde \vartheta(t,y)|\lesssim \gamma$, the second line in \eqref{lowerinegtildePsigen} is bounded from below by $r_{0}/(20C_{q})$ (if $\sqrt{\gamma}$ is small compared to $r_0$) and we are done. Therefore we are reduced to considering the first line in \eqref{lowerinegtildePsigentilde} bounded from above by $r_{0}/(10C_{q})$, which is the same as assuming that $|\partial_A\tilde\Psi^{\cveps}(t,x,y,\Sigma,A,\rho)|\leq \gamma^{3/2}{r_0}/{10}$ and $|\partial_{\Sigma}\tilde\Psi^{\cveps}_{N,a,\gamma}(t,x,y,\Sigma,A,\rho)|\leq \gamma^{3/2}{r_0}/{10}$. 
We prove that, if, moreover, \eqref{assumptomegactilde} holds, then $\gamma^{-3/2}|\partial_{\rho}\tilde\Psi^{\cveps}_{N,a,\gamma}(t,x,y,\Sigma,A,\rho)|\geq r_0/5$. As $\gamma^{-3/2}|\partial_A\tilde\Psi^{\cveps}(t,x,y,\Sigma,A,\rho)|\leq \frac{r_0}{10}$, 
we find
\begin{equation}
\Big|\frac{q^{2/3}({\vartheta_c})}{2}\frac{(t-B_2(y,{\vartheta_c}))}{\sqrt{\gamma}}+(\Sigma-S_{\cveps}(a,\gamma,{\vartheta_c},A))+O(\sqrt{\gamma})-2NA^{1/2}(1-\frac 34 B'_L(\rho \lambda_{\gamma}A^{3/2}))\Big|\leq r_0/10\,.
\end{equation}
Recall that 
\begin{multline}
\gamma^{-3/2}\partial_{\rho}\tilde\Psi^{\cveps}_{N,a,\gamma}=\gamma^{-3/2}\Big((y+\nabla B_0(y,{\vartheta_c}))\cdot{\vartheta_c}+t\Big)-\frac{Aq^{2/3}({\vartheta_c})}{2}\frac{(t-B_2(y,{\vartheta_c}))}{\sqrt{\gamma}}\\
+\Big(\Sigma^3/3+\Sigma(\frac{x}{\gamma}q^{1/3}({\vartheta_c})(1+\Lp(y,{\vartheta_c}))+\frac 23 S_{\cveps}(a,\gamma,{\vartheta_c},A)\Big)-\frac 43N A^{3/2}(1-B'_L(\rho\lambda_{\gamma}A^{3/2}))+O(\sqrt{\gamma})
\end{multline}
and therefore, by substitution in the $NA^{3/2}$ term,
\begin{multline}
\left|\gamma^{-3/2}\partial_{\rho}\tilde\Psi^{\cveps}_{N,a,\gamma}-\left(\gamma^{-3/2}\Big((y+\nabla B_0(y,{\vartheta_c}))\cdot{\vartheta_c}+t\Big)-\frac{Aq^{2/3}({\vartheta_c})}{6}\frac{(t-B_2(y,{\vartheta_c}))}{\sqrt{\gamma}}\right.\right.\\{}+
\left.\left.\Big(\Sigma^3/3+\Sigma\frac{x}{\gamma}q^{1/3}({\vartheta_c})(1+\Lp(y,{\vartheta_c}))+\frac 23 S_{\cveps}(a,\gamma,{\vartheta_c},A)-\frac 23 (\Sigma-S_{\cveps}(a,\gamma,{\vartheta_c},A))\Big)\right)\right| \leq r_0/10\,.
\end{multline}
With ${\vartheta_c}^{\sharp}$ given by \eqref{eqNcontYmodgennewnew} in Lemma \ref{lemdetails} (recall that it is independent of $N$), we have
\begin{multline}
\gamma^{-3/2}\Big((y+\nabla B_0(y,{\vartheta_c}^{\sharp}))\cdot{\vartheta_c}^{\sharp}+t\Big)-\frac{A  q^{2/3}({\vartheta_c}^{\sharp})}{6}\frac{(t-B_2(y,{\vartheta_c}^{\sharp}))}{\sqrt{\gamma}}\\
+\Big(\Sigma^3/3+\Sigma\frac{x}{\gamma}q^{1/3}({\vartheta_c}^{\sharp})(1+\Lp(y,{\vartheta_c}))+\frac 23 S_{\cveps}(a,\gamma,{\vartheta_c}^{\sharp},A)-\frac 23 (\Sigma-S_{\cveps}(a,\gamma,{\vartheta_c}^{\sharp},A))\Big)+O(\sqrt{\gamma})=0.
\end{multline}
Taking the difference between the last two equations and using that $|{\vartheta_c}-{\vartheta_c}^{\sharp}|\lesssim \gamma$ and that differences always provide functions of ${\vartheta_c}-{\vartheta_c}^{\sharp}$, we get, using \eqref{assumptomegactilde}
\[
  \gamma^{-3/2}|\partial_{\rho}\tilde\Psi^{\cveps}_{N,a,\gamma}|=\gamma^{-3/2}\Big|(y+\nabla B_0(y,{\vartheta_c}))\cdot{\vartheta_c}-(y+\nabla B_0(y,{\vartheta_c}^{\sharp}))\cdot{\vartheta_c}^{\sharp}\Big|-O(\sqrt \gamma)- r_0/10
 \geq \frac{r_0}{5}\,.
\]
Therefore we always have $\Big|\nabla_{(\Sigma,A,\rho)}\tilde \Psi^{\cveps}_{N,a,\gamma}\Big|\gtrsim r_0/5$, which completes the proof in the case $4a\lesssim \gamma$.\end{proof}
Let now $\gamma\sim a$. We write $\Phi_{N,a,a}:=\Phi_{N,a,\gamma\sim a}$ and rescale variables as follows $s=\sqrt{a}|\theta|^{1/3}S$, $\sigma\sqrt{a}|\theta|^{1/3}\Sigma$, $\alpha=a|\theta|^{2/3}A$ and let $\lambda=\lambda_a=\frac{a^{3/2}}{h}$. We define
\begin{equation}\label{deftildePhi}
\tilde \Phi_{N,a,a}(t,x,y,\Sigma,S,A,\theta):=\Phi_{N,a,a}(t,x,y,\sqrt{a}|\theta|^{1/3}\Sigma,\sqrt{a}|\theta|^{1/3}S,a|\theta|^{2/3}A,\theta).
\end{equation}
Let $s_0(a,\theta,\alpha)$ be the unique solution to $\partial^2_{s}\Phi(a,0,\theta,\alpha,s)=0$ and let $s_{\pm}(a,\theta,\alpha)$ be the critical points of $\Phi(a,0,\theta,\alpha)$ (see Lemma \ref{lemphaseG} from the Appendix for details on critical points $s_{\pm}$ of $\Phi(x,y,\theta,\alpha,\sigma)$), then after rescaling variables we obtain at most two critical points such that
\[
S_{\pm}(a,\vartheta,A)-S_0(a,\vartheta,A)= \pm \sqrt{\frac{\zeta(a,0,\vartheta,a A)}{a}}(1+O(\sqrt{\zeta(a,0,\vartheta,a A)})),
\] 
where $\zeta(a,0,\vartheta,a A)/a=A-e_0(a,0,\vartheta,a A)$ is the phase function introduced in Theorem \ref{thmMelrose}, with $e_0$ elliptic and close to $1$. In this case $\zeta(a,0,\vartheta,a A)/a$ is close to $0$ and $S_{\pm}$ are real only for $\zeta(a,0,\vartheta,a A)\geq 0$. We now repeat the argument from the model case : let $\chi_0\in C^{\infty}$ be a smooth cutoff, equal to $1$ on $[0,\infty]$ and equal to $0$ on $[-\infty,-2]$. Then $(\chi_0Ai)(-(|\theta|\lambda)^{2/3}\zeta(a,0,\vartheta,aA))$ is a symbol of order $2/3$ supported for values $(|\theta|\lambda)^{2/3}\zeta(a,0,\vartheta,aA)\leq 2$ and $(1-\chi_0)Ai(-(|\theta|\lambda)^{2/3}\zeta(\cdots)))$ is supported on $A\geq e_0(a,0,\vartheta,aA)$ with value $1$ on $(|\theta|\lambda)^{2/3}\zeta(a,0,\vartheta,aA)\geq 2$. On the support of $(1-\chi_0)$ the Airy function may be written as a sum of two contributions $A_{\pm}((|\theta|\lambda)^{2/3}\zeta(a,0,\vartheta,aA)))$ corresponding to critical points $S_{\pm}$. We split the symbol of $V_N$ in two parts using $\chi_0+(1-\chi_0)=1$ and on the support of $\chi_0$ the Airy function behaves as a symbol of order $2/3$. Therefore we write each integral $V_{N,a}(t,x,y)$ as a sum $V_{N,a}(t,x,y)=\sum_{\cveps\in \{0,\pm\}}V^{\cveps}_{N,a}(t,x,y)$ where for $\cveps\in\{\pm\}$, $V^{\cveps}_N$ has phase $\tilde\Phi_{N,a,a}(t,x,y,\Sigma, S_{\cveps},A,\theta)$ while $V^0_{N,a}$ has phase $\tilde\Phi_{N,a,a}(t,x,y,\Sigma, S_0(a,\vartheta,A),A,\theta)$. It remains to prove that, for each $\cveps\in\{0,\pm\}$,
\begin{equation}\label{VNtangentbis}
\sum_{N\notin \mathcal{N}^{1}_d(t,x,y),|N|\lesssim 1/\sqrt{a}}V^{\cveps}_{N,a}(t,x,y)=O(h^{\infty}).
\end{equation}
For $\cveps\in\{\pm\}$ we act exactly like in the transverse case $4a\leq\gamma$, as on the support of $(1-\chi_0)$ we have two separate critical points $S_{\pm}$. For $\cveps=0$ we use that we got rid of variable $S$ hence $\partial_S\tilde \Phi_{N,a,a}(t,x,y,\Sigma, S_0(a,\vartheta,A),A,\theta)$ and act again as in the previous case, finally completing the proof of Proposition \ref{propcardoutN1}.
\end{proof}
In the last part of this section we prove Theorem \ref{dispintermediaire} : writing $\Prond_{h,a}$ as the sum over $\gamma$, 
we evaluate each $\Prond_{h,a,\gamma}$ and then sum up in $a\lesssim \gamma\ll 1$. We deal separately with the cases $\gamma>4a$ and $\gamma\sim a$ when use the notation $\Prond_{h,a,a}$ for $\Prond_{h,a,\gamma\sim a}$ and $V_{N,a}$ for $V_{N,\gamma\sim a}$. From \eqref{eq:newVN} and \eqref{eq:newVNgam} we have
\[
V_N=\sum_{a\lesssim\gamma\ll 1}V_{N,\gamma}=V_{N,a}+\sum_{4a\leq\gamma}V_{N,\gamma}\,.
\]
\begin{rem}
  In \eqref{eq:newVNgam} we set $\omega=h^{-2/3}\alpha$, and then $\alpha=\gamma A$, which yields $\omega=\lambda_{\gamma}^{2/3}A$, with $\lambda_{\gamma} =\gamma^{3/2}/h$ and $A\sim 1$ on the support of $\psi$. Since the "main" contribution in the parametrix \eqref{eq:Prond} comes from values $\omega\sim \lambda^{2/3}$ with $\lambda=\lambda_a=\frac{a^{3/2}}{h}$ (or, in terms of $\mathcal{P}_{h,a}$ written as a sum \eqref{eq:Prond2}  with $\omega_k\sim k^{2/3}$, the main contribution comes from $k\sim \lambda$), the part $\Prond_{h,a,a}$ will provide the "worst" case scenario.
\end{rem}
Before stating estimates for $V_{N,\gamma}$, when $|N|\geq 1$ and $a\lesssim \gamma$, we recall that we have the free space dispersion for $V_{0}$, assuming that $t>h$ so that the dispersive effect takes over:
\begin{equation}
  \label{eq:36}
  |V_{0}(t,x,y)| \leq C h^{-d} \left(\frac h t \right)^{\frac{d-1}2}\,.
\end{equation}
\begin{prop}\label{propN=1}
Assume $\ceps>0$, $a\in [h^{2/3-\ceps},a_{0}]$, and $h\in (0,1)$. Then there exists $C(\ceps,a_{0})$ such that, for $t\gtrsim h$,
  \begin{equation}
    \label{eq:37}
      |V_{\pm 1}(t,x,y)| \leq C h^{-d} \left(\frac h t \right)^{\frac{d-2}2} \left(\left(\frac h t\right)^{1/2}+ a^{1/4} \left(\frac{h}{t}\right)^{1/4}+h^{\frac 1 3}\right)\,.
  \end{equation}
\end{prop}
\begin{prop}\label{proptransv}
Assume $\ceps>0$, $a\in [h^{2/3-\ceps},a_{0}]$, $\gamma\geq 4a$, $h\in (0,1)$ and let $\lambda_{\gamma}=\gamma^{3/2}/h$. Then there exists $C(\ceps,a_{0})$ such that, for $t\gtrsim \sqrt \gamma$,
  \begin{equation}
    \label{eq:37bis}
      |\sum_{|N|\geq 2 }V_{N,\gamma}(t,x,y)| \leq C h^{-d} \left(\frac h t \right)^{\frac{d-2}2} h^{1/3}\left(\frac{\gamma^{1/4}}{t^{1/2}}+\frac 1 {\lambda_{\gamma}^{3/2}}\right)\,.
  \end{equation}
\end{prop}
\begin{prop}\label{proptang}
Assume $\ceps>0$, $a\in [h^{2/3-\ceps},a_{0}]$, $h\in (0,1)$  and let $\lambda=a^{3/2}/h$. Then there exists $C(\ceps,a_{0})$ such that for $0\leq x\leq 2a$,
  \begin{equation}
    \label{eq:37ter}
      |\sum_{|N|\geq 2 }V_{N,a}(t,x,y)| \leq C h^{-d} \left(\frac h t \right)^{\frac{d-2}2}  \left( a^{1/4} \left(\frac h t \right)^{1/4}+\frac{h^{1/3}}{\lambda^{4/3}}\right)\,.
  \end{equation}
\end{prop}
Our main estimate, \eqref{eq:2} from Theorem \ref{dispintermediaire}, follows at once from the previous propositions, in the regime $a\geq h^{2/3-\ceps}$. Recall that for $a\lesssim \gamma$, $V_{N,\gamma}$ was  defined in \eqref{eq:newVNgam} as
\begin{multline}
  V_{N,\gamma}(t,x,y) = \frac 1 {2\pi h^{d+1}} \int e^{\frac i h (
    t\tau_{q}(\alpha,\theta)+\Phi(x,y,\theta,\alpha,\sigma)-\Phi(a,0,\theta,\alpha,s)-Nh L(h^{-2/3}\alpha)
    )}\psi(\alpha/(|\theta|^{2/3}\gamma)) \\
{}\times {\cutoffchi^{\flat}}(\alpha/\ceps_{0}){\cutoffchi^{\sharp}}(\alpha/h^{2/3})\chi(s)p_h(x,y,\theta,\alpha,\sigma)\tilde q_h (\theta,\alpha,s) ds \,d\theta  d{\sigma}d\alpha\,,
\end{multline}
where the symbol of $V_{N,\gamma}$ (the same for every $N$) is of order $0$. Let $|N|\geq 1$ and $a\lesssim \gamma$, and set $\lambda_{\gamma}=\frac{\gamma^{3/2}}{h}$. We rescale variables as follows 
\begin{itemize}
\item If $\gamma\sim a$ we set $s=\sqrt{a}|\theta|^{1/3}S$, $\sigma\sqrt{a}|\theta|^{1/3}\Sigma$, $\alpha=a|\theta|^{2/3}A$ and let $\lambda=\frac{a^{3/2}}{h}$;
\item If $4a\leq \gamma$ we set
 $s=\sqrt{\gamma}|\theta|^{1/3}S$, $\sigma\sqrt{\gamma}|\theta|^{1/3}\Sigma$, $\alpha=\gamma|\theta|^{2/3}A$.
\end{itemize}
We let $\tilde\Phi_{N,a,a}(t,x,y,\Sigma,S,A,\theta)$ be given by \eqref{deftildePhi} when $\gamma \sim a$ and $\tilde\Phi_{N,a,\gamma}(t,x,y,\Sigma,S,A,\theta)$ be given by \eqref{deftildephiNagamma} when $4a\leq \gamma$. We have (including the case where $\gamma$ is replaced by $a$)
\[
\tilde\Phi_{N,a,\gamma}(t,x,y,\Sigma,S,A,\theta)=\tilde\Phi_{0,a,\gamma}(t,x,y,\Sigma,S,A,\theta)-\frac 43 N\gamma^{3/2}|\theta|A^{3/2}+NhB_L(|\theta|\lambda_{\gamma} A^{3/2}).
\] 
The phase $\tilde\Phi_{0,a,\gamma}(t,x,y,\Sigma,S,A,\theta)$, with $\theta=\rho \vartheta$,  $|\vartheta|=1$, has two critical points $\vartheta_{\pm}$ and they are non-degenerate. Indeed (see the proof of Proposition \ref{propcardN}), $\nabla^2_{\vartheta}\tilde\Phi_{0,a,\gamma}=|y|O(1)$ for any $a\lesssim\gamma$. From Lemma \ref{derriere} we have $|y|\geq c_0|t|$ and stationary phase in $\vartheta\in\mathbb{S}^{d-2}$ yields a decay factor
\begin{equation}
  \label{eq:46}
  \left(\frac{h}{|y|}\right)^{\frac{d-2}2} \leq C   \left(\frac{h}{|t|}\right)^{\frac{d-2}2},\text{ for } |y|\geq c_{0} |t|.
\end{equation}
Recall that the critical points (in $\vartheta$) of $\Phi_{N,a,\gamma}$ are \eqref{eqNcontYmodgen} (Lemma \ref{lemomegac}). According to Remark \ref{rmqsigneomegac}, if we fix a sign for $t$, only one of these critical points provide non-trivial contributions. Let $t>0$ and denote $\vartheta_{c}=\vartheta_-(t,x,y,\Sigma,S,A)$ the critical point in $\vartheta$,
and let
\[
  \Psi_{N,a,\gamma}(t,x,y,\Sigma,S,A,\rho)=\tilde\Phi_{N,a,\gamma}(t,x,y,\Sigma,S,A,\rho\vartheta_{c})\,.
\]
Recall from Lemma \ref{lemomegac} that ${\vartheta_c}$ doesn't depend on $N$, neither on $\rho=|\theta|$ (since $\tilde\Phi_{0,a,\gamma}(t,x,y,\Sigma,S,A,\theta)$ is linear in $|\theta|$). 
\begin{lemma}
The critical points ${\vartheta_c}$ are such that
\begin{equation}\label{derivomegac}
\partial_{\Sigma}{\vartheta_c}=O(\gamma^{3/2}/t),\quad \partial_{S}{\vartheta_c}=O(\gamma^{3/2}/t).
\end{equation}
\end{lemma}
\begin{proof}
We evaluate its derivatives in $\Sigma$ and $S$ of ${\vartheta_c}$ using the equation \eqref{eqNcontYmodgen} from Lemma \ref{lemomegac}. 
\end{proof}
\begin{rmq}\label{rmqBL}
For $|N|\leq \lambda_{\gamma}$, the factor $e^{iNB_L(\rho \lambda_{\gamma}A^{3/2})}$ does not oscillate. Indeed $NB_L(\rho \lambda_{\gamma}A^{3/2})\sim N/\lambda_{\gamma}\lesssim 1$, and moving this factor to the symbol, the phase becomes linear in $\rho$. On the other hand, as soon as $N>\lambda_{\gamma}$ we can take  advantage of the stationary phase in $\rho$, as we shall see below. Applying the stationary phase in $\rho$ turns out to be of particular interest for $N\geq \lambda_{\gamma}^2$, since in this case for a given $t$ such that $t/\sqrt{\gamma}\geq \lambda_{\gamma}^2$, the cardinal of the set $\mathcal{N}_1(t,x,y)$ is not uniformly bounded anymore but starts to increase like $\frac{t}{\sqrt{\gamma}\lambda_{\gamma}^2}$.
\end{rmq}
\subsection{The tangent part $\gamma\sim a$. } 
Let first $a\sim \gamma$ and set $\lambda=\lambda_a=a^{3/2}/h$. We apply stationary phase to $\Psi_{N,a,a}$ with respect to $A$.
\begin{lemma}
The equation $\partial_A\Psi_{N,a,a}=0$ has at most one solution on the support of the symbol $\psi(A)$, that we denote $A_c$. Then $A_c$ is a non-degenerate critical point and $\partial^2_{A}\Psi_{N,a,a}|_{A=A_c}\sim Na^{3/2}$.
\end{lemma}
\begin{proof}
The phase is stationary when $\partial_A\Psi_{N,a,a}=\partial_A\tilde\Phi_{N,a,a}|_{\vartheta={\vartheta_c}}=0$. The equation $\partial_A\tilde\Phi_{N,a,a}=0$ is the first in \eqref{desclambdaN} and we easily see that for any $N\neq 0$ there exists a solution $A^{1/2}_c\sim \frac{t}{4N\sqrt{a}}$. For $N\sim t/(4\sqrt{a})$ we have $A_c\sim 1$. Moreover,
\[
\partial^2_{A}\Psi_{N,a,\gamma}=\partial^2_{A}\tilde\Phi_{N,a,\gamma}|_{\vartheta={\vartheta_c}}+\partial_A{\vartheta_c}\partial_A\nabla_{\vartheta}\tilde\Phi_{N,a,\gamma}|_{\vartheta={\vartheta_c}}\,.
\]
The derivative with respect to $A$ of the first term in the left hand side of \eqref{desclambdaN} is $t\,O(a^2)$. Using \eqref{derivAPhi}, $\partial^{2}_{A}(\Phi(x,y,\theta,a|\theta|^{2/3}A,\sqrt{a}|\theta|^{1/3}\Sigma)-\Phi(a,0,\theta,a|\theta|^{2/3}A,\sqrt{a}|\theta|^{1/3}S))=|y|O(a^2)$. As the derivative of the coefficient of $N$ in \eqref{desclambdaN} is close to $Na^{3/2}$ and $|N|\sim |t|/4\sqrt{a}$, $|t|\sim |y|$, the main contribution of $\partial^2_{A}\tilde\Phi_{N,a,a}|_{\vartheta={\vartheta_c}}$ is also $Na^{3/2}$. On the other hand, 
\begin{equation}\label{estimderivAomegac}
\partial_A\nabla_{\vartheta}\tilde\Phi_{N,a,a}|_{\vartheta={\vartheta_c}}=\partial_A\nabla_{\vartheta}\tilde\Phi_{0,a,a}|_{\vartheta={\vartheta_c}}\sim tO(a).
\end{equation}
Moreover, taking the derivative with respect to $A$ of $\nabla_{\vartheta}\tilde\Phi_{0,a,a}|_{\vartheta={\vartheta_c}}=0$ gives $\partial_A{\vartheta_c}\nabla^2_{\vartheta}\tilde\Phi_{0,a,a}=-\partial_A\nabla_{\vartheta} \tilde\Phi_{0,a,a}$ ; using $\Big(\nabla^2_{\vartheta}\tilde\Phi_{0,a,a}\Big)^{-1}=|y|^{-1}O(1)$, $|y|\geq c_0|t|$ and \eqref{estimderivAomegac} eventually yields 
\[
\partial_A{\vartheta_c}\partial_A\nabla_{\vartheta} \tilde\Phi_{0,a,a}|_{\vartheta={\vartheta_c}}\sim \Big| \partial_A\nabla_{\vartheta} \tilde\Phi_{0,a,a}\Big|^2\times |y|^{-1}O(1)\sim \frac{t^2}{|y|}O(a^2)\lesssim \frac{|t|}{c_0}O(a^2)\,.
\]
Therefore the main contribution of $\partial^2_{A}\Psi_{N,a,a}$ is $Na^{3/2}$.
\end{proof}
Hence we may apply stationary phase in $A$ and get another decay factor
\begin{equation}
  \label{eq:169}
\frac{1}{\sqrt {a^{3/2}|N|}} \times \frac{1}{\sqrt{1/ h}}=  \frac 1{(\lambda |N|)^{1/2}}\,.
\end{equation}
\begin{lemma}
Assume $N\sqrt a \lesssim 1$. The critical point $A_c$ is
\begin{gather}
  \label{eq:146bis}
A_{c}=\Ab_0^2-2\Ab_0\Ab_1+\Big(\frac{\Sigma}{2N}-\frac {S} {2N}\Big)^2+ O\Big(a (\frac{\Sigma}{N},\frac{S}{N})^2\Big)+\frac {f_0} {N\lambda^{2}}\,,\\
\label{eq:defFbGb}
\Ab_0=\frac{q^{2/3}({\vartheta_c})|_{\Sigma=S=0}}{4N\sqrt{a}}(t+E_0(y,a)),\quad \Ab_1=\frac{\Sigma}{2N}(1-xE_1)-\frac {S} {2N}(1-aE_2)\,,
\end{gather}
and $f_0$ is an asymptotic expansion in $\lambda^{-1}$,  $E_0=O(|y|^2,a |y|)$, $E_{1,2}=O(1)$, $E_{0,1,2}$ are independent of $\Sigma$ and $S$.
\end{lemma}
\begin{proof}
  At the critical point $A_c$, \eqref{desclambdaN} holds (replacing $\vartheta$ with ${\vartheta_c}$ and $\gamma$ with $a$). Using \eqref{derivAPhi},
\begin{multline}\label{eqAcritgen}
A^{1/2}(1-\frac 34 B_L'(\rho\lambda A^{3/2}))=\frac{q^{2/3}({\vartheta_c})\Big(t-B_2(y,{\vartheta_c})+\sum_{k\geq 2}\partial_{\tau}\Big(\frac{(1-\tau)^k}{\tau^{k-1}}\Big)B_{2k}(y,{\vartheta_c})\Big)}{4N\sqrt{a}\sqrt{1+a Aq^{2/3}({\vartheta_c})}}|_{\tau=\tau_{q}(a A,{\vartheta_c})}\\+\frac{(-\Sigma+S)}{2N}
+\frac{{a}q^{2/3}({\vartheta_c})}{2N\sqrt{1+a Aq^{2/3}({\vartheta_c})}}\Big[\frac{x}{a^{3/2}}\partial_{\tau}\Big(\tau A_{\Gamma}(x,y,\sqrt{a}\Sigma q^{1/3}({\vartheta_c})/\tau,{\vartheta_c}/\tau)\Big)|_{\tau=\tau_{q}(a A,{\vartheta_c})}\\
-\frac{a}{a^{3/2}}\partial_{\tau}\Big(\tau A_{\Gamma}(a,0,\sqrt{a}S q^{1/3}({\vartheta_c})/\tau,{\vartheta_c}/\tau)\Big)|_{\tau=\tau_{q}(a A,{\vartheta_c})}\Big]\,,
\end{multline}
where we recall that $B_{2k}(y,{\vartheta_c})=O(|y|)$ for all $k\geq 2$ and $1-\tau_{q}(a A,{\vartheta_c})=\frac{a Aq^{2/3}({\vartheta_c})}{(1+\tau_{q}(a A,{\vartheta_c}))}$. 
\begin{rmq}
When $|N|\lesssim \lambda$, according to Remark \ref{rmqBL} we get rid of the factor $(1-\frac 34 B_L'(\rho\lambda A^{3/2}))$ in the LHS of \eqref{eqAcritgen}, without which the equation satisfied by $A_c$ is independent of $\rho,\lambda$.
\end{rmq}
At $\Sigma=S=0$, we get for $A_c^{1/2}|_{\Sigma=S=0}$
\begin{multline}\label{eqAcritgenzero}
A^{\frac 12}(1-\frac 34 B_L'(\rho\lambda A^{\frac 32}))=\frac{q^{2/3}({\vartheta_c})/(4N\sqrt{a})}{\sqrt{1+a Aq^{\frac 23}({\vartheta_c})}}\Big[t-B_2(y,{\vartheta_c})+\sum_{k\geq 2}\partial_{\tau}\Big(\frac{(1-\tau)^k}{\tau^{k-1}}\Big)B_{2k}(y,{\vartheta_c})\\
+2a \Big(\frac{x}{a}\partial_{\tau}\Big(\tau A_{\Gamma}(x,y,0,{\vartheta_c}/\tau)\Big)
-\frac{a}{a}\partial_{\tau}\Big(\tau A_{\Gamma}(a,0,0,{\vartheta_c}/\tau)\Big)\Big)\Big]_{|\tau=\tau_{q}(a A,{\vartheta_c}),\Sigma=S=0}\,.
\end{multline}
Using that $\partial_{\tau}(\tau A_{\Gamma})=O(1)$ and the expansion \eqref{eq:propL} of $B_L$, by the implicit function theorem applied to \eqref{eqAcritgenzero}, we obtain that $A^{1/2}_c|_{\Sigma=S=0}$ is of the form $\Ab_0$ given in \eqref{eq:defFbGb}. 
Taking the derivative of \eqref{eqAcritgen} with respect to $\Sigma$ yields 
\begin{multline}
\partial_{\Sigma}( A_c^{1/2})\Big(1-\frac 34 B_L'(z)-\frac 94zB_L''(z)\Big)|_{z=\rho\lambda A^{3/2}}=\frac{q^{2/3}({\vartheta_c})}{4N\sqrt{a}\sqrt{1+a A_cq^{2/3}({\vartheta_c})}}\Big[\partial_{\Sigma}{\vartheta_c}\cdot\Big(-\nabla B_2(y,{\vartheta_c})\\
+\sum_{k\geq 2}\partial_{\tau}\Big(\frac{(1-\tau)^k}{\tau^{k-1}}\Big)\nabla B_{2k}(y,{\vartheta_c})\Big)+(\partial_{\Sigma}A_c\partial_A\tau+\partial_{\Sigma}{\vartheta_c}\cdot\nabla_{\vartheta}\tau)\sum_{k\geq 2}\partial^2_{\tau,\tau}\Big(\frac{(1-\tau)^k}{\tau^{k-1}}\Big)B_{2k}(y,{\vartheta_c})\Big]\Big|_{\tau=\tau_{q}(a A_c,{\vartheta_c})}\\ 
+\partial_{\Sigma}\Big(\frac{q^{2/3}({\vartheta_c})}{\sqrt{1+a A_cq^{2/3}({\vartheta_c})}}\Big)\frac{\Big(t-B_2(y,{\vartheta_c})+\sum_{k\geq 2}\partial_{\tau}\Big(\frac{(1-\tau)^k}{\tau^{k-1}}\Big)B_{2k}(y,{\vartheta_c})\Big)}{4N\sqrt{a}}\Big|_{\tau=\tau_{q}(a A_c,{\vartheta_c})}\\
-\frac{1}{2N}+\frac{\sqrt{a}}{2N}\partial_{\Sigma}\Big(\frac{q^{2/3}({\vartheta_c})}{\sqrt{1+a A_cq^{2/3}({\vartheta_c})}}\Big)\frac{x}{a} \partial_{\tau}\Big(\tau A_{\Gamma}(x,y,\sqrt{a}\Sigma q^{1/3}({\vartheta_c})/\tau,{\vartheta_c}/\tau)\Big)|_{\tau=\tau_{q}(a A,{\vartheta_c})}\\
+\frac{\sqrt{a}}{2N}\frac{q^{2/3}({\vartheta_c})}{\sqrt{1+a A_cq^{2/3}({\vartheta_c})}}\frac{x}{a} \partial_{\Sigma}\Big[\partial_{\tau}\Big(\tau A_{\Gamma}(x,y,\sqrt{a}\Sigma q^{1/3}({\vartheta_c})/\tau,{\vartheta_c}/\tau)\Big)|_{\tau=\tau_{q}(a A,{\vartheta_c})}\Big]\,,
\end{multline}
where $\partial_A\tau_{q}=a q^{2/3}(\vartheta)/(2\tau_{q})$, $B_{2k}(y,{\vartheta_c})=O(|y|)$, $\nabla_{\vartheta}\tau_{q}=a A\nabla(q^{2/3}(\vartheta))/(2\tau_{q})$ and $\partial_{\Sigma} (\partial_{\tau}(\tau A_{\Gamma}))=O(a)$. At $\Sigma=S=0$ we find
\[
\partial_{\Sigma}(A_c^{1/2})|_{\Sigma=S=0}=\frac 23 \partial_{\Sigma}{\vartheta_c}\nabla q({\vartheta_c})q^{-1/3}({\vartheta_c})\frac{(t+O(|y|^2)+O(a |y|))}{4N\sqrt{a}}-\frac{1}{2N}(1+O(a^{3/2})+O(\lambda^{-2}))\,,
\]
which, together with \eqref{eqAcritgen} and \eqref{derivomegac}, allows to obtain the explicit form of the derivative of $A_c$ with respect to $\Sigma$ (similar computations hold for $S$).
\end{proof}

\begin{rmq}\label{}
When $|N|\geq \lambda^2$, stationary phase in $\rho$ turns out to be of particular interest: for a given $t$ such that $t/\sqrt{a}\geq \lambda^2$, the cardinal of the set $\mathcal{N}_1(t,x,y)$ is large. Obtaining a bound of the integral defining $V_{N,a}$ better than the one given by integrating only with respect to $\Sigma,S$ turns out to be crucial in order to obtain the desired dispersive estimates. 
\end{rmq}

Let us consider the case $|N|>C\lambda^2$ for some constant $C>1$. Let
\begin{equation}
  \label{eq:23}
  \phi_{N,a}(t,x,y,\Sigma,S,\rho):=\Psi_{N,a,a}(t,x,y,\Sigma,S,A_c,\rho)
\end{equation}
denote the critical value of $\Psi_{N,a,a}$. We compute derivatives of $\phi_{N,a}$ with respect to $\rho$. 
Since $\vartheta_{c}$ is independent of $\rho$, 
the phase $\phi_{N,a}$ is stationary in $\rho$ when
\[
0=\partial_{\rho}\phi_{N,a}=(\partial_{\rho}\Psi_{N,a,a}+\partial_{\rho}A_c\partial_A\Psi_{N,a,a})|_{A_c}=\partial_{\rho}\Psi_{N,a,a}|_{A_c}\,,
\]
with $A_c$ provided in \eqref{eq:146bis}, where the coefficients of $t,\Sigma,S$ (in the first three terms of the sum) are independent of $\rho$ (notice that in the equation \eqref{eqAcritgen} verified by $A_c$, the only term containing $\rho$ is $B_L'$ which amounts for the terms with factor $O(\lambda^{-2})$ in \eqref{eq:146bis}). Since $\Psi_{N,a,a}=\Psi_{0,a,a}-\frac 43 a^{3/2}\rho NA^{3/2}+N hB_L(\rho\lambda A^{3/2})$, and since $\Psi_{0,a,a}-\frac 43 a^{3/2}\rho NA^{3/2}$ is linear in $\rho$, the second derivative is
\[
\partial^2_{\rho}\phi_{N,a}=\Big(\partial^2_{\rho}\Psi_{N,a,a}+\partial_{\rho} A_c(\partial_{\rho}\partial_{A}\Psi_{N,a,a}+\partial_{\rho}A_c\partial^2_{A}\Psi_{N,a,a})\Big)|_{A_c}\,.
\]
For $|N|>\lambda^2\gg \lambda$, the only part that matters here is the contribution from $\frac{N}{\lambda}B_L(\rho\lambda A^{3/2})$. Taking into account that the support of the symbol of $V_{N,a}$ in $A$ is a fixed, compact set of $(0,\infty)$, the contribution of $\partial^2_{\rho}\Psi_{N,a,a}$ will be $hN/\lambda$.
The main contribution of $\partial_{\rho}\partial_{A}\Psi_{N,a,a}|_{A_c}$ is also of  size $hN/\lambda$, but it comes with a factor $\partial_{\rho}A_c$, which, from \eqref{eq:146bis}, can be at most $O(1/N)$, since the first three terms in the sum defining $A_c$ do not depend on $\rho$. Eventually we obtain, 
 \begin{equation}
   \label{eq:54}
 \frac 1h  |\partial^{2}_{\rho} \phi_{N,a}| \sim \frac{|N|}{\lambda}\,,
 \end{equation}
 hence the stationary phase in $\rho$ will produce a factor $(\frac{|N|}{\lambda})^{-1/2}$
which is (of interest only in the regime $|N|\geq C \lambda$ and) particularly useful for $\frac{|N|}{\lambda}>\lambda$. 
Let $\sigma_{V,h,a}(\Sigma,S,\rho)$ be obtained from $$\psi(A)p_h(x,y,\theta,a |\theta|^{2/3}A,\sqrt{a}|\theta|^{1/3}\Sigma)\tilde q_h(\theta,a |\theta|^{2/3}A,\sqrt{a}|\theta|^{1/3}S)$$ after applying the stationary phase in both $\vartheta$ and $A$. We note that $\sigma_{V,h,a}$ is independent of $N$, of order zero and has compact support in $\Sigma,S$. We are left with estimating the oscillatory integral $\int e^{\frac ih \phi_{N,a}}\sigma_{V,h,a}d\Sigma dS d\rho$.
  Following the approach in \cite{ILP3} (see also \cite{Annals}), we deal separately with $|N|>\lambda^{1/3}$ and $|N|\leq \lambda^{1/3}$, where we recall that $\lambda=\lambda_a=a^{3/2}/h$.
\subsubsection{Large number of reflections: $|N|>\lambda^{1/3}$}
\begin{lemma}\label{lemestimVN2}
There exists $C$ (independent of $N$) such that,  if $|N|\geq \lambda^{1/3}$,
  \begin{equation}
    \label{eq:64}
\left|      \int e^{\frac i h \phi_{N,a}} \sigma_{V,h,a}(\Sigma,S,\rho) d\Sigma dS d\rho \right|  \leq \frac{C}{\lambda^{2/3}}\,.
  \end{equation}
\end{lemma}
\begin{rem}
The proof will be split in two parts: first, we consider $\lambda^{1/3}\leq |N|\leq \lambda^2$ where we integrate with respect to $S,\Sigma$ and then bound the remaining integral in $\rho$ owning to its compact support; then for $|N|>\lambda^2$ we start with stationary phase in $\rho$, which provides additional decay; then we integrate with respect to $S,\Sigma$ following closely the approach for $\lambda^{1/3}\leq |N|\leq \lambda^2$. 
\end{rem}
\begin{proof}
We compute derivatives of $\phi_{N,a}$ with respect to $\Sigma,S$. Start with $\lambda^{1/3}\leq |N|\leq \lambda^2$ and follow \cite[Proposition 4]{ILP3} (see also \cite[Lemma 2.24]{Annals}, where only the case $N<\lambda^2$ is considered).
Using $\tilde\Phi_{0,a,a}$ as provided in \eqref{eqtildePhiN} for $N=0$ and $\gamma$ replaced by $a$, we compute
\begin{multline}\label{derivSigmaphizero}
\partial_{\Sigma} \phi_{N,a}=\partial_{\Sigma}\tilde\Phi_{0,a,a}|_{{\vartheta_c},A_c,\rho=|\theta|}= a^{3/2}\rho\Big[\Sigma^2+\frac{x}{a}q^{1/3}(\vartheta_{c})-A_c\\
+\frac{x}{a}\frac{\tau_{q}(a A,\vartheta_{c})}{\sqrt{a}} \partial_{\Sigma}A_{\Gamma}\Big(x,y,\frac{\sqrt{a}\Sigma q^{1/3}(\vartheta_{c})}{\tau_{q}(a A,\vartheta_{c})},\frac{\vartheta_{c}}{\tau_{q}(a A,\vartheta_{c})}\Big)
\Big]\\
= a^{3/2}\rho\Big[\Sigma^2+\frac{x}{a}q^{1/3}(\vartheta_{c})-A_c+\frac{x}{a}q^{1/3}({\vartheta_c})\partial_{\Xi}A_{\Gamma}(x,y,\Xi,\frac{\vartheta_{c}}{\tau_{q}(a A,\vartheta_{c})})|_{\Xi=\frac{\sqrt{a}\Sigma q^{1/3}(\vartheta_{c})}{\tau_{q}(a A,\vartheta_{c})}}\Big].
\end{multline}
In the same way we have
\begin{multline}
\partial_{S} \phi_{N,a}=\partial_{S}\tilde\Phi_{0,a,a}|_{{\vartheta_c},A_c,\rho=|\theta|}= -a^{3/2}\rho\Big[S^2+q^{1/3}(\vartheta_{c})-A_c\\
+\frac{\tau_{q}(a A,\vartheta_{c})}{\sqrt{a}} A_{\Gamma}\Big(a,0,\frac{\sqrt{a}S q^{1/3}(\vartheta_{c})}{\tau_{q}(a A,\vartheta_{c})},\frac{\vartheta_{c}}{\tau_{q}(a A,\vartheta_{c})}\Big)
\Big]\\
= -a^{3/2}\rho\Big[S^2+q^{1/3}(\vartheta_{c})-A_c+q^{1/3}({\vartheta_c})\partial_{\Xi}A_{\Gamma}(a,0,\Xi,\frac{\vartheta_{c}}{\tau_{q}(a A,\vartheta_{c})})|_{\Xi=\frac{\sqrt{a}S q^{1/3}(\vartheta_{c})}{\tau_{q}(a A,\vartheta_{c})}}\Big]\,.
\end{multline}
The critical points are such that
\begin{gather}\label{systsecderivSigS}
\Sigma^2+\frac xa q^{1/3}({\vartheta_c})\Big(1+\partial_{\Xi}A_{\Gamma}(x,y,\Xi,\frac{\vartheta_{c}}{\tau_{q}(a A,\vartheta_{c})})|_{\Xi=\frac{\sqrt{a}\Sigma  q^{1/3}(\vartheta_{c})}{\tau_{q}(a A,\vartheta_{c})}}\Big)=A_c,\\
S^2+q^{1/3}({\vartheta_c})\Big(1+\partial_{\Xi}A_{\Gamma}(a,0,\Xi,\frac{\vartheta_{c}}{\tau_{q}(a A,\vartheta_{c})})|_{\Xi=\frac{\sqrt{a}S q^{1/3}(\vartheta_{c})}{\tau_{q}(a A,\vartheta_{c})}}\Big)=A_c\,,
\end{gather}
where $\partial_{\Xi}A_{\Gamma}(x,y,\Xi,\Theta)=\Lp(y,{\vartheta_c})+2\Xi\mu(y,{\vartheta_c})+\mathcal{H}_{j\geq 2}$, $\Lp(0,{\vartheta_c})=0$, and where homogeneous terms of order $j$ come with small factors $a^{j/2}$.
We will prove that, although the determinant of the matrix of second derivatives may vanish, we can still use degenerate stationary phase with critical point of order at most $2$ to conclude.
We compute the second derivative with respect to $\Sigma$ using \eqref{derivSigmaphizero} : 
\begin{equation}\label{secderivSigSig}
  \partial_{\Sigma}^{2} \phi_{N,a}=a^{3/2}\rho\Big[2\Sigma-\partial_{\Sigma}A_c+\frac xa \partial_{\Sigma}\Big(q^{1/3}({\vartheta_c})\partial_{\Xi}A_{\Gamma}(x,y,\Xi,\frac{{\vartheta_c}}
   {\tau_{q}
    (aA_c,{\vartheta_c})})|_{\Xi=\frac{\sqrt{a}\Sigma q^{1/3}(\vartheta_{c})}{\tau_{q}(a A_c,\vartheta_{c})}}\Big)\Big]\,,
\end{equation}
where homogeneous terms of order $j$ in $\partial_{\Xi}A_{\Gamma}$ come with small factors $a^{j/2}$ and 
\begin{multline}\label{formderivXiAGam}
  \partial_{\Sigma}\Big(q^{1/3}({\vartheta_c})\partial_{\Xi}A_{\Gamma}(x,y,\Xi,\frac{{\vartheta_c}}{
    \tau_{q}(aA_c,{\vartheta_c})})|_{\Xi=\frac{\sqrt{a}S q^{1/3}(\vartheta_{c})}{\tau_{q}(a A_c,\vartheta_{c})}}\Big)=\partial_{\Sigma}\Big(
q^{1/3}({\vartheta_c})(\Lp(y,{\vartheta_c})\\+2\mu(y,{\vartheta_c})\frac{\sqrt{a}\Sigma q^{1/3}(\vartheta_{c})}{\tau_{q}(a A_c,\vartheta_{c})}+\mathcal{H}_{j\geq 2})\Big)=\partial_{\Sigma}{\vartheta_c}\nabla_{\vartheta}\Big(\Lp(y,\vartheta)q^{1/3}(\vartheta)+2\sqrt{a}\Sigma\mu(y,\vartheta)q^{2/3}(\vartheta)+O(a)\Big)|_{\vartheta={\vartheta_c}}\\
+2\sqrt{a}\mu(y,{\vartheta_c})q^{2/3}({\vartheta_c})+O(a)\,.
\end{multline}
Using that $\sqrt{a}\lesssim \frac 1N$, $x\lesssim 2a$, we obtain an estimation of the second derivatives of $\phi_{N,a}$
  \begin{align}\label{eq:secderivSTprime}
  \partial^2_{\Sigma}\phi_{N,a} & =a^{3/2}\rho(2\Sigma+O( a^{3/2}/|y|)+O(1/N)),\\
    \partial^2_{S}\phi_{N,a} & =a^{3/2}\rho(2S+O( a^{3/2}/|y|)+O(1/N))\,,\\
      \partial^2_{\Sigma,S}\phi_{N,a} & =O(a^{3/2}/N)\,.
  \end{align}
We rescale variables $(\Sigma,S)=(\lambda^{-1/3} \xx,\lambda^{-1/3}\yy)$, so that we are left with proving
\begin{equation}
  \label{eq:65}
  \left|   \int e^{\frac i h \phi_{N,a}(\lambda^{-1/3}\xx,\lambda^{-1/3}\yy,\rho)} \sigma_{V,h,a}(\lambda^{-1/3}\xx,\lambda^{-1/3}\yy,\rho) \,d\xx d\yy\right|  \leq {C}\,.
\end{equation}
From the compact support of $\sigma_{V,h,a}$ we obviously have, for any multi-index $ \nu$,
\begin{equation}
  \label{eq:66}
|  \partial^{\nu}_{(\xx,\yy)} \sigma_{V,h,a}(\lambda^{-1/3}\xx,\lambda^{-1/3}\yy,\rho)|\leq C_{\nu} (1+|\xx|+|\yy|)^{-|\nu|}\,.
\end{equation}
Let $\Ab_0$ be the main term of $A_c^{1/2}$ defined \eqref{eq:defFbGb}; we define $(\Af,\Bf)$ as follows
\begin{equation}
  \label{eq:70}
-  \Af \lambda^{-2/3}=\frac{x}{a} q^{1/3}({\vartheta_c})(1+\Lp(y,{\vartheta_c}))-\Ab_0^2,\,\,\,\,\, -\Bf\lambda^{-2/3}=q^{1/3}({\vartheta_c})-\Ab_0^2
\end{equation}
and we also write $(\Af,\Bf)=r (\sin \tf,\cos \tf)$, where $r^2=\Af^2+\Bf^2$. In the new variables, since $\frac{\partial \Sigma}{\partial \xx}=\frac{\partial S}{\partial \yy}=\lambda^{-1/3}$ and using $\partial_{\Xi} A_{\Gamma}$ as provided in \eqref{formderivXiAGam}, we find
 \begin{multline}
 \frac 1h  \partial_{\xx}\phi_{N,a}=\frac{a^{3/2}}{h}\lambda^{-1/3}\rho\Big[\lambda^{-2/3}(\xx^{2}-\Af)+2\frac{\lambda^{-1/3}\xx}{2N}\Big(\Ab_0(1-xE_1)+\frac xa (N\sqrt{a})\mu(y,{\vartheta_c})q^{2/3}({\vartheta_c})\Big)\\
 -2\frac{\lambda^{-1/3}\yy}{2N}\Big(\Ab_0(1-aE_2)\Big)+\frac{\lambda^{-1/3}}{N}O(\frac{\yy}{N},\frac{\xx}{N})+O(a)\Big]\\
 = \rho\Big((\xx^2-\Af)+\frac{\lambda^{1/3}}{N} \Big(\xx(\Ab_0(1-xE_1)+O(N\sqrt{a}))-\yy\Ab_0(1-aE_2)+O(\frac{\yy}{N},\frac{\xx}{N}))+O(\lambda^{2/3}a)\Big)\\
=\rho(\xx^{2}-\Af + O(\xx ,\yy )+O(1))\,,
\end{multline}
where the term $O(a)$ in the second comes from the terms homogeneous of order $j\geq 2$ in the term $\partial_{\Xi}A_{\Gamma}(x,y,\Xi,\frac{{\vartheta_c}}{\tau_{q}(aA_c,{\vartheta_c})})|_{\Xi=\frac{\sqrt{a}S q^{1/3}(\vartheta_{c})}{\tau_{q}(a A_c,\vartheta_{c})}}$ (see \eqref{formderivXiAGam}). We have written $O(\lambda^{2/3}a)=O((\lambda^{1/3}/N\times N\sqrt{a})^2)=O(1)$ since ${\lambda^{1/3}}/{N}<1$ and $(Na^{1/2})<1$.
Similarly, derivatives with respect to $\xx$ and $\yy$ are,
\begin{equation}
 \frac 1h \partial_{\xx }\phi_{N,a}  =\rho(\xx ^{2}-\Af +O(\xx ,\yy )+O(1))\,,\quad\quad\frac 1h  \partial_{\yy }\phi_{N,a}  =\rho(\yy ^{2}-\Bf+O(\xx ,\yy )+O(1))\,.
\end{equation}
From \eqref{eq:secderivSTprime}, we obtain, in the new coordinates
\begin{gather}
  \label{eq:74}
 \frac 1h \partial_{\xx}^{2}\phi_{N,a}=\lambda^{1/3} \rho(2\lambda^{-1/3} \xx+O(N^{-1})) =\rho(2\xx+O(1))\,,\\
  \frac 1h \partial_{\yy}^{2}\phi_{N,a}=\lambda^{1/3} \rho(2\lambda^{-1/3} \yy+O(N^{-1})) =\rho(2\yy+O(1))\,,
\end{gather}
where we used $|N|>\lambda^{1/3}$ in the last step together with $|y|\geq c_0 t$ which yields $a^{3/2}/|y|\lesssim a^{3/2}/t\sim a/N$, as $t/\sqrt{a}\sim 4N$  when the phase is stationary in $A$.
Now, going back to \eqref{eq:65}, the integral is bounded for $0\leq r \leq r_{0}$,  for some $r_0>0$, by integration by parts for large $(\xx,\yy)$ (recall that the support of the integrand is now of radius $\lambda^{1/3}$). For $r_{0}< r \lesssim \lambda^{2/3}$, set $(\xx,\yy)=r^{1/2}(\xx',\yy')$, $\phi_{N,a}=r^{3/2} \tilde{\phi}_{N,a}$, $\tilde \sigma_{V,h,a}(\xx',\yy',.)= \sigma_{V,h,a}(r^{1/2}\lambda^{-1/3} \xx',r^{1/2}\lambda^{-1/3} \yy',\rho)$,  and as $r^{1/2}\lambda^{-1/3}$ is bounded, we retain the decay
$|  \partial^{\nu}_{(\xx',\yy')}\tilde\sigma_{V,h,a}(\xx',\yy',.)|\leq C_{\nu} (1+|\xx'|+|\yy'|)^{-|\nu|}$.
It remains to prove
\begin{equation}
  \label{eq:73}
 r |\int e^{i\frac{r^{3/2}}{h}\tilde \phi_{N,a}} \tilde\sigma_{V,h,a} \,d\xx' d\yy' | \leq C \,.
\end{equation}
To begin with, notice that
\begin{align}
  \partial_{\xx' }(\frac{r^{3/2}}{h} \tilde\phi_{N,a}) & = r^{3/2} \rho\Big(\xx'^{2}-\sin \tf + r^{-1/2}O(\xx' ,\yy' )+ r^{-1} O(1)\Big)\,,\\
  \partial_{\yy' }(\frac{r^{3/2}}{h} \tilde\phi_{N,a}) & =r^{3/2} \rho\Big(\yy'^{2}-\cos \tf + r^{-1/2}O(\xx' ,\yy' )+r^{-1 }O(1)\Big)\,.
\end{align}
If $|\sin \tf|\geq 1/100$, then $\frac{r^{3/2}}{h} \tilde\phi_{N,a}$ has two critical points in $\xx'$, namely $\xx'_{\pm}=\pm |\sin \tf|^{1/2}+O(r^{-1/2})$,
 and these critical points are non degenerate as 
$|\partial^{2}_{\xx'} (\frac{1}{h} \tilde\phi_{N,a})|\geq \rho/10 $. By stationary phase, we get
\begin{equation}
  \label{eq:75}
  \int e^{i\frac{r^{3/2}}{h} \tilde\phi_{N,a}} \tilde\sigma_{V,h,a} \,d\xx'd\yy'= (r^{3/2})^{-1/2} (\int  e^{i\frac{r^{3/2}}{h} \tilde\phi_{N,h,a,+}} \tilde\sigma_{V,h,a,+} \,d\yy'+\int  e^{i\frac{r^{3/2}}{h} \tilde\phi_{N,a,-}} \tilde\sigma_{V,h,a,-} \,d\yy')\,,
 \end{equation}
for some symbols $\tilde \sigma_{V,h,a,\pm}$ of order $0$.
Using \eqref{eq:74}, one may check that $|\partial^{3}_{\yy',\yy',\yy'} (\frac{1}{h} \tilde\phi_{N,a})|\geq c>0$, and by degenerate stationary phase (or Van der Corput lemma) we get
\begin{equation}
  \label{eq:76}
|  \int  e^{i\frac{r^{3/2}}{h} \tilde\phi_{N,a,\pm}} \tilde\sigma_{V,h,a,\pm} \,d\yy'| \leq C (r^{3/2})^{-1/3}=Cr^{-1/2}\,,
\end{equation}
and for $r\geq 1$, we get the desired decay, and even an extra $r^{-1/4}$ on the right hand side. If $|\cos \tf|\geq 1/100$, we proceed in the same way, exchanging $\xx' $ and $\yy'$.

Let us now deal with $|N|\geq \lambda^2$. Stationary phase applies in $\rho$ and from \eqref{eq:54},
 \begin{equation}
 \label{eq:54a}
 \frac 1h  |\partial^{2}_{\rho} \phi_{N,a}| \sim \frac{|N|}{\lambda}\,,
 \end{equation}
 hence the stationary phase in $\rho$ yields a factor $(\frac{|N|}{\lambda})^{-1/2}$. In the following we apply exactly the same method as in the previous step; the only difference is that now we have an additional function $\rho_c$ which depends on the variables $\Sigma,S$ and whose derivatives with respect to these variables are small only for $|N|>\lambda^2$ (which explains why we did not perform stationary phase with respect to $\rho$ earlier, for any $|N|>\lambda^{1/3}$). Indeed, from $\partial_{\rho}\phi_{N,a}=\partial_{\rho}\Psi_{N,a,a}|_{A_c}=0$ we get
\begin{equation}\label{eq:eqr}
\Psi_{0,a,a}-\frac 43 a^{3/2}NA_c^{3/2}=Nh\lambda A_c^{3/2}B'_L(\rho\lambda A_c^{3/2})= -\frac{Nh A_c^{3/2}}{\rho^2\lambda A_c^3}(b_1+O(1/\lambda)),\quad b_1\neq 0\,,
\end{equation}
where the term in the left hand side doesn't depend on $\rho$. Since $\rho=|\theta|\sim 1$ and since the support of the symbol in $A_c$ is a given compact set of $(0,\infty)$, we have
\[
\rho^2_c=-{\frac{Nh}{\lambda A_c^{3/2}}(b_1+O(1/\lambda))}\Bigl({\Psi_{0,a,a}(t,x,y,\Sigma,S,1)-\frac 43 a^{3/2}NA_c^{3/2}}\Bigr)^{-1}\,.
\]
Taking the derivatives with respect to $S,\Tprime$ of \eqref{eq:eqr}, with $A_c$ bounded, provides a factor $\lambda^2/N$ which is small in the regime we consider here, $|N|>C\lambda^{2}$ for some $C>1$ sufficiently large (indeed, the terms containing $\Sigma,S$ in \eqref{eq:eqr} come with a factor $a^{3/2}=h\lambda$). Since $\partial_{\Sigma}\rho_c$ and $\partial_{S}\rho_c$ are now sufficiently small, we can follow the same earlier steps to estimate the integral in the remaining variables $\Sigma,S$.
\end{proof}
\subsubsection{Moderately large $0<|N|\leq \lambda^{1/3}$}
In this case the contribution from the integral in $\rho$ is uniformly bounded due to its compact support. According to Remark \ref{rmqBL}, we bring the factor $e^{iNB_L}$ into the symbol and work with the phase $\Psi_{0,a,a}-\frac 43 a^{3/2}\rho NA^{3/2}$, which is linear in $\rho$. Therefore, the critical point $A_c$ satisfies an equation similar to \eqref{eqAcritgen}, but without the factor $1-\frac 34 B_L'(\rho\lambda A^{3/2})$, which leads to an explicit expression of the form \eqref{eq:146bis} where $f_0=0$. We start in the same way as in the proof of Lemma \ref{lemestimVN2}, replacing $\lambda^{1/3}$ by $|N|$: we rescale variables with $(\Sigma,S)=(\frac{\xx}{|N|},\frac{\yy}{|N|})$. 
\begin{lemma}
  For $|N|\geq 1$, set $\Lambda=\lambda |N|^{-3}$ and assume $\Lambda \geq 1$, then we have
  \begin{equation}
    \label{eq:77}
    \left| \int e^{\frac i h \phi_{N,a}} \sigma_{V,h,a} (\frac \xx {|N|},\frac \yy {|N|},\rho) \,d\xx d\yy\right | \leq C \Lambda^{-3/4}\,.
  \end{equation}
\end{lemma}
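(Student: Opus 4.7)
The plan is to mirror the argument of the previous lemma, but with the rescaling adapted so that $\Lambda = \lambda|N|^{-3}$ plays the role of large parameter in place of $\lambda$. First, I would substitute $\Srond = \yy/|N|$, $\Trondprime = -\xx/|N|$ into the representations \eqref{eq:56}--\eqref{eq:62} of $F_N$ at the critical points in $\omega$ and $\Arond$, and verify that the phase takes the form
\[
\tfrac{1}{h}F_N \;=\; r_c \Lambda\Bigl(\tfrac{\yy^{3}}{3}-\pf\,\yy-\tfrac{\xx^{3}}{3}+\qf\,\xx\Bigr) + \text{corrections},
\]
where the bracketed expression is decoupled in $\xx$ and $\yy$, and the corrections---arising from the higher-order Taylor terms of $\Arond_c$ and $\omega_c$ as functions of $(\Srond,\Trondprime)$---are uniformly controlled by $|N|^{-1}$, $a^{1/2}$, and $\Lambda^{-2}$ on the support of the integrand, given $|N|\geq 2$, $\Lambda\geq 1$, and $a$ small.

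Setting $\rf := \sqrt{\pf^{\,2}+\qf^{\,2}}$ and writing $(\pf,\qf) = \rf(\cos\tf,\sin\tf)$, I would split into two regimes. In the large regime $\rf \geq \rf_{*}$, I would rescale further, $(\xx,\yy) = \rf^{1/2}(\xx',\yy')$, so that the effective large parameter becomes $\mu := \Lambda \rf^{3/2}$ and the phase takes the form $\mu G'(\xx',\yy')$ with $G'$ a perturbation of $\yy'^{3}/3 - \cos\tf\,\yy' - \xx'^{3}/3 + \sin\tf\,\xx'$. Assuming $|\cos\tf|\geq\varepsilon_{0}$ (the complementary case being symmetric), two non-degenerate critical points of $G'$ in $\yy'$ at $\pm|\cos\tf|^{1/2}$ contribute a factor $\mu^{-1/2}$ by stationary phase, while the residual $\xx'$-integral has at worst a cubic critical point at $\xx'=0$ (when $\sin\tf=0$) and contributes $\mu^{-1/3}$ by van der Corput. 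Including the Jacobian $\rf$, this yields
\[
\rf\cdot \mu^{-5/6} \;=\; \rf^{-1/4}\Lambda^{-5/6} \;\leq\; C\,\Lambda^{-3/4},
\]
since $\rf \geq \rf_{*}$ and $\Lambda \geq 1$.

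For the small regime $\rf \leq \rf_{*}$, I would integrate by parts in $(\xx,\yy)$ for $|(\xx,\yy)|$ large---where $|\nabla_{\xx,\yy}(F_N/h)| \geq c\, r_c \Lambda(\xx^{2}+\yy^{2})$---to reduce to a bounded region, and then apply a refined one-dimensional Airy-type estimate in each factor, using the envelope bound $|\Ai(-z)| \leq C(1+|z|)^{-1/4}$ together with a dyadic subpartition of the $\rf$-range, to recover the extra gain beyond the naive cubic bound $\Lambda^{-2/3}$.

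The main obstacle is the uniform control of the non-separable perturbations over the whole range $2\leq |N|\leq \lambda^{1/3}$: these include the coupling induced by the dependence of $\omega_c, \Arond_c$ on both $\Srond$ and $\Trondprime$, and subtleties arising from the implicit definition of $\Arond_c$ through \eqref{eq:58bis} involving $B'(\lambda\Arond^{3/2})$. The earlier bounds $\partial_{\Srond}\omega_c, \partial_{\Trondprime}\omega_c = O(a)$ and the mixed-derivative estimate $\partial^{2}_{\Srond\Trondprime} F_N = O(r_c\Lambda/|N|)$ established in \eqref{aaa3} are exactly what is needed to ensure that these cross-terms remain subordinate to the diagonal contributions, so that the separated stationary-phase analysis proceeds unchanged to leading order.
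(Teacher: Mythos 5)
Your proposal rests on a decoupling of the phase that does not actually hold. You write the phase at the critical points in $(\omega,\Arond)$ as $r_c\Lambda(\yy^3/3-\pf\yy-\xx^3/3+\qf\xx)$ plus corrections controlled by $|N|^{-1}$, $a^{1/2}$, $\Lambda^{-2}$, and then run two separate one-dimensional stationary-phase/van der Corput arguments in $\yy$ and $\xx$. But the explicit gradient formulas \eqref{eq:157}--\eqref{eq:158} and the second derivatives \eqref{eq:160}--\eqref{eq:162} show genuine cross-terms that are \emph{not} small in the range $2\le|N|\le\lambda^{1/3}$: the quadratic coupling $(b_0\yy+c_0\xx)^2/(4N^2)$ contributes at order $N^{-2}$ (hence $1/4$ at $N=2$), and the linear cross-terms $\tfrac12(\beta_2\yy+\gamma_2\xx)$ carry coefficients of size $T/N\sim1$ by \eqref{eq:143}. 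Concretely, from \eqref{aaa3} one has $\partial^2_{\Srond\Trondprime}F_N = a^{3/2}r\,O(N^{-1})$, and after the rescaling $\Srond=-\yy/|N|$, $\Trondprime=\xx/|N|$ and division by $h\Lambda=a^{3/2}r/N^3$ the mixed derivative becomes $\partial^2_{\xx\yy}G_a = O(1)$ --- the same order as the diagonal entries $\partial^2_{\yy}G_a = r(-2\yy+O(1))$. So the Hessian is fully coupled; the stationary set is not close to a product of one-dimensional critical points, and iterated 1D stationary phase in $\yy$ then $\xx$ is not justified.

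This is precisely why the paper's proof does not separate variables. It computes the full Hessian determinant $H_a$ in \eqref{eq:80}, \eqref{eq:163} (which contains the inherently coupled term $(\xx+\yy)^2\lambda_0^2 N^{-2}$), and then invokes the genuinely two-dimensional degenerate stationary-phase result, Lemma 2.21 of \cite{Annals}: part (a) for the large-$\rf$ regime (analysis of the fold curve $\Gamma=\{H_a'=0\}$), and part (b) for the small-$\rf$ regime, where one must verify the normal-form non-degeneracy conditions $|c|+|d|\ge c_0$ and $d^2+|c^2-2e|\ge c_0^2/2$ for the coupled cubic expansion \eqref{eq:166}. Your proposed replacement --- Airy envelope bounds applied factor by factor after assuming decoupling --- does not engage with any of this and would not produce the $\Lambda^{-3/4}$ bound once the cross-terms are accounted for. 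The large-$\rf$ exponent you write down ($\rf^{-1/4}\Lambda^{-5/6}\le C\Lambda^{-3/4}$) does match the paper's, but the mechanism by which you obtain it is invalid; and the small-$\rf$ regime, which is where the $-3/4$ exponent is actually sharp (and where optimality in Theorem \ref{disperoptimal} comes from), is entirely missing.
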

\begin{proof}
  Here we need all the first three terms in the formula \eqref{eq:146bis}. In our new variables, $\Ab_0$, as defined in \eqref{eq:defFbGb}, does not change and $\Ab_1=\frac{1}{2N^{2}}(\xx(1-xE_1)-\yy(1-aE_2))$, hence, using \eqref{eq:146bis}
  \begin{align}
\nonumber
 \frac 1h  \partial_{\xx}\phi_{N,a} & \! \begin{multlined}[t] = \Lambda r\Big(\xx^{2}+N^2(\frac xa q^{1/3}({\vartheta_c})(1+\Lp(y,{\vartheta_c}))-\Ab_0^2))+\Ab_0\Big(\xx(1-xE_1)-\yy(1-aE_2)\Big)\\+2N\sqrt{a}\xx\frac xa\mu(y,{\vartheta_c})q^{2/3}({\vartheta_c}) -\frac{1}{4N^2}(\xx-\yy)^2+O(\frac{\yy^2}{N^2},\frac{\xx^2}{N^2})+N^2O(a)\Big),
\end{multlined}\\
\nonumber
\frac 1h  \partial_{\yy}\phi_{N,a} & \! \begin{multlined}[t] = -\Lambda r\Big(\yy^{2}+N^2(q^{1/3}({\vartheta_c})-\Ab_0^2)+\Ab_0\Big(\xx(1-xE_1)-\yy(1-aE_2)\Big)\\-2N\sqrt{a}\yy\mu(0,{\vartheta_c})q^{2/3}({\vartheta_c})-\frac{1}{4N^2}(\xx-\yy)^2+O(\frac{\yy^2}{N^2},\frac{\xx^2}{N^2})+N^2O(a)\Big),
\end{multlined}
      \end{align}
where $N^2O(a)$ in the last two formulas are homogeneous terms of order $j\geq 2$ in the expression of $\partial_{\Xi}A_{\Gamma}$.
Next, we compute the second derivatives : notice that \eqref{derivomegac} yields $|\nabla_{\Sigma,S}{\vartheta_c}|\sim O(a^{3/2}/|y|)\sim O(a/N)$ since $|y|\geq c_0 t$ and $t/\sqrt{a}\sim 4N$.
Using \eqref{secderivSigSig} and \eqref{eq:defFbGb} we find
\begin{align*}
   \frac 1h  \partial^2_{\xx}\phi_{N,a} & =\Lambda \rho\Big(2\xx+2N\sqrt{a}\frac xa \mu(y,{\vartheta_c})q^{2/3}({\vartheta_c}) +\Ab_0(1-xE_1)-\frac{1}{2N^2}(\xx-\yy)+O(\frac{\yy^2}{N^2},\frac{\xx}{N^2})\Big),\\
  \frac 1h  \partial^2_{\yy}\phi_{N,a} & \begin{multlined}[t] =-\Lambda \rho\Big(2\yy+2N\sqrt{a}\mu(0,{\vartheta_c})q^{2/3}({\vartheta_c})(1+O(\sqrt{a}))+\Ab_0(1-aE_2)-\frac{1}{2N^2}(\xx-\yy)\\
    {}+O(\frac{\yy}{N^2},\frac{\xx^2}{N^2}) \Big)
  \end{multlined}\\
    \frac 1h  \partial_{\xx}\partial_{\yy}\phi_{N,a} & =\Lambda \rho\Big(\Ab_0-\frac{(\xx-\yy)}{2N^2}+O(\frac{\yy}{N^2},\frac{\xx}{N^2}) \Big)\,.
 \end{align*}
In fact, we infer that we can write
 \begin{gather}\label{eq:derivePhiN2}
 \frac 1h  \partial_{\xx}\phi_{N,a}=\Lambda \rho\Big(\xx^{2}-\Af+(b \xx-d \yy)-\frac{1}{4N^2}(\xx-\yy)^2+O(\frac{\xx^2}{N^2},\frac{\yy^2}{N^2})\Big),\\
 \frac 1h  \partial_{\yy}\phi_{N,a}=-\Lambda \rho\Big(\yy^{2}-\Bf+(d \xx-c\yy)-\frac{1}{4N^2}(\xx-\yy)^2+O(\frac{\xx^2}{N^2},\frac{\yy^2}{N^2}) \Big),
\end{gather}
where $(\Af,\Bf)$ and $b$, $c$, $d$ are defined as follows
\begin{gather*}
  -\frac{\Af}{N^{2}}=\frac xa q^{1/3}({\vartheta_c})(1+\Lp(y,{\vartheta_c}))-\Ab_0^2 \,,\quad 
  -\frac{\Bf}{N^{2}}  =q^{1/3}({\vartheta_c})-\Ab_0^2\,,\\
  \begin{aligned}
    b  & =\Ab_0(1+O(x))+N\sqrt{a}\frac xa \mu(y,{\vartheta_c})q^{2/3}({\vartheta_c})(1+O(\sqrt{a})) \,,\\
c & =\Ab_0(1+O(a))-N\sqrt{a}\mu(0,{\vartheta_c})q^{2/3}({\vartheta_c})(1+O(\sqrt{a}))\\
d & =\Ab_0(1+O(x;a))\,.  \end{aligned}
\end{gather*}
This follows easily from writing the Taylor development for the first order derivatives of $\phi_{N,a}$ with respect to $\xx,\yy$ and use that $\partial_{\xx}\partial_{\yy}\phi_{N,a}=\partial_{\yy}\partial_{\xx}\phi_{N,a}$. We also have
\begin{align*}
  \label{eq:160bis}
   \frac 1h  \partial^2_{\xx}\phi_{N,a} & =\Lambda \rho\Big(2\xx+b-\frac{1}{2N^2}(\xx-\yy)+O(\frac{\yy^2}{N^2},\frac{\xx}{N^2})\Big)\,,\\
 \frac 1h  \partial^2_{\yy}\phi_{N,a} & =-\Lambda \rho\Big(2\yy-c+\frac{1}{2N^2}(\xx-\yy)+O(\frac{\yy}{N^2},\frac{\xx^2}{N^2}) \Big)\,,\\
  \frac 1h  \partial_{\xx}\partial_{\yy}\phi_{N,a} & =-\Lambda \rho\Big(d-\frac{(\xx-\yy)}{2N^2}+O(\frac{\yy}{N^2},\frac{\xx}{N^2}) \Big)\,.
 \end{align*}
Let $\Lambda \rho M(\xx,\yy)$ denote the matrix of second order derivatives (i.e. the Hessian of $\frac 1h \phi_{N,a}$); we compute its determinant
\begin{equation}
  \label{eq:80}
  \det M(\xx,\yy)=-\Lambda ^2 \rho^{2}
  \Big(4\xx\yy+2(b\yy-c\xx)+d^2-bc+\frac{(\xx-\yy)^{2}}{N^{2}}+O(\frac{\xx}{N^2},\frac{\yy}{N^2})\Big)\,.
\end{equation}
Set  $(\Af,\Bf)=r(\sin \tf,\cos \tf)$. We will again deal separately with $r\geq r_{0}$, for some large $r_{0}>1$, and then $r< r_{0}$. We start with large $r\geq r_0>1$ and we prove
\begin{equation}
  \label{eq:81}
  \left|  \int e^{\frac ih \phi_{N,a}} \sigma_{V,h,a} (\frac \xx {|N|},\frac \yy {|N|}) \,d\xx d\yy\right | \leq C \Lambda^{-5/6}\,,
\end{equation}
which has better decay  than required. First, observe that from the hypothesis on $r$ and \eqref{eq:derivePhiN2}, we may integrate by parts in $(\xx,\yy)$ in a region $|(\xx,\yy)|\leq cr^{1/2}$ with $c$ small enough: for any $k\geq 1$,
\begin{equation}
  \label{eq:82}
    \left|  \int_{|(\xx,\yy)\leq c r^{1/2}} e^{\frac ih \phi_{N,a}} \sigma_{V,h,a} (\frac \xx {|N|},\frac \yy {|N|}) \,d\xx d\yy\right | \leq C_{k} (r\Lambda)^{-k}\,.
\end{equation}
Thus we are left with the region $|(\xx,\yy)|\geq c r^{1/2}$. We rescale again $(\xx,\yy)=r^{1/2}(\xx',\yy')$ and, in the new variables, we prove the following
\begin{equation}
  \label{eq:83}
      \left| r  \int_{|(\xx',\yy')\geq c } e^{\frac ih r^{3/2} \tilde\phi_{N,a}} \tilde\sigma_{V,h,a}(\xx',\yy',.) \,d\xx' d\yy' \right | \leq C \Lambda^{-5/6}\,,
\end{equation}
where we set, like in the previous section, $\phi_{N,a}(\xx,\yy,.)=r^{3/2}\tilde\phi_{N,a}(\xx',\yy',.)$ and $\tilde\sigma_{V,h,a}(\xx',\yy')=\sigma_{V,h,a}(r^{1/2}\xx/N,r^{1/2}\yy/N,.)$.
Taking $b=r^{1/2}b'$, $d=r^{1/2}d'$ and $c=r^{1/2}c'$, we have
\begin{align}
  \partial_{\xx'} \Big(\frac{r^{3/2}}{h}\tilde\phi_{N,a}\Big)& = \Lambda \rho r^{3/2} \Big( \xx'^{2}-\sin\tf+2(b' \xx'-d'\yy')-\frac{(\xx'-\yy')^{2}}{4N^{2}}+O(1/N^2)\Big)\\
  \partial_{\yy'} \Big(\frac{r^{3/2}}{h}\tilde\phi_{N,a}\Big)& = -\Lambda \rho r^{3/2} \Big( \yy'^{2}-\cos \tf+2(d' \xx'-c'\yy')-\frac{(\xx'-\yy')^{2}}{4N^{2}}+O(1/N^2)\Big)\,.
\end{align}
From these expressions of $\nabla_{(\xx',\yy')}(\frac{r^{3/2}}{h}\tilde\phi_{N,a})$, if $(\xx',\yy')$
is large, we get decay by integrations by parts, as $|N|\geq 2$. Therefore, we
can assume that $c\leq |(\xx',\yy')|\leq C$ where $C$ is a large, fixed
constant (we avoid a neighbourhood of $(0,0)$ since we have assumed $|(\xx,\yy)|>cr^{1/2}$).
If $|N|\geq 2$, $a$ is small and $r_{0}$ is large, the set $\{\det M(\xx,\yy)=0\}$ is a smooth curve which does not intersect the origin. Away from this set, we may use stationary phase, which will provide $r(r^{3/2}\Lambda)^{-1}$ decay on the left of \eqref{eq:83}. In the region close to $\{\det M(r^{1/2}\xx',r^{1/2}\yy')=0\}$, we can apply \cite[Lemma 2.21 (a)]{Annals}, i.e. degenerate stationary phase along a curve, to obtain
\begin{equation}
  \label{eq:84}
        \left|  \int_{c\leq |(\xx',\yy')\leq C } e^{\frac ih r^{3/2}\tilde\phi_{N,a}} \sigma_{V,h,a} (\xx',\yy',.) \,d\xx' d\yy' \right | \leq C (r^{3/2}\Lambda)^{-5/6}\,,
\end{equation}
and therefore we get \eqref{eq:83} as the extra factor $r$ on the lefthand side is canceled by the $r^{-5/4}$ on the righthand side (recall $r\geq r_{0}>1$).

We can now focus on $r=|(\Af,\Bf)|\leq r_0$. Notice that we may further restrict support again, this time to $|(\xx,\yy)|\leq 2r^{1/2}\leq 2r^{1/2}_0$ as we get $\Lambda^{-\infty}$ decay by integration by parts if $|(\xx,\yy)|$ is larger than this value, as we did before. We now aim at proving
\begin{equation}
  \label{eq:85}
  \left | \int_{|(\xx,\yy)|\leq 2r^{1/2}} e^{\frac ih \phi_{N,a}} \sigma_{V,h,a}(\frac{\xx}{|N|},\frac{\yy}{|N|}) \,d\xx d\yy \right| \leq C \Lambda^{-3/4}\,,
\end{equation}
for which we will apply
\cite[Lemma 2.21 (b)]{Annals}. The determinant of the Hessian $M(\xx,\yy)$ of
$\frac 1h \phi_{N,a}$ is given in \eqref{eq:80} and the set $\{\det M(\xx,\yy)=0\}$ is a smooth curve in
$\{|(\xx,\yy)|\leq 2r^{1/2}\}$, at least for small $a$: it will be close to $(2\xx+b)(2\yy-c)+d^2=-(\xx-\yy)^{2}$ when $|N|=1$ and $(2\xx+b)(2\yy-c)+d^2=-(\xx-\yy)^{2}/N^{2}$ for
$|N|\geq 2$ (and using $b\sim c\sim d\sim \Ab_0(1+O(a))$, we see that it is close to the hyperbola $4\xx\yy+2\Ab_0(\xx-\yy)=-(\xx-\yy)^2/N^2$ for $|N|>1$ and to the parabola $2\Ab_0(\xx-\yy)=-(\xx+\yy)^2$ when $|N|=1$). 

Let $(\xx_{0},\yy_{0})$ such that $|(\xx_0,\yy_0)|\leq 2r^{1/2}$: if $\det M(\xx_{0},\yy_0)\neq 0$, then
the usual stationary phase applies, unless we are in the special
condition where \cite[Lemma 2.21]{Annals} takes over and we recall it now: let  $H(\xi)$ be a smooth function defined in a neighborhood of $(0,0)$ in $\mathbb{R}^2$, such that $H(0)=0$ and $\nabla H(0)=0$.
We assume that the Hessian $H''$ satisfies $\text{rank} (H''(0))= 1$ and $\nabla \det (H'')(0)\not= 0$.
Then $\det (H'')(q,p)=0$ defines a smooth curve $\mathcal C$ near $0\in \R^2$ with $0\in \mathcal C$.
Let $s\rightarrow \xi(s)$ be a smooth parametrization of $\mathcal C$, with $\xi(0)=0$, and define the curve $\Xi(s):=H'(\xi(s))$ in $\R^2$.
\begin{lemma}\label{lemA}(\cite[Lemma 2.21]{Annals})
Let $K=\{\xi \in \mathbb{R}^2,  |\xi|\leq R_0\}$ , and $a(\xi,\Lambda)$  a classical symbol of 
order $0$ in $\Lambda\geq 1$ with $a(\xi,\Lambda)=0$ for $\xi\notin K$.
Set for $(P,Q)\in \mathbb{R}^2$ close to $0$
\begin{equation}\label{gls1}
I(\cdot,\Lambda): = \int e^{i\Lambda (<\xi,\cdot>-H(\xi))}a(\xi,\Lambda)d\xi\,.
\end{equation}
Then for $R_0>0$ small enough, the following holds true:
\begin{itemize}
\item[(a)] If $\Xi'(0)\not= 0$, there exists $C$ such that for all $(P,Q)$ close to $0$, $| I(\cdot,\Lambda) | \leq C\Lambda^{-5/6}$.
\item[(b)] If $\Xi'(0)= 0$ and $\Xi''(0)\not= 0$
there exists $C$ such that for all $\cdot$ close to $0$, $| I(\cdot,\Lambda) | \leq C\Lambda^{-3/4}$.
Moreover, if $a$ is elliptic at $\xi=0$, there exists $C'$ such that $| I((0,0),\Lambda) | \geq C'\Lambda^{-3/4}$.
\end{itemize}
\end{lemma}
Let $(\xx_0,\yy_0)$ be such $|(\xx_0,\yy_0)|\leq 2 r_0^{1/2}$ and $\det M(\xx_{0},\yy_0)= 0$.
For $(\xx,\yy)$ near $(\xx_0,\yy_0)$, $|(\xx,\yy)|\leq 2r^{1/2}_0$ let $\xi=(\xx-\xx_0,\yy-\yy_0)$ and let $R_0=4r_0^{1/2}$, then $|\xi|\leq R_0$.
We set
\[
H(\xi)=\Phi_N(\xx,\yy)-\Phi_N(\xx_{0},\yy_0)-\nabla \phi_{N,a}(\xx_{0},\yy_0)\cdot \xi, \quad (\xx,\yy)=(\xx_0,\yy_0)+\xi.
\]
We see that $H(0)=0$, $H'(0)=0$ and $H''(0)=\nabla^{2}
\phi_{N,a}(\xx_{0},\yy_0)=M(\xx_0,\yy_0)$. The matrix $M$ has two
  eigenvalues, $0$ with normalized eigenvector
  $v(\xx_{0},\yy_0)$ and $\lambda_1=\text{tr} (M(\xx_{0},\yy_0))$ with
 normalized eigenvector $u(\xx_{0},\yy_0)$. Let $Q$ be the
  matrix formed with column vectors $u$ and $v$, then $^{t} Q
  M(\xx_{0},\yy_0) Q=\text{diag}(\lambda_{1}/2,0)$, and $\lambda_{1}\neq 0$
  as the rank of $M(\xx_{0},\yy_0)\geq 1$ (thanks to $|N|\geq 2$ and
  $(t,x,y)$ close to $0$). Using \eqref{eq:80}, a simple computation yields
  \begin{equation}
    |\nabla \text{det} H''(0)|^{2}=16\Big(\xx_0+b/2-\frac{(\xx_0-\yy_0)}{2N^2}\Big)^2+16\Big(\yy_0-c/2+\frac{(\xx_0-\yy_0)}{2N^2}\Big)^2+O(\frac{\xx_0}{N^2},\frac{\yy_0}{N^2}).
  \end{equation}
We need to prove that there exists a positive constant $C_0>0$ such that 
$| \nabla \text{det} H''(0)|^{2}\geq C_0>0$. Since $b=\Ab_0(1+O(a))+O(a^{1/2}N)$, $c=\Ab_0(1+O(a))+O(a^{1/2}N)$ it follows that for $(\xx_0,\yy_0)\in \{M(\xx,\yy)=0\}$ we can write
  $|\nabla \text{det} H''(0)|^{2}=8\Ab_0^2+16(\xx_0-\yy_0)^2(1-\frac{1}{2N^2})+O(a^{1/2}N)$,
 provided $a^{1/2}N$ is small
enough and therefore $ |\nabla \text{det} H''(0)|\geq C_0>0$ for some positive constant $C_0$.
This is enough to apply Lemma \ref{lemA} (case [b]) and get the desired bounds.
When $N=1$, one may inspect the previous proofs to check that, knowing from support conditions that $(\xx,\yy)$ is bounded, for large $(\Af,\Bf)$ integrations by parts provide decay. On the other hand, in the range $(\Af,\Bf)$ bounded, one may proceed as before, using Lemma \ref{lemA}.
\end{proof}
We moreover remark that the last statement in Lemma \ref{lemA}, together with ellipticity of the symbol $\sigma_{V,h,a}$, provides, at fixed $\theta\in \mathbb{S}^{d-2}$, a sequence $(t_{N},x_{N},y_{N})$ where the bound \eqref{eq:77} saturates, which is exactly at the swallowtail singularity in space-time. This is a key point in proving Theorem \ref{disperoptimal}: for now, we have $V_{N}(t_{N},x_{N},y_{N}) \sim \|V_{N}\|_{{\infty}}$, and at $(t_{N},x_{N},y_{N})$, we have $\Af\sim \Bf\sim 0$, i.e. $\mathbb{A}_{0}\sim q^{1/6}(\theta)$, implying $t_{N}\sim 4N q^{-1/2}(y/|y|)\sqrt a$, $x_{N}\sim a$ and $y_{N}/|y_{N}\sim \vartheta_{c}\sim \theta$. Moreover, at $(t_{M},x_{N},y_{N})$ with $M\neq N$ and $|M-N|\leq M_{0}$, $\Bf\sim C N$ and $|V_{N}(t_{M},x_{N},y_{N})|$ decays much faster ($\Lambda^{-5/6}$ rather than $\Lambda^{-3/4}$). Therefore when summing over $M$ at fixed $N$, only $V_{N}$ contributes to saturating the bound and Theorem \ref{disperoptimal} holds.
\subsubsection{The sum over $2\leq N\lesssim \sqrt{a}$: completing the proof of Proposition \ref{proptang}}
We now prove \eqref{eq:37ter}.
For $N\not\in \Nrond_{1}(t,x,y)$, Proposition \ref{propcardoutN1} provides an $O_{C^{\infty}}(h^{\infty})$ contribution.  Recall that if $(\theta=(\rho,\vartheta),A,\Sigma,S)$ is a critical point
  in the phase integral defining $V_{N,a}$, then $t/(4\sqrt{a}|N|)\sim
  A^{1/2}$ and on the support of the symbol $\psi(A)$ we have $A\sim 1$. Set $\sharp N=|\Nrond_{1}(t,x,y)|$. If $\sharp N\leq 2C$, the sum reduces to a finite sum, discarding all $N\not\in \Nrond_{1}(x,y,t)$. For those $N\in \Nrond_{1}(t,x,y)$, if $|N|\leq \lambda^{1/3}$, then, collecting \eqref{eq:46} (stationary phase in $\vartheta$), \eqref{eq:169} (stationary phase in $A$) and \eqref{eq:77} (degenerate stationary phase in $(\Sigma,S)$),
  \begin{equation}
    \label{eq:87der}
    |V_{N,a}(t,x,y)|\lesssim \frac {1}{h^{d}} \left(\frac h t\right)^{\frac{d-2}2} \frac{a^{2}} h \frac{1}{ |N|^{1/2} \lambda^{1/2}} |N|^{1/4} \lambda^{-3/4}
  \end{equation}
from which, with $t\sim 4a^{1/2} N$ we get the desired result:
\begin{equation}
  \label{eq:88der}
      |V_{N,a}(t,x,y)|\lesssim \frac {1}{h^{d}} \left(\frac h t\right)^{\frac{d-2}2}  \frac{a^{1/8} h^{1/4}}{ |N|^{1/4}}\leq \frac {C_{0}}{h^{d}} \left(\frac h t\right)^{\frac{d-2}2}  \frac{a^{1/4} h^{1/4}}{ t^{1/4}}\,.
\end{equation}
If $|N|\geq \lambda^{1/3}$, we collect the same bounds but with \eqref{eq:77} replaced by \eqref{eq:64}:
  \begin{equation}
    \label{eq:89der}
        |V_{N,a}(t,x,y)|\lesssim \frac {1}{h^{d}} \left(\frac h t\right)^{\frac{d-2}2}  \frac{a^{2}} h \frac 1 {|N|^{1/2} \lambda^{1/2}}\frac 1 {\lambda^{2/3}}
  \end{equation}
and one easily checks that $a^{1/2} N^{-1/2}\lambda^{-1/6}\leq a^{1/8}
h^{1/4}N^{-1/4}$ is equivalent to $|N|\geq \lambda^{1/3}$ (using $1/N\leq \lambda^{-1/3}$ we get $h^{1/3}$ instead of $a^{1/8} h^{1/4}/|N|^{1/4}$.) Moreover, if only a finite number of $V_{N,a}(t,x,y)$ contributes, we have $a> h^{4/7}$.
We proceed with large $\sharp N(\geq 2C)$: as $\sharp
N\leq C+C \frac{t}{\sqrt{a} \lambda^{2}}$, we have $t\geq c \sqrt{a}\lambda^{2}$. We also have $|N|\sim t/(4\sqrt{a})$ and from $t \lesssim 1$, we have $a\lesssim h^{4/7}$.
 For those $N$, 
\begin{equation}
  \label{eq:90der}
\left|  \sum_{N\in \Nrond_{1}(t,x,y),|N|\sim t/\sqrt{a}} V_{N,a} \right| \lesssim  \frac {1}{h^{d}} \left(\frac h t\right)^{\frac{d-2}2} a^{1/2} \lambda^{1/3} \frac 1 {|N|} \sharp N  \lesssim \frac {1}{h^{d}} \left(\frac h t\right)^{\frac{d-2}2} h^{1/3} \lambda^{-4/3}\,,
\end{equation}
and one checks that $h^{1/3} \lambda^{-4/3} \leq h^{1/3}$ for $a\gtrsim h^{2/3}$. This completes the proof of Proposition \ref{proptang}.
\subsection{The transverse part $4a\leq \gamma\ll1$} 
We go back to $\Psi_{N,a,\gamma}(t,x,y,\Sigma,S,A,\rho)$, which is the critical value of $\tilde\Phi_{N,a,\gamma}$ after the stationary phase in $\vartheta$: when $4a\leq \gamma$ we start by applying stationary phase in $S$, as in the proof of Proposition \ref{propcardoutN1}. 
The phase has two distinct saddle points $S_{\pm}$, 
\begin{equation}\label{critScpm}
S_{\pm}^{2}+ \frac{a}{\gamma}q^{1/3}(\vartheta_{c})\Big(1+\partial_{\Xi}\Big(A_{\Gamma}(a,0,\Xi,\frac{\vartheta_{c}}{\tau_{q}(\gamma A,\vartheta_{c})})\Big)|_{\Xi=\frac{\sqrt{\gamma}S_{\pm} q^{1/3}(\vartheta_{c})}{\tau_{q}(\gamma A,\vartheta_{c})}}\Big)=A.
\end{equation}
Using \eqref{derivomegac}, ${\vartheta_c}$ (function of $\Sigma,S,A$) does not affect the second derivative in $S$ significantly and we get $\partial^2_{S}\Psi_{N,a,\gamma}|_{S_{\pm}}\sim 2$. Moreover, the critical points $S_{\pm}$ depend on $\Sigma$ only through ${\vartheta_c}$ and we have $S_{\pm}=S_0\pm\sqrt{A-O(a/\gamma)}$, where $S_0$ is the unique solution to $\partial^2_{S}\Psi_{N,a,\gamma}(t,x,y,\Sigma,S,A,\rho)=0$ and it satisfies $S_0=O(a/\sqrt{\gamma})$ (indeed, from Corollary \ref{corUpsi}, the unique solution $s_0$ to $\partial^2_{s}\tilde\Phi_{N,a,\gamma}$ satisfies $s_0=O(a)$; making the change of variables $s=\sqrt{\gamma}\rho^{1/3}S$ gives $S_0=O(a/\sqrt{\gamma})$). We are left with an integral with respect to $A,\Sigma,\rho$ and we pick a factor $\lambda_{\gamma}^{1/2}$.

 When $1\leq |N|\lesssim \lambda_{\gamma}^2$, we ignore $\rho$ : we first perform stationary phase with respect to $A$, and then prove that the remaining phase has critical points that may be degenerate of order at most two in $\Sigma$. For each $S_{\pm}$, the critical point with respect to $A$, denoted $A_{\pm}$, is such that, with $\tau=\tau_{q}(\gamma A,\vartheta_{c})$,
\begin{multline}\label{AccritScpm}
2N A^{1/2}(1-\frac 34 B_L'(\rho\lambda_{\gamma}A^{3/2}))=\frac{q^{2/3}({\vartheta_c})\Big(t-B_2(y,{\vartheta_c})+\sum_{k\geq 2}\partial_{\tau}\Big(\frac{(1-\tau)^k}{\tau^{k-1}}\Big)B_{2k}(y,{\vartheta_c})\Big)}{2\sqrt{\gamma}\sqrt{1+\gamma Aq^{2/3}({\vartheta_c})}}\\{}-\Sigma+S_{\pm}
+\frac{{\gamma}q^{2/3}({\vartheta_c})}{\sqrt{1+\gamma Aq^{2/3}({\vartheta_c})}}\Big[\frac{x}{\gamma^{3/2}}\partial_{\tau}\Big(\tau A_{\Gamma}(x,y,\sqrt{\gamma}\Sigma q^{1/3}({\vartheta_c})/\tau,{\vartheta_c}/\tau)\Big)\\
{}-\frac{a}{\gamma^{3/2}}\partial_{\tau}\Big(\tau A_{\Gamma}(a,0,\sqrt{\gamma}S_{\pm} q^{1/3}({\vartheta_c})/\tau,{\vartheta_c}/\tau)\Big)\Big].
\end{multline}
The second derivative with respect to $A$ behaves like $N/\sqrt{A}$ and with $A\sim 1$ on the support of the symbol $\psi$, stationary phase provides a factor $\lambda_{\gamma}^{-1/2}|N|^{-1/2}$.

 When $|N|>\lambda_{\gamma}^2$ we apply stationary phase in $A$ just like we did, and then stationary phase in $\rho$. We then prove that the remaining integral can be degenerate of order at most two. To apply (additional) stationary phase in $\rho$ we act exactly like in the case $\gamma \sim a$ and obtain, as in \eqref{eq:54a}
\[
\frac 1h |\partial^2_{\rho}\Psi_{N,a,\gamma}(t,x,y,\Sigma,S_{+},A_{+},\rho)|\sim \frac 1h |\partial^2_{\rho}\Psi_{N,a,\gamma}(t,x,y,\Sigma,S_{+},A_{+},\rho)|\sim \frac{|N|}{\lambda_{\gamma}}\,.
\]

Let $\phi_{N,a,\gamma,+}(t,x,y,\Sigma,\rho)=\Psi_{N,a,\gamma}(t,x,y,\Sigma,S_{+},A_{+},\rho)$ and, respectively, $\phi_{N,a,\gamma,-}(t,x,y,\Sigma,\rho)=\Psi_{N,a,\gamma}(t,x,y,\Sigma,S_{-},A_{-},\rho)$, denote the critical values of $\Psi_{N,a,\gamma}$ after the the stationary phase in $S$ and then in $A$ (notice that $A_+$ (and $A_-$, respectively) is the critical point of the phase $\Psi_{N,a,\gamma}(t,x,y,S_+,A,\rho)$ (and $\Psi_{N,a,\gamma}(t,x,y,S_-,A,\rho)$, respectively), so the critical points $S$ and $A$ are paired with same sign, either $+$ or $-$).
Let $\sigma_{V,h,\gamma,\pm}(\Sigma,\rho)$ be the symbol obtained from $$\psi(A)p_h(x,y,\theta,\gamma |\theta|^{2/3}A,\sqrt{\gamma}|\theta|^{1/3}\Sigma)\tilde q_h(\theta,\gamma |\theta|^{2/3}A,\sqrt{\gamma}|\theta|^{1/3}S)$$ after applying stationary phase in $\vartheta$, $A$ and $S$, evaluated at $(S,A)_{\pm}$; $\sigma_{V,h,\gamma}$ is independent of $N$, of order zero and has compact support in $\Sigma$. We are left with estimating 
$
\int e^{\frac ih \phi_{N,a,\gamma,\pm}}\sigma_{V,h,\gamma,\pm}d\Sigma d\rho.
$
 Following \cite{ILP3}, we state different estimates for $|N|\geq 2$, $N=0$ and $N=\pm 1$.
\begin{lemma}\label{LemN>2}
For $|N|\geq 2$ and a given point $(t,x,y)$ with $t\sim 4N\sqrt{\gamma}$, the phase $\phi_{N,a,\gamma,\pm}(t,x,y,\Sigma,\rho)$ has at most one degenerate critical point, which is of order two.
\end{lemma}
\begin{proof}
The first derivative of the phase $\phi_{N,a,\gamma,\pm}$ is 
\begin{align}\label{derivSigmaphiN}
\partial_{\Sigma}\phi_{N,a,\gamma,\pm}& =\partial_{\Sigma}\Psi_{N,a,\gamma}(t,x,y,\Sigma,S,A,\rho)|_{A_{\pm},S_{\pm}}\\
\nonumber
&=\Sigma^2+ \frac{x}{\gamma}q^{1/3}(\vartheta_{c})\Big(1+\partial_{\Xi}\Big(A_{\Gamma}(x,y,\Xi,\frac{\vartheta_{c}}{\tau_{q}(\gamma A,\vartheta_{c})})\Big)|_{\Xi=\frac{\sqrt{\gamma}S q^{1/3}(\vartheta_{c})}{\tau_{q}(\gamma A,\vartheta_{c})}}\Big)-A_c
\end{align}
Using \eqref{AccritScpm} and then \eqref{derivSigmaphiN}, we estimate derivatives w.r.t. $\Sigma$ as follows
\begin{gather}
\partial_{\Sigma} (A^{1/2}_{\pm})=-\frac{1}{2N\mp 1}(1+O(\sqrt{\gamma})+O(\lambda^{-2})+O(\partial_{\Sigma}\omega_c))=-\frac{1}{2N\mp 1}(1+O(\sqrt{\gamma}))\,,\\
\partial^2_{\Sigma}\phi_{N,a,\gamma,\pm}=2\Sigma(1+O(\sqrt{\gamma}))+\frac{A_{\pm}^{1/2}}{2N\mp 1}(1+O(\sqrt{\gamma}))\,,\quad \partial^3_{\Sigma}\phi_{N,a,\gamma,\pm}= 2+O(1/N)+O(\sqrt{\gamma})\,.
\end{gather}
For a given $t,x,y, N$, the equation $\partial^2_{\Sigma}\phi_{N,a,\gamma,\pm}=0$ has at most one solution which is a saddle point when $\partial_{\Sigma}\phi_{N,a,\gamma,\pm}=0$. Since the third order derivative stays close to $2$, degenerate stationary phase (or Van der Corput Lemma) in $\Sigma$ provides a factor $\lambda_{\gamma}^{-1/3}$.
\end{proof}
\begin{lemma}
Let $N=0$, $|t|>\gamma$, then for $|\Sigma|\lesssim 1$ we have $\partial^2_{\Sigma}\phi_{0,a,\gamma,\pm}(t,x,y,\Sigma,\rho)\sim t/\sqrt{\gamma}$.
\end{lemma}

\begin{proof}
Let $N=0$, $|t|>\gamma$, then $A_{\pm}$ solve \eqref{AccritScpm} where the term in the left hand side vanishes. The main term containing $A$ comes from $S_{\pm}=S_0\pm \sqrt{A-O(a/\gamma)}$ and $A_{\pm}$ satisfy
\begin{equation}\label{derivApm}
S_0\pm\sqrt{A_{\pm}-O(a/\gamma)}=\Sigma(1+O(\sqrt{\gamma})) - q^{2/3}(\vartheta_c)\frac{(t+O(|y|))}{2\sqrt{\gamma}}(1+O(\gamma)).
\end{equation}
Taking the derivative of \eqref{derivApm} with respect to $\Sigma$, using that $S_0$ depends on $\Sigma$ only through $\vartheta_c$ (which satisfies \eqref{derivomegac}), that $A_{\pm}$ stays close to $1$ and that $t/\sqrt{\gamma}$ is bounded yields
\begin{align*}
\partial_{\Sigma} A_{\pm}&=\pm 2\sqrt{A_{\pm}-O(a/\gamma)}+O(\sqrt{\gamma})+O(\gamma^{3/2}/|y|)\times \frac{(t+O(|y|))}{2\sqrt{\gamma}}\\
&=2\Big(\Sigma(1+O(\sqrt{\gamma})) - q^{2/3}(\vartheta_c)\frac{(t+O(|y|))}{2\sqrt{\gamma}}(1+O(\gamma))\Big)+O(\sqrt{\gamma}),
\end{align*}
and therefore, using that the saddle point $\Sigma$ is bounded since $\Sigma^2\leq A_{\pm}$, we find
\[
\partial^2_{\Sigma}\phi_{0,a,\gamma,\pm}|_{\partial_{\Sigma}\phi_{0,a,\gamma,\pm}=0}=2\Sigma(1+O(\sqrt{\gamma}))-\partial_{\Sigma}A_{\pm}
=q^{2/3}(\vartheta_c)\frac{(t+O(|y|))}{\sqrt{\gamma}}(1+O(\gamma))+O(\sqrt{\gamma})\,.
\]
When $|t|>\gamma$ we obtain the result. When $|t|\lesssim \gamma $, the wave has no time to reach the boundary.
\end{proof}
\begin{lemma}\label{lemN=1transverse}
For $N=\pm 1$: each phase function $\phi_{\pm 1,a,\gamma,\pm}(t,x,y,\Sigma,\rho)$ has at most one degenerate critical point  $\Sigma_c$ of order exactly two;  for $t\neq 0$, the equation $\partial^2_{\Sigma}\phi_{\pm 1,a,\gamma,\mp}=0$ has an unique solution $\Sigma\sim \mp1/\gamma$, while for $|\Sigma|\lesssim 1$, we have $\partial^2_{\Sigma}\phi_{\pm 1,a,\gamma, \mp}\sim (t/\sqrt{\gamma})(1+O(\gamma))$.
\end{lemma}
\begin{proof}
For $\phi_{\pm 1,a,\gamma,\pm}$, the proof of Lemma \ref{LemN>2} applies, for $\phi_{\pm 1,a,\gamma,\mp}$ the proof for  $N=0$ does.  
\end{proof}
\begin{lemma}\label{lemT>lambdasquare}
For $|N|\geq \lambda_{\gamma}^2$, after stationary phase in $\rho$, the critical value $\phi_{N,a,\gamma,\pm}(t,x,y,\Sigma,\rho_c)$ has at most one degenerate critical point of order exactly two in $\Sigma$.
\end{lemma}
\begin{proof}
The proof is essentially the same as the one of Lemma \ref{LemN>2} since the contribution from the derivatives of the critical point with respect to $\rho$ do not affect significantly the third order derivative of the phase with respect to $\Sigma$ .
\end{proof}
\subsubsection{Estimates for the sum over $N$ and end of the proof of Proposition \ref{proptransv}}
We now proceed with estimating the sum over $N$, i.e. proving \eqref{eq:37bis}.
Again, for $N\not\in \Nrond_{1}(t,x,y)$, Proposition \ref{propcardoutN1} provides an $O_{C^{\infty}}(h^{\infty})$ contribution.  With $\sharp N=|\Nrond_{1}(t,x,y)|$, if $\sharp N\leq 2C$, the sum reduces to a finite sum. For those $N\in \Nrond_{1}(t,x,y)$, we have $|N|\leq \lambda^{2}$, then, collecting \eqref{eq:46} (stationary phase in $\vartheta$), factors from stationary phase in $S,A$ and from degenerate stationary phase in $\Sigma$,
  \begin{equation}
    \label{eq:87}
    |V_{N,\gamma}(t,x,y)|\lesssim  \frac{1}{h^{d}} \left(\frac h t\right)^{\frac{d-2}2} \frac{\gamma^{2}} h \frac{1}{\lambda_{\gamma}^{1/2}} \frac{1}{ |N|^{1/2} \lambda^{1/2}_{\gamma}} \frac 1 {\lambda^{1/3}_{\gamma}}\sim \frac {1}{h^{d}} \left(\frac h t\right)^{\frac{d-2}2} \frac{h^{1/3}}{\sqrt N}\,.
  \end{equation}
If $|N|\geq \lambda_{\gamma}^{2}$, we collect the same bounds but with an additional stationary phase in $\rho$,
  \begin{equation}
    \label{eq:89}
   |V_{N,\gamma}(t,x,y)| \lesssim  \frac{1}{h^{d}} \left(\frac h t\right)^{\frac{d-2}2} \frac{\gamma^{2}} h \frac{1}{\lambda_{\gamma}^{1/2}} \frac{\lambda^{1/2}_{\gamma}}{\sqrt N}\frac{1}{ |N|^{1/2} \lambda^{1/2}_{\gamma}} \frac 1 {\lambda^{1/3}_{\gamma}}\sim \frac {1}{h^{d}} \left(\frac h t\right)^{\frac{d-2}2} \frac{h^{1/3}\lambda^{1/2}_{\gamma}}{N}\,.
  \end{equation}
As  $\sharp N \leq C+C \frac{T} {\lambda^{2}}$ and $T\sim N$, we complete the proof of Proposition \ref{proptransv} with
\begin{equation}
  \label{eq:90}
\left|  \sum_{N\in \Nrond_{1}(t,x,y)} V_{N,\gamma} \right| \lesssim  \frac {1}{h^{d}} \left(\frac h t\right)^{\frac{d-2}2}   \frac{h^{1/3}\lambda^{1/2}_{\gamma}}{N} \sharp N\lesssim \frac {1}{h^{d}} \left(\frac h t\right)^{\frac{d-2}2}   \frac{h^{1/3}}{\lambda^{3/2}_{\gamma}}\,.
\end{equation}
\begin{rmq}
In this transverse regime,   we used $\gamma\geq h^{2/3-\eps}$ rather than $a\geq h^{2/3-\eps}$. As such, all estimates hold with any $a>0$. This will be of importance in the next section.
\end{rmq}
\section{Dispersion for small $a\lesssim h^{2/3-\ceps}$}\label{secdispapetit}
We now obtain dispersion for $\mathcal{P}_{h,a}$ for small $a\in (0,h^{2/3-\ceps})$ with a small $0<\ceps<1/12$ using Propositions \ref{propdataapetitpetit1} and \ref{propdataapetitpetit2}. Write
$
\Prond_{h,a}(t,x,y):=\Prond^{1}_{h,a}+\Prond^{2}_{h,a}\,,
$
where $\Prond^{j}_{h,a}$ is given by \eqref{eq:Prond2cut} with $g_{h,a}$ replaced by $g_{h,a,j}$, $j\in\{1,2\}$. Dispersion for $\Prond^{2}_{h,a}$ easily follows using {exactly} the arguments from the transverse case (for $a>h^{\frac{2}{3}-\ceps}$): $\omega$ is large and stationary phase arguments apply. We are left with $\Prond^{1}_{h,a}$.
\begin{prop}
For $h<t\leq  T_0\leq 1$ 
\begin{equation}\label{P1disp}
\|\Prond^{1}_{h,a}(t,.)\|_{L^{\infty}_{x>0,y}}\lesssim h^{-d}\Big(\frac ht\Big)^{\frac{d-2}{2}}\Big(\frac ht\Big)^{1/3}.
\end{equation}
\end{prop}
\begin{proof}
We need the following lemma, relying on Corollary \ref{lemgomegak}: 
\begin{lemma}
There exists symbols $\sigma(\eta,\omega_k)$ 
and $r_j(a,\eta,\omega_k)$, of order $0$, $\sigma, r_0$ elliptic such that 
 \begin{multline}\label{K<} 
K_{\omega_k}(g_{h,a,1})(t,x,y)=\int e^{it\tau_{q}(\omega_k,\eta)}G(x,y,\eta,\omega_k)q^{\frac 16}(\eta)\varkappa(h\eta)\varkappa(h\tau_{q}(\omega_k,\eta))r(\eta,\omega_k)\sigma(\eta,\omega_k)\\
 \cutoffchi^{\flat}\Big(\frac{\omega_k}{\omega_{K_{\ceps}}}\Big) q^{\frac 16}(\eta)\Big(r_0Ai(-\zeta(a,0,\eta, \omega_k))+iq^{-\frac 16}(\eta)r_1Ai'(-\zeta(a,0,\eta,\omega_k))\Big)d\eta\,,
\end{multline}
where $\zeta$ is the phase introduced in Theorem \ref{thmMelrose}.
\end{lemma}
\begin{proof}
Replacing $\hat{g}_{h,a,1}$ given by Corollary \ref{lemgomegak} in formula \eqref{eq:Kequiv} yields
\[
K_{\omega_k}(g_{h,a,1})=\int e^{it\tau_{q}(\omega_k,\eta)}G(x,y,\eta,\omega_k)\chi^{\#}(\omega_k)q^{\frac 16}(\eta)\varkappa(h\eta)\varkappa(h\tau_{q}(\omega_k,\eta))r(\eta,\omega_k)\frac{\sqrt{L'(\omega_k)}}{\sqrt{2\pi}}I_a(\eta,\omega_k)d\eta\,,
\]
with $I_a(\eta,\omega_k)$ defined in \eqref{Io}. To compute $I_a$ use \eqref{defEmathcalM} and $\overline{e( x, y,\eta,\omega_k)}=e^{-i|\eta|B_0}\overline{\tilde e( x, y,\eta,\omega_k)}$
\begin{equation}\label{I<}
 I_a  = \int e^{-i(\tilde y\cdot (\eta-\tilde \eta)+|\eta| B_{0}(\tilde y,\frac{\eta}{|\eta|}))}\varkappa(h\tilde\eta)\cutoffchi^{\flat}\Big(\frac{\omega_k}{\omega_{K_{\ceps}}}\Big) e_k(a,\tilde \eta)\left[\int_{\R_{+}} \overline{\tilde e( \tilde x, \tilde y,\eta,\omega_k)}e_k(\tilde x,\tilde \eta) d\tilde x\right] 
d\tilde \eta d\tilde y\,.
\end{equation}
As in the proof of Lemma \ref{lemestimderivekk}, the bracket term in \eqref{I<} behaves like a symbol. For small $a\lesssim h^{2/3-\ceps}$ and $k\leq K_{\ceps}$, $e_k(a,\tilde\eta)$ can also be included in the symbol, and
 with $\eta=\theta/h$ and $\tilde \eta=\tilde\theta/h$, stationary phase applies in $\tilde y,\tilde \theta$ for the phase $-(\tilde y\cdot\theta+|\theta|B_{0}(\tilde y,\vartheta))+\tilde y\cdot \tilde \theta$, with large parameter $\frac 1h$ and symbol 
\begin{equation}\label{symbK<}
\frac{1}{h^{d-1}}\cutoffchi^{\flat}\Big(\frac{\omega_k}{\omega_{K_{\ceps}}}\Big)\varkappa(\tilde\theta)\chi_0(h^{2/3}\omega_kq^{2/3}(\tilde\theta))e_k(a,\tilde\theta/h)
\Big(\int_0^{\infty}  \overline{\tilde e( \tilde x, \tilde y,\theta/h,\omega_k)}e_k(\tilde x,\tilde \theta/h) d\tilde x\Big)\,.
\end{equation}
Stationary points are such that $\tilde y=0$, $\tilde \theta=\theta+|\theta|\partial_{\tilde y} B_{0}(\tilde y,\vartheta)=\theta$ (as $\partial_y B_0(0,\vartheta)=0$). All derivatives with respect to $\tilde y$ land on $ \overline{\tilde e( \tilde x, \tilde y,\theta/h,\omega_k)}$ and we can use Lemma \ref{lemestimderivekk}.
Derivatives with respect to $\tilde \theta$ land either on cut-offs, $e_k(\tilde x,\tilde \theta/h)$ or $e_k(a,\tilde \theta/h)$ : using \eqref{estimderivekk} for $e_k$,
\[
\Big|\partial^{\beta_1}_{\tilde y}\partial^{\beta_2}_{\tilde \eta}\Big(\int_0^{\infty}  \overline{\tilde e( \tilde x, \tilde y,\eta,\omega_k)}e_k(\tilde x,\tilde \eta) d\tilde x\Big)\Big|\leq \|\partial^{\beta_1}_{\tilde y}\tilde e(.,\omega_k) \|_{L^{2}_{x\geq 0}} \|\partial^{\beta_2}_{\tilde \eta} e_k \|_{L^{2}_{x\geq 0}}\lesssim (\omega_k|\eta|^{1/3})^{|\beta_1|+|\beta_2|}\,.
\]
As $|\eta|\sim 1/h$ and $k\lesssim K_{\ceps}< h^{-1/4+\ceps}$ we find $\omega_k|\eta|^{1/3} \leq h^{-1/2+2\ceps/3}$. As one derivative on $e_k(a,\tilde\theta/h)$ yields at most $\frac ah\sqrt{h^{2/3}\omega_k}$ and we have $\frac ah\sqrt{h^{2/3}\omega_k}\leq h^{-1/12-2\ceps/3}\ll h^{-1/2}$ for $\ceps<5/8$, stationary phase applies in $\tilde y,\tilde\theta$. After stationary phase, $e_k(a,\tilde\theta/h)$ transforms into a linear combination of $e_k$ and its derivative : indeed, repeated derivatives in $\tilde\theta$ lead to $Ai(aq^{1/3}(\tilde\theta/h)-\omega_k)$ and $Ai'(aq^{1/3}(\tilde \theta/h)-\omega_k)$, and we denote $r_0, r_1$ the corresponding asymptotic expansions; $r_0, r_1$ are functions of $(a,\theta/h,\omega_k)$ and $r_0$ is elliptic with main contribution equal to $1$. Taking the difference between phase functions of $e_k(a,\tilde\theta/h)$ and $e(a,0,\theta/h,\omega_k)$ yields $a\tau_{q}(h^{2/3}\omega_k,\theta)A_{\Gamma}(a,0,s q^{1/3}(\theta)/\tau_{q},\theta/\tau_{q})$, which behaves (at worst) like $a(s^2-h^{2/3}\omega_kq^{2/3}(\theta)+O(s^3,s(h^{2/3}\omega_k))$ (where $s^2\lesssim h^{2/3}\omega_k$);
as $ah^{2/3}\omega_k\leq h^{7/6-\ceps/3}$, $e^{\frac ih a\tau_{q}(h^{2/3}\omega_k,\theta)A_{\Gamma}(a,0,s q^{1/3}(\theta)/\tau_{q},\theta/\tau_{q})}$ does not oscillate and can be brought into the symbol.
The $L^2_{x>0}$ product in \eqref{I<} becomes a new symbol $\sigma(\theta/h,\omega_k)$, with main contribution $\sigma_0$
\begin{multline*}
\sigma_{0}=\int_0^{\infty}  \overline{\tilde e( \tilde x, 0,\theta/h,\omega_k)}e_k(\tilde x, \theta/h) d\tilde x=h^{-2/3}\int_0^{\infty}e^{\frac ih (-\tilde s^3/3-\tilde s(\tilde xq^{1/3}(\theta)-h^{2/3}\omega_k)+s^3/3+s(\tilde x q^{1/3}(\theta)-h^{2/3}\omega_k))}\\
\times \frac{2\pi}{L'(\omega_k)}  \frac{q^{1/3}(\theta)}{h^{2/3}}   \tilde p_h(\tilde x,\theta, h^{2/3}\omega_k,\tilde s, h/t) e^{-\frac ih \tilde x \tau_{q}(h^{2/3}\omega_k,\theta)A_{\Gamma}(\tilde x,0,\sigma q^{1/3}(\theta)/\tau_{q},\theta/\tau_{q})} d\tilde s ds d \tilde x,
\end{multline*}
for a symbol $\tilde p_h$ obtained from $p_h$ in \eqref{eq:qh}. For $\tilde x\gtrsim h^{2/3}\omega_k$, repeated integrations by parts in $s$ yield an $O(h^{\infty})$ contribution. For $\tilde x\lesssim h^{2/3}\omega_k$, apply stationary phase in $s,\tilde x$: as $\tilde xq^{1/3}(\theta)=h^{2/3}\omega_k-s^2$ and $s q^{1/3}(\theta)=\tilde s q^{1/3}(\theta)+\tau_{q} (A_{\Gamma}+\tilde x\partial_{\tilde x}A_{\Gamma})$, we are left with an integral in $\tilde s$ with phase $e^{\frac ih (h^{2/3}\omega_k-\tilde s^2)q^{1/3}(-\theta)\tau_{q}(h^{2/3}\omega_k,\theta)A_{\Gamma}((h^{2/3}\omega_k-\tilde s^2)q^{-1/3}(\theta),0,\tilde s q^{1/3}(\theta)/\tau_{q},\theta/\tau_{q})}$. One derivative with respect to $\theta$ on this phase yields at most $\frac 1h (h^{2/3}\omega_k)^2\lesssim  h^{4\ceps/3}$ for $k\leq K_{\ceps}\leq h^{-1/4+\epsilon}$ and therefore $\sigma_0$ is of order $0$. Rewriting $K_{\omega_{k}}(g_{h,a,1})$ after stationary phase, relabelling $r_0$ and $r_1$, we obtain \eqref{K<}.
\end{proof}
Now we evaluate the $L^{\infty}$ norm of $\Prond^{1}_{h,a}(t,.)$. If $d\geq 3$, we perform stationary phase in \eqref{K<} w.r.t $\eta/|\eta|\in \mathbb{S}^{d-2}$: $\eta=\theta/h$, $\theta=|\theta|\vartheta$, $\theta\in \text{supp}(\varkappa)$, the phase of each $K_{\omega_k}(g_{h,a,1})(t,x,y)$ is
\[
t\tau_{q}(h^{2/3}\omega_k,\theta)+|\theta|(y\cdot\vartheta+B_0(y,\vartheta))+O(h^{2/3}\omega_k)=|\theta|(t+y\cdot\vartheta+B_0(y,\vartheta))+O(h^{2/3}\omega_k),
\]
where $O(h^{2/3}\omega_k)$ contains contributions from $B_{\Gamma}-B_0$, $xA_{\Gamma}$ and $aA_{\Gamma}$. Critical points $\vartheta_{\pm}$ are given by \eqref{eqNcontYmodgen} (with $\gamma=a$, $A=h^{2/3}\omega_k/(a|\theta|^{2/3})$, $S=s/(\sqrt{a}|\theta|^{1/3})$, $\Sigma=\sigma/(\sqrt{a}|\theta|^{1/3}$)). As $\nabla^2_{\vartheta}(y\cdot\vartheta+B_0(y,\vartheta))\sim |y|$ and $|y|\sim |t|$ (Lemma \ref{derriere}), stationary phase in $\vartheta$ yields $({h}/{|\theta| |y|})^{d-2} \leq C({h}/{|t|})^{d-2}$ for $\vartheta$ near $\vartheta_{\pm}$. For $\vartheta$ outside a small neighborhood of $\vartheta_{\pm}$, we get $O(\frac{h}{t})^{\infty}$. Stationary phase yields new symbols as asymptotic expansions with small parameter $\frac h t$; the main contribution  $K_{\omega_k}(g_{1,a,h})(t,x,y)$ remains similar to \eqref{K<}, with a front factor $(h/t)^{(d-2)/2}$, $\eta=|\theta| \vartheta_{\pm}/h$ and integration over $\eta$ replaced by integration over $|\theta|$.
\begin{lemma}( \cite[Lemma 3.5]{Annals})
There exists $C_0$ such that for $L\geq 1$,
\begin{equation}\label{estairy2}
\sup_{b\in \R}\Big (\sum_{1\leq k\leq L}\omega_k^{-1/2}Ai^2(b-\omega_{k})\Big)  \leq C_{0}L^{1/3}\,, \quad 
\sup_{b\in\R_+}\Big (\sum_{1\leq k \leq L}\omega_k^{-1/2}h^{2/3}Ai'^2(b-\omega_{k})\Big)  \leq C_{0}h^{2/3}L\,.
\end{equation}
\end{lemma}
Applying Cauchy-Schwarz, dispersion for small $t$ reduces to estimates like \eqref{estairy2}, as 
\begin{equation}\label{estimmodes}
\|  \sum_{k=1}^{L}\frac{2\pi}{L'(\omega_k)}K_{\omega_k}(g_{h,a,1})(t,x,y)\|_{L^{\infty}}\lesssim h^{-d}\Big(\frac ht\Big)^{\frac{d-2}{2}}h^{1/3}\Big(L^{1/3}+h^{1/3}L^{2/3}+h^{2/3}L\Big),
\end{equation}
where $h^{-(d-1)}(h/t)^{\frac{d-2}{2}}$ comes from stationary phase in $\vartheta$, while $h^{-2/3}=h^{-1}h^{1/3}$ arises from $q^{1/3}(\theta/h)$ and $L$-related terms come from \eqref{estairy2}. 
Recall $h^{-2\ceps}\leq K_{\ceps}\leq h^{-1/4+\ceps}$, hence $h^{\ceps}\gg h^{2\ceps}\geq \frac{1}{K_{\ceps}}$. Let $L= h^{-\ceps}$ : for $t\leq h^{\ceps}$ we bound $L\leq \frac 1t$ in \eqref{estimmodes} and get \eqref{P1disp} for the sum up to $h^{-\ceps}$.

When $h^{\ceps}\leq t$, we apply stationary phase in each oscillatory integral in the sum over $k$ for $k\leq  h^{-\ceps}(\ll K_{\ceps})$. Using Corollary \ref{corUpsi} in the Appendix, we reformulate the phase $\psi$ of Theorem \ref{thmMelrose} as
\begin{equation}\label{phasestat}
\psi(x,y,\theta,\omega_k)=y\cdot \theta+\tau_{q}(h^{2/3}\omega_k,\theta)B_{\Gamma}(y,\theta/\tau_{q})+\Upsilon(x,y,\theta,h^{2/3}\omega_k).
\end{equation}
\begin{lemma}\label{lemphasestatksmall}
The stationary phase theorem with respect to $|\theta|$ applies in \eqref{K<} with large parameter $t\omega_kh^{-1/3}$, 
with phase function $\phi_{h,k}(t,x,y,\theta)=t\tau_{q}(h^{2/3}\omega_k,\theta)+\psi(x,y,\theta,\omega_k)$ and with symbol
\begin{multline}\label{symbstat}
\sigma_{h,k}=\Big(p_0Ai(-\zeta(x,y,\theta/h,\omega_k))+ip_1Ai'(-\zeta(x,y,\theta/h,\omega_k))\Big)q^{\frac 13}(\theta)\varkappa(\theta)\varkappa(\tau_{q}(h^{\frac 23}\omega_k,\theta)) \chi^{\flat}\Big(\frac{\omega_k}{\omega_{K_{\ceps}}}\Big)\\
\times  \sigma(\theta/h,\omega_k)r(\theta/h,\omega_k)
\Big(r_0Ai(-\zeta(a,0,\theta/h,\omega_k))+iq^{-1/6}(\theta/h)r_1Ai(-\zeta(a,0,\theta/h,\omega_k))\Big)\,,
\end{multline}
where $p_0, p_1$ were defined in  \eqref{eq:defG} and $\theta=|\theta|\vartheta$ with $\vartheta\in\{ \vartheta_{\pm}\}$.
\end{lemma}
\begin{proof}
Let $r=|\theta|$ and recall $\vartheta\in\{\vartheta_{\pm}\}$ (we already performed stationary phase in $\vartheta$). We have
 \begin{multline}
\phi_{h,k}=t\tau_{q}(h^{2/3}\omega_k,\theta)+\psi(x,y,\theta,\omega_k)= r\Big(t+y\cdot \vartheta+B_0(y,\vartheta)\Big)\\
+\frac{r^{1/3}}{2}h^{2/3}\omega_k q^{2/3}(\vartheta)\Big(t+B_0(y,\vartheta)-B_2(y,\vartheta)+O(|y|(h^{2/3}\omega_k))\Big)+(rt)O((h^{2/3}\omega_k)^2)\\
-x\big(O(x)+O(h^{2/3}\omega_k)\big)-k_1\zeta\Big(\zeta+O(xh^{2/3}\omega_k)+O((h^{2/3}\omega_k)^2)\Big),
 \end{multline}
 where, in the second line, $O(|y|(h^{2/3}\omega_k))$ comes from $B_{\Gamma}-B_0-B_2$, and $(rt)O((h^{2/3}\omega_k)^2)$ comes from $t\tau_{q}(h^{2/3}\omega_k,r\vartheta)$ and where the last line represents the contribution from $\Upsilon$ as given in \eqref{eq:Ups}.

Here the critical points $\vartheta_{\pm}$ are solutions to Equation \eqref{eqNcontYmodgen}, but with $\gamma A$ replaced with $h^{2/3} \omega_{k}/r^{2/3}$ in the whole equation. Then, $y\cdot \vartheta_{\pm}+B_0(y,\vartheta_{\pm})=\pm |y+\nabla B_0(y,\vartheta_{\pm})|+O(a^2)$, $a^2\ll h$. Moreover, $\partial_r\vartheta_{\pm}=O(h^{2/3}\omega_k)$ (deriving \eqref{eqNcontYmodgen}; originally in \eqref{eqNcontYmodgen} we made a suitable change of variables and got rid of $r=|\theta|$, which is no longer possible as we now work with fixed $\omega_k$).
 A critical point with respect to $r$ for $\phi_{h,k}$ is such that
\begin{multline}\label{eqcritr}
0 =t \pm |y+\nabla B_0(y,\vartheta_{\pm})|+O(a^2)+r O(|y|^2)O(h^{2/3}\omega_k)\\
{}+\frac 23 r^{-2/3} h^{2/3}\omega_k q^{2/3}(\vartheta_{\pm})\Big(t+B_0(y,\vartheta_{\pm})-B_2(y,\vartheta_{\pm})+O(|y|(h^{2/3}\omega_k))\Big)+tO((h^{2/3}\omega_k)^2)\\
{}
-\partial_r k_1\zeta^2(x,y,\theta,h^{2/3}\omega_k)+O(x^2,x(h^{2/3}\omega_k))\,,
\end{multline}
where the last line is  $\partial_{r}\Upsilon$, whose main contribution is $-\partial_r k_1\zeta^2(x,y,\theta,\omega_k)$ : as $h^{2/3}\zeta(x,y,\theta/h,\omega_k)=\zeta(x,y,\theta,h^{2/3}\omega_k)=h^{2/3}\omega_k-xq^{1/3}(\theta)e_0(x,y,\vartheta,h^{2/3}\omega_k/|\theta|)$, $\partial_{r}\zeta$ yields a factor $x$, which puts everything else in $O(x^2,x(h^{2/3}\omega_k))$. Moreover, set $t(>h)>0$ as usual, only $\vartheta_-$ is to be kept (at $\vartheta_+$ the phase will be non-stationary).
As $B_0(y,\vartheta_{\pm})-B_2(y,\vartheta_{\pm})=O(|y|^2)=O(t^2)$ and $x\lesssim h^{2/3-\ceps}$, the main contribution of $\partial^{2}_{r}\phi_{h,k}$ comes from first terms on the second and last lines of \eqref{eqcritr}
\begin{multline}\label{eq:estimsecondderivr}
-\frac 49  r^{-5/3} h^{2/3}\omega_kq^{2/3}(\vartheta_{\pm})t\Big(1+O(|y|^2/t)+O(|y|/t)\times (h^{2/3}\omega_k))+O((h^{2/3}\omega_k)^{2})\Big)\\
-\partial^2k_1 \zeta^2(x,y,\theta,h^{2/3}\omega_k)+O(x^2,x(h^{2/3}\omega_k)).
\end{multline}
Observe that $h^{2/3}\omega_k t\gg \zeta^2(x,y,\theta,h^{2/3}\omega_k)$ for all $t>h^{\ceps}$: indeed, $h^{\ceps}\gg h^{2/3}\omega_k$, which holds true for all $\omega_k\leq \omega_{K_{\epsilon}}\leq h^{-1/6+2\ceps/3}$. At the stationary point satisfying \eqref{eqcritr}, $|y|$ behaves like $t$, and therefore $\partial_{r}^{2}\phi_{h,k}$ behaves like $t h^{2/3}\omega_k$  there.
We are left to prove that stationary phase indeed applies with large parameter $(th^{2/3}\omega_k)/h\sim t\omega_kh^{-1/3}$, for $h^{\ceps}\lesssim  t$ and symbol $\sigma_{h,k}$ from \eqref{symbstat}.
Computing  $\partial_{r}^{2}\sigma_{h,k}$ yields at most $\omega_k^{11/4}$ which occurs when $\zeta(x,y,\theta/h,\omega_k)$ is large, $\zeta(a,0,\theta/h,\omega_k)$ is bounded and when both derivatives fall on of the first Airy: hence, it suffices to check $\frac{h^{1/3}}{t\omega_k}\omega_k^{11/4}\ll1 $ for $t\geq h^{\ceps}$, 
 uniformly in $k\lesssim h^{-\ceps}$ : this does hold as long as $\ceps<2/13$ and we already set $\ceps<1/12$. \end{proof}
We then sum up to $k\leq h^{-\ceps}$, for $t\geq h^{\ceps}$ and $\ceps<1/12$, with $L'(\omega_{k})\sim 2 \omega_{k}^{\frac 1 2}$:
\begin{equation}
\|\Prond^{1}_{h,a}(t,x,y)\|_{L^{\infty}}\lesssim \frac{h^{1/3}}{h^{d}}\Big(\frac{h}{t}\Big)^{\frac{d-2}2}\sum_{ k\lesssim h^{-\ceps}}\frac{1}{L'(\omega_k)}\Big(\frac{h^{1/3}}{t\omega_k}\Big)^{\frac 1 2}
\sim \frac 1 {h^{d}}\Big(\frac{h}{t}\Big)^{\frac{d-2}2}\Big(\frac ht\Big)^{\frac 1 2}h^{-\frac{\ceps}{3}}\lesssim \frac 1 {h^{d}}\Big(\frac{h}{t}\Big)^{\frac{d-2}2}\Big(\frac ht\Big)^{\frac 1 3}\,.
\end{equation}
It remains to deal with $h^{-\ceps}\leq k\leq K_{\ceps}$ and $t\in (h,T_0]$. Taking $L=K_{\ceps}$ in \eqref{estimmodes}, we obtain \eqref{P1disp} exactly as before for all $t\leq K_{\ceps}^{-1}$ (up to $k\leq  K_{\ceps}$). Hence we are left with $t\geq K_{\ceps}^{-1}$ and $h^{-\ceps}\leq k\lesssim K_{\ceps}$. It suffices to consider $0\leq x,a\lesssim h^{\frac 23-\ceps}$ (as for $x\geq h^{\frac 23-\ceps}$, the arguments for the transverse case $a>h^{\frac 23-\ceps}$ apply). 
For small values of both $x,a$, $|\zeta(x,y,\eta,\omega_k)|\geq \omega_k/2$ and $|\zeta(a,0,\eta,\omega_k)|\geq \omega_k/2$ ($k\geq h^{-\ceps}$ is large so $\omega_k\geq h^{-2\ceps/3}\gg h^{-\ceps}\gtrsim a/h^{2/3}$). We can write $Ai(-\zeta)=\sum_{\pm}A_{\pm}(\zeta)$ which give rise to factors $\zeta(x,y,\eta,\omega_k)^{-1/4}\zeta(a,0,\eta,\omega_k)^{-1/4}$. If we decide to ignore the integral with respect to $r\in \text{supp}\varkappa$,  we can immediately bound the sum of integrals with
\begin{equation}
\|\Prond^{1}_{h,a}(t,x,y)\|_{L^{\infty}}\lesssim \frac{h^{1/3}}{h^{d}}\Big(\frac{h}{t}\Big)^{\frac{d-2}2}\sum_{h^{-\ceps}\leq k\leq K_{\ceps}}\frac{1}{\omega_k^{1/2}}\frac{1} {\omega_k^{1/2}}
\lesssim h^{-d}\Big(\frac{h}{t}\Big)^{\frac{d-2}2}(hK_{\ceps})^{1/3}\,,
\end{equation}
and as $K_{\ceps}\leq h^{-1/4+\ceps}$,  $(hK_{\ceps})^{1/3}\leq h^{1/4+\ceps/3}$ which is not optimal (a similar crude bound was obtained in \cite{Annals} in that regime.) To obtain sharper bounds when $t\geq K_{\ceps}^{-1}$, $h^{-\ceps}\leq k\leq K_{\ceps}$, decompose ${Ai}(-z)=A_{+}(z)+A_{-}(z)$, with $A_{\pm}(z)=\Psi(e^{\mp i\pi/3} z)e^{\mp\frac 23 i z^{3/2} }$ and 
$4\pi^{3/2}|\Psi(z)|\sim |z|^{-1/4}$ for large $z$. Then there are four different phases in $K_{\omega_k}(g_{h,a,1})$, where $\pm_{1}$ and $\pm_{2}$ mean idependent signs,
\[
\frac 1h \Big(t\tau_{q}(h^{2/3}\omega_k,\theta)+\psi(x,y,\theta,\omega_k)\pm_{1} \frac 23 \zeta(x,y,\theta,h^{2/3}\omega_k)^{3/2}\pm_{2} \frac 23 \zeta(a,0,\theta,h^{2/3}\omega_k)^{3/2}\Big)
\]
and a symbol which behaves like ${h^{-(d-1)-2/3}}q^{1/3}(\theta)\varkappa(\theta)|\zeta(x,y,\theta/h,\omega_k)|^{-1}|\zeta(a,0,\theta/h,\omega_k)|^{-1/4}$, where $h^{-2/3}\zeta(x,y,\theta,h^{2/3}\omega_k)=\zeta(x,y,\theta/h,\omega_k)=\omega_k-xq^{1/3}(\theta)/h^{2/3}e_0(x,y,\theta/h,\omega_k)\geq \omega_k/2$ and also $\zeta(a,0,\theta/h,\omega_k)=\omega_k-aq^{1/3}(\theta/h)e_0(a,0,\theta/h,\omega_k)\geq \omega_k/2$. We  prove that stationary phase applies with the same "large parameter" $th^{2/3}\omega_k/h$ but a new symbol. 
As in \eqref{eqcritr}, we find 
\begin{multline*}
t - |y+\nabla B_0(y,\vartheta_{-})|+r O(|y|^2)O(h^{\frac 23}\omega_k)
+\frac 23 r^{-\frac 23} h^{\frac 23}\omega_k q^{\frac 23}(\vartheta_{-})\Big(t+B_0(y,\vartheta_{-})-B_2(y,\vartheta_{-})-x\mu(y,\vartheta_{-})\Big)\\{}+O((h^{\frac 23}\omega_k)^2)\pm_{1} x\partial_r(q^{\frac 13}(r\vartheta_{-})e_0)\sqrt{ \zeta(x,y,r\vartheta_{-},h^{\frac 23}\omega_k)}\pm_{2} a\partial_r(q^{\frac 13}(r\vartheta_{-})e_0)\sqrt{ \zeta(a,0,r\vartheta_{-},h^{\frac 23}\omega_k)}=0.
\end{multline*}
The second derivative is
\begin{multline*}
-\frac 49  r^{-5/3} h^{2/3}\omega_kq^{2/3}(\vartheta_{-})(t+O(|y|^2))+O((h^{2/3}\omega_k)^{2})) \mp_{1} xO(\sqrt{h^{2/3}\omega_k})(1+O((h^{2/3}\omega_k)^2)\\
 {}\mp_{2} aO(\sqrt{h^{2/3}\omega_k})(1+O((h^{2/3}\omega_k)^2)),
\end{multline*}
and for $t\geq K_{\ceps}^{-1}$, $h^{-\ceps}\leq k\leq K_{\ceps}$, the main contribution comes from the first term : indeed, $t\sqrt{h^{2/3}\omega_k}\geq K_{\ceps}^{-1}h^{1/3(1-\ceps)}\geq h^{7/12-4\ceps/3}\gg h^{2/3-\ceps}$, hence $t(h^{2/3}\omega_k)\gg \max\{x,a\}O(\sqrt{h^{2/3}\omega_k})$ for all $x,a\lesssim h^{2/3-\ceps}$. We then check that $th^{2/3}\omega_k/h$ is a large parameter for $t> K_{\ceps}^{-1}$ (recall that in Lemma \ref{lemphasestatksmall} we had $t\geq h^{\ceps}\gg K_{\ceps}^{-1}$): indeed, $t\omega_k\geq K_{\ceps}^{-1}h^{-2\ceps/3}\geq h^{1/4-5\ceps/3}\gg h^{1/3}$. We find
\begin{equation}
\|\Prond^{1}_{h,a}(t,x,y)\|_{L^{\infty}}\leq \frac{h^{1/3}}{h^d}\Big(\frac{h}{t}\Big)^{\frac{d-2}2}\sum_{1\leq k\lesssim K_{\ceps}}\frac{1}{L'(\omega_k) \omega_k^{1/4+1/4}}\Big(\frac{h^{1/3}}{t\omega_k}\Big)^{1/2}
\sim h^{-d}\Big(\frac{h}{t}\Big)^{\frac{d-2}2}\Big(\frac ht\Big)^{1/2}\log{h^{-1}}\,,
\end{equation}
where we use the symbol of both factors $A_{\pm}$ to get $|\zeta|^{-1/4}<2/\omega_{k}^{1/4}$. We proved \eqref{P1disp} and therefore Theorem \ref{dispintermediaire}.
\end{proof}
\section{Strichartz estimates}
Standard duality and interpolation arguments would lead to Theorem \ref{thStri} with $\upgamma(d)=1/4$ if one only uses dispersion formula \eqref{dispco}. We aim at improving on this straightforward application of our dispersion estimate, combining three ingredients, where the first two were already proven earlier: in our estimates from Propositions \ref{propN=1}, \ref{proptransv}, \ref{proptang} and \ref{P1disp}, we either have better decay or additional (small) factors of $a$ or $\gamma$; we also have the usual dispersion on time intervals with lengh less than $\sqrt \gamma$, in the sum of reflections regime (corresponding to dispersion on the $V_{0}$ term in the expansion over $N$); and finally, we claim that the additional cut-off that we introduced to localize $h^{2/3}\omega\sim \gamma$ is not only a useful technical device but also essentially commutes with the flow itself. Assuming these facts, one can then prove Strichartz estimates at fixed $\gamma$: on intervals of length one, with the $\upgamma(d)=1/4$ loss but with a constant that depends on a positive power of $\gamma$; on intervals of size $\sqrt\gamma$, one has the usual Strichartz estimates, with $\upgamma(d)=0$, and by iteration, one has a corresponding Strichartz estimate on time length one but with a constant that depends on a negative power of $\gamma$; by interpolation, one recovers a Strichartz estimate, at fixed $\gamma$, that improves upon $\upgamma(d)=1/4$, and then summation over $\gamma$ yields the theorem, with strict inequality on $\upgamma(d)$ because of that last summation over $\gamma$. Such a strategy was implemented in \cite{ILP4} and is facilitated there because the $\gamma$ localization commutes with the wave flow. Moreover, a refined analysis of the dispersion estimates around the exceptional times that force $\upgamma(d)=1/4$ allows for a better Strichartz estimate to interpolate with, thus yielding a better result on the 2D model. We expect such result to generalize to the 2D general convex case, while higher dimensions will require additional arguments related to the space-time localization of swallowtails (which can no longer be a set of discrete, exceptional, times). As such, we believe that Theorem \ref{thStri} is of interest as it illustrates, in a relatively simple way, that the rate of dispersion does tell the whole story as far as Strichartz estimates are concerned.

We now turn to the details: we relabel $\mathcal{P}_{h}(t,x,y,a,b)$ our parametrix but with source $(a,b)\in \R^{d}$, and similarly $\mathcal{P}_{h,\gamma}(t,x,y,a,b)$ where an additional cut-off in $\alpha$ was inserted. Inhomogeneous Strichartz estimates follow from estimating the inhomogeneous operator $P_{h}$ (and/or $P_{h,\gamma}$ with obvious notations)
$$
P_{h}f(t,x,y)=\int_{s} \mathcal{P}_{h}(t-s,x,y,a,b) f(s,a,b)\,dadbds\,.
$$
We remark that, with such notations, the homogeneous operator approximating the half-wave flow with data $f_{0}(a,b)$ at time $t=0$ is
$$
(W_{h}f_{0})(t,x,y)=P_{h}(\delta_{s=0}f_{0}(a,b))(t,x,y)\,,
$$
and we define similarly $W_{h,\gamma}$. Now, suppose we restrict ourselves to ${P}_{h}^{\flat}=\sum_{\gamma<h^{1/3}} {P}_{h,\gamma}$: for $\mathcal{P}_{h,\gamma}$ with $\gamma<h^{1/3}$, we have $a^{1/4}h^{1/4}\lesssim h^{1/3}$ and, collecting all bounds from Propositions \ref{propN=1}, \ref{proptransv}, \ref{proptang} and \ref{P1disp} in our regime, we do have
\begin{equation}
  \label{eq:8}
|\mathcal{P}_{h}^{\flat}(t,x,y,a,b)|\lesssim \frac 1 {h^{d}}  \Bigl(\frac h t\Bigr)^{(d-2)/2} \min \left( 1, \Bigl(\frac h t\Bigr)^{1/3}\right)\,.
\end{equation}
Commuting the wave flow and the $\alpha\sim \gamma$ localization in the parametrix is an issue, to be adressed in Proposition \ref{commutgamma} below; we already observe that $\mathcal{P}_{h,\gamma}$ is an approximate solution to the wave equation, and as such, satisfies energy estimates.  We then obtain inhomogeneous Strichartz estimates but with $\upgamma(d)=1/6$ rather than $1/4$, after interpolation between \eqref{eq:8} and energy estimates, followed by the argument from \cite{KT98} for the endpoint (only required for $d\geq 4$) and a dependence on $h$ dictated by scaling, which we encode with $\lesssim_{h}$:
\begin{equation}
  \label{eq:25}
      \|P_{h,\gamma} f\|_{L^{2}_{t} L^{2\frac{3d-4}{3d-7}}_{x}} \lesssim_{h} \| f\|_{L^{2}_{t} L^{2\frac{3d-4}{3d+3}}_{x}}\,,\quad\text{or},\,\text{for}\,d=3,\, \,\, \|P_{h,\gamma} f\|_{L^{\frac 12 5}_{t}L^{\infty}_{x}}\lesssim_{h} \|f\|_{L^{\frac {12} 7}_{t}L^{1}_{x}}\,.
\end{equation}
For now, we focus on a given $\gamma>h^{1/3}$.
First, we remark that ${P}_{h,\gamma}$ satisfies inhomogeneous Strichartz estimates with $\upgamma(d)=1/4$, but with a constant related to $\gamma$ (from the $a^{1/4}$, $\gamma^{1/4}$ factors in \eqref{eq:37}, \eqref{eq:37bis}, \eqref{eq:37ter})
\begin{equation}
  \label{eq:9}
  \|P_{h,\gamma} f\|_{L^{2}_{t} L^{2\frac{2d-3}{2d-7}}_{x}} \lesssim_{h} \gamma^{\frac 1 {2d-3}} \| f\|_{L^{2}_{t} L^{2\frac{2d-3}{2d+4}}_{x}}\,,\quad\text{or},\,\text{for}\,d=3,\, \,\, \|P_{h,\gamma} f\|_{L^{\frac 8 3}_{t}L^{\infty}_{x}}\lesssim_{h} \gamma ^{\frac 1 4} \|f\|_{L^{\frac 8 5}_{t}L^{1}_{x}}\,.
\end{equation}
This follows from performing the usual duality and interpolation argument directly on $P_{h,\gamma}$ rather than $P_{h}$, using Proposition \ref{commutgamma} to handle commutation of the localization with the flow.
At the same time, $W_{h,\gamma}$ satisfies the usual short time dispersion (either from \cite{blsmso08} or from \eqref{eq:36} and close inspection of the $N=1$ cases, especially Lemma \ref{lemN=1transverse}), hence homogeneous Strichartz estimates on a time interval of size $\gamma^{1/2}$ (this may be seen as a direct consequence of short time Strichartz estimates proven in a more general context in \cite{blsmso08}.) With the help of Proposition \ref{commutgamma}, one may sum such $L^{2}_{t}$ estimates over $\gamma^{-1/2}$ intervals of length $\gamma^{-1/2}$, obtaining 
$$
\|W_{h,\gamma} f_{0}\|_{L^{2}_{t} L^{2\frac{d-1}{d-3}}_{x}} \lesssim_{h} \gamma^{-\frac 1 {4}} \| f_{0}\|_{2}
$$
and then revert to inhomogenous estimates,
\begin{equation}
  \label{eq:10}
    \|P_{h,\gamma} f\|_{L^{2}_{t} L^{2\frac{d-1}{d-3}}_{x}} \lesssim_{h} \gamma^{-\frac 1 {2}} \| f\|_{L^{2}_{t} L^{2\frac{d-1}{d+1}}_{x}}\,.
\end{equation}
(abusing the enpoint for $d=3$, for which exponents are to be shifted to avoid the forbidden endpoint). One then interpolates between \eqref{eq:9} and \eqref{eq:10} to retain $\gamma^{\varepsilon(q)}$, with $\varepsilon(q)>0$ dictated by interpolation (to sum over $\gamma$ later),
\begin{equation}
  \label{eq:11}
  \|P_{h,\gamma} f\|_{L^{2}_{t} L^{q}_{x}} \lesssim_{h}  \gamma^{\varepsilon(q)}\| f\|_{L^{2}_{t} L^{q'}_{x}}\,,\quad\text{or},\,\text{for}\,d=3,\, \,\, \|P_{h,\gamma} f\|_{L^{\frac {12} {5-12\varepsilon}}_{t}L^{\frac 1 \varepsilon}_{x}}\lesssim_{h} \gamma ^{\varepsilon} \|f\|_{L^{\frac {12} {7+12\varepsilon}}_{t}L^{\frac 1 {1-\varepsilon}}_{x}}\,.
\end{equation}
with $1/2-1/q>2d/((d-1)(2d-1))$ for $d\geq 4$: this corresponds to $\upgamma(d)>1/4-1/4d$ for $d\geq 3$. Taking advantage of $\gamma^{\varepsilon}$, we sum over dyadic $\gamma$'s to get estimates for $P^{\sharp}_{h}=\sum_{\gamma>h^{1/3}} P_{h,\gamma}$, and the resulting estimate is always worse than \eqref{eq:9} for $P^{\flat}_{h}$, that is to say, $q>2(3d-4)/(3d-7)$. As such, $P_{h}=P^{\sharp}_{h}+P^{\flat}_{h}$ satisfies the same set of estimates as $P^{\sharp}_{h}$, that is, Strichartz estimates with $\upgamma(d)>1/4-1/4d=1/6+(1/4)(1/3-1/d)$. In particular, for $d=3$, we obtain (except for the endpoint) the set of Strichartz estimates that would result from  dispersion with a loss of only $1/6$, which is the loss resulting from cusp-like singularities in the wave front. This completes the proof of Theorem \ref{thStri}, up to proving Proposition \ref{commutgamma}.

It remains to handle the issue of localization with respect to $\alpha$ in our parametrix. For this, we check that we do have a suitable form of the group property for the operator $W_{h,\gamma}(t)$: assume that $\tilde W_{h,\gamma}$ is the same operator (with kernel denoted by $\mathcal{\tilde P}_{h,\gamma}$) but with a cut-off in $\alpha$, denoted $\tilde\chi_1$, which is the identity on the support of the cut-off $\chi_1$ in $W_{h,\gamma}$.
\begin{prop}\label{commutgamma}
  We have $\tilde W_{h,\gamma}(t_{1})\circ W_{h,\gamma}(t_{2})=W_{h,\gamma}(t_{1}+t_{2})$, modulo $O(h^{\infty})$. In terms of kernels,
  \begin{equation}
    \label{eq:7}
    \mathcal{P}_{h,\gamma}(t_1+t_2,x,y;{a,b})=\int_{\tilde x,\tilde y}\mathcal{ \tilde P}_{h,\gamma}(t_1,x,y;{\tilde x,\tilde y}) \mathcal{P}_{h,\gamma}(t_2,\tilde x,\tilde y;a,b) \,d\tilde x d\tilde y+O(h^{\infty})\,.
  \end{equation}
  Similarly, for $\gamma_{1}$ and $\gamma_{2}$ such that the corresponding cut-offs have disjoint support, $ W_{h,\gamma_{1}}(t_1)\circ W_{h,\gamma_{2}}(t_2)=O(h^{\infty})$ and
  \begin{equation}
    \label{eq:7bis}
    \int_{\tilde x,\tilde y}\mathcal{P}_{h,\gamma_1}(t_1,x,y;{\tilde x,\tilde y}) \mathcal{P}_{h,\gamma_2}(t_2,\tilde x,\tilde y;a,b) \,d\tilde x d\tilde y=O(h^{\infty})\,.
  \end{equation}
\end{prop}
\begin{proof}
Recall that $\mathcal{P}_{h,\gamma}$ is given by \eqref{defProndgamma} with  $K_{\omega,\gamma}$ defined in \eqref{KKKgam}. Then we rewrite
\[
\int_{\tilde x,\tilde y}\mathcal{ \tilde P}_{h,\gamma}(t_1,x,y;{\tilde x,\tilde y}) \mathcal{P}_{h,\gamma_1}(t_2,\tilde x,\tilde y;a,b) \,d\tilde x d\tilde y=\sum_{N_1, N_2}\int e^{-iN_1 L(\omega)}\cutoffchi^{\flat}(h^{2/3}\omega/\ceps_{0}) J_{N_2,\gamma_1, \gamma_2}(t_1, t_2,x,y,\omega)d\omega,
\]
where we set
 \begin{align*}
 J_{N_2,\gamma_1, \gamma_2}(t_1,t_2,x,y,\omega) = &\int_{\tilde x,\tilde y} \tilde K_{\omega,\gamma_1}(g_{h,(\tilde x, \tilde y)})(t_1,x,y)F_{N_2, t_2,\gamma_2}(\tilde x,\tilde y)d\tilde x d\tilde y\,, \\
 F_{N_2, t_2, \gamma_2}(\tilde x,\tilde y)= &e^{-i N_2L(\tilde\omega)}\cutoffchi^{\flat}(h^{2/3}\tilde\omega/\ceps_{0})K_{\tilde\omega,\gamma_2}(g_{h,(a,b)})( t_2, \tilde x,\tilde y)\,,
 \end{align*}
 and $\tilde K_{\omega,\gamma_1}(g_{h,(\tilde x, \tilde y)})$ has the same form as \eqref{KKKgam} but with a cut-off $\tilde\chi_1(\frac{\omega}{\gamma_1|\eta|^{2/3}})$ instead of $\chi_1(\frac{\omega}{\gamma_1|\eta|^{2/3}})$, supported near $1$ and equal to $1$ on the support of $\chi_1$.
 
From \eqref{KKKgam}, $F_{N_2, t_2,\gamma_2}(\tilde x,\tilde y)$ expands as
\[
\int G(\tilde x, \tilde y, \tilde \eta,\tilde \omega)\chi^{\#}(\tilde\omega)q^{1/6}(\tilde \eta)\varkappa (h\tilde \eta)\varkappa(h\tau_q(\tilde\omega,\tilde\eta))
 \chi_1(\frac{\tilde\omega}{\gamma_2 |\tilde\eta|^{2/3}})e^{-i N_2L(\tilde\omega)+i t_2\tau_q(\tilde\omega,\tilde\eta)}\hat{g}_{h,(a,b)}(\tilde\eta, \frac{\tilde{\omega}}{h^{1/3}}) d\tilde \omega d\tilde\eta, 
\]
hence $F_{ N_2,  t_2, \gamma_2}(\tilde x,\tilde y)=J(f_{N_2,t_2,\gamma_2})$, where we have set
\begin{equation}\label{ftildentgam}
\hat{f}_{N_2,t_2, \gamma_2}(\tilde \eta, \frac{\tilde \omega}{h^{1/3}})=\chi_1(\frac{\tilde\omega}{\gamma_2 |\tilde\eta|^{2/3}})e^{-i N_2L(\tilde\omega)+i t_2\tau_q(\tilde\omega,\tilde\eta)}\hat{g}_{h,(a,b)}(\tilde\eta, \frac{\tilde{\omega}}{h^{1/3}}).
\end{equation}
We turn to $\tilde K_{\omega,\gamma_1}(g_{h,(\tilde x, \tilde y)})(t_1,x,y)$ which expands as 
\begin{equation}\label{tildeKagam}
\int G(x,y,\eta,\omega)\chi^{\#}(\omega)q^{1/6}( \eta)\varkappa (h \eta)\varkappa(h\tau_q(\omega,\eta))
\tilde\chi_1(\frac{\omega}{\gamma_1 |\eta|^{2/3}})e^{it_1\tau_q(\omega,\eta)}\hat{g}_{h,(\tilde x,\tilde y)}(\eta, \frac{\omega}{h^{1/3}}) d\eta.
\end{equation}
\begin{lemma}
We have
\[
\int_{\tilde x, \tilde y} g_{h, (\tilde x,\tilde y)}(y',\rho) F_{N_2,t_2, \gamma_2}(\tilde x,\tilde y) d\tilde x d\tilde y= J^{-1}( F_{N_2, t_2,\gamma_2})(y',\rho).
\]
\end{lemma}

\begin{proof}
Recall from \eqref{ghaF} that, in the variables $\theta=h\eta$, $\alpha=h^{2/3}\omega$, we may write 
\begin{multline}
 g_{h, (\tilde x,\tilde y)}(y',\rho) =\frac{1}{h^{2d+1}}\int e^{\frac ih (y\cdot '\theta'-\Phi(\overline{x},\overline{y},\theta',\alpha,s)+\rho\alpha+(\overline{x}-\tilde x)\sigma +(\overline{y}-\tilde y)\cdot\theta)} \\
 \times q_h(\overline{x}, \overline{y}, \theta',\alpha, s)q^{-1/6}(\theta')\tilde \chi_0(\sigma)\varkappa(\theta) d\overline{x} d\overline{y} d\theta' d\sigma ds d \alpha d \theta,
\end{multline}
where $\tilde\chi_0$ is supported near $0$ and equals $1$ on the support of $\chi_0$ (of Lemma \ref{lemgab}). We thus obtain
\begin{multline}
\int_{\tilde x, \tilde y} g_{h, (\tilde x,\tilde y)}(y',\rho) F_{N_2,t_2, \gamma_2}(\tilde x,\tilde y) d\tilde x d\tilde y=\frac{1}{h^{2d+1}}\int e^{\frac ih (y'\cdot\theta'-\Phi(\overline{x},\overline{y},\theta',\alpha,s)+\rho\alpha+\overline{x}\sigma +\overline{y}\cdot\theta)} q_h(\overline{x}, \overline{y}, \theta',\alpha, s)q^{-1/6}(\theta')\\
 \times e^{-\frac ih(\tilde x \sigma +\tilde y.\theta)}\tilde\chi_0(\sigma)\varkappa(\theta)\hat{F}_{N_2,t_2, \gamma_2}(\sigma/h,\theta/h).
\end{multline}
As $\hat{ F}_{N_2, t_2, \gamma_2}=\tilde \chi_0(\sigma)\hat{F}_{N_2,t_2, \gamma_2}(\sigma/h,\theta/h)$ (which contains $\chi_0(\sigma)$), the last integral equals $J^{-1}( F_{N_2,t_2, \gamma_2})$.
\end{proof}
Using the lemma and $F_{N_2, t_2, \gamma_2}(\tilde x,\tilde y)=J(f_{N_2,t_2, \gamma_2})$ yields
\begin{equation}\label{fplusinf}
\int_{\tilde x, \tilde y} g_{h, (\tilde x,\tilde y)}(y',\rho) F_{N_2,t_2, \gamma_2}(\tilde x,\tilde y) d\tilde x d\tilde y= J^{-1}\circ J(f_{N_2,t_2,  \gamma_2})(y',\rho)= f_{N_2,t_2, \gamma_2}(y',\rho)+O(h^{\infty}),
\end{equation}
hence taking the Fourier transform of $g_{h, (\tilde x,\tilde y)}(y',\rho)$ (as needed in \eqref{tildeKagam}) gives $\hat{f}_{N_2,t_2,\gamma_2}(\theta/h,\alpha/h)$ modulo $O(h^{\infty})$ terms.
Hence, $J_{N_2,\gamma_1, \gamma_2}(t_1,t_2,x,y,\omega)$ becomes
\[
J_{N_2,\gamma_1, \gamma_2}(t_1,t_2, x,y,\omega)=\int G(x,y,\eta,\omega)\chi^{\#}(\omega)q^{1/6}( \eta)\varkappa (h \eta)\varkappa(h\tau_q(\omega,\eta))
\tilde \chi_1(\frac{\omega}{\gamma_1 |\eta|^{2/3}})e^{it_1 \tau_q(\omega,\eta)}\hat{f}_{N_2,t_2,\gamma_2}(\eta,\omega/h^{1/3}),
\]
with $\hat{f}_{N_2,t_2,\gamma_2}$ given in \eqref{ftildentgam}. We find, modulo $O(h^{\infty})$ terms from \eqref{fplusinf} (which stay $O(h^{\infty})$ as the sums over $N_1,N_2$ below are finite),
\begin{multline}\label{finalcompgam}
\int_{\tilde x,\tilde y}\mathcal{ \tilde P}_{h,\gamma_1}(t_1,x,y;{\tilde x,\tilde y}) \mathcal{P}_{h,\gamma_2}(t_2,\tilde x,\tilde y;a,b) \,d\tilde x d\tilde y=\sum_{N,\tilde N}\int e^{it_1\tau_q(\omega,\eta)-i N_1 L(\omega)}\cutoffchi^{\flat}(h^{2/3}\omega/\ceps_{0})G(x,y,\eta,\omega)\\
\times \chi^{\#}(\omega)q^{1/6}( \eta)\varkappa (h \eta)
 \varkappa(h\tau_q(\omega,\eta))
\tilde\chi_1(\frac{\omega}{\gamma_1 |\eta|^{2/3}})\hat{f}_{N_2,t_2,\gamma_2}(\eta,\omega/h^{1/3})d\omega\\
=\sum_{N_1,N_2}\int e^{i(t_1+t_2)\tau_q(\omega,\eta)-i(N_1+N_2)L(\omega)}G(x,y,\eta,\omega) \chi^{\#}(\omega)q^{1/6}( \eta)\varkappa (h \eta)
 \varkappa(h\tau_q(\omega,\eta))\\
\tilde\chi_1(\frac{\omega}{\gamma_1 |\eta|^{2/3}})\chi_1(\frac{\omega}{\gamma_2 |\eta|^{2/3}})\hat{g}_{h,(a,b)}(\eta, \frac{\omega}{h^{1/3}}).
\end{multline}
Recall that $\tilde\chi_1=1$ on the support of $\chi_1$. Therefore, if $\gamma_1,  \gamma_2\in 1/2^{\mathbb{N}}$, with $\gamma_1\neq\gamma_2$, then $\tilde\chi_1(\frac{\omega}{\gamma_1 |\eta|^{2/3}})\chi_1(\frac{\omega}{\gamma_2 |\eta|^{2/3}})=0$ and in this case we obtain \eqref{eq:7bis}.

It remains to deal with $\gamma_1=\gamma_2=:\gamma$ : in this case, as $\tilde\chi_1\chi_1=\chi_1$, the last sum in \eqref{finalcompgam} equals 
\begin{equation}\label{sumnntilde}
\sum_{N_1,N_2} <e^{-i(N_1+N_2)L(\omega)}, \cutoffchi^{\flat}(h^{2/3}\omega/\ceps_{0})K_{\omega,\gamma}(g_{h,(a,b),\gamma})(t_1+t_2,x,y)>_{\omega}+O(h^{\infty}),
\end{equation}
and we need to show that the double sum (over $N_1,N_2$) yields indeed $\mathcal{P}_{h,\gamma}(t_1+t_2,x,y ;a,b)$. This is the goal of the next lemma.
\begin{lemma}
The sum in \eqref{sumnntilde} can be written as 
\begin{equation}\label{calPfinal}
\sum_{N} <e^{-i N L(\omega)}, \cutoffchi^{\flat}(h^{2/3}\omega/\ceps_{0})K_{\omega,\gamma}(g_{h,(a,b),\gamma})(t_1+t_2,x,y)>_{\omega}.
\end{equation}
\end{lemma}

\begin{proof}
We use arguments similar to those in Lemma \ref{sommeNfinie}, i.e. we check when the phase function 
of $\int e^{-i(N_1+N_2)L(\omega)}K_{\omega,\gamma}( g_{h,(a,b),\gamma})(t_1+t_2,x,y)d\omega$ may be stationary with respect to $\alpha=h^{2/3}\omega$. 

Recall first that, independently of the size of $a$ (or $\tilde x$), the phase function of $\hat{g}_{h,(a,b)}$ (or of $\hat{g}_{h,(\tilde x,\tilde y)}$) equals that of $G(a,b,\eta,\omega)$ (or of $G(\tilde x,\tilde y,\tilde\eta,\tilde\omega)$) : this follows from Propositions \ref{lemp} and and \ref{proptildeJ} in case $a>h^{2/3-\epsilon}$ and from Lemma \ref{proptildeJ} when $a\lesssim h^{2/3-\epsilon}$ and $\omega>h^{-\epsilon}$ (for some $\epsilon>0$). (This property also holds when $a\lesssim h^{2/3-\epsilon}$ and $\omega$ is bounded). These constructions of $g_{h,(a,b)}(y',\rho)$ (resp. $g_{h,(\tilde x,\tilde y)}(y',\rho)$) in both cases are very similar and these functions are compactly supported for $\rho$ near $0$.
As in Lemma \ref{sommeNfinie}, the phase function of $<e^{-i N L(\omega)}, \cutoffchi^{\flat}(h^{2/3}\omega/\ceps_{0})K_{\omega,\gamma}(g_{h,(a,b),\gamma})(t_1+t_2,x,y)>_{\omega}$  (with $\eta=\theta/h$, $\omega=\alpha/h^{2/3}$) is given by \eqref{phaKom} and its critical points with respect to $\alpha,\sigma$ satisfy the equations \eqref{eq:critalphaNN} and \eqref{critsigma}. Therefore, taking $\alpha=\gamma A$, $\sigma=\sqrt{\gamma} \Sigma$, $T_1=t_1/(2\sqrt{\gamma})$, $X=x/\gamma$ we must have
\[
T_1-\Sigma=\rho+2N_1\sqrt{A},\text{ where } \Sigma^2+Xq^{1/3}(\theta)(1+\Lp+O(\gamma))=A \text{ and } \rho \text{ is small}.
\]
If there are two non-trivial contributions for the same $t_1>0$, corresponding to $N_1$ and $N_1-1$, then there exists $A_1\sim 1$ and $\Sigma_1$ satisfying $\Sigma^2_1+Xq^{1/3}(\theta)(1+\Lp+O(\gamma))=A_1$ such that
\[
T_1-\Sigma_1=\rho+2(N_1-1)\sqrt{A_1}=\rho+2N_1\sqrt{A_1}-2\sqrt{A_1}.
\]
The only solutions satisfying the last equations verify $\Sigma/\sqrt{A}\sim -1$ and $\Sigma_1/\sqrt{A_1}\sim 1$. On the other hand, there are no solutions if, instead of $N_1-1$ and $N_1$ we consider $N_1-2$, $N_1$. Therefore, each time that we obtain non-trivial contributions from two consecutive values of $N_1$ in the sum \eqref{sumnntilde}, the corresponding critical points in $\Sigma$ satisfy $\Sigma^2/A\sim 1$. In the same way, if, for fixed $t_2$, $N_2$ and $N_2+1$ provide nontrivial contributions in $f_{N_2,t_2,\gamma}(y',\rho)$ and $f_{N_2+1,t_2,\gamma}(y',\rho)$, then the (normalized) Airy variable $\tilde \Sigma$ in $G(a,b,\tilde\eta, \tilde \omega)$ has to satisfy $\tilde\Sigma^2/\tilde A\sim 1$. 

We introduce $\chi$, supported in $[-3/4,3/4]$ and equal to $1$ on $[-1/2,1/2]$ and $\chi_{\pm}(z):=(1-\chi)(z) $ if $\pm z>0$, and equal to $0$ otherwise: then $\chi+\sum_{\pm}\chi_{\pm}=1$ everywhere. If $\sigma$, $s$ denote the Airy variables of $G(x,y,\eta,\omega)$ and $G(a,b,\eta,\omega)$, we split its symbol in two parts, $p_h=p_h\chi(\sigma^2/\alpha)+p_h(1-\chi(\sigma^2/\alpha))$ and write $K_{\omega,\gamma}(f)=K^0_{\omega,\gamma}(f)+K^{\pm}_{\omega,\gamma}(f)$. We then do the same with $ g_{h,(a,b),\gamma}$, that we write as a sum $g_{h,(a,b),\gamma}= g^0_{h,(a,b),\gamma}+ g^{\pm}_{h,(a,b),\gamma}$, where in the integral form of $\hat{g}_{h,(a,b),\gamma}$ we added the cut-offs $\chi(s^2/\alpha)$ and $(1-\chi)(s^2/\alpha)$.

Using the same arguments as above, we notice that, with $N_t=[t/4\sqrt{\gamma}]$, the only non-trivial contribution in last line of \eqref{finalcompgam} comes from pairs $(N_t,N_{\tilde t}), (N_t\mp 1, N_{\tilde t}),(N_t, N_{\tilde t}\pm 1), (N_t\mp 1, N_{\tilde t}\pm 1)$ which corresponds to the following products of cutoffs (with respect to $(\sigma/\alpha), (s/\alpha)$) : $(\chi,\chi)$, $(\chi_{\pm},\chi)$, $(\chi,\chi_{\pm})$ and $(\chi_{\pm},\chi_{\pm})$. Summing up all these contributions allows to obtain \eqref{calPfinal} (as we recover $(\chi+\sum_{\pm}\chi_{\pm})(\cdot)(\chi+\sum_{\pm}\chi_{\pm})(\cdot)$); the sum of the contributions coming from $|N-N_t|\geq 2$ or $|\tilde N-\tilde N_{\tilde t}|\geq 2$ equals $O(h^{\infty})$ by repeated integrations by parts and using that these sums stay finite. This achieves the proof of Proposition \ref{commutgamma}.
\end{proof}
From  the last Lemma we conclude the proof of Proposition \ref{commutgamma}.
\end{proof}
\section{Appendix}
We provide details on the proof of Lemma \ref{lemestimderivekk} and on obtaining $\canonchi_M$ (from Proposition \ref{lemgamma}).
\subsection{Proof of Lemma \ref{lemestimderivekk}}
We start with twisted modes $\tilde e$ (see Definition \ref{blabla}):
\begin{lemma}\label{lemequivL2normstildee}
Let $k$ such that $\omega_kh^{2/3}\leq \ceps_0$ with $\ceps_0< 1/100$. There exists  constants $l_0>0$ and $0<c_0<C_0$ independent of $k,h,a$ such that, for all $y$ such that $|y|\leq l_0$,
\begin{equation}\label{equivtildee2}
c_0\leq \|\tilde e(\cdot,y,\eta,\omega_k)\|_{L^2}\leq C_0\,.
\end{equation}
\end{lemma}
\begin{proof}
By definition, $\tilde{e}(x,y,\eta,\omega_k)=\frac{q(\eta)^{\frac 16}}{\sqrt{L'(\omega_k)}}e^{-i(y\cdot \eta+|\eta|B_0(y,\eta/|\eta|))}G(x,y,\eta,\omega_k)$ and, using \eqref{eq:propL2}, $L'(\omega_k)=2\pi \|{Ai}(\cdot -\omega_k)\|^2_{L^2}\sim \omega_k^{1/2}$, for all $k\geq 1$. From \eqref{eq:defG}, with $e_0$, $p_{0}$ and $p_{1}$ from Theorem \ref{thmMelrose},
\begin{equation}\label{developtildee}
\tilde{e}(x,y,\eta,\omega_k)=\frac{q(\eta)^{1/6}}{ \sqrt{L'(\omega_k)}}e^{i(\psi(x,y,\eta,\omega_k)-y\cdot \eta-|\eta|B_0(y,\eta/|\eta|))}\Big(p_0{Ai}((-\zeta)+ip_1q^{-1/6}(\eta){Ai}'(-\zeta)\Big),
\end{equation}
where $\zeta(x,y,\eta,\omega_k)=-\omega_k+x|\eta|^{2/3}e_0(x,y,\eta/|\eta|,\omega_k/|\eta|^{2/3})$. 
Using Cauchy-Schwarz,
\begin{multline}\label{estimtildeeL2}
\int_0^{\infty} |\tilde{e}(x,y,\eta,\omega_k)|^2dx 
\leq \frac{q^{1/3}(\eta)}{L'(\omega_k)}\Big[\int_0^{\infty}\Big| p_0{Ai}(-\zeta)\Big|^2dx+2\Big(\int_0^{\infty}\Big| p_0{Ai}(-\zeta)\Big|^2dx\Big)^{1/2}\\ {}\times \Big(\int_0^{\infty}\Big| p_1q^{-1/6}(\eta){Ai}'(-\zeta)\Big|^2dx\Big)^{1/2}
+\int_0^{\infty}\Big| p_1q^{-1/6}(\eta){Ai}'(-\zeta)\Big|^2dx\Big]\,.
\end{multline}
For values $x|\eta|^{2/3}e_0> \omega_k$ both ${Ai}(-\zeta)$ and ${Ai}'(-\zeta)$ are exponentially decreasing.
Setting $X:=xe_0(x,y,\eta/|\eta|,\omega_k/|\eta|^{2/3})$ yields $x=x(X,y,\eta/|\eta|,\omega_k/|\eta|^{2/3})$ and
\[
\frac{q^{1/3}(\eta)}{L'(\omega_k)}\int_0^{\infty}\Big| p_0{Ai}(-\zeta)\Big|^2dx=\frac{q^{1/3}(\eta)}{L'(\omega_k)}\int_0^{\infty}\Big| \tilde p_0{Ai}(X|\eta|^{2/3}-\omega_k)\Big|^2\frac{dx}{dX}dX,
\]
where $\tilde p_0(X,y,\eta,\omega_k):=p_0(x(X,y,\eta/|\eta|,\omega_k/|\eta|^{2/3})$. Here $\omega_k/|\eta|^{2/3}\ll 1$ from $\eta\sim 1/h$ and $h^{2/3}\omega_k\ll 1$. Moreover $\frac{dx}{dX}=\frac{1}{e_0(0,\cdot)}+O(X)$, $e(0,\cdot)$ is close to $1$ and $X\sim x\leq \omega_k|\eta|^{2/3}\ll 1$ (as for large $X$, ${Ai}(\cdot)$ is exponentially decreasing.) With $C:=\sup |\tilde p_0|^2|\frac{dx}{dX}|<\infty$ and rescaling variables $\tilde X=X|\eta|^{2/3}$, 
\begin{equation}\label{estimAisquare}
\frac{q^{1/3}(\eta)}{L'(\omega_k)}\int_0^{\infty}\Big| p_0{Ai}(-\zeta)\Big|^2dx\leq C \frac{q^{1/3}(\eta)/|\eta|^{2/3}}{L'(\omega_k)}\int_0^{\omega_k}{Ai}(\tilde X-\omega_k)^2dX=Cq^{1/3}(\eta/|\eta|).
\end{equation}
For the integral with ${Ai}'$ we proceed similarly: $p_1$ is bounded, $\|{Ai}'(\tilde X-\omega_k)\|^2_{L^2_{\tilde X}}\lesssim \omega_k^{3/2}$ and $\tilde X\lesssim \omega_k$ as for large values we have exponential decay. We compute, with $X=xe_0(x,\cdot)$, then $\tilde X=X|\eta|^{2/3}$,
\begin{multline}\label{estimxAiprimsquare}
\frac{q^{1/3}(\eta)}{L'(\omega_k)}\int_0^{\infty}\Big| p_1q^{-1/6}(\eta){Ai}'(-\zeta)\Big|^2dx\leq \frac{C'}{L'(\omega_k)}\int_0^{\infty}|X\mathrm{Ai}'(X|\eta|^{2/3}-\omega_k)|^2dX\\
\leq\frac{C'|\eta|^{-2/3}}{L'(\omega_k)}\int_0^{\omega_k}\tilde X^2|\eta|^{-4/3}|{Ai}'(\tilde X-\omega_k)|^2d\tilde X
\leq C'|\eta|^{-2}\omega_k^{3+1/2}/L'(\omega_k)\sim C'(\omega_kh^{2/3})^3\ll C',
\end{multline}
where we used $|{Ai}'(z)^2|\leq (1+|z|)^{1/2}$ and that $p_1|_{x=0}=0$. Collecting all bounds yields the upper bound in \eqref{equivtildee2}, which  will be enough  for proving Lemma \ref{lemestimderivekk} below. We now prove the lower bound, which was used for the proof of Proposition \ref{propmatrix}.
 From \eqref{estimtildeeL2}, using \eqref{estimAisquare}, \eqref{estimxAiprimsquare} and $\omega_k h^{2/3}\leq \ceps_0$, we have 
\begin{multline}\label{estimtildeeL2<}
\int_0^{\infty} |\tilde{e}(x,y,\eta,\omega_k)|^2dx\geq \frac{q^{1/3}(\eta)}{L'(\omega_k)}\Big[\int_0^{\infty}\Big| p_0{Ai}(-\zeta)\Big|^2dx-2\Big(\int_0^{\infty}\Big| p_0{Ai}(-\zeta)\Big|^2dx\Big)^{1/2}\\ 
\quad\quad\quad\quad\quad\quad\quad\quad\quad{}\times\Big(\int_0^{\infty}\Big| xp_1q^{-1/6}(\eta){Ai}'(-\zeta)\Big|^2dx\Big)^{1/2}
-\int_0^{\infty}\Big| xp_1q^{-1/6}(\eta){Ai}'(-\zeta)\Big|^2dx\Big]\\
\geq   \frac{q^{1/3}(\eta)}{L'(\omega_k)}\int_0^{\infty}\Big| p_0{Ai}(-\zeta)\Big|^2dx-O(\omega_kh^{2/3})^{3/2}\,.
\end{multline}
As $\omega_k h^{2/3}\leq \ceps_0$, we are left to prove that $\frac{q^{1/3}(\eta)}{L'(\omega_k)}\int_0^{\infty}\Big| p_0{Ai}(-\zeta)\Big|^2dx$ can be bounded from below by a constant independently of $k,h,a$. From ellipticity of  $p_0\sim 1$, there exists $\cveps_1>0$ such that $p_0(x,y,\eta,\omega_k) \geq 1/2$ for all $(x,y)$ such that $|(x,y)|\leq \cveps_1$. On the other hand, for values $x|\eta|^{2/3}e_0(x,y,\eta/|\eta|,\omega_k/|\eta|^{2/3})>\omega_k$ with $|\eta|\sim 1/h$, ${Ai}(-\zeta)$ is exponentially decreasing: thus,  the bulk of the $L^2$ norm of $p_0{Ai}(-\zeta)$ is located for $x\lesssim \omega_kh^{2/3}\leq \ceps_0\ll 1$ and $\int_{4\ceps_0}^{\infty}|p_0{Ai}(-\zeta)|^2dx=O(h^{\infty})$. Taking $\ceps_0$ smaller if necessary such that $\ceps_0<\cveps_1/4$, we have, for all $|y|\leq \cveps_1$
\begin{equation}\label{estimAiminortildee}
\frac{q^{1/3}(\eta)}{L'(\omega_k)}\int_0^{\infty}\Big| p_0{Ai}(-\zeta)\Big|^2dx\geq \frac{q^{1/3}(\eta)}{4L'(\omega_k)}\int_0^{4\ceps_0}|{Ai}(-\zeta)|^2dx+O(h^{\infty})\,.
\end{equation}
Ellipticity of $e_0$ near $(x,y)=(0,0)$ provides $c>0$ and $\cveps_2>0$ such that $e_0(x,y,\cdot)\geq c$ for all $|(x,y)|\leq \cveps_2$. Taken $\cveps_0$ smaller if necessary (so that $\cveps_0<\cveps_2/4$), we can assume that $e_0(x,y,\cdot)\geq c$ for all $x\leq 2\cveps_0$ and all $|y|\leq \cveps_2$. Let $X=xe_0(x,y,\eta/|\eta|,\omega_k/|\eta|^{2/3})$ for $x\leq \cveps_0$ and $|y|\leq l_0:= \min\{\cveps_1,\cveps_2\}$, then $|\frac{dx}{dX}|\geq \frac 1c$ for all $0\leq x\leq 4\cveps_0$ and $|y|\leq l_0$ and 
\[
 \frac{q^{1/3}(\eta)}{4L'(\omega_k)}\int_0^{4\ceps_0}|{Ai}(-\zeta)|^2dx\geq  \frac{q^{1/3}(\eta/|\eta|)}{4cL'(\omega_k)}\int_0^{4\ceps_0|\eta|^{2/3}}{Ai}^2(\tilde X-\omega_k)d\tilde X.
\]
As $\omega_k\leq 4\cveps_0|\eta|^{2/3}$ for $h|\eta|\in [1/2,2]$ and $\omega_kh^{2/3}\leq \cveps_0$, we find
\begin{equation}
\int_0^{4\ceps_0|\eta|^{2/3}}{Ai}^2(\tilde X-\omega_k)d\tilde X=\int_0^{\infty}{Ai}^2(\tilde X-\omega_k)d\tilde X-\int_{4\ceps_0|\eta|^{2/3}}^{\infty}{Ai}^2(\tilde X-\omega_k)d\tilde X=L'(\omega_k)+O(h^{\infty}) 
\end{equation}
and therefore the left hand side term in \eqref{estimAiminortildee} is bounded from below by $\inf_{\Theta\in\mathbb{S}^{d-2}}q^{1/3}(\Theta)/(4c)$ for all $y$ with $|y|\leq l_0$. As $q$ is  positive definite, this completes the proof of Lemma \ref{lemequivL2normstildee}.
\end{proof}
\begin{proof}(of Lemma \ref{lemestimderivekk}) 
Using $B_{2j}(y,\cdot)=O(y)$ for all $j\geq 2$ and Corollary \ref{corUpsi}, the phase $\psi(x,y,\eta,\omega_k)-y\cdot \eta-|\eta|B_0(y,\eta/|\eta|)$ of $\tilde e(x,y,\eta,\omega_k)$ reads (see \eqref{developtildee})
\begin{equation}\label{phasetildee}
(\tau_{q}(\omega_k,\eta)-|\eta|)(B_0+B_2)+\tau_{q}(\omega_k,\eta)(O(x)\mathcal{H}_{j\geq 2}+O(\zeta/|\eta|^{2/3})\mathcal{H}_{j\geq 2}+O(y)\mathcal{H}_{j\geq 3}).
\end{equation}
Using Lemma \ref{lemequivL2normstildee}, $\|\tilde{e}(.,\omega_k)\|_{L^2(x\geq 0)}\lesssim 1$. From $\tau_{q}(\omega,\eta)-|\eta|=\frac{\omega q^{2/3}(\eta)}{|\eta|+\tau_{q}(\omega,\eta)}\sim \omega |\eta|^{1/3}$ and $x\lesssim \omega_k/|\eta|^{2/3}$, taking derivatives with respect to $y$ or $\eta$ of the phase of \eqref{developtildee} provides, at each step, $\omega_k |\eta|^{1/3}\sim \omega_k/h^{1/3}$. On the other hand, taking the derivatives (with respect to $y$ or $\eta$) in the last factor of the right hand side of \eqref{developtildee} and using that ${Ai}''(-\zeta)=\zeta {Ai}(-\zeta)$ provides
\begin{equation}
\partial^{\beta_1}_{y}\partial^{\beta_2}_{\eta}\Big(p_0{Ai}(-\zeta)+ip_1q^{-1/6}(\eta){Ai}'(-\zeta)\Big)=(\omega |\eta|^{1/3})^{\beta_1+\beta_2}\Big(p^{(\beta_1,\beta_2)}_0{Ai}(-\zeta)+i p^{(\beta_1,\beta_2)}_1 q^{-1/6}(\eta){Ai}'(-\zeta)\Big),
\end{equation}
where $p^{(\beta_1,\beta_2)}_0$ and $p^{(\beta_1,\beta_2)}_1$ are asymptotic expansions with main contributions homogeneous of degree $0$ and small parameter $(\omega_k|\eta|^{1/3})^{-1}$. Then, \eqref{estimderivekk} follows from bounds like in \eqref{estimAisquare} and \eqref{estimxAiprimsquare}.
\end{proof}
\subsection{The generating function $\varphi_{\Gamma}$ of $\canonchi_{M}$}
\label{Secgamma}
We aim at proving \eqref{eq:GAB}.
Set $\Theta=\varrho\vartheta$ with $\varrho =|\Theta|$ near $1$ and $\vartheta=\Theta/|\Theta|$. Functions $A_{\Gamma}, B_{\Gamma}$ are to be defined near the glancing set $\mathcal{GL}=\{x=0,\Xi=0,\varrho-1=0\}$ and for $(y,\vartheta)$ near $\{0\}\times \mathbb{S}^{d-1}$. We work with formal Taylor expansions $F$ near $\mathcal{GL}$ such that $F=\sum_{a,b,c} f_{a,b,c}(y,\vartheta){X_{M}}^a(\varrho-1)^b\xi^c$.
We attribute a degree to each factor $x, \varrho-1, \Xi$: a monomial of the form $x^a(\varrho-1)^b\Xi^c$ is homogeneous of degree $k$ if and only if $c+2(a+b)=k$. 
For such a  formal serie $F(x,y,\Xi,\varrho,\vartheta)$, defined near $\mathcal{GL}$, we write $F= \sum_{k\geq 0} F_k$, where $F_k$ is homogeneous of degree $k$; we also write $F\in \mathcal{H}_{\geq j}$ if and only if $F=\sum_{k\geq j} F_k$. Therefore, $F_0=f_0(y,\vartheta)$, $F_1=\Xi f_1(y,\vartheta)$, $
F_2={X_{M}} f_2^0(y,\vartheta)+(\varrho-1)f_2^1(y,\vartheta)+\Xi^2f_2^2(y,\vartheta)$, and so on. Replacing ${X_{M}},\xi,\eta$ by their formulas \eqref{genchi} (as functions of $(x,y,\Xi,\Theta)$) and using that from \eqref{eq:melrose} 
\begin{equation}\label{eqiffchi}
\xi^2+R(x,y,\eta)=1\quad \text{ if and only if }\quad \Xi^2+|\Theta|^2+{X_{M}}q(\Theta)=1,
\end{equation}
(where we notice that there is no $Y_{M}$ in the second equation), we get
\begin{equation}\label{formalseriesBA}
B_{\Gamma}= \sum_{j\geq 0} (\varrho-1)^j B_{2j}(y,\vartheta)\,,\quad
A_{\Gamma}= \sum_{k\geq 1}A_k\,.
\end{equation}
Using the third equation from \eqref{genchi} and $\xi|_{\mathcal{GL}}=0$, we have $A_{0}=0$. We also have $\xi(x,y,\Xi,\Theta)\in \mathcal{H}_{\geq 1}$ and ${X_{M}}(x,y,\Xi,\Theta)\in \mathcal{H}_{\geq 2}$. Moreover, from the proof of Melrose's theorem \cite{mel76}, if formal series of the form \eqref{formalseriesBA} satisfy \eqref{genchi} and \eqref{eqiffchi}, then there exist $C^{\infty}$ functions $A_{\Gamma}, B_{\Gamma}$ with the same Taylor development near $\mathcal{GL}$ satisfying \eqref{genchi} and \eqref{eqiffchi}.

We first consider in \eqref{eqiffchi} homogeneous terms of order $\leq 5$ in the expansion of $\Gamma$:  using ${X_{M}}(x,y,\Xi,\Theta)\in \mathcal{H}_{\geq 2}$, we are to write the explicit form of $A_{\Gamma}$ up to $\mathcal{H}_{j\leq 3}$ and $B_{\Gamma}$ up to $\mathcal{H}_{j\leq 5}$.  Let
$A_1=\Xi\Lp(Y,\vartheta)$, $A_2=\alpha(y,\vartheta)x+\beta(y,\vartheta)(\varrho-1)+\mu(y,\vartheta)\Xi^2$, $A_3=\alpha_1(y,\vartheta)x\Xi+\beta_1(y,\vartheta)(\varrho-1)\Xi+\mu_1(y,\vartheta)\Xi^3$ and
$A_{\Gamma}=A_{1}+A_{2}+A_{3} 
+\mathcal{H}_{j\geq 4}$.
\begin{rmq}
Understanding this process will allow us to proceed with the expansion at any order. Observe that $A_{2j}$ and $A_{2j+1}$ always have the same number of terms: the only way to obtain homogeneous terms of order $2j+1$ is to add a factor $\Xi$ to homogeneous terms of order $2j$. Moreover, each $A_{2j}$ and $A_{2j+1}$ will have $2j+1$ terms. This is of importance to understand why all $A_{k}$ may be obtained from the system of equations that will follow below.
\end{rmq}
Using these expansions for $A_{\Gamma}$ and $B_{\Gamma}$, and omitting variables for the functions $\Lp$, $\mu$, $\alpha$, $\beta$, $\mu$, $\alpha_{1}$, $\beta_{1}$ and $\mu_{1}$, \eqref{genchi} yields
\begin{align}\label{X}
X_{M}   = & x(1+\Lp+2\mu \Xi+\alpha_1x+\beta_1(\varrho-1)+3\mu_1\Xi^2+\mathcal{H}_{j\geq 3})\\
\label{xi} \xi  
 = & (1+\Lp)\Xi+(2\alpha x+\beta(\varrho-1)+\mu \Xi^2)+[2\alpha_1x\Xi+\beta_1(\varrho-1)\Xi+\mu_1\Xi^3]+\mathcal{H}_{j\geq 4}\\
\label{eta}
\eta  
  = &
      \begin{multlined}[t]
\vartheta+\nabla_yB_0+(\varrho-1)(\vartheta+\nabla_yB_2)+x\Xi\nabla_y\Lp+x^2\nabla_y\alpha+x(\varrho-1)\nabla_y\beta\\
        {}+x\Xi^2\nabla_y\mu+(\varrho-1)^2 \nabla_y B_4 +\mathcal{H}_{j\geq 5}\,.      \end{multlined}
\end{align}
Using $x\in \mathcal{H}_{j\geq 2}$ we rewrite $x=x_2+x_3+x_4+\mathcal{H}_{j\geq 5}$ where $x_j\in \mathcal{H}_{j}$; in the same way, $\eta=\eta_0+\eta_1+\eta_2+\eta_3+\eta_4+\mathcal{H}_{j\geq 5}$, with $\eta_j\in\mathcal{H}_{j}$. From \eqref{eta} we obtain 
\begin{equation}\label{eta012}
\eta_0=\vartheta+\nabla_yB_0, \quad \eta_1=0,\quad \eta_2= (\varrho-1)(\vartheta+\nabla_yB_2).
\end{equation}
Then, from $x\in \mathcal{H}_{j\geq 2}$, $\eta_3$ is homogeneous of order $3$ and $\eta_3=x_2\Xi\nabla_y\Lp$, while
\[
\eta_4=x_3\Xi\nabla_y\Lp+x_2^2\nabla_y\alpha+x_2(\varrho-1)\nabla_y\beta+x_2\Xi^2\nabla_y\mu+(\varrho-1)^2\nabla_yB_4\,.
\]
Similarly, $\xi=\xi_1+\xi_2+\xi_3+\mathcal{H}_{j\geq 4}$, $\xi_j\in\mathcal{H}_j$, depending on $x_2,x_3,x_4$ (notice that $\xi_0=0$). For $\xi$, an expansion up to $\mathcal{H}_{j\geq 3}$ is sufficient as  we work with $\xi^2$. From \eqref{xi}, 
\[
\xi_1=(1+\Lp)\Xi\,,\quad \xi_2=2\alpha x_2+\beta(\varrho-1)+\mu\Xi^2\,,\quad \xi_3=2\alpha x_3+2\alpha_1x_2\Xi+\beta_1(\varrho-1)\Xi+\mu_1\Xi^3\,.
\]
\begin{lemma}\label{l5.2}
Let $\mathcal{L}:=\{(x,y,\vartheta,\varrho,\Xi),{X_{M}}q(\Theta)=1-\Xi^2-|\Theta|^2\}$, where $X_{M}=X_{M}(x,y,\vartheta,\varrho,\Xi)$ is given by \eqref{X}, $\Theta=\varrho\vartheta$. 
If $(x,y,\vartheta,\varrho,\Xi)\in\mathcal{L}$ then $x=x_2+x_3+x_4+\mathcal{H}_{j\geq 5}$, with
\begin{gather}\label{x2x3}
x_2=-\frac{(\Xi^2+2(\varrho-1))}{q(\vartheta)(1+\Lp)}\,,\quad x_3=\frac{2\mu\Xi(\Xi^2+2(\varrho-1))}{q(\vartheta)(1+\Lp)^2}\,,\\
\label{x4} \,\,x_4=\frac{1}{q(\vartheta)(1+\Lp)}\Big[\Xi^4\Big(\frac{3\mu_1}{(1+\Lp)}-\frac{4\mu^2}{(1+\Lp)^2}-\frac{\alpha_1}{q(\vartheta)(1+\Lp)^2}\Big)\\
{}+(\varrho-1)^2\Big(\frac{2}{(1+\Lp)}(\frac{-2\alpha_1}{q(\vartheta)(1+\Lp)} +3+\beta_{1})\Big)\\
\quad\quad\quad\quad\quad\quad\quad\quad{}+\Xi^2(\varrho-1)\Big(2+2\Big(\frac{3\mu_1}{(1+\Lp)}-\frac{4\mu^2}{(1+\Lp)^2}-\frac{\alpha_1}{q(\vartheta)(1+\Lp)^2}\Big)\\{}+\frac{1}{(1+\Lp)}(\beta_1-\frac{2\alpha_1}{q(\vartheta)(1+\Lp)})\Big)\Big].
\end{gather}
\end{lemma}
\begin{proof}
Using $X_{M}q(\Theta)=1-\Xi^2-|\Theta|^2$, $q(\Theta)=\varrho^2q(\vartheta)$ and \eqref{X} yields
\begin{equation}\label{eqxX}
x(1+\Lp+2\mu\Xi+\alpha_1x+\beta_1(\varrho-1)+3\mu_1\Xi^2+\mathcal{H}_{j\geq 3})\varrho^2q(\vartheta)=1-\varrho^2-\Xi^2.
\end{equation}
which immediately provides $x$ in terms of $(y,\vartheta)$, $\Xi$ and $\varrho-1$ up to $\mathcal{H}_{j\geq 3}$ as follows
\[
x|_{\mathcal{L}}=\frac{(1-\varrho^2-\Xi^2)}{\varrho^2q(\vartheta)(1+\Lp)}\Big(1-\frac{2\Xi\mu}{1+\Lp}+\mathcal{H}_{j\geq 2}\Big)\,;
\]
hence, using $1-\varrho^2=-2(\varrho-1)-(\varrho-1)^2$, we get \eqref{x2x3}.
In order to obtain homogeneous terms of order $4$ we write $x|_{\mathcal{L}}=x_2+x_3+x_4+\mathcal{H}_{j\geq 5}$ 
and replace $x_2,x_3,x_4$ in \eqref{eqxX}; using moreover $\frac{1}{\varrho^2}= 1-2(\varrho-1)+3(\varrho-1)^2+O((\varrho-1)^3)$,
we find
\begin{multline}
(x_2+x_3+x_4+\mathcal{H}_{j\geq 5})(1+\Lp+2\mu \Xi+\alpha_1 x_2+\beta_1(\varrho-1)+3\mu_1\Xi^2+\mathcal{H}_{j\geq 3})\\
=-\frac{1}{q(\vartheta)}(\Xi^2+2(\varrho-1)+(\varrho-1)^2)\times (1-2(\varrho-1)+3(\varrho-1)^2+\mathcal{H}_{j\geq 6})\\
=-\frac{1}{q(\vartheta)}\Big((\Xi^2+2(\varrho-1))-(\varrho-1)(2\Xi^2+3(\varrho-1))+\mathcal{H}_{j\geq 6}\Big).
\end{multline}
Identifying homogeneous terms of order $2,3,4$, we obtain (again) $x_2,x_3$ as well as $x_4$:
\begin{equation}
x_4(1+\Lp)+2\mu\Xi x_3+(\alpha_1 x_2+\beta_1(\varrho-1)+3\mu_1\Xi^2)x_2=\frac{1}{q(\vartheta)}(\varrho-1)(2\Xi^2+3(\varrho-1)),
\end{equation}
which yields \eqref{x4} by substitution. This completes the proof of Lemma \ref{l5.2}.
\end{proof}
We now replace \eqref{x2x3} and \eqref{x4} in \eqref{xi} and \eqref{eta} and then $x,\xi,\eta$ in $\xi^2+R(x,y,\eta)=1$ to obtain a system of equations with unknown $B_0,B_2,B_4,\Lp, \alpha,\beta,\mu,\alpha_1,\beta_1,\mu_1$ as follows. First,
\begin{multline}
\xi^2=(\xi_1+\xi_2+\xi_3+\mathcal{H}_{j\geq 4})^2
=(1+\Lp)^2\Xi^2+2(1+\Lp)\Xi(2\alpha x_2+\beta(\varrho-1)+\mu\Xi^2)\\
+2(1+\Lp)\Xi(2\alpha x_3+2\alpha_1x_2\Xi+\beta_1(\varrho-1)\Xi+\mu_1\Xi^3)
+(2\alpha x_2+\beta(\varrho-1)+\mu\Xi^2)^2+\mathcal{H}_{j\geq 5}\,.
\end{multline}
Write $\xi^2=(\xi^2)_2+(\xi^2)_3+(\xi^2)_4+\mathcal{H}_{j\geq 5}$, where $(\xi^2)_j\in\mathcal{H}_j$; replacing $x_2$ and $x_3$ by \eqref{x2x3}, we find 
\begin{align}\label{xi22}
(\xi^2)_2= & (1+\Lp)^2\Xi^2 \in \mathcal{H}_2\,,\\
\label{xi23}
(\xi^2)_3= & 2(1+\Lp)\Xi\Big(-2\alpha \frac{(\Xi^2+2(\varrho-1))}{q(\vartheta)(1+\Lp)}+\beta(\varrho-1)+\mu\Xi^2\Big)\in \mathcal{H}_3\,,\\
\label{xi24}
  (\xi^2)_4 = & \Big(-2\alpha \frac{(\Xi^2+2(\varrho-1))}{q(\vartheta)(1+\Lp)}+\beta(\varrho-1)+\mu\Xi^2\Big)^2+2(1+\Lp)\Xi\\
&{}\times  \Big(2\alpha\frac{2\mu\Xi(\Xi^2+2(\varrho-1))}{q(\vartheta)(1+\Lp)^2}
                 -2\alpha_1\Xi\frac{(\Xi^2+2(\varrho-1))}{q(\vartheta)(1+\Lp)} +\beta_1(\varrho-1)\Xi+\mu_1\Xi^3\Big)\in \mathcal{H}_{4}\\
 = &     \begin{multlined}[t]
 (\Xi^2+2(\varrho-1))^2\Big(\mu-\frac{2\alpha}{q(\vartheta)(1+\Lp)}\Big)^2+2(1+\Lp)\mu_1\Xi^4+2(1+\Lp)\beta_1\Xi^2(\varrho-1)\\
  {} +\frac{4\Xi^2}{q(\vartheta)}\Big(\frac{2\alpha\mu}{(1+\Lp)}-\alpha_1\Big)(\Xi^2+2(\varrho-1))\,.     \end{multlined}
\end{align}
We do the same for $\eta=\eta_0+\eta_2+\eta_3+\eta_4+\mathcal{H}_{j\geq 5}$, for which it remains to replace $x_2$ and $x_3$ obtained in \eqref{x2x3} in the expression of $\eta_3$ and $\eta_4$ that we have already obtained from \eqref{eta}. We get
\begin{equation}
\label{eta34}  \eta_3=-\Xi\nabla_y\Lp\frac{(\Xi^2+2(\varrho-1))}{q(\vartheta)(1+\Lp)},\,
\eta_4=\begin{multlined}[t](\varrho-1)^2\nabla_yB_4-\frac{(\Xi^2+2(\varrho-1))}{q(\vartheta)(1+\Lp)}\Big(\Xi^2\nabla_y\mu+(\varrho-1)\nabla_y\beta\Big)\\
{}+\nabla_y\Lp\frac{2\mu\Xi^2(\Xi^2+2(\varrho-1))}{q(\vartheta)(1+\Lp)^2}
+\nabla_y\alpha \frac{(\Xi^2+2(\varrho-1))^2}{q^2(\vartheta)(1+\Lp)^2}\,.
\end{multlined}
\end{equation}
We expand $R(x,y,\eta)R_0(y,\eta)+xR_1(y,\eta)+\frac 12 x^2R_2(y,\eta)+O(x^3)$ ($\eta_0,\eta_2$ were obtained in \eqref{eta012}),
\begin{multline}\label{R}
R(x,y,\eta)  =R_0(y,\eta_0)+(\eta_2+\eta_3+\eta_4)\nabla_{\eta}R_0(y,\eta_0)+\frac 12 \eta_2^2\nabla^2_{\eta,\eta}R_0(y,\eta_0)+\mathcal{H}_{j\geq 5}\\
{}+(x_2+x_3+x_4)(R_1(y,\eta_0)+\eta_2\nabla_{\eta}R_1(y,\eta_0))+\frac 12 x_2^2R_2(y,\eta_0)+\mathcal{H}_{j\geq 5}\,.
\end{multline}
We set $R(x,y,\eta)=(R)_0+(R)_1+(R)_2+(R)_3+(R)_4+\mathcal{H}_{j\geq 5}$, with $(R)_j$ the homogeneous term of order $j$ in $R(x,y,\eta)$; from \eqref{R} and \eqref{eta012} we get $(R)_0=R_0(y,\vartheta+\nabla_yB_0)$, $(R)_1=0$, and
\begin{align}\label{R2}
(R)_2= & \eta_2\nabla_{\eta}R_0(y,\eta_0)+x_2R_1(y,\eta_0)\,,\quad \eta_0=\vartheta+\nabla_yB_0(y,\vartheta)\,,\\
\label{R3}
(R)_3= & \eta_3\nabla_{\eta}R_0(y,\eta_0)+x_3R_1(y,\eta_0)\,,\\
\label{R4}
(R)_4= & \eta_4\nabla_{\eta}R_0(y,\eta_0)+\frac 12 \eta_2^2\nabla^2_{\eta,\eta}R_0(y,\eta_0)+x_2\eta_2\nabla_{\eta}R_1(y,\eta_0)+\frac 12x_2^2R_2(y,\eta_0)\,.
\end{align}
Recall from \eqref{eq:R01} 
that we had set $R_0(y,\eta)=R(0,y,\eta)=|\eta|^2+O(y)$ and $R_1(y,\eta)=\frac{\partial R}{\partial x}(0,y,\eta)$, $q(\eta):=R_1(0,\eta)$. From \eqref{eqiffchi} it follows that for $(x,y,\vartheta,\varrho,\Xi)\in\mathcal{L}$ we must have 
\begin{equation}\label{eqDelta}
\xi^2+R(x,y,\eta)=1\,.
\end{equation}
On $\mathcal{L}$ we have obtained $x|_{\mathcal{L}}$ as a sum of homogeneous terms of the form \eqref{x2x3}, \eqref{x4} which in turn has allowed to do the same for $\xi^2$ and $R(x,y,\eta)$; it remains to get homogeneous terms of order $j$ for $j\in\{0,2,3,4\}$ in \eqref{eqDelta} to obtain a system of equations whose unknown are the coefficients of $A_{\Gamma}$ and $B_{\Gamma}$ (notice there are no terms of order $j=1$). First, we have
$R_0(y,\vartheta+\nabla_yB_0)=1$,
(homogeneous terms of order $0$). 
The next lemma easily follows from solving transport equations:
\begin{lemma}\label{lemB0}
For $\eta\in \mathbb{R}^{d-1}\setminus 0$, there exists an unique function $\phi(y,\eta)$, homogeneous of degree $1$ in $\eta$, solving the eikonal equation $R_0(y,\nabla_y\phi)=|\eta|^2$, with $\phi_{|y\cdot\eta=0}=0$.
\end{lemma}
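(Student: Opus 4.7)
The plan is to solve the eikonal equation by the classical Hamilton-Jacobi method (method of bicharacteristics) with Cauchy data on the non-characteristic hyperplane $\Sigma_\theta:=\{Y\in\R^{d-1}:Y\cdot\theta=0\}$. Throughout, $\theta$ is regarded as a parameter, and one works in a neighborhood of $Y=0$ (and, if one wishes, for $\theta$ in a small conic neighborhood of a fixed direction; homogeneity then extends the construction to $\theta\in\R^{d-1}\setminus 0$).

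First I would recover the Cauchy data for $\nabla_Y\phi$ along $\Sigma_\theta$. Since $\phi$ vanishes on $\Sigma_\theta$, its tangential derivative along $\Sigma_\theta$ vanishes, so there exists a scalar $\lambda(Y,\theta)$ with $\nabla_Y\phi(Y,\theta)=\lambda(Y,\theta)\,\theta$ for $Y\in\Sigma_\theta$. Plugging into the equation gives
\begin{equation*}
R_0(Y,\lambda\theta)=\theta^2.
\end{equation*}
At $Y=0$, recalling $R_0(0,\eta)=|\eta|^2$, this reduces to $\lambda^2|\theta|^2=|\theta|^2$; selecting the positive root gives $\lambda(0,\theta)=1$. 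Since
$\partial_\lambda\bigl[R_0(Y,\lambda\theta)\bigr]\big|_{Y=0,\lambda=1}=2|\theta|^2\neq 0$, the implicit function theorem produces a unique smooth solution $\lambda(Y,\theta)$ on $\Sigma_\theta$ near $Y=0$, with $\lambda(0,\theta)=1$ (and $\lambda$ homogeneous of degree $0$ in $\theta$, because $R_0$ is homogeneous of degree $2$). This specifies the Cauchy datum $Y\mapsto (Y,\lambda(Y,\theta)\theta)$ in the characteristic variety $\{R_0=|\theta|^2\}\subset T^*\R^{d-1}$.

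Next I would verify the non-characteristic condition and flow out. The Hamilton vector field of $R_0$ is $H_{R_0}=\nabla_\eta R_0\cdot\partial_Y-\nabla_Y R_0\cdot\partial_\eta$; at the initial point $(0,\theta)$ its base component is $\nabla_\eta R_0(0,\theta)=2\theta$, which is transverse to $\Sigma_\theta$. Hence the isotropic $(d-2)$-dimensional Cauchy submanifold $\Lambda_0\subset\{R_0=|\theta|^2\}$ can be flowed out by $H_{R_0}$ to produce a Lagrangian submanifold $\Lambda_\theta$ which, again by the implicit function theorem, projects diffeomorphically onto a neighborhood of $0$ in $\R^{d-1}$. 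The phase $\phi(Y,\theta)$ is then defined on this neighborhood by integrating the Liouville form $\eta\,dY$ along bicharacteristics starting from $\Sigma_\theta$, with the normalization $\phi|_{\Sigma_\theta}=0$. By construction $d_Y\phi=\eta$ on $\Lambda_\theta$, so $R_0(Y,\partial_Y\phi)=|\theta|^2$ identically, and $\phi$ vanishes on $\Sigma_\theta$.

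Uniqueness follows because any other smooth solution $\tilde\phi$ with the same Cauchy data produces, via its graph, a Lagrangian contained in $\{R_0=|\theta|^2\}$ containing $\Lambda_0$; hence it coincides with $\Lambda_\theta$ in a neighborhood of the initial surface, which forces $\tilde\phi=\phi$. For the homogeneity, observe that the rescaled function $\phi_\mu(Y,\theta):=\mu^{-1}\phi(Y,\mu\theta)$ satisfies $R_0(Y,\partial_Y\phi_\mu)=|\theta|^2$ (using homogeneity of $R_0$ of degree $2$ in $\eta$) and the same Cauchy conditions on $\Sigma_\theta=\Sigma_{\mu\theta}$; by uniqueness $\phi_\mu=\phi$, which is exactly homogeneity of degree $1$ in $\theta$. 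The only genuinely substantive step is the verification of the non-characteristic condition and of the compatibility of the Cauchy data, which we have reduced to the ellipticity of $\partial_\lambda R_0(0,\lambda\theta)$ at $\lambda=1$; everything else is the standard Hamilton-Jacobi construction.
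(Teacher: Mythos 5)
Your proof is correct and is precisely the standard Hamilton--Jacobi/bicharacteristic construction that the paper leaves implicit: the paper states the lemma and immediately passes to the consequence $\phi(Y,\theta)=Y\theta(1+O(Y^2))$ without spelling out the argument, so your write-up supplies the expected proof. All the steps check out: the Cauchy data $\nabla_Y\phi=\lambda\theta$ on $\Sigma_\theta$ forced by $\phi|_{\Sigma_\theta}=0$, the solvability of $R_0(Y,\lambda\theta)=|\theta|^2$ for $\lambda$ near $1$ by the implicit function theorem (using $R_0(0,\eta)=|\eta|^2$), the non-characteristic condition $\nabla_\eta R_0(0,\theta)=2\theta\not\parallel\Sigma_\theta$, the flow-out to a Lagrangian that projects diffeomorphically near $Y=0$, and the homogeneity argument via uniqueness applied to $\mu^{-1}\phi(Y,\mu\theta)$.

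One small imprecision you might tighten: as stated, the eikonal problem has \emph{two} smooth solutions near $Y=0$, corresponding to the roots $\lambda(0,\theta)=\pm 1$; your uniqueness claim tacitly assumes both solutions carry the same Cauchy data for $\nabla_Y\phi$, i.e., the same choice of root. You do choose the positive root when constructing $\phi$, which matches the paper's intended normalization $\phi=Y\theta(1+O(Y^2))$, but it is worth saying explicitly that uniqueness holds within the branch $\nabla_Y\phi(0,\theta)=+\theta$ (or equivalently among solutions with $\phi(Y,\theta)=Y\theta+O(|Y|^2)$), since the lemma's bare statement does not fix the sign.
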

We then obtain $B_{0}$: from $R_0(y,\eta)=|\eta|^2+O(y)$, we have $\phi(y,\eta)=y\cdot \eta(1+O(y))$, and
\begin{equation}\label{eqB0}
B_0(y,\vartheta)=\phi(y,\vartheta)-y\cdot\vartheta\,.
\end{equation}
As a consequence we have 
$B_0(0,\vartheta)=0$ and $\nabla_y B_0(0,\vartheta)=0$. Back to \eqref{eqDelta}, consider homogeneous terms of order $2$ such that $(\xi^2)_2+(R)_2=0$; using \eqref{xi22} and \eqref{R2}, this translates into 
\begin{equation}\label{eqorder2}
(1+\Lp)^2\Xi^2+(\varrho-1)(\vartheta+\nabla_yB_2)\cdot \nabla_{\eta}R_0(y,\eta_0)-\frac{(\Xi^2+2(\varrho-1))}{q(\vartheta)(1+\Lp)}R_1(y,\eta_0)=0\,.
\end{equation}
We first match coefficients of $\Xi^2$ in \eqref{eqorder2}: 
$(1+\Lp)^2=\frac{R_1(y,\eta_0)}{q(\vartheta)(1+\Lp)}$
which yields $1+\Lp=\Big(\frac{R_1(y,\vartheta+\nabla_yB_0)}{q(\vartheta)}\Big)^{1/3}$. As $R_1(0,\vartheta)=q(\vartheta)$ and $\nabla_yB_0(0,\vartheta)=0$, we obtain $\Lp(0,\vartheta)=0$. We now match coefficients of $\varrho-1$, 
\begin{equation}\label{eqB2tr}
(\vartheta+\nabla_yB_2)\cdot \nabla_{\eta}R_0(y,\eta_0)=\frac{2R_1(y,\eta_0)}{q(\vartheta)(1+\Lp)}\,,
\end{equation}
which is a linear transport equation for $B_2(y,\vartheta)$ and we can take $B_{2|y.\vartheta=0}=0$ : at $y=0$, the transport field is $2\vartheta\cdot \nabla_y$, as $\nabla_{\eta}R_0(y,\eta_0)=2\eta_0+O(y)$, $\eta_0=\vartheta+\nabla_yB_0(y,\vartheta)$ and $\nabla_yB_0(0,\vartheta)=0$.
The first three equations involving $B_0,B_2$ and $\Lp$ can be solved explicitly using only homogeneous contributions up to order $2$. We consider now homogeneous terms of order $3$ in \eqref{eqDelta}, i.e. $(\xi^2)_3+(R)_3=0$. This yields, using \eqref{xi23} together with \eqref{R3}, \eqref{eta34} and \eqref{x2x3}
 \begin{multline}\label{eqorder3}
 2(1+\Lp)\Xi\Big(-2\alpha \frac{(\Xi^2+2(\varrho-1))}{q(\vartheta)(1+\Lp)}+\beta(\varrho-1)+\mu\Xi^2\Big)\\
 -\Xi\frac{(\Xi^2+2(\varrho-1))}{q(\vartheta)(1+\Lp)}\nabla_y\Lp\cdot \nabla_{\eta}R_0(y,\eta_0)+\frac{2\mu\Xi(\Xi^2+2(\varrho-1))}{q(\vartheta)(1+\Lp)^2}R_1(y,\eta_0)=0\,,
 \end{multline}
in which there are only $\Xi^3$ and $\Xi(\varrho-1)$ terms. Exactly like we did for \eqref{eqorder2}, we match coefficients for these terms separately. Unknown functions are $\alpha$, $\beta$ and $\mu$ (we already chose $\Lp$ and $B_0,B_2$.) Using that $q(\vartheta)(1+\Lp)^3=R_1(y,\eta_0)$, we get
\begin{equation}\label{formalsystem}
\left\{ \begin{array}{l}
4\mu (1+\Lp)=\frac{4\alpha}{q(\vartheta)}+\frac{\nabla_y\Lp}{q(\vartheta)(1+\Lp)}\cdot\nabla_{\eta}R_0(y,\eta_0), \text{  terms in } \Xi^3;
\\ 
(2\mu+\beta)(1+\Lp)=\frac{4\alpha}{q(\vartheta)}+\frac{\nabla_y\Lp}{q(\vartheta)(1+\Lp)}\cdot\nabla_{\eta}R_0(y,\eta_0)\,,
\text{ terms in } \Xi(\varrho-1).
 \end{array} \right.
\end{equation}
The last system of two equations and three unknown functions implies $\beta=2\mu$, and provides a relation between $\alpha$ and $\mu$ (given by the first equation in \eqref{formalsystem}). Moreover, at $y=0$, we have $q(\vartheta)=R_1(0,\vartheta)$ and $3\nabla_y\Lp(0,\vartheta)=\frac{\nabla_yR_1(0,\vartheta)}{q(\vartheta)}$.
We summarize what we obtained so far:
\begin{prop}\label{propimpformgamma}
The phase function $\Gamma(x,\cdot)=B_{\Gamma}+x A_{\Gamma}$ is such that, near the glancing set $\mathcal{GL}$,
\begin{equation}\label{AGam}
\left\{ \begin{array}{l}
A_{\Gamma}(x,y,\Xi,\Theta)= \Xi\Lp(y,\vartheta)+\alpha(y,\vartheta)x+\mu(y,\vartheta)(\Xi^2+|\Theta|^2-1)+\mathcal{H}_{j\geq3}\,,\\
B_{\Gamma}(y,\Theta)=B_0(y,\vartheta)+(\varrho-1)B_2(y,\vartheta)+\mathcal{H}_{j\geq3}\,,
 \end{array} \right.
\end{equation}
where $\vartheta=\Theta/|\Theta|$, $\varrho=|\Theta|$ and the functions $B_0$ $B_{2}$ were defined in \eqref{eqB0}, \eqref{eqB2tr}. Morover, 
\[
\beta(y,\vartheta)=2\mu\,, \quad \text{ and } \quad \mu(y,\vartheta)=\frac{\alpha}{q(\vartheta)(1+\Lp)}-\frac{\nabla_{\eta}R_0(y,\vartheta+\nabla_yB_0)}{4q(\vartheta)}\cdot \nabla_y\Big(\frac{1}{1+\Lp}\Big)\,.
\]
We also have $\Lp(0,\vartheta)=0$, $B_0(0,\vartheta)=0$, $\nabla_yB_0(0,\vartheta)=0$, $B_2(0,\vartheta)=0$ and $\nabla_yB_2(0,\vartheta)=0$, $B_0$ (resp. $B_2$) is homogeneous of order $1$ (resp. of order $0$) in the second variable.
\end{prop}
\begin{rmq}
The restriction of $\canonchi_M$ to $\mathcal{GL}_M$ is given by $x=\xi=0$ and $Y_{M}=y+\nabla_{\Theta}B_{\Gamma}(y,\Theta)|_{\varrho=1}$, $\eta=\Theta+\nabla_yB_0(y,\Theta)(y,\Theta)|_{\varrho=1}$.
 It preserves the canonical foliation: $\canonchi_M\Big(\{Y_{M}=Y_0+2s\vartheta_0, \Theta=\vartheta_0,\vartheta_0^2=1\}\Big)$ is an integral curve of $H_{R_0}$ on $R_0=1$.
\end{rmq}
To complete the proof of Proposition \ref{lemgamma}, we are to identify homogeneous terms of order 
at least $ 4$ for $A_{\Gamma}$ and $B_{\Gamma}$ in \eqref{eqiffchi}. One gets a cascade of linear equations (similar to those obtained by identifying homogeneous terms of order $0,1,2,3$
in order to chose $\alpha,\beta,\mu$) which may be solved by induction.
We only do it for homogeneous terms of order $4$ in \eqref{eqDelta} as an example: 
let $(\xi^2)_4+(R)_4=0$ and match coefficients for $\Xi^4$ and $\Xi^2(\varrho-1)$ using \eqref{xi24}, \eqref{R4}, \eqref{eta34}, \eqref{eta012}, \eqref{x2x3},
\begin{align}
\frac{4\alpha_1}{q(\vartheta)}-2(1+\Lp)\mu_1= &  \,9\mu^2+\frac{R_2(y,\eta_0)}{2q^2(\vartheta)(1+\Lp)^2}+\frac{\nabla_{\eta}R_0(y,\eta_0)}{q(\vartheta)(1+\Lp)}\cdot \nabla_y\Big(\frac{\nabla_{\eta}R_0(y,\eta_0)}{4q(\vartheta)}\cdot \nabla_y(\frac{1}{(1+\Lp)})\Big)\\
\frac{4\alpha_1}{q(\vartheta)}-(1+\Lp)\beta_1= &\, 10\mu^2+\frac{R_2(y,\eta_0)}{q^2(\vartheta)(1+\Lp)^2}-\frac{\nabla_{\eta}R_1(y,\eta_0)}{2q(\vartheta)(1+\Lp)}\cdot (\vartheta+\nabla_yB_2)\\
& \quad\quad\quad\quad\quad\quad\quad\quad\quad\quad\quad\quad  {}+\frac{2\nabla_{\eta}R_0(y,\eta_0)}{q(\vartheta)(1+\Lp)}\cdot \nabla_y\Big(\frac{\nabla_{\eta}R_0(y,\eta_0)}{4q(\vartheta)}\cdot \nabla_y(\frac{1}{(1+\Lp)})\Big)\,,
\end{align}
where both RHSs only contain $\mu^2$ and known function such as $\Lp$, $B_0$, $B_2$, $R_0$, $R_1$, $R_2$. 
\begin{rmq}
If we require $\beta_1=2\mu_1$ (which implies $A_3=\Xi(x\alpha_1+(\Xi^2+2(\varrho-1))\mu_1)$), then by difference between the last two equations $\mu^{2}$ is uniquely determined (and therefore $\alpha$ using Proposition \ref{propimpformgamma}). Similarly, we could ask for
\[
A_{2j}=x\alpha_{j}+(\Xi^2+2(\varrho-1))\mu_{j},\quad\quad A_{2j+1}=x\Xi\alpha_{j-1}+(\Xi^2+2(\varrho-1))\Xi\mu_{j-1}, \quad j\geq 2\,,
\]
but this would determine $B_{2j}$ in an unique way. Indeed, for $j=1$ requiring $\beta_1=2\mu_1$ provides a unique $\mu$ and then a unique $\alpha$. Moreover, identifying the coefficients of $(\varrho-1)^2$ in homogeneous terms of degree $4$ in \eqref{eqDelta} does not involve $\alpha_1,\beta_1,\mu_1$, but only $\nabla_yB_4$ (with $\nabla_{\eta}R_0(y,\eta_0\neq 0$) and $\alpha,\mu,\Lp$ (this is the first occurrence of $B_4$.) Indeed, $\eta_4$ does not contain $\alpha_1,\beta_1,\mu_1$, which appear only in $(\xi^2)_4$ (with $\Xi^4$ or $\Xi^2(\varrho-1)$). Therefore for given $\alpha,\mu$, this equation (obtained by identifying coefficients of $(\varrho-1)^2$) determines $\nabla_y B_4$ (and therefore $\nabla_yB_4(0,\vartheta)\neq 0$ unlike for $B_0,B_2$.)
\end{rmq}
\begin{rmq}
That the formal expansion is not uniquely defined reflects that the
group of canonical transformations which preserves the model $\{X_{M}=0, \Xi^2+\vert\Theta\vert^2+X_{M}q(\Theta)=1\}$ is not trivial.
\end{rmq}
\subsection{Equivalence of phase functions for $G(x,y,\eta,\omega)$}
Both phases $\psi+s^3/3-s\zeta$ (from \eqref{eq:defG} in Theorem \ref{thmMelrose}) and $y\cdot \eta+\sigma^3/3+\sigma(xq^{1/3}(\eta)-\omega)+\tau_{q}\Gamma(x,y,\sigma q^{1/3}(\eta)/\tau_{q},\eta/\tau_{q})$ (from \eqref{eq:Gosc}) define the same Lagrangian. We now explain how they are related. From a classical result (see \cite{ChFrUr}) on the normal form of integrals whose phases have degenerate critical points of order $2$, we have:
\begin{lemma}\label{lemphaseG}
Let $\phi(x,y,\theta,\alpha,\sigma)=\sigma^3/3+\sigma(xq^{1/3}(\theta)-\alpha)+x\tau_{q} A_{\Gamma}(x,y,\sigma q^{1/3}(\theta)/\tau_{q},\theta/\tau_{q})$. There exists a unique map $\sigma\rightarrow s$ and $\Upsilon(x,y,\theta,\alpha)\in C^{\infty}$ such that $\phi(x,y,\theta,\alpha,\sigma)=s^3/3-s\zeta(x,y,\theta,\alpha)+\Upsilon(x,y,\theta,\alpha)$
 and $\frac{ds}{d\sigma}\notin\{0,\infty\}$. Let $w:=(x,y,\theta,\alpha)$ and denote $\sigma_0(w)$ the unique solution to $\partial^2_{\sigma,\sigma}\phi(w,\sigma)=0$; then the two saddle points of $\phi$, that we denote $\sigma_{\pm}(w)$, correspond to the critical points $s_{\pm}(w):=\pm\sqrt{\zeta(w)}$ and such that $\sigma_{\pm}(w)=\sigma_0(w)\pm \sqrt{\zeta(w)}k(\pm\sqrt{\zeta(w)},w)$, with $k(u,w)= 1+\sum_{j\geq 1}k_j(w)u^j$, where $k_j$ are smooth functions of $w$.
Moreover, 
\begin{equation}\label{formephaseUpsilon}
\frac 34 \zeta^{3/2}(w)=\phi(w,\sigma_-(w))-\phi(w,\sigma_+(w)),\quad \Upsilon(w):=\frac 12\Big(\phi(w,\sigma_+(w))+\phi(w,\sigma_-(w))\Big)\,.
\end{equation}
\end{lemma}
\begin{cor}\label{corUpsi}
We may write $\psi(x,y,\theta,\alpha)=y\cdot \theta+\tau_{q}(\alpha,\theta)B_{\Gamma}(y,\theta/\tau_{q})+\Upsilon(x,y,\theta,\alpha)$, with
\begin{equation}
  \label{eq:Ups}
\Upsilon(x,y,\theta,\alpha)
= -x\big(x \mu(y,\theta/|\theta|)q(\theta)/\tau_{q}(\alpha,\theta)(1+\Lp(y,\theta/|\theta|)+\mathcal{H}_{j\geq 4}\big)-k_1\zeta(\zeta+x\mathcal{H}_{j\geq 2}+\zeta\mathcal{H}_{j\geq 2})\,.
\end{equation}
\end{cor}
\begin{proof}
Compute $\sigma_0$: as $\partial^2_{\sigma,\sigma}\phi(w,\sigma)=2\sigma+2x\big(\mu(y,\theta/|\theta|)q^{2/3}(\theta)/\tau_{q}(\alpha,\theta)+\mathcal{H}_{j\geq 1}\big)=0$, where $\mathcal{H}_{j=1}$ contains only multiples of $\sigma$, we get 
$\sigma_0(x,y,\theta,\alpha)= -x\big(\mu(y,\theta/|\theta|)q^{2/3}(\theta)/\tau_{q}(\alpha,\theta)+\mathcal{H}_{j\geq 2}\big)$,
where all $\mathcal{H}_{j\geq 2}$ in the RHS come with weights $x$ and $\alpha$. We develop $\phi(w,\sigma)$ near $\sigma=0$ and replace $\sigma$ by $\sigma_{\pm}(w)$: as $\frac 12(\sigma_++\sigma_-)=\sigma_0+k_1(w)\zeta(w)(1+O(\zeta(w)))$, using  \eqref{formephaseUpsilon} yields \eqref{eq:Ups}. Moreover, $\psi$ contains $B_{\Gamma}$ as it does not depend on $\sigma$. \end{proof}

\section{Index of notations}
Below is a commented list of the main notations, with reference to their very first occurence. 
\subsection{General notations (used consistently throughout the paper)}
\begin{itemize}
\item $(\Omega,g)$ = $d$ dimensional manifold, $d\geq 2$, $\Delta_g$ its Laplace Beltrami operator, section \ref{intro}.
  \item $\upgamma(d)$ encodes the loss in Strichartz estimates w.r.t. the case without boundary, Theorem \ref{thStri}.
\item $(x,y)$, boundary normal coordinates; $t$ the time variable; locally, $\Omega=\{(x,y) : x>0, y\in\mathbb{R}^{d-1}\} $, section \ref{parconstruction}.
\item $(\xi,\eta,\tau)$, dual variables: $(x,y,t,\xi,\eta,\tau)\in T^*(\Omega\times\mathbb{R}_t)$.
For $(x,y)$ near $(0,y)$, the metric is $\xi^2+R(x,y,\eta)$. In a neighborhood of $(0,0)\in \partial\Omega$, 
\[
  R_0(y,\partial_y):=R(0,y,\eta)
  \,, \quad R_1(y,\eta):=\partial_x R(0,y,\partial)
  \,,
\]
section \ref{parconstruction} and \eqref{eq:R01}.
\item $\Delta_M=\partial^2_x+\sum_j\partial^2_{y_j}+x\sum_{j,k=1}^{d-1}R^{j,k}_1(0)\partial_{y_j}\partial_{y_k}$: model Laplace operator, \eqref{eq:LapM};  Multipliers 
 \[
q(\eta)=\sum_{j,k=1}^{d-1}R^{j,k}_1(0)\eta_j\eta_k,\quad \tau_q(\omega,\eta):=\sqrt{|\eta|^2+\omega q(\eta)^{2/3}}\,.
 \]
\item $\{e_k(x,\eta)\}_{k\geq 0}$: in the spectral decomposition of $-\Delta_M$ (section \ref{sss231}), an explicit orthonormal base of eigenfunctions associated to eigenvalues $\lambda_k(\eta)$, where
\[
  \lambda_k(\eta)=|\eta|^2+\omega_k q(\eta)^{2/3}=\tau_q^2(\omega_k,\eta).
\]
\item $\{-\omega_k\}_{k\geq 0}$: zeros of the Airy function in decreasing order. Everywhere in the paper $\omega >1$ and serves as a substitute to the $\xi$ variable: if $Q_{y}$ is the differential operator with symbol $q$, $\alpha=h^{2/3}\omega$ quantizes the operator $x-Q_{y}^{-1}\partial_{x}^{2}$.
\item $s$, $\sigma$: integration variables in Airy type oscillatory integrals, \eqref{eq:Gosc}, \eqref{eq:15}.
\item  $(a,b)$ coordinates of the source point, mostly set with $b=0$, Theorem \ref{disper}.
\item  $h\in (0,1)$ (Theorem \ref{disper}), $\gamma\in (0,1)$ with $1/\gamma \in 2^{\mathbb{N}}$ (Section \ref{sectcardN1}): small parameters.
    \item $\lambda=a^{3/2}/h$, $\lambda_{\gamma}=\gamma^{3/2}/h$: large parameters, Section \ref{sectcardN1}.
    \item $(X,Y,T)$, rescaled coordinates (using some combination of $a$, $h$ or $\lambda$, $\lambda_{\gamma}$ as rescaling parameters), Section \ref{sectcardN1}.
    \item $\Sigma$, $S$, $A$: rescaled variables in Airy-type oscillatory integrals, Section \ref{sectcardN1}.
      \item  $\omega$: \eqref{eq:LapM}, parameter and integration variable, successively rescaled to $\alpha$ (\eqref{eq:Phi}) and then $A$ (Section \ref{sectcardN1}). Stationary phases in oscillatory integrals are performed with respect to $\alpha$, $\sigma$, $s$ or their rescaled versions $A$, $\Sigma$, $S$, less frequently $\eta$, with a large parameter being $1/h$, $\lambda$ or $\lambda_{\gamma}$.
    \item $\theta$: (Section \ref{parconstruction}) rescaled $\eta$, near $\mathbb{S}^{d-1}$, and $\rho=|\theta|$, $\vartheta=\theta/\rho$.
\end{itemize}
\subsection{Localisations in phase space} 
\begin{itemize}
\item We localize $\tau_q(\omega,\eta)\sim 1/h$ and $|\eta|\sim 1/h$.
For small $x$, this corresponds to large frequencies $-\Delta\sim -\Delta_M \sim 1/h^2$ and "tangent" directions: the number of reflections on the boundary may be quite large.
\item A further localization is to values $\omega/|\eta|^{2/3}\sim \gamma$. Informally, it relates to  the angle of incidence at the boundary for a ray starting tangentially from $(\gamma,0)$.  %
\item Cut-offs : $\varkappa\geq 0$ is a cut-off function in $ C_{0}^{\infty}(\mathbb{R}^{m})$ with $m=1$ or with $m=d-1$, localizing around  a small neighbourhood of $1$ (for $d=1$), or near $\mathbb{S}^{m-1}$  for $m=d-1$; $\varkappa_{1}$ is a 1-d $\varkappa$. We also have $\chi^{\flat}\in C^{\infty}(\mathbb{R})$ such that $\chi^{\flat}=1$ on $(-\infty, 1]$ and $\chi^{\flat}=0$ on $[2,\infty)$ and $\chi^{\sharp}=1-\chi^{\flat}$. Also, $\chi_0\in C^{\infty}_0(\mathbb{R})$ is supported is a small, fixed neighborhood of $0$.
\end{itemize}
\subsection{Operators, kernels and quasimodes}
\begin{itemize}
\item $G(x,y,\eta,\omega)$: a quasimode, \eqref{eq:eqG} or \eqref{eq:Gosc}; satisfies \eqref{eq:eqG}  $-\Delta G=\tau_q^2 G+O_{C^{\infty}}(\tau_q^{-\infty})$.
\item $K_{\omega}(f)(t,x,y)$: operator related to wave flow \eqref{eq:Kequiv}, acting on smooth $f$.
\item $J(f)(x,y)$: Fourier integral operator, \eqref{eq:J}.
\item $\mathcal{P}_{h,a}(t,x,y)$, \eqref{eq:Prond} and \eqref{eq:Prond2}, our parametrix for the wave equation
\item $V_{N}$: a wave in the expansion over $N$ of $\mathcal{P}_{h,a}$, \eqref{eq:newVN}.
\item $V_{N,\gamma}$: further localized with $\chi_{1}(\omega/(\gamma |\eta|^{2/3}))$, \eqref{eq:newVNgam}.
  \item $\mathcal{P}_{h,a,\gamma}$: the corresponding sum over $N$, \eqref{eq:newProndgam}. 
  \item $\mathcal{E}_M(\cdot,\omega_k)$:  galery modes for the model Laplacian $\Delta_{M}$, \eqref{defEmathcalM}.
  \item $e_{k}(x,\eta)$: eigenfunctions of $\mathcal{F}_{y}(\Delta_{M})$, \eqref{eig_k}.
    \item  $e(x,y,\eta,\omega)$: quasimodes for $\Delta_{g}$, \eqref{defeomkgen}.
\item $g_{h,a}$, $g_{h,a,j}$, $j\in\{1,2\}$: functions to serve as arguments to $J$ and $K_{\omega}$ to construct a suitable smoothed out Dirac data, \eqref{eq:gosc}, Propositions \ref{propdataapetitpetit1} and \ref{propdataapetitpetit2}.
\item $F_{\omega_k}(g)(x,y)$: operator acting on functions $g\in L^2(\mathbb{R}^{d-1})$, average (with density $\hat g(\eta)$) of quasimodes $e(x,y,\eta,\omega_{k})$, \eqref{defFk}.
\item $\Lo(f)(y)$: operator actiong on $f\in L^2(\mathbb{R}^{d-1})$ which allows to "get rid" of the term $B_0$ in the phase of $e(x,y,\eta,\omega)$,  \eqref{def:L}.
\item $\tilde F_{\omega_k}(f)(x,y)=F_{\omega_k}\circ \Lo(f)(x,y)$: its main property is that it can be inverted.
\end{itemize}
\subsection{Phase functions and canonical transformation} 
\begin{itemize}
\item $\zeta(x,y,\eta,\omega)$, $\psi(x,y,\eta,\omega)$ : the phase functions of $G(x,y,\eta,\omega)$ from Theorem \ref{thmMelrose}.
\item $\Sigma_0$: \eqref{eq:18}, a neighborhood of a glancing point in the model case.
  \item $ \canonchi_M$, \eqref{eq:melrose},the canonical transformation defined in a conic neighborhood of $\Sigma_0$ mapping the model case (variables $(X_{M},Y_{M}, \Xi, \Theta)$) to the general case (variables $(x,y,\xi,\eta)$).
  \item $\varphi_{\Gamma}(x,y,\Xi,\Theta)=x\Xi+y\Theta+\Gamma(x,y,\Xi,\Theta)$: Proposition \ref{lemgamma}, the generating function for $\canonchi_M$, with $\Gamma(x,y,\Xi,\Theta)=B_{\Gamma}(y,\Theta)+xA_{\Gamma}(x,y,\Xi,\Theta)$ from \eqref{eq:GAB}.
    \item $A_{\Gamma}$, $B_{\Gamma}$: phase functions that are formal series \eqref{formalseriesBA} from Section \ref{Secgamma}, defined near $\mathcal{GL}=\{x=0,\Xi=0,\varrho-1=0\}$ and for $(y,\vartheta)$ near $\{0\}\times \mathbb{S}^{d-1}$, where $\Theta=\varrho\vartheta$. Their explicit form is given in \eqref{AGam}.
\item $\mathcal{H}_{\geq j}=\{ F \text{ such that } F=\sum_{k\geq j} F_k, \text{ with } F_k \text{ homogeneous of degree } k\geq 1\}$, where a monomial of the form $x^a(\varrho-1)^b\Xi^c$ is homogeneous of degree $k$ if $c+2(a+b)=k$.

\item $\Lp(y,\vartheta)$ (which defines $A_1=\Xi \Lp$), $\alpha(y,\vartheta)$, $\beta(y,\vartheta)$, $\mu(y,\vartheta)$ (which define $A_2=\alpha(y,\vartheta)x+\beta(y,\vartheta)(\varrho-1)+\mu(y,\vartheta)\Xi^2$) so that $A_{\Gamma} =A_1+A_2+\mathcal{H}_{\geq 3}$: other functions related to $\Gamma $ from Section \ref{Secgamma}.
  \item $B_{\Gamma}(y,\Theta)=B_0(y,\vartheta)+(\varrho-1)B_2(y,\vartheta)+\mathcal{H}_{j\geq3}$, whose properties are stated in Proposition \ref{propimpformgamma}.
\item $\mathcal{L}=\{(x,y,\vartheta,\varrho,\Xi), p_{M,2}(X_M,Y_M,\Xi,\Theta)=0\}$ defined in Lemma \ref{l5.2}.
\item $\WF_{{h}}$, the semiclassical wavefront set (see \cite{ZworBook}).
\item $\Phi_{N,a,\gamma}$: the phase function of $V_{N,\gamma}$ defined in \eqref{PhiNagamma}.
\item $\Phi^M_{N,a,\gamma}$: the phase function in the model case $(\Omega,\Delta_M)$. (In general, a notation with an additional $M$ indicates that we consider the model situation.)
\item $\tilde\Phi_{N,a,\gamma}$, a rescaled $\Phi_{N,a,\gamma}$ (see \eqref{deftildephiNagamma}) with $\sigma=\sqrt{\gamma}|\theta|^{1/3}\Sigma$, $\alpha=\gamma |\theta|^{2/3} A$, $s=\sqrt{\gamma}|\theta|^{1/3}S$.
  \item $\Psi_{N,a,\gamma}$: relabeled $\tilde\Phi_{N,a,\gamma}$ after rescaling $x=\gamma X$, $t=\sqrt{\gamma}T$, $y=\sqrt{\gamma}Y$.
    
  \item $\mathcal{N}(t,x,y)$: the set of $N$ with significant contributions of the phase $\Phi_{N,a,\gamma}(t,x,y,\cdot)$, defined in \eqref{Ncal}.
  \item $\mathcal{C}_{\gamma}$: a cylinder defined by \eqref{eq:113}.
    \item $\mathcal{N}^1_{d}$: enlargement of $\mathcal{N}$ defined by $ \mathcal{N}^1_{d}(t,x,y)=\cup_{\mathcal{C}_{\gamma}(t,x,y)}\mathcal{N}(t',x',y')$ in \eqref{Ncal1}.
 
 \item  $\tilde\Gamma_{\gamma}$, $\breve \Gamma_{\gamma}$ (see \eqref{deftildeGammagam}), $\mathcal{E}_{\pm}$, $\tilde{\mathcal{E}}$ (see Lemma \ref{lemomegac}), $\tilde{\mathcal{E}}$ (see Lemma \ref{lemdetails}): remainder terms in phase functions (they do not influence the behaviour of the corresponding phase functions.)
    \end{itemize}


\def\cprime{$'$} \def\cprime{$'$}

\end{document}